\theoremstyle	{plain}
\newtheorem		{theorem}		{Theorem}		[section]
\theoremstyle	{definition}
\newtheorem		{lemma}		[theorem] 		{Lemma}			
\newtheorem		{defn}   	[theorem] 		{Definition}
\newtheorem		{prop}		[theorem] 		{Proposition}	
\newtheorem		{corollary}	[theorem]		{Corollary} 	
\newtheorem 	{remark} 	[theorem]		{Remark}
\crefname{prop}{Proposition}{Propositions} %
\Crefname{prop}{Proposition}{Propositions} %
\titleformat*{\section}{\large\bfseries} 			 	%
\titleformat*{\subsection}{\normalsize\bfseries} 		%
\newcommand{ \indentForAssumptions }{ 1.5 cm }
\newcommand{\allNewAssumptions}{\ref{assumption increasing generator rates}--\ref{assumption intertwining} }
\newcommand{\allAssumptions}{\ref{assumption finite state spaces}--\ref{assumption intertwining} }
\newcommand{\AssumptionsOneFive}{\ref{assumption increasing generator rates}--\ref{assumption intertwining} }
\providecommand{\keywords}[1]
{\noindent
  {\small	
  \textbf{\textit{Keywords---}} #1}
} 
\DeclareMathOperator\Law{Law} 
\DeclareMathOperator\Leb{Leb} 
\DeclareMathOperator\linearSpan{span} 
\DeclareMathOperator\range{range} 
\DeclareMathOperator{\polylog}{Li}
\DeclareMathOperator{\sep}{sep}
\DeclareMathOperator{\cont}{cont}
\newcommand{ \mc }[1] 			{ \mathcal #1 }
\newcommand{ \mb }[1] 			{ \mathbf #1 }
\newcommand{ \mbb }[1] 			{ \mathbb #1 }
\newcommand{ \norm }[1] 		{ \Vert #1 \Vert }
\newcommand{ \floor }[1] 		{ \lfloor #1 \rfloor }
\newcommand{ \ceiling }[1] 		{ \lceil #1 \rceil }
\newcommand{ \ldef } 				{ \hspace{ 1 pt } \raisebox{ 0.4 pt }{:} \hspace{ -4 pt }= }
\newcommand{ \eps } 		{ \varepsilon }
\newcommand{ \indicator } 	{ \mathbbm 1 }
\newcommand{ \zeroVertex } 		{ \varnothing }
\newcommand{ \upDownDist } 		{ M }
\newcommand{ \upDownChain } 	{ X }
\newcommand{ \limitDist } 		{ \upDownDist }
\newcommand{ \limitProcess } 	{ F }
\newcommand{ \sepDist } 		{ \Delta }
\newcommand{ \downKernel } 				{ p^\downarrow }
\newcommand{ \upKernel } 				{ p^\uparrow }
\newcommand{ \upKernelell } 				{ p^{\uparrow,\ell} }
\newcommand{ \downKernelell } 				{ p^{\downarrow,\ell} }
\newcommand{ \upDownKernel } 			{ p }
\newcommand{ \intertwiningKernel } 		{ k }
\newcommand{ \commutationWeightProduct } 	{ \omega } 	%
\newcommand{ \coeffEigInDensityBasis } 		{ \eta }
\newcommand{ \coeffDensityInEigBasis } 		{ \eta^* }
\newcommand{ \generatorRates } 				{ c }
\newcommand{ \constantsInEstimate } 		{ q }
\newcommand{ \density } 				{ d } 	%
\newcommand{ \finiteFiltrations } 		{ V } 	%
\newcommand{ \eigenfunction } 			{ h }
\newcommand{ \core } 					{ \mc H }
\newcommand{ \spaceOfDensityFunctions } { H }
\newcommand{ \downOperator } 					{ D }
\newcommand{ \upOperator } 		 				{ U }
\newcommand{ \upDownOperator } 	 				{ T }
\newcommand{ \limitSemigroupOnGraph } 	 		{ Q }
\newcommand{ \projection } 						{ \pi }
\newcommand{ \intertwiningOperator } 			{ K } 	%
\newcommand{ \pregenerator } 					{ \mc A }
\newcommand{ \discreteGenerators } 				{ A }
\newcommand{ \limitSemigroupOnMetricSpace } 	{ \mc T }
\newcommand{ \discreteToContinuousMap} 			{ \Psi }
\newcommand{ \stateSpace } 		{ \mbb S }
\newcommand{ \limitSpace } 		{ E }
\newcommand{ \inclusion } 		{ \iota }
\newcommand{ \Sn } 		{ \mathfrak S }
\newcommand\si{\sigma}
\newcommand\la{\lambda}
\newcommand{\permutonSpace} 	{\mc P }
\DeclareMathOperator{\pat}{pat}
\DeclareMathOperator{\occ}{occ}
\newcommand{\murec}{\bm\mu^{\text{\tiny rec}}}   %
\newcommand{ \orange }[1]{{#1}} 		%
\begin{document}

\title{
	Up-down chains and scaling limits: \\
	application to permuton- and graphon-valued diffusions
}
\author{Valentin F\'eray$^1$$^*$ \and Kelvin Rivera-Lopez$^2$\thanks{This work was supported in part by the "Future Leader" Program of the LUE initiative (Lorraine Université d'Excellence) and by the ANR projects CORTIPOM (ANR-21-CE40-0019) and LOUCCOUM (ANR-24-CE40-7809).}}
\date{%
    $^1$Universit\'e de Lorraine, CNRS, IECL, F-54000 Nancy, France valentin.feray@univ-lorraine.fr\\%
    $^2$mathbykelvin@gmail.com %
}

\maketitle

\begin{abstract}

	An \emph{up-down chain} is a Markov chain in which each transition is a two-step process that moves up to a larger object and then back down to an object of the original size.
    The first goal of this paper is to present a general framework for analyzing these chains and computing their scaling limits.
    This approach unifies much of the existing literature while extending it in several directions.
    These include explicit conditions for constructing integrable up-down chains and convergence results for families of intertwined processes.
    The latter contribute to the \emph{method of intertwiners} of Borodin and Olshanski.
	The second goal is to highlight a notable application of this framework to the settings of permutations and graphs.
	Here, we identify some integrable up-down chains and construct their scaling limits, a family of permuton-~and graphon-valued Feller diffusions. 
   \orange{Both the up-down chains and the limiting diffusions} exhibit ergodicity, %
   diagonalizable semigroups, and explicit expressions for the maximal separation distance to stationarity.
	For the diffusions, the stationary measures are the \emph{recursive separable permutons} and \emph{recursive cographons} recently introduced by the authors, and the separation distances turn out to be related to the Dedekind eta function.
\end{abstract}

\keywords{up-down chains, intertwining, scaling limits, Feller diffusions, mixing time, permutons, graphons}

 \section{Introduction}

 In the next three sections (\cref{ssec:intro-up-down,ssec:intro-scaling,ssec:intro-permuton-graphon}), we present our main results.
 Then (in \cref{ssec:literature}), we discuss the related literature.

\subsection{Up-down chains}
\label{ssec:intro-up-down}

Up-down chains are Markov chains in which each transition can be decomposed into a growth step followed by a reduction step.
More precisely, suppose that the state spaces for these chains are given by $ \{ \stateSpace_n \}_{ n \ge 0 } $.
Then we require a collection of \emph{up-steps}, given by transition matrices $ \{ \upKernel_n \}_{ n \ge 0 } $ that move from $\stateSpace_n $ to $ \stateSpace_{n+1} $, and a collection of \emph{down-steps}, given by transition matrices $ \{ \downKernel_n \}_{ n \ge 1 } $ that move from $\stateSpace_n $ to $ \stateSpace_{n-1} $.
The associated up-down chains $ \{ \upDownChain_n \}_{ n \ge 0 } $ are then obtained by performing an up-step followed by a down-step.
That is, $ \upDownChain_n $ is the Markov chain on $ \stateSpace_n $ with transition matrix $\upDownKernel_n = \upKernel_n  \downKernel_{n+1}$.

In this paper, we will consider up-down chains with state spaces that satisfy the following condition:
\begin{enumerate}[ label = (A\arabic*), leftmargin = \indentForAssumptions ]
      \setcounter{enumi}{-1} %
	\item
	\label{assumption finite state spaces}
	$ \stateSpace_0, \stateSpace_1, \stateSpace_2, \ldots $ are finite and disjoint, and $ \stateSpace_0 $ consists of a single element, denoted by $ \zeroVertex$.
\end{enumerate}

\noindent
Moreover, we will impose a certain commutation relation on their up- and down-steps. 
Letting $i_n$ be the $|\stateSpace_n| \times |\stateSpace_n| $ identity matrix, this condition is as follows:
 \begin{enumerate}[ label = (C), leftmargin = \indentForAssumptions ]
      \setcounter{enumi}{-1} 
	\item 
	\label{assumption:commutation} 
      there exist constants $\beta_1, \beta_2, \ldots$ in $ ( 0, 1 ) $ such that
     $$
     \upKernel_n
	\downKernel_{ n + 1 }
		= 
        		\beta_n
    				\downKernel_n
    				\upKernel_{ n - 1 }
			+
    			\left( 
    				1 - \beta_n
    			\right)
    			i_n
		,
			\qquad
			n \ge 1
		.
	$$
 \end{enumerate}

In \cref{section discrete framework}, we explore the spectral and asymptotic properties of these chains.
Our approach is based on analyzing the transition operators\footnote{This operator is essentially the action of the transition matrix on a column vector; see \cref{ssec:def_kernels_operators} for details.} associated to $ \upKernel_n $, $ \downKernel_n $, and $ \upDownKernel_n $, which we denote by $ \upOperator_n $, $ \downOperator_n $, and $ \upDownOperator_n = \upOperator_n \downOperator_{n+1}$, respectively. 
Our first main result is \cref{thm:intro_discrete} below, proved in Proposition~\ref{prop eigenbasis}, Proposition~\ref{prop properties of stationary measures}, and \cref{cor ergodicity of chains}.
We use the notation $\generatorRates_{ - 1 } = 0 $, $\generatorRates_k = \beta_{1}^{-1} \cdots \beta_{k}^{-1}$ for $k \ge 0$, and
$\stateSpace=\cup_{n \ge 0} \stateSpace_n$.
Let also $I$ denote an identity operator (its domain should be deduced from context),
$ C( \stateSpace_n ) $ denote the space of real-valued functions on $\stateSpace_n$,
and 
for a function $f$ defined on $\stateSpace$, let $( f )_n$ denote its restriction to $\stateSpace_n$.
Finally, throughout the paper, \orange{we say that a Markov process} is ergodic if it \orange{converges to a unique stationary distribution for any initial state.}
 \begin{theorem}\label{thm:intro_discrete}
 Suppose that \orange{$ \{ \upDownChain_n \}_{ n \ge 0 } $ are up-down chains satisfying} Assumptions \ref{assumption finite state spaces} and \ref{assumption:commutation}.
 Then the following statements hold (using the above notation).
 \begin{enumerate}
 \item 
 {
 There exists an explicit family $(\eigenfunction_s)_{s \in \stateSpace}$ of functions on $\stateSpace$ (defined in (\ref{identity expansion of eigenfunctions}) below) such that}
\begin{align*}
 	\generatorRates_n
	( \upDownOperator_n - I )
    ( \eigenfunction_s )_n
    		& =
             	-\generatorRates_{k - 1 }
				( \eigenfunction_s )_n
			,
				\qquad
				\qquad
				\qquad
				\qquad
                s \in \stateSpace_k, 
				\,
				k \le n
			,
		\\
	C( \stateSpace_n )
		& =
        	\linearSpan\left\{
        		( \eigenfunction_s )_n : s \in \bigcup_{ k = 0 }^n \stateSpace_k 
        	\right\}
		,
			\quad
			\,\,\,\,
			n \ge 0
		,
		\\
	\dim 
		\linearSpan 	
    		\{
    			( \eigenfunction_s )_n
    		\}_{ s \in \stateSpace_k }
       			& =
        			\begin{cases}
           				| \stateSpace_k | - | \stateSpace_{ k - 1 } |,
            				&            
								\qquad
								\qquad
								\qquad
								1 \le k \le n,
            				\\
								1,
							&
								\qquad
								\qquad
								\qquad
								k = 0.
        			\end{cases} 
\end{align*}

	\item The up-down chains are {ergodic} and the unique stationary distribution of $ \upDownChain_n $ is given by 
	\[M_n(s) 
		= 
			(\upKernel_0 \cdots \upKernel_{n-1}) (\zeroVertex,s)
		,
			\qquad
			s \in \stateSpace_n
		.
	\]
			 \end{enumerate}
 \end{theorem}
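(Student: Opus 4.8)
For Part~1, the plan is to exploit the operator form of the commutation relation~\ref{assumption:commutation}: since $\upDownOperator_n = \upOperator_n \downOperator_{n+1}$, it becomes
\[
\upDownOperator_n \;=\; \beta_n\, \downOperator_n \upOperator_{n-1} \;+\; (1-\beta_n)\, I
\qquad \text{on } C(\stateSpace_n),\quad n\ge 1,
\]
while at the same time $\upOperator_{n-1}\downOperator_n = \upDownOperator_{n-1}$. From this I would extract two facts. First, any $g\in C(\stateSpace_n)$ with $\upOperator_{n-1}g = 0$ satisfies $\upDownOperator_n g = (1-\beta_n)g$; as $\generatorRates_{n-1}/\generatorRates_n = \beta_n$, this produces the "new" eigenvalue $1-\generatorRates_{n-1}/\generatorRates_n$ at level $n$, and for $n=0$ the constant function on the singleton $\stateSpace_0$ plays the same role with eigenvalue $1 = 1-\generatorRates_{-1}/\generatorRates_0$. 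Second, $\downOperator_n$ intertwines: from $\downOperator_n \upDownOperator_{n-1} = (\downOperator_n\upOperator_{n-1})\,\downOperator_n$ one sees that $\upDownOperator_{n-1}g = \lambda g$ forces $\upDownOperator_n(\downOperator_n g) = \big(1-\beta_n(1-\lambda)\big)\,\downOperator_n g$. Accordingly, I would read the family $(\eigenfunction_s)_{s\in\stateSpace}$ of~\eqref{identity expansion of eigenfunctions} as follows: for $s\in\stateSpace_k$ one starts from a seed $(\eigenfunction_s)_k$ — the explicit spanning family of $\ker\upOperator_{k-1}$ provided there, resp.\ the constant function for $k=0$ — and propagates it upward through the levels via $(\eigenfunction_s)_n = \downOperator_n\cdots\downOperator_{k+1}(\eigenfunction_s)_k$ for $n>k$. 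Iterating the eigenvalue transformation from $\lambda_k = 1-\beta_k$ yields $1-\lambda_n = \beta_k\cdots\beta_n = \generatorRates_{k-1}/\generatorRates_n$, which is precisely the first displayed identity.

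For the other two displays I would induct on the level $n$, carrying the statement that $C(\stateSpace_n) = \bigoplus_{k=0}^{n}\linearSpan\{(\eigenfunction_s)_n : s\in\stateSpace_k\}$ together with the claimed dimensions. The key points of the inductive step: (i) the eigenvalues $1-\generatorRates_{k-1}/\generatorRates_n$, $k=0,\dots,n$, are pairwise distinct because $(\generatorRates_j)_j$ is strictly increasing (every $\beta_j\in(0,1)$), so the sum of the corresponding eigenspaces is automatically direct; (ii) all of them lie in $(0,1]$, hence $\upDownOperator_n$ is invertible, so from $\upDownOperator_n = \upOperator_n\downOperator_{n+1}$ the operator $\upOperator_n:C(\stateSpace_{n+1})\to C(\stateSpace_n)$ is surjective and $\dim\ker\upOperator_n = |\stateSpace_{n+1}|-|\stateSpace_n|$, which is the dimension of the eigenspace born at level $n+1$; (iii) for $k\le n$ the map $\downOperator_{n+1}$ is injective on $\linearSpan\{(\eigenfunction_s)_n:s\in\stateSpace_k\}$, because $\upOperator_n\downOperator_{n+1}=\upDownOperator_n$ is invertible, so the dimension of each older block is preserved. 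A telescoping count $1+\sum_{k=1}^{n+1}(|\stateSpace_k|-|\stateSpace_{k-1}|)=|\stateSpace_{n+1}|$ then forces the direct sum to exhaust $C(\stateSpace_{n+1})$. I expect the delicate part here to be this interleaving of the eigenvalue computation with the dimension bookkeeping, in particular the step that deduces surjectivity of $\upOperator_n$ from invertibility of $\upDownOperator_n$.

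For Part~2, I would first check by induction on $n$ that $M_n\downKernel_n = M_{n-1}$ for $n\ge1$: the case $n=1$ is immediate since $\downKernel_1$ is stochastic and $\stateSpace_0=\{\zeroVertex\}$ is a singleton, while for $n\ge2$ one writes $M_n = M_{n-1}\upKernel_{n-1}$, applies~\ref{assumption:commutation} at level $n-1$, and invokes the induction hypothesis. This gives $M_n\upDownKernel_n = M_n\upKernel_n\downKernel_{n+1} = M_{n+1}\downKernel_{n+1} = M_n$, so $M_n$ is stationary for $\upDownChain_n$. Ergodicity and uniqueness then drop out of Part~1: $\upDownOperator_n$ is diagonalizable with spectrum in $(0,1]$, and the eigenvalue $1$ is attained only for $k=0$, hence is simple with eigenfunction $\One$. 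Consequently $\upDownKernel_n^{\,m}$ converges as $m\to\infty$ to the rank-one projector onto the constants, whose rows must be the unique stationary probability measure, namely $M_n$; thus $\upDownKernel_n^{\,m}(s,\cdot)\to M_n$ for every $s\in\stateSpace_n$, and simplicity of the eigenvalue $1$ yields uniqueness of the stationary distribution. This part should be routine once Part~1 is established, the only mild subtlety being the identity $M_n\downKernel_n = M_{n-1}$.
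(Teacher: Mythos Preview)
Your structural approach to Part~1 is genuinely different from the paper's and is a nice way to see the eigenspace decomposition. The paper instead builds up through an intermediate layer: it first proves an \emph{extended} commutation relation $U_n D_{n+1,k} = \omega_{k,n}D_{n,k-1}U_{k-1} + (1-\omega_{k,n})D_{n,k}$, then derives a triangular action of $T_n$ on the density functions $d_s$, and finally plugs the explicit formula~\eqref{identity expansion of eigenfunctions} into this triangular action and computes directly that $(T_n-I)(h_s)_n = -\omega_{|s|,n}(h_s)_n$. Your route via $\ker U_{k-1}$ and propagation by $D_n$ bypasses the triangular action and gives cleaner dimension bookkeeping (surjectivity of $U_n$ from invertibility of $T_n$ is slicker than the paper's count, which goes through independence of the $\{(d_s)_n\}_{|s|=k}$).

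However, there is a genuine gap. You assert that formula~\eqref{identity expansion of eigenfunctions}, restricted to level $k$, gives ``the explicit spanning family of $\ker U_{k-1}$ provided there'' --- but nothing in your argument establishes that $(h_s)_k\in\ker U_{k-1}$ for $|s|=k$. The formula defines $h_s$ as a specific linear combination $\sum_{|r|\le k} p^\uparrow_{|r|,k}(r,s)\,\eta_{|r|,k}\,d_r$, and showing that this particular combination is annihilated by $U_{k-1}$ (equivalently, that $T_k(h_s)_k=(1-\beta_k)(h_s)_k$) requires exactly the kind of computation the paper performs: one must know how $U_{k-1}$ (or $T_k$) acts on each $(d_r)_k$, which is the content of the extended commutation relation and the triangular action. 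Your induction correctly shows that \emph{some} family with this structure exists and has the right dimensions, but the theorem is about the specific family~\eqref{identity expansion of eigenfunctions}, and tying the two together needs this missing verification at the seed level. The propagation identity $(h_s)_n = D_n\cdots D_{k+1}(h_s)_k$ is, by contrast, immediate from $D_{n+1}(d_r)_n=(d_r)_{n+1}$.

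For Part~2, your argument is correct and essentially a reordering of the paper's: you prove $M_n\downKernel_n = M_{n-1}$ first by induction on~\ref{assumption:commutation} and deduce stationarity, while the paper proves stationarity directly by induction and then deduces down-consistency from uniqueness. Ergodicity via simplicity of the eigenvalue~$1$ is the same in both.
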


Note that the first item above provides diagonal descriptions for the transition operators: it establishes that each $ \upDownOperator_n $ is diagonalizable, gives the dimensions of its eigenspaces, and gives a spanning set of eigenfunctions (there does not seem to be a canonical way to extract a basis from these).
These operators can also be described by their action on certain probabilistic functions, on which they act in a triangular manner (\cref{prop action of transition operators}). 

After our spectral analysis, we proceed to establish some results on the large time behavior of the up-down chains (\cref{prop density estimate,prop initial convergence}). 
These results are uniform in $n$ and allow us to consider some mixed large time-large size behavior. 
We then provide, under a small additional condition, an exact expression for the separation distance\footnote{The separation distance is a way to compare probability distributions, classically used to study mixing times of finite Markov chains; see \cref{subsection separation distance} for details.} between the distribution of the up-down chain at any time and its stationary measure $\upDownDist_n$ (for the worst initial distribution).
This result follows the work of Fulman \cite{fulmanCommutation} and appears as \cref{thm:separation_distance}.

As a final remark, we note that these results are actually proved in a more general setting.
This context is presented in \cref{section general setting} with the introduction of five new hypotheses \allNewAssumptions\unskip.
In short, we will go beyond up-down chains and focus on chains that can be \emph{intertwined} by running them in continuous time.
A definition of intertwining can be found in \eqref{definition intertwining} in \cref{ssec:literature}.

\subsection{Scaling limits}
\label{ssec:intro-scaling}
We continue our general framework by identifying a scaling limit for our chains.
Here we require that our discrete objects $\stateSpace=\bigcup_{n \ge 0} \stateSpace_n$ can be suitably mapped into a limiting space $ \limitSpace $ (see Assumptions \ref{assumption state space approximation}--\ref{assumption continuous density functions are limits} at the beginning of \cref{section convergence}).
This ensures that the `eigenfunction' $ \eigenfunction_s $ from \cref{thm:intro_discrete} has an extension\footnote{The restriction of $ \eigenfunction_s^o $ to $\stateSpace$ need not be $ \eigenfunction_s $, but it must approximate it; see Assumption \ref{assumption continuous density functions are limits} and Eq.~\eqref{general reformulation of convergence assumption}.} to $ \limitSpace $, which we denote by $ \eigenfunction_s^o $.
The following result is established in \cref{prop limiting semigroup and generator}, \cref{corol:ConvProcesses}, and \cref{prop:Stationary Measure of Limit Process}.

\begin{theorem}
\label{thm:intro-scaling-limit}
Let $\inclusion $ denote the map from $ \stateSpace $ to $ \limitSpace $.
Suppose that \orange{$ \{ \upDownChain_n \}_{ n \ge 0 } $ are up-down chains satisfying}
Assumptions \ref{assumption finite state spaces} and \ref{assumption:commutation}
(or more generally, \orange{Markov chains satisfying \ref{assumption finite state spaces}--\ref{assumption intertwining}}), and that \ref{assumption state space approximation}--\ref{assumption continuous density functions are limits} are satisfied.
Suppose also that $ \generatorRates_n \to \infty $ and
that the distributions of $ \inclusion( \upDownChain_n( 0 ) ) $ converge to $\mu$, a distribution on $ \limitSpace $.
Then the following statements hold.
 \begin{enumerate}
   \item
    There exists a Feller process $ \limitProcess $ in $ \limitSpace $ with initial distribution $ \mu $
    such that the convergence
	$$
		\big(
			\inclusion( \upDownChain_n ( \floor{ \generatorRates_n t } ) )
		\big)_{ t \ge 0 }
			\Longrightarrow
        		\big(
        				\limitProcess( t )
        		\big)_{ t \ge 0 }
	$$
	holds in distribution in the Skorokhod space $ D([0,\infty),\limitSpace)$.
	\item 
	The transition semigroup $ \{ \limitSemigroupOnMetricSpace(t) \}_{ t \ge 0 } $ of $ \limitProcess $ admits the diagonal description
	$$
		\limitSemigroupOnMetricSpace(t) h_s^o 
			= 
				e^{ -t \generatorRates_{ k-1 } } 
				h_s^o
			,
				\qquad
				s \in \stateSpace_k, 
				\,
				k \ge 0,
				\,
				t \ge 0
			.
	$$ 

  \item The process $ \limitProcess$ is ergodic with a unique stationary measure $M$,
	which is the weak limit of the stationary measures $M_n$ of $X_n$
	(or, more precisely, of their push-forwards on $ \limitSpace$).
 \end{enumerate}
\end{theorem}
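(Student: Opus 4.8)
The plan is to read off the generator of the limiting process from the eigenstructure in \cref{thm:intro_discrete}(1), construct the associated Feller semigroup, and then invoke a Trotter--Kurtz-type approximation theorem; the second and third items will then follow from the explicit spectrum. For the first item, observe that accelerating time by $\generatorRates_n$ turns the one-step operator $\upDownOperator_n$ into the discrete generator $\discreteGenerators_n \ldef \generatorRates_n(\upDownOperator_n - I)$, and that \cref{thm:intro_discrete}(1) is exactly the identity $\discreteGenerators_n (\eigenfunction_s)_n = -\generatorRates_{k-1}(\eigenfunction_s)_n$ for $s \in \stateSpace_k$, $k \le n$. I would define $\pregenerator$ on $\core \ldef \linearSpan\{\eigenfunction_s^o : s \in \stateSpace\}$ by $\pregenerator\,\eigenfunction_s^o = -\generatorRates_{k-1}\eigenfunction_s^o$ for $s\in\stateSpace_k$, and then check three things: (a) $\core$ is dense in $C(\limitSpace)$, since by Assumptions~\ref{assumption state space approximation}--\ref{assumption continuous density functions are limits} the $(\eigenfunction_s)_n$ span $C(\stateSpace_n)$ and are uniformly approximated by the restrictions of the $\eigenfunction_s^o$ while $\inclusion(\stateSpace)$ is dense in $\limitSpace$; (b) $\pregenerator$ satisfies the positive maximum principle, which holds for each $\discreteGenerators_n$ because $\upDownOperator_n$ is Markovian and is preserved under the convergence $\discreteGenerators_n \to \pregenerator$ by a standard argument; (c) $\range(\lambda I - \pregenerator) \supseteq \core$ for every $\lambda > 0$, hence is dense, because $\pregenerator$ is diagonal on $\core$ with eigenvalues $-\generatorRates_{k-1} \le 0$. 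By the Hille--Yosida--Ray theorem, $\overline{\pregenerator}$ then generates a Feller semigroup $\{\limitSemigroupOnMetricSpace(t)\}_{t\ge0}$ on $C(\limitSpace)$ for which $\core$ is a core. Since the exact identity together with the approximation assumptions gives $\discreteGenerators_n(\eigenfunction_s)_n \to \pregenerator\,\eigenfunction_s^o$ in the sense required on the core $\core$, the time-scale parameter $\generatorRates_n^{-1}\to 0$ (as $\generatorRates_n\to\infty$ is assumed), and the initial laws $\Law(\inclusion(\upDownChain_n(0)))$ converge to $\mu$, the Ethier--Kurtz theorems on convergence of rescaled Markov chains (convergence of semigroups, and then of processes in the Skorokhod topology) yield the asserted weak convergence to the Feller process $\limitProcess$ with semigroup $\{\limitSemigroupOnMetricSpace(t)\}$ and initial law $\mu$.

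The second item is then immediate: $\eigenfunction_s^o \in \core$ is an eigenvector of $\overline{\pregenerator}$ with eigenvalue $-\generatorRates_{k-1}$, so $\limitSemigroupOnMetricSpace(t)\eigenfunction_s^o = e^{-t\generatorRates_{k-1}}\eigenfunction_s^o$. For the ergodicity in the third item, note that $\beta_i \in (0,1)$ forces $0 = \generatorRates_{-1} < 1 = \generatorRates_0 < \generatorRates_1 < \cdots$, so $\generatorRates_{k-1} \ge 1$ for $k \ge 1$; hence $\limitSemigroupOnMetricSpace(t)\eigenfunction_s^o \to 0$ uniformly as $t\to\infty$ whenever $s\in\stateSpace_k$ with $k\ge 1$, while $\limitSemigroupOnMetricSpace(t)\eigenfunction_\zeroVertex^o = \eigenfunction_\zeroVertex^o$, which equals the constant function $1$ (its restrictions approximate the functions $(\eigenfunction_\zeroVertex)_n$, which are constant because $\upDownChain_n$ is ergodic by \cref{thm:intro_discrete}(2), and $\inclusion(\stateSpace)$ is dense). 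Since $\stateSpace_0 = \{\zeroVertex\}$, it follows that $\limitSemigroupOnMetricSpace(t)f$ converges uniformly to a constant for every $f\in\core$, and then, using $\|\limitSemigroupOnMetricSpace(t)\|\le 1$ and the density of $\core$, for every $f\in C(\limitSpace)$; by the Riesz representation theorem there is a probability measure $\upDownDist$ with $\limitSemigroupOnMetricSpace(t)f \to \langle\upDownDist,f\rangle$ for all $f$, so $\Law(\limitProcess(t))\to\upDownDist$ weakly from any initial law and $\limitProcess$ is ergodic with unique stationary measure $\upDownDist$.

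To identify $\upDownDist$ as the limit of the push-forwards of the $\upDownDist_n$, I would combine two computations. On the limit side, stationarity gives $\langle\upDownDist,\eigenfunction_s^o\rangle = \langle\upDownDist,\limitSemigroupOnMetricSpace(t)\eigenfunction_s^o\rangle = e^{-t\generatorRates_{k-1}}\langle\upDownDist,\eigenfunction_s^o\rangle$, whence $\langle\upDownDist,\eigenfunction_s^o\rangle = 0$ for $k\ge 1$ and $\langle\upDownDist,\eigenfunction_\zeroVertex^o\rangle = 1$. On the discrete side, stationarity of $\upDownDist_n$ under $\upDownOperator_n$ and \cref{thm:intro_discrete}(1) give $(1-\generatorRates_{k-1}/\generatorRates_n)\langle\upDownDist_n,(\eigenfunction_s)_n\rangle = \langle\upDownDist_n,(\eigenfunction_s)_n\rangle$ for $1\le k\le n$, hence $\langle\upDownDist_n,(\eigenfunction_s)_n\rangle = 0$; replacing $(\eigenfunction_s)_n$ by the restriction of $\eigenfunction_s^o$ via Assumption~\ref{assumption continuous density functions are limits}, this yields $\langle\inclusion_*\upDownDist_n,\eigenfunction_s^o\rangle\to 0$ for $k\ge 1$ and $\to 1$ for $s = \zeroVertex$. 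As $\limitSpace$ is compact the sequence $(\inclusion_*\upDownDist_n)$ is tight, and any of its weak limits $\nu$ satisfies $\langle\nu,\eigenfunction_s^o\rangle = \langle\upDownDist,\eigenfunction_s^o\rangle$ for all $s\in\stateSpace$; since $\core$ is dense this forces $\nu = \upDownDist$, so $\inclusion_*\upDownDist_n\Rightarrow\upDownDist$.

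The main obstacle will be the construction of $\{\limitSemigroupOnMetricSpace(t)\}$: \cref{thm:intro_discrete} only determines $\pregenerator$ on the algebraic span $\core$, and only as a diagonal operator there, which is not by itself enough to produce a \emph{Feller} --- in particular positivity-preserving --- semigroup. Everything hinges on verifying the hypotheses of the Hille--Yosida--Ray / Trotter--Kurtz machinery, above all the positive maximum principle for $\pregenerator$, which must be transported from the Markovianity of the $\upDownOperator_n$ through the embedding $\inclusion$, and the fact that $\core$ is a genuine core. Once this is secured, the remaining steps are routine bookkeeping with Assumptions~\ref{assumption state space approximation}--\ref{assumption continuous density functions are limits}.
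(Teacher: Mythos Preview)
Your overall strategy --- read off a pregenerator from the discrete spectrum, apply Hille--Yosida, then Ethier--Kurtz --- is natural, and your treatment of items (2) and (3) is essentially what the paper does. But there is a genuine gap in the construction of $\pregenerator$, and it is more basic than the positive maximum principle you flag at the end.

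You define $\pregenerator$ on $\core = \linearSpan\{\eigenfunction_s^o : s\in\stateSpace\}$ by declaring $\pregenerator\,\eigenfunction_s^o = -\generatorRates_{k-1}\eigenfunction_s^o$. But the functions $\{\eigenfunction_s^o\}_{s\in\stateSpace}$ on $\limitSpace$ are \emph{not} known to be linearly independent (the paper states this explicitly in the remark following \cref{prop limiting semigroup and generator}). The functions $\{\eigenfunction_s\}_{s\in\stateSpace}$ on $\stateSpace$ are independent, but the map $\discreteToContinuousMap\colon \eigenfunction_s\mapsto \eigenfunction_s^o$ may have a nontrivial kernel. So your formula is not, a priori, the definition of a linear operator: given a relation $\sum_s a_s \eigenfunction_s^o = 0$, you would need $\sum_s a_s \generatorRates_{|s|-1}\eigenfunction_s^o = 0$ as well, and nothing in the hypotheses gives this directly. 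The same issue afflicts any attempt to verify the positive maximum principle or the range condition for an operator that may not exist.

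The paper circumvents this by reversing your order of construction. It \emph{defines} the semigroup by the limit
\[
\limitSemigroupOnMetricSpace(t)f \;=\; \lim_{n\to\infty} \upDownOperator_n^{\lfloor \generatorRates_n t\rfloor}\,\projection_n f,
\]
first for $f\in\core$ (using the quantitative estimate of \cref{prop initial convergence} together with \ref{assumption continuous density functions are limits}), and then on all of $C(\limitSpace)$ by contractivity and density. Being a limit, this is automatically well-defined. Positivity, contractivity, conservativity and the semigroup property are then \emph{inherited} from the discrete $\upDownOperator_n$ through the limit --- this replaces your positive-maximum-principle verification entirely. The diagonal identity $\limitSemigroupOnMetricSpace(t)\eigenfunction_s^o = e^{-t\generatorRates_{|s|-1}}\eigenfunction_s^o$ is a \emph{consequence} of this construction (and a posteriori shows your $\pregenerator$ was consistent). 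With the Feller semigroup in hand, \cref{prop:dense_invariant_core} gives that $\core$ is a core, and Ethier--Kurtz (\cref{theorem ethier kurtz path convergence}) yields the process convergence just as you describe. Your arguments for ergodicity and for identifying $\upDownDist$ with the weak limit of $\inclusion_*\upDownDist_n$ then go through and match \cref{prop coefficient functional,prop:Stationary Measure of Limit Process}.
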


\noindent
In the case of up-down chains, recall that the above time scaling factor $\generatorRates_n$ has a very simple expression
in terms of the constants appearing in \ref{assumption:commutation} -- namely, $c_n=\beta_1^{-1} \dots \beta_n^{-1}$.
In the general case, $ \generatorRates_n $ is encoded into \allNewAssumptions\unskip.

{After addressing the scaling limit, we go on to} establish a number of results concerning the limiting process, including several explicit descriptions of the generator (\cref{prop limiting semigroup and generator,prop algebraic identities in limit}),
a sufficient condition for path continuity (\cref{prop:path_continuity}),
an intertwining relation between $ \limitProcess $ and $ \upDownChain_n $ (\cref{prop algebraic identities in limit}),
other characterizations of the stationary distribution (\cref{prop:Stationary Measure of Limit Process}),
and large time estimates for {a rich class of observables} (\cref{prop densities of limit process}).

We conclude our framework by analyzing the maximal separation distance\footnote{In \cref{subsection separation distance}, we provide a natural extension of the separation distance to the continuous setting.} between the law of $\limitProcess(t)$ and its stationary distribution.
In particular, we show that it is a limit of discrete separation distances (\cref{theorem discrete and continuous sep dist}), whose value is given explicitly in an earlier result (\cref{prop convergence of sep dist}).
This improves upon an inequality that holds in general (\cref{thm general sep dist of a limit}) and reveals an interesting feature of intertwined processes.
As in the discrete setting, our analysis of this separation distance requires some additional conditions.
These are fairly natural and are satisfied by most examples.

\subsection{Permuton- and graphon-valued Feller processes}
\label{ssec:intro-permuton-graphon}
We demonstrate our general theory by studying two novel examples of up-down chains.
Let us briefly describe their construction.
For the state space $\stateSpace_n$, we will take the permutations or simple graphs of size $n$ (\orange{the {\em size} of a graph is its number of vertices}).
In the permutation case, it will be useful to think of the associated diagrams (the diagram of $\sigma$ in $\orange{\stateSpace_n}$ is the set of points $\{(i,\sigma(i)) : 1 \le i \le n\}$).

The down-steps in these chains delete a 
uniformly random point/vertex from a permutation/graph.
For permutations, this may involve adjusting the remaining points so that there is no empty row or column.
For graphs, this involves deleting all edges incident to the selected vertex.

The up-steps in these chains depend on a fixed parameter $p \in [0,1]$ and will duplicate a uniformly random point/vertex in a permutation/graph.
For permutations, this means replacing a point in the diagram by two points that
are consecutive in positions and values, and possibly adjusting the other points so that there is exactly one point in each row and column.
Here, the parameter $ p $ denotes the probability that the two new points are placed in an increasing position.
For graphs, duplicating a vertex involves replacing it by two new vertices with the same neighborhood (except possibly the vertices themselves).
Here, the parameter $ p $ denotes the probability that the two new vertices are connected.
Examples of these duplication operations are given in Figure~\ref{fig:ExamplesUpOperator}.
Formal definitions can be found in \cref{section permutation example,sec:graph}.

A simulation of our chains is given in Figure~\ref{fig:graph-permutation-processes}.
We remark that the permutation-valued chains are strongly related to the graph-valued chains.
Indeed, their up- and down-steps satisfy a commutation relation with the map taking a permutation to its inversion graph\footnote{If $\sigma$ is a permutation of size $ n $, its inversion graph has vertex set $\{1,\dots,n\}$ and an edge between $i$ and $j$ if and only if $\{i,j\}$ is an inversion of $\sigma$, i.e.~if and only if $(j-i)(\sigma(j)-\sigma(i))<0$.}.
See \cref{remark commutation with inversion graph} for details.
\medskip

\begin{figure}[t]
	\centering
	\includegraphics[height=35mm]{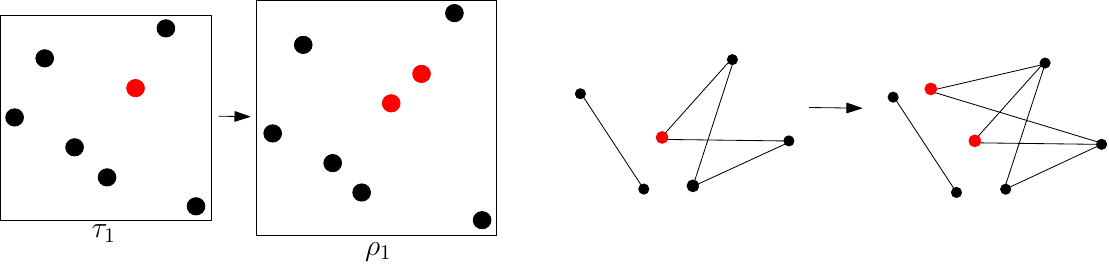}
	\caption{Examples of the duplication operations on permutations (left) and graphs (right).\label{fig:ExamplesUpOperator}}
	\end{figure}

In \cref{prop:commutation_permutations,prop:commutation_graphs},
we show that the above up-down chains satisfy condition \ref{assumption:commutation}.
Our analytic hypotheses \ref{assumption state space approximation}--\ref{assumption continuous density functions are limits} can also be seen to hold by taking the space of permutons/graphons\footnote{Permutons/graphons are limit objects for permutations/graphs that have been the subject of intense study in the last twenty years or so; see \cref{ssec:prel_permutations,ssec:graphons} for basic definitions and references.} as the limiting space for the permutations/graphs. 
As a result, our general theory applies and all of the results discussed earlier hold for these chains.
We wish to highlight a few of these results since they take particularly nice forms in these concrete settings.
We begin with the scaling limit, which now includes the path-continuity of the limiting process
and the identification of its stationary distribution.
In the following, we use the term {\em Feller diffusion}
for a Feller process with a.s.~continuous paths.
\begin{theorem}\label{thm:intro-scaling-limit-examples}  
	Let
	$\stateSpace$ be the set of permutations (resp.~graphs),
	$\stateSpace_n$ be the subset of objects of size $n$,
	and $ \limitSpace $ be the space of permutons (resp.~graphons).
	Let $ \upDownChain_n $ denote the up-down chain defined above on $ \stateSpace_n $ and $ \inclusion $ denote the map from $ \stateSpace $ to $ \limitSpace $.
	Suppose that the initial distributions of these chains converge to $ \mu $, a distribution on $ \limitSpace $.
  Then there exists a Feller diffusion $F$ in $ \limitSpace $ with initial distribution $ \mu $ such that
  	$$
		\big(
			\inclusion( \upDownChain_n ( \floor{ n^2 t } ) )
		\big)_{ t \ge 0 }
			\Longrightarrow
        		\big(
        				\limitProcess( t )
        		\big)_{ t \ge 0 }
	$$
    in distribution in the Skorokhod space $D([0,\infty),E)$.
    Moreover, this process is ergodic and its unique stationary distribution
     is the law of the recursive separable permuton (resp.~recursive cographon)
    of parameter $p$ introduced in \cite{vfkrPermuton}.
\end{theorem}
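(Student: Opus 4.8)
The plan is to deduce the statement from the general scaling-limit machinery of \cref{thm:intro-scaling-limit}, together with the path-continuity and stationarity results cited around it, so that the work reduces to checking the hypotheses for the two concrete families of up-down chains and then identifying the limiting stationary measure. Assumption~\ref{assumption finite state spaces} is immediate: there are finitely many permutations (resp.\ graphs) of each size $n$, the $\stateSpace_n$ are disjoint, and $\stateSpace_0$ consists of the unique empty object. The commutation relation~\ref{assumption:commutation} is precisely \cref{prop:commutation_permutations} (resp.\ \cref{prop:commutation_graphs}); from the constants $\beta_n$ produced there one computes $\generatorRates_n=\beta_1^{-1}\cdots\beta_n^{-1}$ and checks that it is of order $n^2$, so in particular $\generatorRates_n\to\infty$. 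Since $\generatorRates_n/n^2$ converges to a positive constant (and the free parameter $\beta_1$ is anyway unconstrained by~\ref{assumption:commutation}), replacing the time change $\lfloor\generatorRates_n t\rfloor$ of \cref{thm:intro-scaling-limit} by $\lfloor n^2 t\rfloor$ amounts only to a deterministic constant time change of the limit, which preserves the Feller property, the diffusion property once established, and the stationary law.

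Next I would verify the analytic hypotheses~\ref{assumption state space approximation}--\ref{assumption continuous density functions are limits}. One takes $\limitSpace$ to be the space of permutons (resp.\ graphons) with its usual compact metrizable topology, and $\inclusion$ to be the standard map sending a permutation (resp.\ graph) to the associated permuton (resp.\ graphon). The role of the ``density functions'' is played by the classical pattern-occurrence densities of permutons (resp.\ homomorphism densities of graphons): these are continuous on $\limitSpace$, they separate points, and --- this is the crucial point for Assumption~\ref{assumption continuous density functions are limits} --- a density read off from a finite object of size $n$ differs from the corresponding density of its image under $\inclusion$ by $O(1/n)$, uniformly in the object. This quantitative stability is exactly what is needed to extend the eigenfunctions $\eigenfunction_s$ of \cref{thm:intro_discrete} to functions $\eigenfunction_s^o$ on $\limitSpace$ with the required approximation property. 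Granting all hypotheses, \cref{thm:intro-scaling-limit} yields a Feller process $\limitProcess$ on $\limitSpace$ with the stated Skorokhod convergence, its ergodicity, and the identification $M=\lim\inclusion_*M_n$ of its stationary law. That $\limitProcess$ is in fact a Feller diffusion follows from \cref{prop:path_continuity}: its hypothesis holds because a single up- or down-step modifies the associated permuton (resp.\ graphon) by $O(1/n)$ in the relevant metric, and this bound vanishes along the scaling.

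It remains to identify the stationary measure concretely. By \cref{thm:intro_discrete}(2), the stationary law of $\upDownChain_n$ is $M_n(s)=(\upKernel_0\cdots\upKernel_{n-1})(\zeroVertex,s)$, that is, the law of the size-$n$ object obtained by starting from the unique size-$1$ object and iterating the up-step: at each stage one duplicates a uniformly chosen point/vertex, the two new points being in increasing relative position (resp.\ the two new vertices being adjacent) with probability $p$. This is exactly the recursive random construction whose distributional scaling limit defines the recursive separable permuton (resp.\ recursive cographon) of parameter $p$ in \cite{vfkrPermuton}. Hence $\inclusion_*M_n$ converges weakly to the law of that random permuton (resp.\ graphon), and by the previous paragraph this weak limit is $M$. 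The only delicate point here is bookkeeping: one must check that the up-step defined in \cref{section permutation example,sec:graph} coincides verbatim with the recursive step of \cite{vfkrPermuton}, and that the mode of convergence used to define the objects there matches the weak convergence of push-forwards appearing in \cref{thm:intro-scaling-limit}; both are direct comparisons of definitions.

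I expect the genuine obstacle to be the verification of Assumption~\ref{assumption continuous density functions are limits}: one has to exhibit explicit continuous extensions $\eigenfunction_s^o$ of the fairly implicitly defined eigenfunctions $\eigenfunction_s$ and bound the discrepancy between $\eigenfunction_s^o\circ\inclusion$ and $\eigenfunction_s$ on each $\stateSpace_n$, which is where the quantitative stability of permuton/graphon densities under deletion or duplication of a single point enters. Everything else --- Assumption~\ref{assumption finite state spaces}, the input from \cref{prop:commutation_permutations,prop:commutation_graphs}, the $O(1/n)$ estimates, and the comparison with \cite{vfkrPermuton} --- is routine once the correct objects (permuton/graphon densities) are in place.
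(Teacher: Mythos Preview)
Your approach matches the paper's: verify \ref{assumption finite state spaces} and \ref{assumption:commutation} (via \cref{prop:commutation_permutations,prop:commutation_graphs}), check \ref{assumption state space approximation}--\ref{assumption continuous density functions are limits} using standard permuton/graphon theory, then invoke \cref{corol:ConvProcesses}, \cref{prop:path_continuity}, and \cref{prop:Stationary Measure of Limit Process}\ref{claim limit of stationary distributions} together with the observation that $M_n$ is by construction the law of the random recursive separable permutation/cograph of \cite{vfkrPermuton}.

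Two small corrections. First, there is an index shift: the singleton level of the general framework corresponds here to objects of size $1$, not the empty object, and one gets $c_n=n(n+1)$; the time-change discussion is then unnecessary, since \cref{prop limiting semigroup and generator} and \cref{corol:ConvProcesses} already allow any $\eps_n$ with $\eps_n c_n\to 1$, and $\eps_n=1/n^2$ qualifies directly. Second, the obstacle you flag in your last paragraph is milder than you suggest: Assumption~\ref{assumption continuous density functions are limits} concerns the \emph{density functions} $d_s$, not the eigenfunctions $h_s$. In this setting the $d_s$ are precisely the pattern (resp.\ induced-subgraph) densities, their extensions $d_s^o$ are the standard permuton/graphon densities, and the required uniform $O(1/n)$ bound is the classical sampling-with-versus-without-replacement estimate (e.g.\ \cite[Lemma~3.5]{MR2995721}); no stability under deletion/duplication is needed. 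The $h_s^o$ are then obtained automatically as linear combinations of the $d_s^o$ via the map $\Psi$ of \cref{section limiting space}.
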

\begin{figure}[h!]
     \centering
       \makeatletter\edef\animcnt{\the\@anim@num}\makeatother
    \animategraphics[label=myAnimPerm,scale=0.4,type=pdf]{20}{perm_step}{0}{30} 
       \mediabutton[jsaction={anim.myAnimPerm.playFwd();}]{\scalebox{1.5}[1.2]{\strut $\vartriangleright$}}
       \mediabutton[jsaction={anim.myAnimPerm.pause();}]{\scalebox{1.5}[1.2]{\strut $\shortparallel$}}
    \qquad
        \makeatletter\edef\animcnt{\the\@anim@num}\makeatother
    \animategraphics[label=myAnimGraph,scale=0.4,type=pdf]{20}{graph_step}{0}{30} 
       \mediabutton[jsaction={anim.myAnimGraph.playFwd();}]{\scalebox{1.5}[1.2]{\strut $\vartriangleright$}}
       \mediabutton[jsaction={anim.myAnimGraph.pause();}]{\scalebox{1.5}[1.2]{\strut $\shortparallel$}}
        \caption{Simulation of an up-down chain on permutations (left) and graphs (right). In each case, we take $p=1/2$, $n=100$, and the initial distribution is a uniform random permutation, resp.~graph, of size $n$.
        Each movie is a succession of thirty-one pictures that show the state of the chain after $m$ steps,
         where $m \in  \{0,\dots,30\} \cdot 50$. 
    We plot permutations as diagrams (with a blue dot at coordinates $(i,\sigma(i))$ for each $i\le n$), 
    and graphs as pixel pictures, or adjacency matrices (with a black dot at coordinates $(i,j)$ and $(j,i)$ for each edge $\{i,j\}$ of the graph with an appropriate labeling).
    Animations do not work properly with all pdf viewers, but they seem to work with Acrobat Reader or Okular. \label{fig:graph-permutation-processes}}
\end{figure}
In addition to this convergence, in this setting we are also able to approximate $ \limitProcess $ by some Markov chains constructed directly in the limiting space.
In the case of permutons, this \enquote{semi-discrete approximation} is presented in \cref{ssec:semi-discrete-permutations}; the adaptation to graphons is straightforward.
It would be interesting to go beyond this and construct $ \limitProcess $ directly in the limiting space, but we do not know how to do this.

Next, we turn our attention to several results that now feature a combinatorial class of observables central %
to the theory of permutons/graphons: the pattern/subgraph density functions.
These results are \cref{prop algebraic identities in limit}\ref{claim generator on density functions},
equations \eqref{identity expansion of eigenfunctions} and \eqref{eq:ho_In_Do_Basis},
and \cref{prop density estimate,prop densities of limit process}, and they reveal that these functions triangularize the generator of $ \limitProcess $,
can be used to describe the eigenfunctions of all of the semigroups,
and admit simple large time asymptotics under the discrete chains and the diffusion $ \limitProcess $.
The exact statement of these asymptotics can be found in \cref{section asymptotics of up-down pattern densities,ssec:asymp_pattern_densities_diffusion}.

Finally, we consider \cref{thm:separation_distance,theorem discrete and continuous sep dist}, which now yield simple formulas for the separation distances and reveal a connection to the Dedekind eta function, defined by 
\[\eta(\tau)\ldef e^{\frac{\pi\, i\, \tau}{12}} \prod_{j=1}^{\infty}(1-e^{2j \pi\, i \, \tau})\]
on complex numbers $\tau$ with positive imaginary part.
These formulas are presented in the following result, which is proved in \cref{ssec:separation_permutations,section sep dist of permuton diff}.
\begin{prop}
\label{prop:separation-intro-permutations}

	Let $\stateSpace_n$ be the set of permutations or graphs of size $n$
	and $ \upDownChain_n $ the up-down chain defined above on $ \stateSpace_n $.
	Let $\sepDist_n(m)$ denote the separation distance between $ \upDownChain_n( m ) $ and the stationary distribution $ \upDownDist_n $ (for the worst initial distribution).
	Then we have the identity
  \begin{equation}
  \label{eq:separation-intro-permutations}
  \sepDist_n(m)
  	= 
		\sum_{j=1}^{n-1} 
			(-1)^{j-1} 
			(2j+1) 
			\frac{(n-1)!n!}{(n-1-j)!(n+j)!} 
			\bigg(1-\frac{j(j+1)}{n(n+1)}\bigg)^m
  	,
		\qquad
		n \ge 2,
		\,
		m \ge 0
	.
  \end{equation}

    \noindent
    Let now $ \sepDist_\limitProcess $ be the separation distance associated with the limiting diffusion of the $ \upDownChain_n $.
    Then \orange{$\sepDist_\limitProcess$ is given by the following monotonic limit and series}:
     \begin{equation}
      \label{eq:separation-intro-asymptotics}
       \sepDist_n(\lfloor t n ( n + 1 ) \rfloor)
    		\underset{ n \to \infty}{\orange{\nearrow}}
    	\sepDist_\limitProcess( t )
    		=
    			\sum_{ j = 1 }^\infty 
					( -1 )^{ j - 1 } ( 2j + 1 ) 
    				e^{ -t j ( j + 1 ) }
    		,
				\qquad
				t > 0
			.
         \end{equation}
    
    \noindent
    Moreover, $ \sepDist_\limitProcess $ exhibits the following properties:
    \begin{enumerate}[ label = (\roman*)]
    
    \item
    \label{item product form of sep dist}
    $
    	\sepDist_\limitProcess(t)
    		=
    			1
    			-
    			\prod_{ j = 1 }^\infty
    				( 1 - e^{ - 2 j t } )^3
    		=
    			1
    			-
    			e^{ t/4} 
				\eta^3( i t/\pi )
    $
    for $ t > 0 $,
  \item \label{item:symmetry_DeltaF}
	$
		1 - \sepDist_\limitProcess( t )
			=	
				\exp\big( -\frac{ \pi^2 }{ 4 t } + \frac{ t }{ 4 } \big)
				\left(
					\frac{ \pi }{ t }
				\right)^{ 3/2 }
				\big( 1 - \sepDist_\limitProcess\big( \frac{ \pi^2 }{ t } \big) \big)
	$
	for $ t > 0 $,

  \item \label{item:asymp_DeltaF_Infty}
    $ \sepDist_\limitProcess( t ) \sim 3 e^{ - 2 t } $ as $ t \to \infty $,
    
    \item \label{item:asymp_DeltaF_Zero}
	$
		1 - \sepDist_\limitProcess( t )
			\sim	
				e^{ -\frac{ \pi^2 }{ 4 t } }
				\left(
					\frac{ \pi }{ t }
				\right)^{ 3/2 }
	$
    as $ t \to 0 $,
    and

  \item \label{item:DeltaF_CInfty}
      $\sepDist_\limitProcess$ is in $C^\infty[0, \infty )$ and its successive derivatives
    satisfy
    $ 
    	\orange{
		\sepDist_\limitProcess^{(k)}( t ) = 0 
		}
	$ for $ k \ge 1 $.
    \end{enumerate}
\end{prop}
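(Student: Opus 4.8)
The plan is to derive \eqref{eq:separation-intro-permutations} from \cref{thm:separation_distance}, deduce \eqref{eq:separation-intro-asymptotics} from the general convergence statements for separation distances together with this explicit formula, and then read off the properties \ref{item product form of sep dist}--\ref{item:DeltaF_CInfty} from Jacobi's cube identity and the classical theory of the Dedekind $\eta$ function. For the discrete identity, recall that by \cref{prop:commutation_permutations,prop:commutation_graphs} the commutation constants of \ref{assumption:commutation} are $\beta_n=(n-1)/(n+1)$ for $n\ge 2$ in both models (the value of $\beta_1$ is immaterial, since $\stateSpace_1$ is a singleton), so that $\generatorRates_n=\generatorRates_1\binom{n+1}{2}$ and, by \cref{thm:intro_discrete}, the eigenvalue of $\upDownOperator_n$ on the $\stateSpace_k$-block equals $1-\generatorRates_{k-1}/\generatorRates_n=1-\frac{(k-1)k}{n(n+1)}$, while the $\stateSpace_0$-block carries only the stationary eigenvalue $1$ and the $\stateSpace_1$-block is $0$-dimensional. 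I would then check the extra hypothesis that \cref{thm:separation_distance} requires for these two chains and apply it; since its conclusion involves only the spectral data $\{\beta_j\}$ — the same for permutations and for graphs — it produces a single formula $\sepDist_n(m)=\sum_{k=2}^{n}b_{n,k}\bigl(1-\tfrac{(k-1)k}{n(n+1)}\bigr)^m$. Writing $j=k-1$ and $\lambda_j=1-\frac{j(j+1)}{n(n+1)}$, the coefficient is the residue form $b_{n,k}=\prod_{1\le i\le n-1,\,i\ne j}\frac{1-\lambda_i}{\lambda_j-\lambda_i}=\prod_{i\ne j}\frac{i(i+1)}{(i-j)(i+j+1)}$, which factors into two telescoping products and collapses to $(-1)^{j-1}(2j+1)\frac{(n-1)!\,n!}{(n-1-j)!(n+j)!}$, giving \eqref{eq:separation-intro-permutations}. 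A useful sanity check and alternative route: $\upDownChain_n$ is intertwined with a birth-death chain on $n$ points with eigenvalues $\lambda_0,\dots,\lambda_{n-1}$, whose separation distance from the appropriate endpoint is, by the Diaconis--Fill theory of strong stationary times, the survival function of a sum of independent geometrics of parameters $\frac{j(j+1)}{n(n+1)}$; partial fractions on its generating function yields the same coefficients.

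For \eqref{eq:separation-intro-asymptotics}, the monotone convergence $\sepDist_n(\lfloor tn(n+1)\rfloor)\nearrow\sepDist_\limitProcess(t)$ follows from \cref{thm general sep dist of a limit,prop convergence of sep dist,theorem discrete and continuous sep dist} once their hypotheses — the same analytic conditions already verified for \cref{thm:intro-scaling-limit-examples} — are checked. To identify the limit with the stated series I would invoke \eqref{eq:separation-intro-permutations} directly: for fixed $j$ one has $\prod_{i=1}^{j}\frac{n-i}{n+i}\to1$ and $\bigl(1-\tfrac{j(j+1)}{n(n+1)}\bigr)^{\lfloor tn(n+1)\rfloor}\to e^{-tj(j+1)}$, and for all $1\le j\le n-1$ the $n$-uniform bound $(2j+1)\prod_{i=1}^{j}\tfrac{n-i}{n+i}\bigl(1-\tfrac{j(j+1)}{n(n+1)}\bigr)^{\lfloor tn(n+1)\rfloor}\le e\,(2j+1)\,e^{-tj(j+1)}$ holds (using $j(j+1)<n(n+1)$), which is summable in $j$ for $t>0$; dominated convergence for series then gives $\sepDist_\limitProcess(t)=\sum_{j\ge1}(-1)^{j-1}(2j+1)e^{-tj(j+1)}$. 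If a self-contained proof of the monotonicity is wanted, it can be extracted from the inequality $(1-x/M)^{M}\uparrow e^{-x}$ together with the stochastic monotonicity in $n$ of the rescaled sum of geometrics, at the cost of some care with the floor function.

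The properties of $\sepDist_\limitProcess$ are then classical. Item \ref{item product form of sep dist} is Jacobi's identity $\prod_{j\ge1}(1-q^j)^3=\sum_{j\ge0}(-1)^j(2j+1)q^{j(j+1)/2}$ with $q=e^{-2t}$, combined with the definition of $\eta$ at $\tau=it/\pi$ (where $e^{\pi i\tau/12}=e^{-t/12}$ and $e^{2\pi ij\tau}=e^{-2jt}$), which gives $1-\sepDist_\limitProcess(t)=\prod_{j\ge1}(1-e^{-2jt})^3=e^{t/4}\eta^3(it/\pi)$. Item \ref{item:symmetry_DeltaF} is the cube of the modular relation $\eta(-1/\tau)=\sqrt{-i\tau}\,\eta(\tau)$ evaluated at $\tau=it/\pi$ (so that $-1/\tau=i\pi/t$), rearranged via \ref{item product form of sep dist}. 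Item \ref{item:asymp_DeltaF_Infty} is immediate from the series, whose leading term as $t\to\infty$ is $3e^{-2t}$ and whose remainder is $O(e^{-6t})$. Item \ref{item:asymp_DeltaF_Zero} follows by letting $t\to0$ in \ref{item:symmetry_DeltaF}: then $\pi^2/t\to\infty$, so $\sepDist_\limitProcess(\pi^2/t)\to0$ by \ref{item:asymp_DeltaF_Infty} and $e^{t/4}\to1$. Item \ref{item:DeltaF_CInfty} follows from \ref{item product form of sep dist}, since $t\mapsto e^{t/4}\eta^3(it/\pi)$ is smooth and positive on $(0,\infty)$ and the behavior of $\eta$ at the cusp furnishes a full asymptotic expansion of $1-\sepDist_\limitProcess(t)$ in powers of $e^{-\pi^2/(4t)}$ as $t\to0^+$, so $\sepDist_\limitProcess$ extends to a $C^\infty$ function on $[0,\infty)$ with $\sepDist_\limitProcess(0)=1$ and all of its derivatives vanishing at the origin.

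I expect the main obstacle to be establishing \eqref{eq:separation-intro-permutations}: verifying that the permutation and graph chains satisfy the extra hypothesis of \cref{thm:separation_distance}, and then carrying out the coefficient simplification cleanly and uniformly across the two models. Once that is in place, the monotone limit \eqref{eq:separation-intro-asymptotics} and the five properties are comparatively routine, resting on general results already proved in the paper and on classical identities for the $\eta$ function.
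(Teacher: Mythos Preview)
Your proposal is correct and follows essentially the same route as the paper: verify \ref{assumption distant elements}--\ref{assumption consistency rn} (the paper uses $r_n=1\cdots n$ and $s_n=n\cdots 1$), apply \cref{thm:separation_distance} with $\generatorRates_i=i(i+1)$ and simplify the product via $j(j+1)-i(i+1)=(j-i)(i+j+1)$ to get \eqref{eq:separation-intro-permutations}; then combine \cref{prop convergence of sep dist} and \cref{theorem discrete and continuous sep dist} for \eqref{eq:separation-intro-asymptotics}; and finally read off \ref{item product form of sep dist}--\ref{item:asymp_DeltaF_Zero} from Jacobi's cube identity and the modular relation for $\eta$.

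The one genuine difference is your treatment of \ref{item:DeltaF_CInfty}. You argue that the modular transform \ref{item:symmetry_DeltaF} exhibits $1-\sepDist_\limitProcess(t)$ as $e^{-\pi^2/(4t)}$ times a factor with only polynomial growth in $1/t$, whence flatness at $0$ follows from the standard fact that $e^{-c/t}$ kills polynomial singularities. The paper instead works directly with $V(t)=\log\prod_{j\ge1}(1-e^{-2jt})$, expresses its derivatives via polylogarithms, proves $V^{(m)}(t)=O(t^{-2m-1})$, and deduces $v^{(m)}(0^+)=0$ for $v=e^V$. Your argument is shorter and more conceptual; the paper's is more self-contained and makes the polynomial-growth claim fully explicit rather than leaving it implicit in ``behavior of $\eta$ at the cusp''. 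Your aside on the Diaconis--Fill/birth--death interpretation and the dominated-convergence computation of the series limit are additions not present in the paper.
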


\noindent
We note that $\sepDist_\limitProcess$ is not analytic at $0$
since all of its successive derivatives vanish, but the function is not constant.

It follows from this result that the up-down chains do not exhibit a separation cutoff\footnote{A sequence of Markov chains exhibits a separation cutoff if there exists $t_n$ such that for any $\eps>0$, $\sepDist_n((1-\eps)t_n)$ tends to $1$ while $\sepDist_n((1+\eps)t_n)$ tends to $0$.
Informally, this means that the limit of the separation distance should be, after an appropriate time renormalization, of the form $\bm{1}[t <t_0]$,
which is not the case here.}.
A numerical plot of $ \sepDist_\limitProcess $ is shown in Figure~\ref{fig:limit_separation_distance}.
The flat aspect of the curve near $ t = 0 $ is consistent with the estimate $\sepDist_\limitProcess( t )=1-e^{-\Theta(1/t)}$ above.
Informally, our results indicate that the chain mixes at time scale $\Theta(n^2)$, but the mixing starts very slowly at this scale.
\begin{figure}
  \begin{center}
    \includegraphics[scale=0.7]{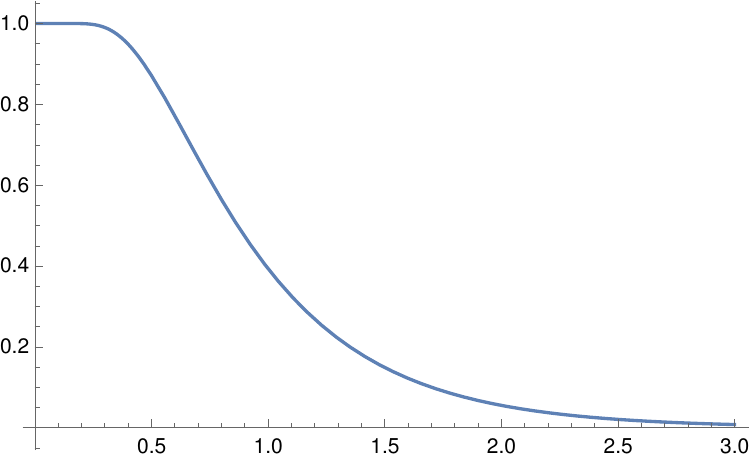}
  \end{center}
  \caption{{The separation distance of the permuton- and graphon-valued diffusions.}}
  \label{fig:limit_separation_distance}
\end{figure}

\subsection{Related literature}
\label{ssec:literature}

\textbf{Intertwined processes.}
A family of Markov processes $ \{ Y_n \}_{ n \ge 0 } $ on state spaces $ \{ E_n \}_{ n \ge 0 } $ is \emph{intertwined} if the transition semigroups of these processes $ \{ P_n(t) \}_{ n \ge 0 } $ satisfy a certain commutation relation together with some transition operators 
$\{ \Lambda_n \}_{ n \ge 1 }$ between the state spaces.
Namely, the operator $\Lambda_n$ should be associated with a transition
 from $E_n$ to $E_{ n - 1 }$ and 
\begin{equation}
	\label{definition intertwining}
	P_n( t )
	\Lambda_n
		=
			\Lambda_n
			P_{ n - 1 }( t )
		,
			\qquad
			t \ge 0,
			\,
			n \ge 1
		.
\end{equation}

\noindent
Borodin and Olshanki showed that under certain conditions, these processes give rise to a process on a natural limiting space that is also intertwined with each $ Y_n $ \cite[Proposition 2.4]{BOintertwiners}.
It seems reasonable to expect this process to be some limit of the $ \{ Y_n \} $, but this is still unknown, particularly because the construction of this process on the limiting space is abstract.

The Markov chains considered in this paper yield families of intertwined processes when run in continuous time with appropriate jump rates (see Assumption \ref{assumption intertwining}).
Therefore, Borodin and Olshanski's result applies, providing an existence version of our \cref{prop algebraic identities in limit}\ref{claim semigroup intertwining}.
We go beyond this by offering an explicit and complete description of the generator 
and by establishing the desired convergence\footnote{Since the Markov chains converge, their continuous-time variants do as well; see \cref{theorem ethier kurtz modes of convergence}.}.
\orange{It would be interesting to extend these results to the more general setting of \cite{BOintertwiners}.}
\medskip

\noindent
\textbf{Up-down chains.}
A number of up-down chains have been previously considered in the literature.
Various examples on integer partitions were studied in \cite{fulmanCommutation, BOpartitions, petrovTwoParameter, petrov2010strict, OlshAddJackParameter} and an example on integer compositions was studied in \cite{krdrDiffusions,krdrLeftmost}.
These chains have connections to well-known objects, such as $z$-measures, Schur functions, Jack polynomials, the Chinese restaurant process, and the infinitely-many-neutral-alleles diffusion model of Ethier and Kurtz.
A general class of up-down chains was considered in \cite{petrovSL2}. 
This class unifies many of the examples on partitions.
In a different body of literature, an up-down chain on trees
was introduced by Aldous~\cite{aldous2000mixing_cladograms} in connection with the Brownian Continuum Random Tree
and further studied in \cite{schweinsberg2002cladograms,lohr2020Aldous_chain,forman2023aldousdiffusion}.

One motivation for considering these chains is that given a {consistent family of distributions}, up-down dynamics often provide simple local dynamics that are mathematically tractable and have those distributions as stationary distributions.
One can then use them to construct exchangeable pairs for applying Stein's method \cite{fulman2005Stein_Plancherel,fulman2005Stein_Jack,dolega2016Jack_Plancherel},
or to construct interesting processes on some limit space as done in \cite{BOpartitions,petrovTwoParameter,petrov2010strict,OlshAddJackParameter,krdrDiffusions,lohr2020Aldous_chain} and in the present paper.
\medskip

In \cite{fulmanCommutation}, Fulman considers down-up chains\footnote{For down-up chains, down-steps are performed before up-steps. These are \orange{often} comparable to the associated up-down chains; see e.g.~\cite[Proposition 5.7]{fulmanCommutation}. \orange{In fact}, under Assumption~\ref{assumption:commutation}, the up-down chain is simply a lazy version of the down-up chain.}
that satisfy a general form of our commutation relation~\ref{assumption:commutation} (the matrices need not be transition matrices and the constants need not sum to $1$).
Fulman presents a methodology for computing the separation distance of several down-up chains and then uses ad~hoc arguments to obtain asymptotic estimates.
We follow this methodology to establish our first general formula in \cref{thm:separation_distance}, but we also extend the method to treat the continuous-time variants of the chains and the limiting processes.
Moreover, we handle the asymptotics of the separation distance in a general manner (\cref{prop convergence of sep dist}) and exploit the intertwining structure to establish a new monotonicity result (\cref{prop recursive structure in sep dist}).
Fulman also obtains a complete description of the spectrum of the transition operators (the eigenvalues and their multiplicities) but does not identify the eigenfunctions or the stationary distributions
(at the same time, the stationary distributions are known in those examples).
Our Theorem~\ref{thm:intro_discrete} can be seen as a more complete version of these results but 
in a slightly more restrictive context.
Finally, let us mention that Fulman does not consider at all the problem of scaling limits.
\medskip

On the other hand, each of the papers \cite{BOpartitions,petrovTwoParameter,petrov2010strict,OlshAddJackParameter, krdrDiffusions} is primarily interested in computing the scaling limit of a specific family of up-down chains.
These chains all satisfy our condition \ref{assumption:commutation} (see \cref{sec:previous_examples}), but this fact is not used in their analysis.
Instead, the authors rely on an auxiliary algebra that is in correspondence with functions on the combinatorial state spaces to show that the transition operators act in a triangular way on a certain family of functions
-- the Schur symmetric functions in~\cite{BOpartitions},
variants of the monomial symmetric functions in~\cite{petrovTwoParameter},
shifted variants of the complete homogeneous symmetric functions in~\cite{OlshAddJackParameter},
and variants of the monomial quasisymmetric functions in~\cite{krdrDiffusions}.
The passage to the limit is then mostly algebraic, since these triangular operators can be interpreted as `projections' of a single operator acting in the auxiliary algebra.

The results discussed in \cref{ssec:intro-up-down,ssec:intro-scaling} provide a unified theory for these papers.
Indeed, we recover essentially all of their main results with our general framework, with the notable exception of the differential form of some of the generators.
\orange{Moreover, our approach unifies the common methodology of these papers, including the identification of a triangular `basis' and an algebraic passage to the limit.}
Unlike those papers though, we do not work in an auxiliary algebra -- all of our computations are driven by \orange{algebraic properties of our transition operators, like} the commutation relation \ref{assumption:commutation}.
Our work also goes beyond the above papers: this includes the identification of eigenfunctions, the analysis of the separation distance, and the intertwining with the limiting process.
Finally, we note that the triangular `basis' we construct often has a natural interpretation in terms of substructure densities.
This reduces our Assumptions~\ref{assumption state space approximation}--\ref{assumption continuous density functions are limits} to standard facts of the corresponding limit theories.
This will be the case in our novel examples, the settings of permutons and graphons, but is not the case for the above papers.
For those examples, the verification of these hypotheses is nontrivial.
See our discussion in \cref{sec:previous_examples}.
\medskip

In \cite{petrovSL2}, Petrov introduces a class of up-down chains that unifies those in \cite{BOpartitions, petrovTwoParameter, petrov2010strict, OlshAddJackParameter}.
Petrov shows that this class admits triangular descriptions for its transition operators and identifies their spectra, as we do in \cref{prop action of transition operators} and \cref{thm:intro_discrete}.
However, the class of chains we consider is more general and more accessible probabilistically.
Indeed, our conditions \ref{assumption finite state spaces} and \ref{assumption:commutation} involve transition matrices and place little restriction on the state spaces, while the class in \cite{petrovSL2} is defined by more abstract algebraic structures.
The latter also requires the state spaces to \orange{consist} of ideals of some underlying poset and the dynamics to be reversible.
This reversibility requirement excludes, for example, the composition chains in \cite{krdrDiffusions} and the permutation and graph chains introduced in \cref{ssec:intro-permuton-graphon}.
In addition, our results go beyond \cite{petrovSL2}, which does not consider the eigenfunctions, separation distances, asymptotics, or scaling limit.
\medskip

\orange{Finally in \cite{aldous2000mixing_cladograms}, Aldous introduced a Markov chains on a family of trees (namely unrooted
non-plane trees, which he refers to as {\em cladograms}) and studied his mixing time. He also conjectured
the existence of a scaling limit, which was established with respect to different limiting spaces and topologies in 
\cite{lohr2020Aldous_chain} and \cite{forman2023aldousdiffusion}. }
\orange{
This chain is a labelled version of an up-down chain satisfying Assumptions \ref{assumption finite state spaces} 
and \ref{assumption:commutation}. 
Our results recover several results from the literature,
including diagonal and triangular formulas for the generators from \cite{gambelin:tel-05202989}
and the existence of a scaling limit in \cite{lohr2020Aldous_chain}
(we need however the construction of the limiting space provided in this paper).
More detail on this example is given in \cref{ssec:cladograms}.}
\bigskip                                                                        

\noindent
\textbf{Permutation and graph dynamics.}
To our knowledge, the permutation- and graph-valued 
up-down chains we consider have not yet appeared in the literature.
However, the \orange{deterministic} duplication operations are standard: 
the permutations/graphs that can be obtained from the permutation/graph of size $1$
by these operations are known as separable permutations/cographs 
and are well-studied objects
(see the references given in \cite{bassino2018BrownianSeparable}
and \cite{bassino2022cographs} respectively).
Thus, using these duplication operations to build 
 up-down chains on permutations and graphs seems natural.
 
 In another direction, let us point out that
there is a large existing literature on graph dynamics and graphons,
but it seems that simple dynamics on finite graphs leading to nontrivial dynamics on the space
of graphons are uncommon. 
Indeed, due to an averaging effect, the limiting dynamic is often deterministic, 
and can sometimes be analyzed through the {\em differential equation method};
see the seminal paper of Wormald~\cite{wormald1995differential} for general principles 
or the recent paper~\cite{garbe2023flip}
for a large class of random graph evolutions (defined via local edge replacements) 
that lead to deterministic dynamics on graphons in the limit.
In contrast, our model, consisting of local vertex replacements,
still has a random behavior at the level of graphons.
It would be interesting to study 
 a mix of vertex and edge replacements
  and to understand the phase transition between deterministic and random dynamics
  in the limit, but this is out of the scope of this article.

\begin{remark}
  While finalizing this article, we learned about some parallel work by Roman Gambelin \cite{gambelin2025scaling}
  on scaling limits of up-down chains.
  His results and ours have some intersection, in particular regarding the existence of scaling limits under assumption \ref{assumption:commutation}.
  In addition, he proves that, for any up-down chain satisfying  \ref{assumption finite state spaces} and \ref{assumption:commutation},
  there always exists a limiting space $E$ satisfying 
  the hypotheses of \cref{prop limiting semigroup and generator,prop algebraic identities in limit},
  and that this space is unique in some sense, see \cite{gambelin2025scaling} for details.
  On the other hand, he does not consider the computation of the separation distance, or the particular chains on permutations
  and graphs considered in this paper.
\end{remark}

\subsection{Outline}
In Section \ref{section background on feller processes}, we provide background on the theory of Feller processes, recall the notion of separation distance, and give some new results on the latter. 
In Section \ref{section discrete framework}, we analyze our discrete chains. 
In Section \ref{section convergence}, we compute the scaling limit of these chains.
In Section \ref{sec:previous_examples}, we review up-down chains from the literature and discuss the consequences of our general results.
In Sections \ref{section permutation example} and \ref{sec:graph}, we study the novel up-down chains on permutations and graphs.

\subsection{Notation}
An empty product or sum will be regarded as a one or zero, respectively.
A countable set $ E $ will always be equipped with the discrete topology.
The space of continuous functions from $ E $ to $ \mbb R $ will be denoted by $ C( E ) $ and \orange{its subset of positive functions by $ C_+( E )$.}
We note that $ C( E ) $ is a Banach space when $ E $ is compact.
For a measurable space $ E $, the Banach space of measurable bounded functions from $ E $ to $ \mbb R $ will be denoted by $ \mc M_b( E ) $.
Both of these spaces are to be equipped with the supremum norm.
The norm on a Banach space $ B $ will be denoted by $ \Vert \cdot \Vert_B $.
The indicator function of a set $ C $ will be denoted by $ \indicator_C $ and identity \orange{operators} will be denoted by $ I $ -- the domains of these objects should be deduced from context.
Finally, as already done in \cref{thm:intro-scaling-limit,thm:intro-scaling-limit-examples}, 
we use a double arrow $\Rightarrow$ for the convergence in distribution of random variables.

\section{Background on Feller Processes}
\label{section background on feller processes}

This section provides the necessary theory of Feller processes that we will use.
For a simpler presentation, we have reformulated the results we reference into our specific context.
{The discussion on separation distance contains new results, which we believe are of independent interest.}

\subsection{Kernels and transition operators}
\label{ssec:def_kernels_operators}
Let $ ( E, \mc E ) $ and $ ( F, \mc F ) $ be measurable spaces.
A \emph{probability kernel from $ E $ to $ F $} is a function $ \mu \colon E \times \mc F \to [ 0, \infty ) $ such that
\begin{enumerate}[ label = (\roman*) ]
	\item
	for every $ B \in \mc F $, the map $ \mu( \, \cdot \,, B ) $ is measurable, and
	
	\item
	for every $ x \in E $, the map $ \mu( x, \, \cdot \, ) $ is a probability measure on $ F $.
	
\end{enumerate}

\noindent
We will refer to these objects simply as kernels.
Kernels can be viewed as generalized transition matrices: given $ x \in E $, we can transition to $ F $ by sampling an object according to $ \mu( x, \, \cdot \, ) $.
In fact, when the spaces $ E $ and $ F $ are discrete, we can regard a kernel as an $ E \times F $ matrix whose entries are given by $ \mu( x, y ) \ldef \mu( x, \{ y \} ) $.

The operations of transition matrices can be generalized to operations involving kernels.
For example, a kernel $ \mu $ from $ E $ to $ F $ can act on a probability measure $ \lambda $ on $ E $ to obtain a probability measure on $ F $. This measure is given by
$$
	( \lambda \mu ) ( B )
		= 
			\int_E
				\lambda( dx )
				\mu( x, B )
		,
			\qquad
			B \in \mc F
		.
$$

\noindent
Similarly, we can take the product of $ \mu $ and a kernel $ \nu $ from $ F $ to $  G$ to obtain a kernel from $ E $ to $ G $.
This kernel is given by
$$
	( \mu \nu ) ( x, C )
		= 
			\int_Y
				\mu( x, dy )
				\nu( y, C )
		,
			\qquad
			x \in E,
			\,
			C \in \mc G
		.
$$

\noindent
The kernel $ \mu $ can also act on functions, similar to how a transition matrix can act on column vectors.
This action takes the form of a linear operator $ T_\mu \colon \mc M_b( F ) \to \mc M_b( E ) $ defined by
$$
	( T_\mu f )( x )
		=
			\int_F
				f( y ) 
				\mu( x, dy )
        = 
        	\mbb E_x[f(Y)],
$$
where, in the last expression, the random variable $Y$ has distribution $\mu(x,\cdot)$.
This operator is the \emph{transition operator} associated with $ \mu $, and it can be viewed as a dual object to $ \mu $.
The kernel $ \mu $ can be recovered from its transition operator $ T_\mu $ by the formula
\begin{equation}
	\label{identity kernel from operator}
	\mu( x, B )
		=
			( T_\mu \indicator_B )( x )
		.
\end{equation}
The properties of $ \mu $ make $ T_\mu $ a positive, contractive, and conservative operator. 
Recall that an operator $V$ is \emph{positive} if $ V f \ge 0 $ whenever $ f \ge 0 $, \emph{contractive} if its operator norm is at most 1, and \emph{conservative} if $ V \, 1 = 1 $.	
\noindent
Finally, we note that transition operators are often restricted to subspaces of their domain.
For example, whenever $F$ is a compact topological space, it is common to restrict $T_\mu$ to the space $C(F)$ 
of continuous functions on $F$.

\subsection{Markov processes, Feller semigroups, and generators}
Let $ ( E, \mc E ) $ be a measurable space and let $ Y $ be a continuous-time time-homogeneous Markov process in $ E $. 
Recall that $ Y $ is often described by a family of kernels $ \{ \mu_t \}_{ t \ge 0 } $ that specifies the conditional distributions
$$
	 \mu_t( Y( s ), B )
	 	=
			\mbb P( Y( s + t ) \in B \, | \, Y( s ) )
		,
			\qquad
			s, t \ge 0,
			\,
			B \in \mc E
		.
$$

\noindent
An alternative description of $ Y $ is given by the associated transition operators $ \{ T(t) \}_{ t \ge 0 } $, which specify the conditional expectations
\begin{align*}
	( T(t) f )( x )
		= 
			\int_E
				f( z )
				\mu_t( x, dz )
		=
			\mbb E_x[ f( Y( t ) ) ]
		,
			\qquad
			f \in \mc M_b( E ), \,
			x \in E, \,
			t \ge 0
		.
\end{align*}

\noindent
\orange{The family $ \{ T(t) \}_{ t \ge 0 } $ is called} the \emph{transition semigroup} of $ Y $.
This language indicates that these operators are associated with a Markov process and form an \emph{operator semigroup}: they are bounded, defined on a Banach space, start from $ T( 0 ) = I $, and satisfy the semigroup identity\footnote{This relation can be seen as dual to the Chapman-Kolmogorov equation.}
$$
	T( t + s )
		=
			T( t ) T( s )
		,
			\qquad
			s, t \ge 0
		.
$$

Conversely, the theory of Feller processes provides conditions for an operator semigroup to 
be the transition semigroup of a Markov process exhibiting some regularity.
\orange{For simplicity}, let $ E $ be a compact metric space.
A \emph{Feller semigroup} on $ C( E ) $ is an operator semigroup $ \{ T( t ) \}_{ t \ge 0 } $ of positive, contractive, conservative\footnote{This condition is sometimes left out.} operators that satisfy the following regularity conditions, known as the \emph{Feller properties}:
\begin{enumerate}[ label = (\roman*) ]
	
	\item
	$ T( t ) C( E ) \subseteq C( E ) $ for $ t \ge 0 $, and
	
	\item
	$ ( T( t ) f ) ( x ) \to f( x ) $ as $ t \to 0 $ for $ f \in C( E ) $ and $ x \in E $.

\end{enumerate}

\begin{theorem}[Chapter 4 Theorem 2.7 in \cite{ethierKurtzBook}]
	Let $ \{ T( t ) \}_{ t \ge 0 } $ be a Feller semigroup on $ C( E ) $.
	For every Borel probability measure $ \nu $ on $ E $, there exists a Markov process with initial distribution $ \nu $, transition semigroup $ \{ T( t ) \}_{ t \ge 0 } $, and sample paths in the Skorokhod space $ D( [ 0, \infty ), E )  $.

\end{theorem}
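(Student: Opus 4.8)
The plan is to follow the classical passage from a Feller semigroup to its associated process, in the form recorded in \cite{ethierKurtzBook}. Since $ E $ is a compact metric space, $ C( E ) $ is a separable Banach space, and (this is part of, or readily follows from, the Feller properties) $ \{ T( t ) \}_{ t \ge 0 } $ is a strongly continuous contraction semigroup on $ C( E ) $. It therefore possesses a densely defined closed generator $ \pregenerator $, together with resolvent operators $ R_\lambda f = \int_0^\infty e^{ -\lambda t } T( t ) f \, dt $ that map $ C( E ) $ into $ \mathcal{D}( \pregenerator ) $, satisfy $ \pregenerator R_\lambda f = \lambda R_\lambda f - f $, and obey $ \lambda R_\lambda f \to f $ in $ C( E ) $ as $ \lambda \to \infty $.

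First I would realize the semigroup as a transition function. For fixed $ t \ge 0 $ and $ x \in E $, the map $ f \mapsto ( T( t ) f )( x ) $ is a positive linear functional of norm $ 1 $ on $ C( E ) $ (here positivity and conservativeness of $ T( t ) $ enter), so the Riesz representation theorem yields a Borel probability measure $ \mu_t( x, \cdot ) $ on $ E $ with $ ( T( t ) f )( x ) = \int_E f \, d\mu_t( x, \cdot ) $, and measurability of $ x \mapsto \mu_t( x, B ) $ follows by a monotone-class argument. The semigroup identity $ T( t + s ) = T( t ) T( s ) $ becomes the Chapman--Kolmogorov relation for $ \{ \mu_t \} $, and the Feller property $ ( T( t ) f )( x ) \to f( x ) $ gives $ \mu_t( x, \cdot ) \to \delta_x $ weakly as $ t \to 0 $. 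Kolmogorov's extension theorem then produces, on a suitable probability space, a Markov process $ \{ Y( t ) \}_{ t \ge 0 } $ in $ E $ with initial law $ \nu $ and transition kernels $ \{ \mu_t \} $, hence with transition semigroup exactly $ \{ T( t ) \}_{ t \ge 0 } $. What remains is to find a modification whose paths lie in $ D( [ 0, \infty ), E ) $.

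The regularization step carries the real weight. Using the Markov property and $ \tfrac{d}{dt} T( t ) h = T( t ) \pregenerator h $, one checks that for every $ h \in \mathcal{D}( \pregenerator ) $ the process $ M^h( t ) = h( Y( t ) ) - \int_0^t ( \pregenerator h )( Y( s ) ) \, ds $ is a martingale for the (augmented) natural filtration. Pick $ \{ f_n \}_{ n \ge 1 } \subseteq C( E ) $ with $ \{ R_1 f_n \}_{ n \ge 1 } $ dense in $ C( E ) $ --- possible since $ \lambda R_\lambda f \to f $ and $ C( E ) $ is separable --- and set $ h_n = R_1 f_n \in \mathcal{D}( \pregenerator ) $. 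Each $ M^{ h_n } $ is a martingale, so by Doob's martingale regularization theorem it has a modification with one-sided limits along the rationals; since $ t \mapsto \int_0^t ( \pregenerator h_n )( Y( s ) ) \, ds $ is continuous, this transfers to a càdlàg modification $ \xi_n $ of $ t \mapsto h_n( Y( t ) ) $ (right-continuity in probability, hence the identification of $ \xi_n $ with $ h_n( Y( \cdot ) ) $ at each fixed time, comes from $ \mu_t \to \delta $). Intersecting countably many full-measure events, on one event $ \Omega_0 $ of probability $ 1 $ all the $ \xi_n $ are càdlàg simultaneously.

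Finally I would reassemble a genuine $ E $-valued càdlàg path. Fix $ \omega \in \Omega_0 $ and $ t \ge 0 $: by compactness of $ E $, any sequence $ s_k \downarrow t $ has a subsequence along which $ Y( s_k ) \to y $ in $ E $, and by continuity of $ h_n $ and right-continuity of $ \xi_n $ the limit satisfies $ h_n( y ) = \xi_n( t+ ) $ for every $ n $; since $ \{ h_n \} $ is dense in $ C( E ) $ it separates the points of the compact space $ E $, so $ y $ is uniquely determined, whence $ Y( t+ ) \ldef \lim_{ s \downarrow t } Y( s ) $ exists. The same argument gives left limits, so $ \tilde Y( t ) \ldef Y( t+ ) $ has paths in $ D( [ 0, \infty ), E ) $, and by right-continuity in probability of $ Y $ one has $ \tilde Y( t ) = Y( t ) $ almost surely for each fixed $ t $; therefore $ \tilde Y $ has the same finite-dimensional distributions as $ Y $, i.e.\ initial law $ \nu $ and transition semigroup $ \{ T( t ) \}_{ t \ge 0 } $, now with sample paths in $ D( [ 0, \infty ), E ) $, which is what we want. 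The main obstacle is precisely this last passage --- from the scalar martingale-regularity statements to a single $ E $-valued càdlàg process --- where compactness of $ E $ and the point-separating property of $ C( E ) $ are indispensable and one must control all the exceptional null sets at once.
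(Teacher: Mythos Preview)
The paper does not prove this theorem; it is quoted verbatim from \cite{ethierKurtzBook} as background and no proof is given. Your sketch is essentially the classical argument (Riesz representation to get the transition kernels, Kolmogorov extension for existence, then martingale regularization plus compactness and point-separation by $C(E)$ to obtain a c\`adl\`ag modification), which is indeed the approach taken in Ethier--Kurtz, so there is nothing to compare.
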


\orange{A} Markov process associated with a Feller semigroup is called a \emph{Feller process}.
Interestingly, this process can be described by a single operator, called the \emph{generator} of its semigroup.
The generator of a Feller semigroup $ \{ T( t ) \}_{ t \ge 0 } $ on $ C( E ) $ is the operator defined by the limit
$$
	A f
		=	
			\lim_{ t \to 0 }
				\frac{ T( t ) f - f }{ t }
		,
$$

\noindent
wherever it exists.
This domain is usually not $ C( E ) $ but is always a dense subspace of $ C( E ) $ (see \cite[Chapter 17]{kallenbergTheBible}).
Moreover, the constant function $ 1 $ lies in the domain and $ A \, 1 = 0 $ since $ T( t ) 1 = 1 $.

In general, handling a generator on its full domain can be challenging.
As a result, a generator is typically only specified on a suitable subspace of its domain, chosen so that the full generator can be obtained from this restriction.
The choice of such a subspace is informed by the fact that generators are \emph{closed}\footnote{On the other hand, generators are usually unbounded.}
operators -- that is, if $ A $ is a generator with domain $ \mc D $ then its \emph{graph} 
$
	\{ 
		( f, A f )
	:
		f \in \mc D
	\}
$
will be closed in $ C( E ) \times C( E ) $ under the product topology.
A representative subspace can therefore be found in a \emph{core}, a subspace $ D \subset \mc D $ for which the graph of $ A\vert_D $ is dense in the graph of $ A $.

\begin{prop}[Proposition 19.9 in \cite{kallenbergTheBible}] 
  \label{prop:dense_invariant_core}
	Let $ A $ be the generator of a Feller semigroup and $ \mc D $ be its domain. 
	Then every dense, invariant subspace $ D \subset \mc D $ is a core for $ A $.
\end{prop}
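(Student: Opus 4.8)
The plan is to reduce the assertion to a density statement about the range of $\lambda I - A$ and then to establish that density by a Hahn--Banach argument coupled with a scalar ODE. Recall that $D$ being a core for $A$ means precisely that the graph of $A\vert_D$ is dense in the graph of $A$, equivalently that the closure of $A\vert_D$ equals $A$. Using the standard Hille--Yosida description of the semigroup $\{ T( t ) \}_{ t \ge 0 }$, for every $\lambda > 0$ the operator $\lambda I - A$ is a bijection from $\mc D$ onto $C( E )$ whose inverse (the resolvent) is bounded; hence the graph norm $f \mapsto \Vert f \Vert + \Vert A f \Vert$ on $\mc D$ is equivalent to $f \mapsto \Vert ( \lambda I - A ) f \Vert$. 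It follows that $D$ is a core if and only if $( \lambda I - A )( D )$ is dense in $C( E )$ for some $\lambda > 0$ (equivalently, for every $\lambda > 0$), and this density is what I would aim to prove.

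Fix such a $\lambda$ and suppose, for contradiction, that $( \lambda I - A )( D )$ is not dense. By the Hahn--Banach theorem together with the Riesz representation of $C( E )^*$, there is a nonzero finite signed measure $\mu$ on $E$ with $\int_E ( \lambda f - A f ) \, d\mu = 0$ for every $f \in D$. Fix $f \in D$ and set $u( t ) = \int_E T( t ) f \, d\mu$. Since $D$ is invariant, i.e. $T( t ) D \subseteq D$ for all $t \ge 0$, we have $T( t ) f \in D$; and since $f \in \mc D$, the orbit $t \mapsto T( t ) f$ is continuously differentiable in $C( E )$ with $\frac{d}{dt} T( t ) f = A T( t ) f = T( t ) A f$. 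Thus $u$ is $C^1$, and applying the annihilation identity to $T( t ) f \in D$ gives $u'( t ) = \int_E A T( t ) f \, d\mu = \lambda \int_E T( t ) f \, d\mu = \lambda u( t )$. Hence $u( t ) = u( 0 ) e^{ \lambda t }$, while $\vert u( t ) \vert \le \Vert \mu \Vert_{ C( E )^* } \Vert T( t ) f \Vert \le \Vert \mu \Vert_{ C( E )^* } \Vert f \Vert$ is bounded in $t$; since $\lambda > 0$, this forces $u( 0 ) = \int_E f \, d\mu = 0$. As $f \in D$ was arbitrary and $D$ is dense in $C( E )$, we conclude $\mu = 0$, a contradiction. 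Therefore $( \lambda I - A )( D )$ is dense and $D$ is a core.

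The step that I expect to need the most care is the reduction in the first paragraph, since it invokes the full Hille--Yosida picture -- existence and boundedness of the resolvent $( \lambda I - A )^{-1}$ and the $C^1$-regularity of orbits issued from $\mc D$, together with the identities $\frac{d}{dt} T( t ) f = A T( t ) f = T( t ) A f$ -- none of which is developed in the text above; in a self-contained account one would either cite these standard facts or replace the reduction by the equivalent closure argument, namely: let $B = \overline{ A\vert_D }$, which is well defined because generators are closed and which satisfies $B \subseteq A$; the density established above shows that $\lambda I - B$ has full range, so $B$ generates a semigroup, and uniqueness of solutions to the associated Cauchy problem (run on $\operatorname{dom} B$) forces that semigroup to be $\{ T( t ) \}_{ t \ge 0 }$, whence $B = A$, i.e. $D$ is a core. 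Everything else in the argument is soft: the only computation is the one-line linear ODE for $u$.
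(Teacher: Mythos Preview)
The paper does not prove this proposition; it is quoted verbatim as Proposition~19.9 from Kallenberg's textbook and used as background. Your argument is the standard one and is correct: reduce ``core'' to density of the range $(\lambda I - A)(D)$ via the resolvent, then kill any annihilating functional by the scalar ODE $u' = \lambda u$ versus boundedness. One small point worth flagging: you interpret ``invariant'' as $T(t)D \subseteq D$, which is indeed Kallenberg's hypothesis, whereas the paper's two applications of this proposition check invariance under the generator~$\pregenerator$ rather than under the semigroup; in those applications the subspace is a span of eigenfunctions, so both notions coincide, but your proof genuinely needs semigroup-invariance at the step ``$T(t)f \in D$, so the annihilation identity applies to $T(t)f$.''
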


A notable class of Feller processes are the {\em pseudo-Poisson processes}. 
These are continuous-time variants of discrete-time chains that admit a simple construction.
Indeed, let $ Y $ be a discrete-time Markov chain on a finite set $ E $, $T$ the associated transition operator, and $ N $ an independent homogeneous Poisson process on $\mathbb R_+$ with rate $ r $.
Then the composition $ X(t)\ldef Y( N[0,t]) $ is a pseudo-Poisson process whose generator is the bounded operator $ A = r ( T - I ) $ with domain $ C( E ) $ and whose semigroup is given by $ T(t) = e^{ t A } $.

\subsection{Convergence theorems}
\label{subsection convergence theorems}

Let $ E $ be a compact metric space, to be viewed as the ambient space.
For $ n \ge 1 $, let $ E_n $ be a metric space, $ Y_n $ a Markov chain in $ E_n $, and $ \gamma_n \colon E_n \to E $ a continuous function.
Below, we state two results for analyzing the convergence of the Markov chains $ \{ Y_n \}_{ n \ge 1 } $ to a Feller process in $ E $.
In simple terms, they say that this convergence can be obtained from the convergence of semigroups or the convergence of generators.
For the precise statements, we need the following notion of convergence.
A sequence $ \{ f_n \}_{ n \ge 1 } $ with $ f_n \in C( E_n ) $ converges to $ f \in C( E ) $ (and we write $ f_n \to f $)
if it is asymptotically close to the sequence of projections $ \pi_n f = f \circ \gamma_n $ -- that is, if
$$
	\Vert
		f_n
	-
		\pi_n f
	\Vert_{ C( E_n ) }
		\xrightarrow[n \to \infty]{}
			0
		.
$$

\begin{theorem}[Chapter 4, Theorem 2.12 in \cite{ethierKurtzBook}]
    \label{theorem ethier kurtz path convergence}

    Let $ T_n $ be the transition operator of $ Y_n $ and $ \{ T( t ) \}_{ t \ge 0 } $ a Feller semigroup on $ C( E ) $.
    Suppose that the initial distributions of $ \{ \gamma_n( Y_n( 0 ) ) \} $ converge, say to $ \nu $, and that $ \{ \eps_n \}_{ n \ge 1 } $ is a positive sequence converging to zero such that
    $$
    	T_n^{ \floor{ t/\eps_n } }
    	\pi_n
    	f
    		\to
    			T( t ) f
    		,
    			\qquad
                \text{ for all }f \in C( E ),
    			\,
    			t \ge 0
    		.
    $$
    
    \noindent
    Then there exists a Markov process $ Y $ with initial distribution $ \nu $, transition semigroup $ \{ T( t ) \}_{ t \ge 0 } $, and sample paths in the Skorokhod space $ D( [ 0, \infty ), E )  $ such that the convergence
	$$
		\big(
			\gamma_n( Y_n ( \floor{ t/\eps_n }) )
		\big)_{ t \ge 0 }
			\Longrightarrow
        		\big(
        				Y( t )
        		\big)_{ t \ge 0 }
	$$
    holds in distribution in $D( [ 0, \infty ), E )$.
\end{theorem}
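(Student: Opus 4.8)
The statement is a reformulation, in the notation of this paper, of \cite[Chapter 4, Theorem 2.12]{ethierKurtzBook}, so strictly speaking the plan is just to translate the hypotheses and conclusion. Concretely, I would take $L = C(E)$ and $L_n = C(E_n)$, let $\pi_n \colon C(E) \to C(E_n)$ be $\pi_n f = f \circ \gamma_n$, and use the notion of convergence $f_n \to f$ recalled just before the statement; with $A_n \ldef \eps_n^{-1}(T_n - I)$ the bounded generator of the pseudo-Poisson semigroup associated with $T_n$ at rate $\eps_n^{-1}$, the displayed hypothesis $T_n^{\floor{t/\eps_n}}\pi_n f \to T(t)f$ is exactly the convergence of the corresponding discrete-time semigroups to the Feller semigroup $\{T(t)\}_{t \ge 0}$ in the sense of the Trotter--Kurtz approximation theory of \cite[Chapter 1]{ethierKurtzBook} (pointwise convergence in $t$ upgrading automatically to local uniformity since the $T_n$ are contractions). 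Together with the assumed convergence of the laws of $\gamma_n(Y_n(0))$ to $\nu$, this matches the hypotheses of \cite[Chapter 4, Theorem 2.12]{ethierKurtzBook}, whose conclusion is the claimed weak convergence; the existence of a limit process $Y$ with sample paths in $D([0,\infty),E)$ is furnished by the theorem recalled earlier in this section, namely \cite[Chapter 4, Theorem 2.7]{ethierKurtzBook}.

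If one instead wanted a self-contained proof, I would follow the standard ``tightness plus finite-dimensional distributions'' route. The first step is to establish tightness of the laws of $\big(\gamma_n(Y_n(\floor{t/\eps_n}))\big)_{t \ge 0}$ in $D([0,\infty),E)$: compact containment is automatic because $E$ is compact, and the oscillation estimate (in the Aldous--Kurtz form) reduces to bounding $\norm{T_n^{k}\pi_n f - \pi_n f}_{C(E_n)}$ uniformly over $k \le \floor{\delta/\eps_n}$ for $f$ in a countable separating subset of $C(E)$; such a bound is read off from the semigroup convergence together with the strong continuity $\norm{T(t)f - f}_{C(E)} \to 0$ of the limit semigroup. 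The second step is to identify the finite-dimensional distributions of any subsequential limit: by the Markov property of $Y_n$ and the convergence $T_n^{\floor{t/\eps_n}}\pi_n f \to T(t)f$, the quantities $\mbb E\big[\prod_i f_i\big(\gamma_n(Y_n(\floor{t_i/\eps_n}))\big)\big]$ converge to the expressions determined by $\nu$ and the kernels of $\{T(t)\}_{t \ge 0}$, so every subsequential limit is the Feller process with initial law $\nu$ and semigroup $\{T(t)\}_{t \ge 0}$; as all subsequential limits agree, the whole sequence converges.

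The main obstacle is the tightness estimate of the first step above: once it is in place, identifying the limit is a routine passage to the limit via the Markov property, and the fact that the limit admits a version in $D([0,\infty),E)$ is already guaranteed by the Feller theory quoted earlier.
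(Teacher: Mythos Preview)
Your proposal is correct and matches the paper's treatment: the paper does not prove this theorem but simply cites it as background from \cite[Chapter 4, Theorem 2.12]{ethierKurtzBook}, so the appropriate ``proof'' is exactly the translation of hypotheses that you outline.
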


\begin{theorem}[Chapter 1, Theorems 6.1, 6.5 in \cite{ethierKurtzBook}]
    \label{theorem ethier kurtz modes of convergence}
    
    Let 
    $ T_n $ be the transition operator of $ Y_n $, 
    $ \{ T( t ) \}_{ t \ge 0 } $ a Feller semigroup on $ C( E ) $, 
    $ A $ the generator of $ \{ T( t ) \}_{ t \ge 0 } $,
    and 
    $ D $ a core for $ A $.
    For any positive sequence $ \{ \eps_n \}_{ n \ge 1 } $ converging to zero, the following are equivalent:
    \begin{enumerate}[ label = (\roman*) ]
    
      \item \label{item:cvTnt}
        $
        	T_n^{ \floor{ t/\eps_n } }
        	\pi_n
        	f
        		\to
        			T( t ) f
        $
        for every $ f \in C( E ) $ and $ t \ge 0 $,
  
      \item \label{item:cveTn}
        $
        	e^{ ( T_n - I ) t /\eps_n }
        	\pi_n
        	f
        		\to
        			T( t ) f
        $
        for every $ f \in C( E ) $ and $ t \ge 0 $,
     
    	\item
    	the above convergences hold uniformly in $ t $ on bounded intervals,
    
    	\item
    	for each $ f \in D $, there exists a sequence $ \{ f_n \}_{ n \ge 1 } $ with $ f_n \in C( E_n ) $ such that
        $$
        	f_n \to f
    			\qquad
    			\text{ and }
    			\qquad
    		\eps_n^{ -1 } 
    		( T_n - I )
        	f_n
        		\to
        			A f
        		.
        $$
    \end{enumerate}
\end{theorem}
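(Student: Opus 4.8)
This is the Trotter--Kurtz semigroup approximation theorem, reformulated for the varying spaces $C(E_n)$ linked to $C(E)$ through the projections $\pi_n$. As recalled above (pseudo-Poisson processes), $A_n \ldef \eps_n^{-1}(T_n - I)$ is a bounded operator on $C(E_n)$ that generates the uniformly continuous contraction semigroup $\{e^{tA_n}\}_{t\ge0}$ -- the transition semigroup of $Y_n$ run with jump rate $\eps_n^{-1}$ -- so that (ii) just says these pseudo-Poisson processes converge to the Feller process with generator $A$. The plan is to show that each of (i)--(iv) is equivalent to a single symmetric condition, the \emph{strong resolvent convergence}
\[
  ( \lambda - A_n )^{-1}\pi_n g
    \;\xrightarrow[n\to\infty]{}\;
  ( \lambda - A )^{-1} g
    \qquad
  \text{for all } g \in C(E),\ \lambda > 0,
\]
and then to invoke the classical resolvent-to-semigroup half of the Trotter--Kurtz theorem. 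Here $( \lambda - A )^{-1} g = \int_0^\infty e^{-\lambda u}T(u)g\,du$, while a geometric-series expansion gives
\[
  ( \lambda - A_n )^{-1}
    =
  \int_0^\infty e^{-\lambda u}e^{uA_n}\,du
    =
  \frac{\eps_n}{1+\lambda\eps_n}\sum_{j\ge0}(1+\lambda\eps_n)^{-j}T_n^j,
\]
both of norm at most $\lambda^{-1}$; the last sum is a Riemann sum (mesh $\eps_n$) for $\int_0^\infty e^{-\lambda u}T_n^{\floor{u/\eps_n}}\,du$.

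With these representations the passages (i) $\Rightarrow$ hub and (ii) $\Rightarrow$ hub are dominated-convergence arguments: integrate the pointwise-in-$u$ convergences $T_n^{\floor{u/\eps_n}}\pi_n g \to T(u)g$, resp.\ $e^{uA_n}\pi_n g \to T(u)g$, against $e^{-\lambda u}\,du$, using the appropriate form of the resolvent. Conversely, hub $\Rightarrow$ (iv): for $f$ in the core $D$, set $f_n \ldef (\lambda - A_n)^{-1}\pi_n(\lambda - A)f$; then $f_n \to f$ and $A_n f_n = \lambda f_n - \pi_n(\lambda - A)f \to \lambda f - (\lambda - A)f = Af$. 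And (iv) $\Rightarrow$ hub: given $g = (\lambda - A)h$ with $h \in D$ and sequences $h_n \to h$, $A_n h_n \to Ah$ from (iv), one has
\[
  \norm{ (\lambda - A_n)^{-1}\pi_n g - h_n }
    \le
  \lambda^{-1}\norm{ \lambda(\pi_n h - h_n) + (A_n h_n - \pi_n A h) }
    \xrightarrow[n\to\infty]{} 0,
\]
so $(\lambda - A_n)^{-1}\pi_n g \to h = (\lambda - A)^{-1}g$; density of $(\lambda - A)D$ in $C(E)$ (because $D$ is a core) together with the uniform bound $\norm{(\lambda - A_n)^{-1}} \le \lambda^{-1}$ then extends this to all $g \in C(E)$.

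It remains to recover (ii)--(iii) and (i) from the hub. The implication to (ii), in the uniform-on-compacts form (iii), is exactly the resolvent-to-semigroup part of the Trotter--Kurtz theorem (a Yosida-approximation argument using the uniform resolvent bound and the density above), and then (iii) $\Rightarrow$ (ii) $\Rightarrow$ (i) trivially; to obtain the discrete convergence in (i)/(iii) directly from (iv), I would use the comparison estimate that, for any contraction $T$ on a Banach space, any $g$, and $s \ge 0$ with $m = \floor{s}$,
\[
  \norm{ T^m g - e^{s(T-I)} g }
    \le
  \big( 1 + \sqrt{s}\,\big)\,\norm{ (T - I) g },
\]
which follows from $e^{s(T-I)} = e^{-s}\sum_{k\ge0}\tfrac{s^k}{k!}T^k$, the telescoping bound $\norm{ T^k g - T^m g } \le |k - m|\,\norm{ (T-I)g }$, and $\mathbb{E}\,|N_s - m| \le 1 + \sqrt{\mathbb{E}(N_s - s)^2} = 1 + \sqrt{s}$ for $N_s$ Poisson of mean $s$; applied with $T = T_n$, $s = t/\eps_n$ and $g = f_n$ (the sequence from (iv) for fixed $f \in D$, so $\norm{(T_n - I)f_n} = \eps_n\norm{A_n f_n} = O(\eps_n)$), it gives an error $O(\sqrt{t\eps_n} + \eps_n)$ tending to $0$ uniformly for $t$ in compact sets, and replacing $f_n$ by $\pi_n f$ costs only $\norm{f_n - \pi_n f} \to 0$, so density of $D$ with $\norm{\pi_n}\le1$ and contractivity of all operators extend the conclusion to every $f \in C(E)$. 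I expect the main obstacle to be the Trotter--Kurtz resolvent-to-semigroup step itself: the subtle point is that the generator data in (iv) is prescribed only on a \emph{core} of $A$, which suffices precisely because $A$ is already known to be a Feller generator, so that its semigroup and resolvent can carry the information to the whole space. Since the statement is Chapter~1, Theorems~6.1 and~6.5 of \cite{ethierKurtzBook}, in practice one simply cites it; the above records the structure of the proof given there.
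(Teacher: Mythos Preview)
The paper does not prove this statement at all: it is presented as background in Section~\ref{subsection convergence theorems} and simply attributed to Ethier--Kurtz, with no argument given. Your sketch is a reasonable outline of the standard Trotter--Kurtz resolvent argument, and you yourself note at the end that in practice one just cites the reference; that is exactly what the paper does.
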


\subsection{Separation Distance}
\label{subsection separation distance}

Let $ P $ and $ Q $ be probability distributions on the same finite set $ Z $.
Then their separation distance is defined as
\[ \sep(P,Q)\ldef \max_{z \in Z: Q(z) \ne 0} \bigg(1- \frac{P(z)}{Q(z)}\bigg).\]
The separation distance satisfies all of the metric axioms except symmetry and takes its values in $ [ 0, 1 ] $.
As the following result shows, it also admits a dual description.
\orange{Recall that  $ C_+(E)$ denotes the set of continuous functions from a topological space $ E $ to $ ( 0, \infty ) $.}

\begin{prop}
	\label{prop functional form of sep dist}
	Let $P$ and $Q$ be probability distributions on the same finite set $Z$.
Then
    $$
    	\sep( P, Q ) 
    		= 
        		\sup_{ f \in C_+(Z) } 
        			\bigg( 
        				1 - \frac{ \int_{Z} f dP }{ \int_{Z} f dQ}
        			\bigg)
    		.
	$$
\end{prop}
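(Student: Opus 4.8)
The plan is to establish the two inequalities separately. For the direction $\sep(P,Q) \le \sup_{f \in C_+(Z)}\big(1 - \int_Z f\,dP / \int_Z f\,dQ\big)$, I would approximate the indicator of an achiever of the maximum in the definition of $\sep$. Concretely, let $z_0 \in Z$ with $Q(z_0) \ne 0$ be a point attaining $\sep(P,Q) = 1 - P(z_0)/Q(z_0)$. The function $\indicator_{\{z_0\}}$ is not strictly positive, so for $\delta > 0$ I would take $f_\delta = \indicator_{\{z_0\}} + \delta \indicator_{Z \setminus \{z_0\}}$, which lies in $C_+(Z)$ since $Z$ is finite (hence discrete, so every function is continuous). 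Then $\int_Z f_\delta\,dP = P(z_0) + \delta(1 - P(z_0))$ and similarly for $Q$, and letting $\delta \to 0$ gives $1 - \int f_\delta\,dP / \int f_\delta\,dQ \to 1 - P(z_0)/Q(z_0) = \sep(P,Q)$, which shows the supremum is at least $\sep(P,Q)$.

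For the reverse direction $\sup_{f \in C_+(Z)}\big(1 - \int_Z f\,dP / \int_Z f\,dQ\big) \le \sep(P,Q)$, I would fix an arbitrary $f \in C_+(Z)$ and bound $\int_Z f\,dP$ from above using $P(z) \le (1 - \sep(P,Q))\,Q(z)$ for every $z$ with $Q(z) \ne 0$; and for $z$ with $Q(z) = 0$ one must have $P(z) = 0$ too, since otherwise $\sep(P,Q) $ would not be well-defined as stated — actually more carefully, one should note that the definition ranges over $z$ with $Q(z)\neq 0$, but if $P(z)>0=Q(z)$ for some $z$ then $P$ has mass outside the support of $Q$; in the applications $P = \nu P^m$ is absolutely continuous with respect to $Q = M_n$ (the stationary measure, which is positive everywhere since $M_n(s) = (\upKernel_0\cdots\upKernel_{n-1})(\zeroVertex,s)$), so this degeneracy does not arise. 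Granting $P(z) \le (1-\sep(P,Q))Q(z)$ for all $z$, I get $\int_Z f\,dP = \sum_z f(z)P(z) \le (1-\sep(P,Q))\sum_z f(z)Q(z) = (1-\sep(P,Q))\int_Z f\,dQ$, where I used $f > 0$ and $Q \ge 0$. Dividing by $\int_Z f\,dQ > 0$ (positive because $f > 0$ and $Q$ is a probability measure) and rearranging yields $1 - \int f\,dP / \int f\,dQ \le \sep(P,Q)$; taking the supremum over $f$ completes this direction.

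Combining the two inequalities gives the claimed identity. I do not expect a serious obstacle here — the only subtlety is the handling of points $z$ with $Q(z) = 0$, which I would address by the remark above (either restricting attention to the case $P \ll Q$, which is all that is needed for the paper's applications, or noting that for $f$ bounded this contributes finitely and the bound still goes through after a short argument). The finiteness and discreteness of $Z$ make the continuity requirement on $f$ vacuous, so no topological care is needed.
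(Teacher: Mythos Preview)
Your approach is essentially the same as the paper's: approximate an indicator $\indicator_{\{z_0\}}$ by strictly positive functions for one direction, and use the pointwise bound coming from the definition of $\sep$ for the other. However, in the second direction you have flipped the inequality. From $\sep(P,Q) = \max_{z:Q(z)\ne 0}\big(1 - P(z)/Q(z)\big)$ one gets $P(z)/Q(z) \ge 1 - \sep(P,Q)$, i.e.\ $P(z) \ge (1-\sep(P,Q))\,Q(z)$, not $\le$. With your stated inequality the chain of implications would yield $1 - \int f\,dP / \int f\,dQ \ge \sep(P,Q)$, the wrong direction. With the correct inequality $P(z) \ge (1-\sep(P,Q))\,Q(z)$ and $f>0$ you obtain $\int f\,dP \ge (1-\sep(P,Q))\int f\,dQ$, hence $1 - \int f\,dP/\int f\,dQ \le \sep(P,Q)$ as desired.

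This also dissolves your worry about points $z$ with $Q(z)=0$: the correct pointwise inequality $P(z) \ge (1-\sep(P,Q))\cdot 0 = 0$ is automatic there, so no absolute continuity assumption is needed. The paper's proof does exactly this (phrased via $m := \min_{Q(z)\ne 0} P(z)/Q(z)$), without any case analysis on the support of $Q$.
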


\begin{proof}
	It will suffice to show that
    \[ m \ldef \min_{z \in Z: Q(z) \ne 0}\frac{P(z)}{Q(z)} = \inf_{f \in C_+(Z)} \frac{ \int_{Z} f dP }{ \int_{Z} f dQ}.\]
	The definition of $ m $ gives us an inequality in one direction:
	$$
       \frac{ \int_{Z} f dP }{ \int_{Z} f dQ}
        	=
				\frac{ 
    		 		\sum_{ z \in Z }
    					f( z )
    					P(z)
				}{ 
    		 		\sum_{z \in Z }
    					f( z )
    					Q( z )
				}
        	\ge
				\frac{ 
    		 		\sum_{ z \in Z }
    					f( z )
    					( Q(z) m )
				}{ 
    		 		\sum_{z \in Z }
    					f( z )
    					Q( z )
				}
        	=
				m
			,
				\qquad
				f \in C_+( Z )
			.
	$$
	The reverse inequality can be obtained by using $ \indicator_z + \eps \indicator_Z \in C_+( Z) $ to approximate $ \indicator_z $:
    \begin{equation*}
		\inf_{ f \in C_+(Z)}
			\frac{  \int_Z f dP
				}{ 
					\int_Z f dQ
				}
			\le
		\frac{  \int_Z \indicator_z + \eps \indicator_Z \, dP
			}{ 
				\int_Z \indicator_z + \eps \indicator_Z \, dQ
			}
			\xrightarrow[ \eps \to 0^+ ]{}
		\frac{  \int_Z \indicator_z \, dP
			}{ 
				\int_Z \indicator_z\, dQ
			}
    		=
		\frac{  P(z)
			}{ 
				Q(z)
			}
		,
			\qquad
			z \in Z,
			\,
			Q(z) \neq 0
		.\qedhere
	\end{equation*}
\end{proof}

This dual description naturally extends to richer spaces\footnote{For an alternative extension of separation distance beyond finite spaces (using the essential supremum of the Radon-Nikodym derivative), see \cite{kolehe2024separation}.}: for probability distributions $P$ and $Q$ on a compact topological space $Z$, we define
\[	\sep(P,Q)\ldef
        		\sup_{ f \in C_+(Z) } 
        			\bigg( 
        				1 - \frac{ \int_{Z} f dP }{ \int_{Z} f dQ}
  \bigg).  \]
\orange{We note that this quantity continues to lie in $ [ 0, 1 ] $ and will satisfy the metric axioms (except symmetry) when $ Z $ is a separable metric space.}
\bigskip

In the context of Markov chains, it is typical to let $Q$ be the unique stationary distribution of the Markov chain and let $ P $ be the distribution of the Markov chain after a certain number of steps.
The separation distance then depends on the initial distribution of the chain and on time.
Following \cite{aldous1987separation}, it is standard here to maximize over all initial positions in the state space and let the separation distance be a function of time.
More precisely, for a Markov chain $Y$ with unique stationary distribution $\nu$, transition matrix $p$, and finite state space $ E $, one considers
$$
	\sepDist(m) 
		= 
			\max_{ y \in E} 
				\, 
				\sep( p^m( y, \, \cdot \, ), \nu) 
		= 
			\max_{y, y' \in E, \, \nu(y') \ne 0 } 
				\Big(
					1 - \frac{p^m (y,y')}{\nu(y')}
				\Big)
		,
			\qquad
			m \ge 0
		.
$$
We call this the separation distance of $ Y $.
Similarly, we define the separation distance of a Markov process $Y$ with unique stationary distribution $ \nu $, transition semigroup $\{ T( t ) \}_{t \ge 0}$, and compact topological state space $E$ as
$$
\sepDist(t) 
	= \sup_{x \in E} \sep( \Law_x(Y(t)), \nu) 
	=	\sup_{ x\in E, \, f \in C_+(E) }
			\bigg( 
				1 - \frac{ (  T( t ) f )( x ) }{ \int_{E} f d\nu}
			\bigg)
		,
			\qquad
			t \ge 0
		,
$$
\noindent
where $\Law_x(Y(t))$ is the distribution of the Markov process $Y$
at time $t$, initiated at $x$. %

A simple property of the separation distance is the following inequality:
\begin{equation}\label{claim compare positive and nonnegative functions}
  T(t) g \ge \big(1-\sepDist(t)\big) {\textstyle \int_E g \, d\nu},
  		\qquad
			g \in C( E ),
			\,
			g \ge 0
		.
\end{equation}
To see this, note that it holds by definition for positive $g$ and extends to nonnegative $g$ by using
${g + \eps \indicator_Z \in C_+( E )} $ to approximate $ g $ as before.
The following result establishes a few more properties.
\begin{prop}
	Let $ Y $ be a Markov process with unique stationary distribution $ \nu $, 
	transition semigroup $ \{ T( t ) \}_{ t \ge 0 } $, 
	and compact topological state space $ E $. 
	Then its separation distance $ \sepDist( t ) $ 
	\begin{enumerate}[ label = (\roman*) ]
		\item
		is a nonincreasing function of $ t $,
		and
		
		\item
		is submultiplicative -- that is, $ \sepDist( t + s ) \le \sepDist( t ) \sepDist( s ) $ for $ s, t \ge 0 $.
	\end{enumerate}
\end{prop}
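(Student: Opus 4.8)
The plan is to establish both properties using the dual (functional) description of the separation distance together with the inequality \eqref{claim compare positive and nonnegative functions} and the semigroup and positivity properties of $\{T(t)\}_{t \ge 0}$.

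First I would prove monotonicity. Fix $s, t \ge 0$ and let $g \in C_+(E)$ and $x \in E$. Since $T(s)$ is a positive operator, $T(s)g \ge 0$; in fact $T(s)g \in C(E)$ and is nonnegative, so \eqref{claim compare positive and nonnegative functions} applies with $g$ replaced by $T(s)g$, giving $T(t)(T(s)g) \ge (1 - \sepDist(t))\int_E T(s)g \, d\nu$. Now $\nu$ is stationary, so $\int_E T(s)g \, d\nu = \int_E g \, d\nu$, and by the semigroup identity $T(t)T(s)g = T(t+s)g$. Hence $(T(t+s)g)(x) \ge (1 - \sepDist(t)) \int_E g \, d\nu$, which rearranges to $1 - \frac{(T(t+s)g)(x)}{\int_E g\, d\nu} \le \sepDist(t)$. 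Taking the supremum over $x \in E$ and $g \in C_+(E)$ gives $\sepDist(t+s) \le \sepDist(t)$, i.e.\ $\sepDist$ is nonincreasing. (Here one should note $\int_E g\, d\nu > 0$ since $g$ is strictly positive and $\nu$ is a probability measure, so the division is legitimate.)

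For submultiplicativity the idea is the same but one iterates the bound more carefully. Fix $s,t \ge 0$, $g \in C_+(E)$, $x \in E$. Apply \eqref{claim compare positive and nonnegative functions} at time $s$ to the nonnegative function $T(t)g$ (nonnegative by positivity of $T(t)$): this only gives $T(s)T(t)g \ge (1-\sepDist(s))\int_E T(t) g\, d\nu = (1-\sepDist(s))\int_E g \, d\nu$, which alone reproduces monotonicity. To get the product, instead write, for the strictly positive function $g$, $T(t)g \ge (1-\sepDist(t))\int_E g\, d\nu =: c \, \One$ pointwise with $c = (1-\sepDist(t))\int_E g\, d\nu \ge 0$; then $T(t)g - c\One \ge 0$, so applying \eqref{claim compare positive and nonnegative functions} at time $s$ to $T(t)g - c\One$ and using $T(s)\One = \One$, stationarity, and $\int_E(T(t)g - c\One)d\nu = \int_E g\, d\nu - c = \sepDist(t)\int_E g\, d\nu$, we obtain $(T(s+t)g)(x) = (T(s)T(t)g)(x) \ge c + (1-\sepDist(s))\sepDist(t)\int_E g\, d\nu = \big(1 - \sepDist(s)\sepDist(t)\big)\int_E g\, d\nu$. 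Rearranging and taking the supremum over $x$ and over $g \in C_+(E)$ yields $\sepDist(s+t) \le \sepDist(s)\sepDist(t)$.

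The only real subtlety, and the step I would be most careful about, is ensuring that all the inequalities between functions hold genuinely pointwise on all of $E$ (not merely $\nu$-a.e.) so that applying \eqref{claim compare positive and nonnegative functions} again is valid, and that the relevant integrals against $\nu$ are strictly positive so that the quotients in the dual formula make sense — both follow from $C_+$-positivity and the positivity/conservativity of the $T(t)$, but they are exactly the places where one could slip. Everything else is a routine combination of the semigroup property $T(t+s) = T(t)T(s)$, stationarity $\int_E T(t)g\, d\nu = \int_E g\, d\nu$, and the definition of $\sepDist$.
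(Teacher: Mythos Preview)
Your proof is correct and follows essentially the same approach as the paper: form the nonnegative function $T(t)g - (1-\sepDist(t))\int_E g\,d\nu\cdot\One$ and apply \eqref{claim compare positive and nonnegative functions} a second time, using the semigroup identity, conservativity, and stationarity. The only cosmetic difference is that the paper proves submultiplicativity first and then deduces monotonicity from it (since $\sepDist$ takes values in $[0,1]$), whereas you give a separate direct argument for monotonicity; both are fine.
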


\begin{proof}
	We begin with the second claim.
	To this end, fix $ s, t \ge 0 $ .
	Taking $f \in C_+( E )$, an application of the inequality \eqref{claim compare positive and nonnegative functions} implies that the following function is nonnegative:
	$$g = T(s) f - (1-\sepDist(s)) {\textstyle \int_E f d\nu } \cdot \indicator_E .$$	
	\noindent
	Applying \eqref{claim compare positive and nonnegative functions} now to $ g $, we must have that
	$ T(t) g \ge \big(1-\sepDist(t)\big) {\textstyle \int_E g \, d\nu} $.
	Let us simplify the left-hand side here using the semigroup identity $T(t+s) = T(t) T(s)$ and the conservativity of $ T( t ) $:
	$$
		T( t ) g
			= T( t ) T(s) f - (1-\sepDist(s)) {\textstyle \int_E f d\nu } \cdot T( t ) \indicator_E
			= T( t + s ) f - (1-\sepDist(s)) {\textstyle \int_E f d\nu } \cdot \indicator_E
			.
	$$
	Now we handle the right-hand side using the stationarity of $ \nu $:
	$${\textstyle\int_E g \, d\nu }
		= {\textstyle \int_E T(s) f d\nu } - (1-\sepDist(s)) {\textstyle \int_E f d\nu } \cdot {\textstyle \int_E \indicator_E  d\nu } 
		= {\textstyle \int_E f d\nu } - (1-\sepDist(s)) {\textstyle \int_E f d\nu } 
		= \sepDist(s) {\textstyle \int_E f d\nu } 
		.
	$$
	Our inequality thus becomes
	$$
		T( t + s ) f - (1-\sepDist(s)) {\textstyle \int_E f d\nu }
			\ge
				\big(1-\sepDist(t)\big) 
				\sepDist(s) {\textstyle \int_E f d\nu } 
			,
	$$	
	which rearranges to
	$$
		1 - \frac{ T( t + s ) f}{ \textstyle \int_E f d\nu }
			\le
				\sepDist(t)
				\sepDist(s)
			.
	$$	
	Recalling that $f \in C_+( E )$ was arbitrary gives us that $ \sepDist( t + s ) \le \sepDist( t ) \sepDist( s ) $, which proves the second claim.
	The first claim then follows from the fact that separation distances take values in $[0,1]$.
\end{proof}

We now provide a connection between the separation distance and the convergence of processes.
To this end, let us return to the setting of \cref{subsection convergence theorems}.
Let $ Y $ be a Feller process on $ E $, a compact metric space.
For $ n \ge 1 $, let $ E_n $ be a metric space, $ Y_n $ a Markov chain in $ E_n $, and $ \gamma_n \colon E_n \to E $ a \orange{continuous function}.
The projections $ \pi_n f = f \circ \gamma_n $ give rise to the following notion of convergence: a sequence $ \{ f_n \}_{ n \ge 1 } $ with $ f_n \in C( E_n ) $ converges to $ f \in C( E ) $ if 
$
	\Vert
		f_n
	-
		\pi_n f
	\Vert_{ C( E_n ) }
		\to
			0
		.
$
\orange{We remark that} if $f_n$ converges to $f$ in this sense and $\gamma_n(x_n)$ converges to $x$ in $E$, then $f_n(x_n)$ converges to $f(x)$. %
\begin{theorem}
	\label{thm general sep dist of a limit}
	Suppose that for every $ y \in E $ there is a sequence 
	$ 
		\{ y_n \}_{ n \ge 1 }
	$
	such that $ y_n \in E_n $ and
	$
		\gamma_n( y_n )
			\to
				y
			.
	$
	Suppose that each $ Y_n $ has a unique stationary measure $ \nu_n $, $ Y $ has a unique stationary measure $\nu $, and we have the weak convergence $ \nu_n \circ \gamma_n^{ -1 } \to \nu $.
    Let 
    $ T_n $ be the transition operator of $ Y_n $ 
    and 
    $ \{ T( t ) \}_{ t \ge 0 } $ be the semigroup of $ Y $,
    and
    suppose that $ \{ \eps_n \}_{ n \ge 1 } $ is a positive sequence converging to zero such that
    $$
    	T_n^{ \floor{ t/\eps_n } }
    	\pi_n
    	f
    		\to
    			T( t ) f
    		,
    			\qquad
    			f \in C( E ),
    			\,
    			t \ge 0
    		.
    $$
 
	\noindent
    Let 
    $ \sepDist_n $ be the separation distance of $ Y_n $,
    $ \sepDist_n^* $ be the separation distance of its continuous-time variant
    with jump rate $1/\eps_n$,
    and 
    $ \sepDist $ be the separation distance of $ Y $.
	Then the following inequalities hold:
	$$
        \sepDist(t) 
			\le
				\liminf_{ n \to \infty }
					\sepDist_n(\floor{ t/\eps_n}) 
                    \text{ and } \sepDist(t)              
            \le \liminf_{ n \to \infty }
					\sepDist_n^*( t )
			,
				\qquad
				t \ge 0
			.
	$$

\end{theorem}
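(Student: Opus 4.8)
The plan is to prove both inequalities by the same argument, since the continuous-time variant of $Y_n$ with jump rate $1/\eps_n$ has transition operator at time $t$ equal to $e^{(T_n - I)t/\eps_n}$, and by \cref{theorem ethier kurtz modes of convergence}\ref{item:cvTnt}$\Leftrightarrow$\ref{item:cveTn} the hypothesis $T_n^{\floor{t/\eps_n}}\pi_n f \to T(t)f$ is equivalent to $e^{(T_n-I)t/\eps_n}\pi_n f \to T(t)f$. So it suffices to establish the abstract statement: if $S_n$ denotes either $T_n^{\floor{t/\eps_n}}$ or $e^{(T_n-I)t/\eps_n}$, and $S_n\pi_n f \to T(t)f$ for all $f\in C(E)$, and $\widetilde\sepDist_n(t)$ is the separation distance of the corresponding (discrete- or continuous-time) chain with stationary measure $\nu_n$, then $\sepDist(t)\le\liminf_n\widetilde\sepDist_n(t)$. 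I would present this once and then remark that both cases follow.

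First I would fix $t\ge 0$ and use the dual (functional) form of the continuous-state separation distance: $\sepDist(t) = \sup_{x\in E,\,f\in C_+(E)}\big(1 - (T(t)f)(x)/\int_E f\,d\nu\big)$. It therefore suffices to show, for each fixed $x\in E$ and $f\in C_+(E)$, that $1 - (T(t)f)(x)/\int_E f\,d\nu \le \liminf_n \widetilde\sepDist_n(t)$. The idea is to transport $x$ and $f$ down to level $n$. Pick, via the hypothesis, a sequence $y_n\in E_n$ with $\gamma_n(y_n)\to x$, and set $f_n = \pi_n f = f\circ\gamma_n \in C(E_n)$; note $f_n > 0$ since $f > 0$, so $f_n \in C_+(E_n)$, hence it is an admissible test function for $\widetilde\sepDist_n(t)$ and for any initial state, in particular $y_n$. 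By the definition of the continuous-state separation distance applied at level $n$ (the spaces $E_n$ may be infinite here, but they carry the discrete topology and the chains are finite-state in the relevant cases, or one uses the same functional definition directly),
\begin{equation*}
  \widetilde\sepDist_n(t)
    \ge
      1 - \frac{(S_n f_n)(y_n)}{\int_{E_n} f_n\,d\nu_n}
    =
      1 - \frac{(S_n \pi_n f)(y_n)}{\int_{E_n} \pi_n f\,d\nu_n}.
\end{equation*}

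Next I would pass to the limit in the right-hand side. For the numerator: $S_n\pi_n f \to T(t)f$ in the sense that $\Vert S_n\pi_n f - \pi_n(T(t)f)\Vert_{C(E_n)}\to 0$, and since $\gamma_n(y_n)\to x$, the remark preceding \cref{thm general sep dist of a limit} gives $(S_n\pi_n f)(y_n)\to (T(t)f)(x)$. For the denominator: $\int_{E_n}\pi_n f\,d\nu_n = \int_E f\,d(\nu_n\circ\gamma_n^{-1})\to\int_E f\,d\nu$ by the weak convergence hypothesis and continuity of $f$ on the compact space $E$. Since $f > 0$ on compact $E$, the limit $\int_E f\,d\nu$ is strictly positive, so the quotient converges: $(S_n\pi_n f)(y_n)/\int_{E_n}\pi_n f\,d\nu_n \to (T(t)f)(x)/\int_E f\,d\nu$. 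Taking $\liminf$ through the displayed inequality yields $\liminf_n\widetilde\sepDist_n(t)\ge 1 - (T(t)f)(x)/\int_E f\,d\nu$. Finally, taking the supremum over $x\in E$ and $f\in C_+(E)$ gives $\liminf_n\widetilde\sepDist_n(t)\ge\sepDist(t)$, which is the desired inequality; applying this with $S_n = T_n^{\floor{t/\eps_n}}$ gives the first inequality and with $S_n = e^{(T_n-I)t/\eps_n}$ the second.

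I do not expect a genuine obstacle here — the argument is essentially lower semicontinuity of separation distance under the given convergences. The one point requiring a little care is the bookkeeping at level $n$: making sure that $\pi_n f$ is a legitimate test function for $\widetilde\sepDist_n(t)$ (it is continuous and strictly positive, and for the discrete chain $\widetilde\sepDist_n$ the functional form from \cref{prop functional form of sep dist} applies on the finite state space $E_n$), and, for the continuous-time variant, recording that its transition operator is $e^{(T_n-I)t/\eps_n}$ so that the equivalence in \cref{theorem ethier kurtz modes of convergence} can be invoked to get convergence of that operator from the hypothesis. Neither of these is more than a routine verification.
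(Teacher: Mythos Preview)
Your proof is correct and follows essentially the same approach as the paper's: fix $t$, $x\in E$, and $f\in C_+(E)$; transport to level $n$ via $y_n$ and $\pi_n f$; use the functional form of the separation distance at level $n$ to get the inequality $\widetilde\sepDist_n(t)\ge 1-(S_n\pi_n f)(y_n)/\int\pi_n f\,d\nu_n$; and pass to the limit using the semigroup convergence and weak convergence of stationary measures. The paper phrases the level-$n$ inequality via \eqref{claim compare positive and nonnegative functions} rather than the definition, and passes to a subsequence so that $\sepDist_n$ converges to its liminf before taking the limit in the full inequality, but these are cosmetic differences.
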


\begin{proof}
  We prove only the first inequality -- the second can be established similarly (recall the equivalence of items~\ref{item:cvTnt} and~\ref{item:cveTn}
  in \cref{theorem ethier kurtz modes of convergence}).
  Fix $ t \ge 0 $.
  Let $x \in E$ and take a sequence $x_n \in E_n$ so that $\gamma_n( x_n )$ converges to $x$.
  Letting $ f \in C_+( E ) $, each $ \projection_n f$ will lie in $C_+( E_n ) $, so we can apply \eqref{claim compare positive and nonnegative functions} to obtain the inequality
	$$
		\left( 
		T_n^{ \floor{ t/\eps_n } } 
		\projection_n f \right) (x_n) 
		\ge 
        \big( 1 - \Delta_n(\floor{ t/\eps_n } )  \big) \,
		{\textstyle \int_{ E_n }
			\projection_n f
			d \nu_n}
				,
					\quad
                    n \ge 1
				.
	$$ 
    Let us consider the limit as $ n \to \infty $.
    By assumption, $T_n^{ \floor{ t/\eps_n } } \projection_n f$ converges to $T(t)f$, so the left-hand side tends to $(T(t)f)(x)$.
    For the constants $\Delta_n(\floor{ t/\eps_n })$, we pass to a subsequence so that they converge to their limit inferior.
	For the integrals, we use the weak convergence of stationary measures:
	$$
		{\textstyle \int_{ E_n }
			\projection_n f
			d \nu_n }
				=
            		{\textstyle \int_{ E }
            			f
            			d( \nu_n \circ \gamma_n^{ -1 } ) }
				\to
            		{\textstyle \int_{ E }
            			f
            			d\nu }
				.		
	$$
    Altogether, we obtain
	$$
		(T (t ) f)(x) \ge   
			\big( 1 - \liminf_{ n \to \infty } \sepDist_n(\floor{ t/\eps_n}) \big)
       {\textstyle  \int_E
			f
			\,
			d\nu}.
	$$ 
	Recalling that $f \in C_+( E )$ and $ x \in E $ were arbitrary
	concludes the proof.
\end{proof}

\section{The discrete chains}
\label{section discrete framework}

\subsection{The general setting}
\label{section general setting}

We start by proving some consequences of Assumptions~\ref{assumption finite state spaces} and \ref{assumption:commutation} for up-down chains.

\begin{prop}
	\label{prop up-down chains give intertwining}

    Suppose that there are up-down chains whose state spaces $ \{ \stateSpace_n \}_{ n \ge 0 } $ satisfy \ref{assumption finite state spaces} and whose up-steps $ \{ \upKernel_n \}_{ n \ge 0 } $ and down-steps $ \{ \downKernel_n \}_{ n \ge 1 } $ satisfy \ref{assumption:commutation}. 
	Let $ \{ \upDownOperator_n \}_{ n \ge 0 } $ be the transition operators of the chains and $\{ \downOperator_n \}_{ n \ge 1 } $ be the operators associated with the down-steps.
	Let $ \{ \generatorRates_n \}_{ n \ge 0 }$ be a positive sequence satisfying $\frac{\generatorRates_{ n - 1 }}{ \generatorRates_n}=\beta_{n}$ for $n \ge 1$, and define $ \discreteGenerators_n = \generatorRates_n ( \upDownOperator_n - I ) $ for $ n \ge 0 $.
Then the following statements hold for each $ n \ge 0 $:
    \begin{enumerate}[ label = (A\arabic*), leftmargin = \indentForAssumptions ]
    	\item
    	\label{assumption increasing generator rates}
    	$ 0 < \generatorRates_n < \generatorRates_{ n + 1 } $,

    	\item
    	\label{assumption down operators}
    	$ \downOperator_{ n + 1 } $ is a transition operator from $ C( \stateSpace_{ n } ) $ to $ C( \stateSpace_{ n + 1 } ) $,

    	\item
    	\label{assumption range containment}
    	$ 
    		\range ( \discreteGenerators_{ n + 1 } + \generatorRates_{ n } )
    			\subset
    				\range \downOperator_{ n + 1 }
    			,
    	$

    	\item
    	\label{assumption injective down operators}
    	$ \downOperator_{ n + 1 } $ is injective,
    	and

    	\item
    	\label{assumption intertwining}
    	$
    		\discreteGenerators_{ n + 1 }
    		\downOperator_{ n + 1 }
    			=
    				\downOperator_{ n + 1 }
    				\discreteGenerators_n
    			.
    	$

    \end{enumerate}

\end{prop}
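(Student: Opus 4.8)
The plan is to verify the five conclusions \ref{assumption increasing generator rates}--\ref{assumption intertwining} one at a time, each as a direct algebraic consequence of \ref{assumption finite state spaces} and \ref{assumption:commutation}. First, \ref{assumption increasing generator rates} is immediate from the defining recursion $\generatorRates_{n-1}/\generatorRates_n = \beta_n$: since the sequence is positive and $\beta_n \in (0,1)$, we get $\generatorRates_{n-1} < \generatorRates_n$, so $0 < \generatorRates_n < \generatorRates_{n+1}$. (One should also note that such a sequence exists and is unique up to a positive scalar, e.g.\ $\generatorRates_n = \beta_1^{-1}\cdots\beta_n^{-1}$, matching the normalization $\generatorRates_{-1}=0$, $\generatorRates_0=1$ used elsewhere; but this is only a remark since the statement takes the sequence as given.) Next, \ref{assumption down operators} holds because $\downKernel_{n+1}$ is by hypothesis a transition matrix from $\stateSpace_{n+1}$ to $\stateSpace_n$, so by the generalities of \cref{ssec:def_kernels_operators} its transition operator $\downOperator_{n+1}$ maps $\mc M_b(\stateSpace_n) = C(\stateSpace_n)$ (finite spaces) to $C(\stateSpace_{n+1})$ and is positive, contractive, conservative --- i.e.\ a transition operator.

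The heart of the matter is \ref{assumption intertwining}, and the other two items will follow from it together with a dimension/injectivity argument. For \ref{assumption intertwining}: expand $\discreteGenerators_{n+1}\downOperator_{n+1} = \generatorRates_{n+1}(\upDownOperator_{n+1} - I)\downOperator_{n+1} = \generatorRates_{n+1}(\upOperator_{n+1}\downOperator_{n+2}\downOperator_{n+1} - \downOperator_{n+1})$. Dualizing the commutation relation \ref{assumption:commutation} at level $n+1$ gives the operator identity $\downOperator_{n+2}\upOperator_{n+1} = \beta_{n+1}\upOperator_n\downOperator_{n+1} + (1-\beta_{n+1})I$ on $C(\stateSpace_{n+1})$ --- here I must be careful about the order reversal when passing from kernels to transition operators ($T_{\mu\nu} = T_\nu T_\mu$, since $(T_{\mu\nu}f)(x) = \int\int f(z)\nu(y,dz)\mu(x,dy) = (T_\mu(T_\nu f))(x)$). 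Wait --- that reads $T_{\mu\nu}=T_\mu T_\nu$; let me restate: with $\upKernel_{n+1}\downKernel_{n+2}$ meaning "up then down", the associated operator is $\upOperator_{n+1}\downOperator_{n+2}$ in the convention $\upDownOperator_{n+1}=\upOperator_{n+1}\downOperator_{n+2}$ stated in the paper, so I will simply use \ref{assumption:commutation} at level $n+1$ in its stated form, $\upKernel_{n+1}\downKernel_{n+2} = \beta_{n+1}\downKernel_{n+1}\upKernel_n + (1-\beta_{n+1})i_{n+1}$, pass to operators consistently with the paper's stated convention, and substitute. This turns the bracket into $(\beta_{n+1}\downOperator_{n+1}\upOperator_n\downOperator_{n+1} + (1-\beta_{n+1})\downOperator_{n+1} - \downOperator_{n+1}) = \beta_{n+1}\downOperator_{n+1}(\upOperator_n\downOperator_{n+1} - I) = \beta_{n+1}\downOperator_{n+1}(\upDownOperator_n - I)$. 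Multiplying by $\generatorRates_{n+1}$ and using $\generatorRates_{n+1}\beta_{n+1} = \generatorRates_n$ yields $\generatorRates_n\downOperator_{n+1}(\upDownOperator_n - I) = \downOperator_{n+1}\discreteGenerators_n$, which is exactly \ref{assumption intertwining}. I expect the bookkeeping of the kernel-vs-operator composition order to be the one genuinely error-prone point, and I will state the convention explicitly before computing.

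Given \ref{assumption intertwining}, item \ref{assumption range containment} is automatic: $\discreteGenerators_{n+1} + \generatorRates_n = \generatorRates_{n+1}\upDownOperator_{n+1} - \generatorRates_{n+1}I + \generatorRates_n I = \generatorRates_{n+1}\upDownOperator_{n+1} - \generatorRates_{n+1}(1-\beta_{n+1})I$; more usefully, $\discreteGenerators_{n+1}\downOperator_{n+1} = \downOperator_{n+1}\discreteGenerators_n$ can be rewritten (adding $\generatorRates_n\downOperator_{n+1}$ to both sides) as $(\discreteGenerators_{n+1}+\generatorRates_n)\downOperator_{n+1} = \downOperator_{n+1}(\discreteGenerators_n + \generatorRates_n) = \generatorRates_n\downOperator_{n+1}\upDownOperator_n$, hmm --- I actually want $\range(\discreteGenerators_{n+1}+\generatorRates_n) \subseteq \range\downOperator_{n+1}$, so the cleaner route is: from the expansion above, $\discreteGenerators_{n+1} = \generatorRates_{n+1}\upOperator_{n+1}\downOperator_{n+2} - \generatorRates_{n+1}I$, and using \ref{assumption:commutation} rearranged, $\upOperator_{n+1}\downOperator_{n+2} - I = \upOperator_{n+1}\downOperator_{n+2} - I$; applying the commutation identity shows $\discreteGenerators_{n+1}+\generatorRates_n = \generatorRates_{n+1}(\upOperator_{n+1}\downOperator_{n+2} - \beta_{n+1}I) = \generatorRates_{n+1}\beta_{n+1}\downOperator_{n+1}\upOperator_n = \generatorRates_n\downOperator_{n+1}\upOperator_n$ (reading the commutation relation as $\upKernel_{n+1}\downKernel_{n+2} - \beta_{n+1}i_{n+1} = \beta_{n+1}\downKernel_{n+1}\upKernel_n + (1-2\beta_{n+1})i_{n+1}$ --- I will recompute this carefully in the final write-up), which lies in $\range\downOperator_{n+1}$; so \ref{assumption range containment} follows, with the precise constant-juggling deferred. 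Finally, \ref{assumption injective down operators}: $\downOperator_{n+1}$ is injective because $\beta_{n+1}i_{n+1} = \upKernel_{n+1}\downKernel_{n+2} - \beta_{n+1}(\text{something}) \ldots$ --- more simply, rearranging \ref{assumption:commutation}, $\beta_n\downKernel_n\upKernel_{n-1} = \upKernel_n\downKernel_{n+1} - (1-\beta_n)i_n$ shows nothing about injectivity of $\downOperator$ directly; instead, the clean argument is that \ref{assumption:commutation} at level $n+1$ gives $\upOperator_{n+1}\downOperator_{n+2} = \beta_{n+1}\downOperator_{n+1}\upOperator_n + (1-\beta_{n+1})I$, and applying this is not quite it either --- the right statement is: from \ref{assumption:commutation} one has $\downOperator_{n+1}$ appearing with a left inverse, since $\frac{1}{1-\beta_n}(i_n - \beta_n \downKernel_n\upKernel_{n-1}) \cdot$ wait, that involves $\downKernel_n$ not $\downKernel_{n+1}$. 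The correct observation: reading \ref{assumption:commutation} at level $n$ as an operator identity, $\upOperator_n\downOperator_{n+1} - \beta_n\downOperator_n\upOperator_{n-1} = (1-\beta_n)I$ on $C(\stateSpace_n)$ does not immediately give a left inverse of $\downOperator_{n+1}$. So I will instead prove injectivity of $\downOperator_{n+1}$ from the explicit formula for the down-step or from \ref{assumption:commutation} at level $n+1$: $\upOperator_{n+1}\downOperator_{n+2} = \beta_{n+1}\downOperator_{n+1}\upOperator_n + (1-\beta_{n+1})I$ rearranges to $\downOperator_{n+1}(\beta_{n+1}\upOperator_n) = \upOperator_{n+1}\downOperator_{n+2} - (1-\beta_{n+1})I$, which is not of the form (left inverse)$\cdot\downOperator_{n+1}$. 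The genuinely correct route, which I will take, is: by \ref{assumption:commutation} at level $n$ we have $\downOperator_{n+1}$ on the \emph{right} of $\upOperator_n$; to get injectivity of $\downOperator_{n+1}:C(\stateSpace_n)\to C(\stateSpace_{n+1})$ note that if $\downOperator_{n+1}f=0$ then $\upOperator_n\downOperator_{n+1}f = 0$, i.e.\ $(\upDownOperator_n)f=0$, hence from $\upOperator_n\downOperator_{n+1} - \beta_n\downOperator_n\upOperator_{n-1} = (1-\beta_n)I$ we get $-\beta_n\downOperator_n\upOperator_{n-1}f = (1-\beta_n)f$; iterating this relation down the tower (using that eventually one reaches $\stateSpace_0$, a point) forces $f=0$ by a downward induction on $n$. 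I expect this injectivity argument, and getting the constants in \ref{assumption range containment} exactly right, to be the fiddly parts; everything else is bookkeeping.
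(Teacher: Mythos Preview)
Your plan for \ref{assumption increasing generator rates}, \ref{assumption down operators}, \ref{assumption range containment}, and \ref{assumption intertwining} matches the paper's: the paper also derives the identity $A_{n+1} + c_n = c_n D_{n+1}U_n$ directly from \ref{assumption:commutation} (giving \ref{assumption range containment}), and then multiplies on the right by $D_{n+1}$ to obtain \ref{assumption intertwining}. Your intermediate expression $c_{n+1}(U_{n+1}D_{n+2} - \beta_{n+1}I)$ should read $c_{n+1}(U_{n+1}D_{n+2} - (1-\beta_{n+1})I)$, but you flag this as constant-juggling to be cleaned up, and the endpoint $c_nD_{n+1}U_n$ is correct.

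For \ref{assumption injective down operators} you take a different route. The paper argues by induction on $n$ that $T_n = U_nD_{n+1}$ has only positive eigenvalues (hence is injective, hence $D_{n+1}$ is injective): for the inductive step, $D_{n+1}U_n$ shares its nonzero eigenvalues with $U_nD_{n+1} = T_n$, so has only nonnegative eigenvalues, and then $T_{n+1} = \beta_{n+1}D_{n+1}U_n + (1-\beta_{n+1})I$ has eigenvalues at least $1-\beta_{n+1} > 0$. Your approach --- assume $D_{n+1}f = 0$, deduce $T_nf = 0$, use \ref{assumption:commutation} to push down to an eigenvector of $T_{n-1}$ with negative eigenvalue, and iterate to a contradiction at level $0$ --- is correct, but is essentially the same induction unrolled in contrapositive form. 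To make it rigorous you must check that $g_{k-1} := U_{k-1}g_k$ is nonzero whenever $g_k$ is (this follows from $D_kU_{k-1}g_k = \lambda_k g_k$ with $\lambda_k \neq 0$), and that the terminal contradiction is $T_0 g_0 = \lambda g_0$ with $\lambda < 0$ on the one-dimensional space $C(\stateSpace_0)$ where $T_0 = I$. The paper's eigenvalue formulation is shorter and avoids tracking this descending chain of vectors.
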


\begin{proof}
    Claims \ref{assumption increasing generator rates}
    and \ref{assumption down operators} are trivially satisfied.
    To prove \ref{assumption range containment}, let $\{ \upOperator_n \}_{ n \ge 0 } $ be the operators associated with the up-steps, and recall that we have the factorizations $\upDownOperator_n=\upOperator_n \downOperator_{n+1}$.
    Given $n \ge 0$, we apply Assumption~\ref{assumption:commutation}
    and the identity $c_{n+1} \beta_{n+1}=c_n$ {to compute}
    \begin{equation*} %
      A_{n+1} =c_{n+1} (U_{n+1} D_{n+2}-I)
    =c_{n+1} (\beta_{n+1} D_{n+1} U_n -\beta_{n+1} I)
    =c_n (D_{n+1} U_n - I).
    \end{equation*}
    {Note that this can be rewritten as} $A_{n+1} +c_n= c_n D_{n+1} U_n$, which implies \ref{assumption range containment}.
    {On the other hand}, we can multiply by $D_{n+1}$ to obtain
    \[  A_{n+1} D_{n+1} 
    =c_n (D_{n+1} U_n D_{n+1} - D_{n+1})= D_{n+1} c_n ( U_n D_{n+1} - I)= D_{n+1} A_n,\]
    {establishing} \ref{assumption intertwining}.
    To \orange{prove} \ref{assumption injective down operators}, \orange{it will suffice to show that the $U_n D_{n+1}$ are injective. To do this, we will show that they have only positive eigenvalues. We proceed by induction.} 
    The case $n=0$ is trivial, since $C( \stateSpace_{ 0 } )$ has dimension $1$
    and {$U_0 D_{1}$} maps the function $1$ to itself (as any transition operator).
    Assume now that the statement holds for some $n$.
    {A classical fact from linear algebra says that} for linear operators $ A $ and $ B $, 
    the compositions $AB$ and $BA$ have the same eigenvalues 
    {with the same multiplicities}, except possibly $0$.
    Applied to our setting, this implies that $D_{n+1} U_n $ has only nonnegative eigenvalues.
    {The positivity of $ \beta_{n+1} $ and the commutation relation} \ref{assumption:commutation} then imply that the operator
     \[ 
     	U_{n+1} D_{n+2}= \beta_{n+1} D_{n+1} U_n +(1-\beta_{n+1}) I
	\]
    
    \noindent
    has only positive eigenvalues. 
    \orange{This completes the induction, establishing \ref{assumption injective down operators}.}
  \end{proof}

We now proceed to the more general setting, \orange{which will apply throughout Sections~\ref{section discrete framework} and~\ref{section convergence}}.
For every $ n \ge 0 $, let $ \upDownChain_n $ be a Markov chain with state space $ \stateSpace_n $ and transition operator $ \upDownOperator_n $
(\orange{we emphasize that} $\upDownChain_n$ need not be an up-down chain).
We fix some rates $ \{ \generatorRates_n \}_{ n \ge 0 } $ to construct continuous-time variants of these chains -- the pseudo-Poisson processes with generators $ \discreteGenerators_n = \generatorRates_n ( \upDownOperator_n - I ) $.
We assume that the state spaces satisfy \ref{assumption finite state spaces}
and that these processes, together with some operators $ \{ \downOperator_n \}_{ n \ge 1 } $, satisfy conditions \ref{assumption increasing generator rates}--\ref{assumption intertwining} for each $ n \ge 0 $.

Let us briefly comment on our assumptions.
The assumption \ref{assumption down operators} says that $ \downOperator_{ n + 1 } $ is a down-operator, a transition operator associated with a kernel from $ \stateSpace_{ n + 1 } $ to $ \stateSpace_n $. 
The assumption \ref{assumption range containment} is equivalent to the existence of a \emph{pseudo} up-operator\footnote{We use the expression \emph{pseudo} up-operator since $\upOperator_n$ need not be positive \orange{and as a result}, need not be a transition operator in the sense of \cref{ssec:def_kernels_operators}.} $ \upOperator_n \colon C( \stateSpace_{ n + 1 } ) \to C( \stateSpace_n ) $ satisfying
	\begin{equation}
	\label{definition up-operator}
		\generatorRates_n
		\downOperator_{ n + 1 }
		\upOperator_n
			=
            		\discreteGenerators_{ n + 1 }
        		+
            		\generatorRates_n
			,
	\end{equation}

\noindent
and \ref{assumption injective down operators} is equivalent to the uniqueness of this operator.
\noindent
The assumption \ref{assumption intertwining} is equivalent to the intertwining of the pseudo-Poisson processes, \orange{as in \eqref{definition intertwining}}.
Indeed, one can move from the generator relation to the semigroup relation by applying a result like Corollary 7.1 in \cite{krdrLeftmost}.

We have already seen that that the up-down setting of the introduction fits into our new context (\cref{prop up-down chains give intertwining}). 
In the following result, we show that this new context still exhibits much of the structure of the up-down setting.

\begin{prop}%
    \label{prop commutation relations}
    Recall that the pseudo up-operators $\{ \upOperator_n \}_{ n \ge 0 }$ are defined by \eqref{definition up-operator}.
    The following pseudo up-down factorizations and commutation relations hold:
    \begin{align}
    	\label{UD factorization}
    		\upDownOperator_n
    			& =
        			\upOperator_n
            		\downOperator_{ n + 1 }    				
    		,
    			\qquad\hspace{ 9.6 em }
    			n \ge 0
    		,
			\\
    \label{eq:commutation_relations}
    	\upOperator_n
    	\downOperator_{ n + 1 }
    		& = 
            		\frac{ \generatorRates_{ n - 1 } }{ \generatorRates_n }
        				\downOperator_n
        				\upOperator_{ n - 1 }
    			+
        			\Big( 
        				1 - \frac{ \generatorRates_{ n - 1 } }{ \generatorRates_n } 
        			\Big)
        			I
    		,
    			\qquad
    			n \ge 1
    		.
    \end{align}
\end{prop}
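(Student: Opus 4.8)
The plan is to derive both identities directly from the defining relation \eqref{definition up-operator}, namely $\generatorRates_n \downOperator_{n+1} \upOperator_n = \discreteGenerators_{n+1} + \generatorRates_n$, together with the intertwining assumption \ref{assumption intertwining} and the injectivity assumption \ref{assumption injective down operators}. The factorization \eqref{UD factorization} should come first. Recall that $\discreteGenerators_n = \generatorRates_n(\upDownOperator_n - I)$, so \eqref{definition up-operator} rearranges to $\generatorRates_n \downOperator_{n+1}\upOperator_n = \generatorRates_{n+1}(\upDownOperator_{n+1}-I) + \generatorRates_n$. That does not obviously give \eqref{UD factorization} at level $n$; instead I would use \ref{assumption intertwining} in the equivalent form established in the proof of \cref{prop up-down chains give intertwining}, i.e.\ $\discreteGenerators_{n+1}\downOperator_{n+1} = \downOperator_{n+1}\discreteGenerators_n$. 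First I would substitute \eqref{definition up-operator} into the left side: $\discreteGenerators_{n+1}\downOperator_{n+1} + \generatorRates_n\downOperator_{n+1} = \generatorRates_n\downOperator_{n+1}\upOperator_n\downOperator_{n+1}$. On the right side, $\downOperator_{n+1}\discreteGenerators_n + \generatorRates_n\downOperator_{n+1} = \downOperator_{n+1}(\discreteGenerators_n + \generatorRates_n) = \generatorRates_{n-1}\downOperator_{n+1}\downOperator_n\upOperator_{n-1}$ by \eqref{definition up-operator} at level $n-1$ — wait, this introduces $\downOperator_n$, which is the wrong direction. The cleaner route: from \eqref{definition up-operator}, $\discreteGenerators_{n+1} + \generatorRates_n = \generatorRates_n \downOperator_{n+1}\upOperator_n$, so $\discreteGenerators_{n+1} = \generatorRates_n(\downOperator_{n+1}\upOperator_n - I)$, hence $\generatorRates_{n+1}(\upDownOperator_{n+1}-I) = \generatorRates_n(\downOperator_{n+1}\upOperator_n - I)$. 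This is a relation between $\upDownOperator_{n+1}$ and $\downOperator_{n+1}\upOperator_n$ (a "down-up" composition), not yet \eqref{UD factorization}. To get \eqref{UD factorization} I would instead multiply \eqref{definition up-operator} on the left by $\downOperator_{n+1}$-nothing; the honest approach is to observe that \eqref{UD factorization} is essentially a \emph{definition-level compatibility} that must be imposed or derived from how $\upOperator_n$ was introduced — in the up-down case it is the hypothesis $\upDownOperator_n = \upOperator_n\downOperator_{n+1}$, and in the general case one shows $\downOperator_{n+1}(\upDownOperator_n - \upOperator_n\downOperator_{n+1}) = 0$ and invokes injectivity \ref{assumption injective down operators}. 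Concretely: $\downOperator_{n+1}\upOperator_n\downOperator_{n+1} = \generatorRates_n^{-1}(\discreteGenerators_{n+1}+\generatorRates_n)\downOperator_{n+1} = \generatorRates_n^{-1}(\downOperator_{n+1}\discreteGenerators_n + \generatorRates_n\downOperator_{n+1})$ using \ref{assumption intertwining}, which equals $\downOperator_{n+1}\cdot\generatorRates_n^{-1}(\discreteGenerators_n+\generatorRates_n) = \downOperator_{n+1}\upDownOperator_n$ since $\discreteGenerators_n = \generatorRates_n(\upDownOperator_n - I)$. Thus $\downOperator_{n+1}(\upOperator_n\downOperator_{n+1} - \upDownOperator_n) = 0$, and \ref{assumption injective down operators} gives \eqref{UD factorization}. (For $n=0$ one checks the base case directly, noting $\stateSpace_0$ is a singleton.)

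Next I would establish the commutation relation \eqref{eq:commutation_relations}. The strategy is to use \eqref{UD factorization}, just proved, to rewrite $\upOperator_n\downOperator_{n+1} = \upDownOperator_n$, so the target becomes $\upDownOperator_n = \frac{\generatorRates_{n-1}}{\generatorRates_n}\downOperator_n\upOperator_{n-1} + (1 - \frac{\generatorRates_{n-1}}{\generatorRates_n})I$. Multiplying through by $\generatorRates_n$ and rearranging, this is equivalent to $\generatorRates_n(\upDownOperator_n - I) = \generatorRates_{n-1}(\downOperator_n\upOperator_{n-1} - I)$, i.e.\ $\discreteGenerators_n = \generatorRates_{n-1}(\downOperator_n\upOperator_{n-1} - I)$. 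But \eqref{definition up-operator} at level $n-1$ reads precisely $\generatorRates_{n-1}\downOperator_n\upOperator_{n-1} = \discreteGenerators_n + \generatorRates_{n-1}$, which rearranges to exactly $\discreteGenerators_n = \generatorRates_{n-1}(\downOperator_n\upOperator_{n-1} - I)$. So \eqref{eq:commutation_relations} follows immediately from \eqref{definition up-operator} at the previous level combined with \eqref{UD factorization}. I would present this as a short chain of equivalences.

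The main obstacle is getting \eqref{UD factorization} right: unlike the up-down case where $\upDownOperator_n = \upOperator_n\downOperator_{n+1}$ holds by construction, in the general setting $\upOperator_n$ is defined only through \eqref{definition up-operator}, which a priori pins down $\downOperator_{n+1}\upOperator_n$ (the reversed composition), not $\upOperator_n\downOperator_{n+1}$. The key insight making it work is that the intertwining relation \ref{assumption intertwining}, together with the identification $\discreteGenerators_n = \generatorRates_n(\upDownOperator_n-I)$, forces $\downOperator_{n+1}$ to intertwine $\upOperator_n\downOperator_{n+1}$ and $\upDownOperator_n$ — and then injectivity of $\downOperator_{n+1}$ collapses this to an equality. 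Once \eqref{UD factorization} is in hand, \eqref{eq:commutation_relations} is a one-line algebraic rearrangement of \eqref{definition up-operator}, so essentially all the content is in the first identity. I would also take care with the $n=0$ and $n=1$ boundary cases, where the singleton structure of $\stateSpace_0$ (Assumption \ref{assumption finite state spaces}) makes the relevant operators act on a one-dimensional space, so all the identities hold trivially there.
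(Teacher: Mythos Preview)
Your proposal is correct and takes essentially the same approach as the paper. The paper proceeds in the reverse order---first equating the two forms $A_n = c_n(T_n - I) = c_{n-1}(D_n U_{n-1} - I)$ to get $T_n = \tfrac{c_{n-1}}{c_n} D_n U_{n-1} + (1-\tfrac{c_{n-1}}{c_n})I$, then establishing the factorization \eqref{UD factorization} via the same computation you describe (substitute \eqref{definition up-operator} and \ref{assumption intertwining}, then cancel $D_{n+1}$ using \ref{assumption injective down operators})---but the content is identical, and your argument covers $n=0$ without needing a separate base-case check.
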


\begin{remark}	
	\orange{Despite the reappearance of these identities}, our new context is still more general than the up-down setting of the introduction. 
	Indeed, since the $ \upOperator_n $ need not be positive operators, we do not have genuine up-down factorizations here, and as a result, our chains need not be up-down chains.
\end{remark}

\begin{proof}    
	Observe that we have two forms for the generators $ \{ \discreteGenerators_n \}_{ n \ge 1 } $: the defining form $ \discreteGenerators_n = \generatorRates_n ( \upDownOperator_n - I )$ and the new form $ \discreteGenerators_n = \generatorRates_{ n - 1 } ( \downOperator_n \upOperator_{ n - 1 } - I ) $ obtained from  \eqref{definition up-operator}.
    Equating these immediately gives us that 
    	\begin{equation*}
    		\frac{ \generatorRates_{ n - 1 } }{ \generatorRates_n }
    		\downOperator_n
    		\upOperator_{ n - 1 }
        		+
    		\Big( 1 - \frac{ \generatorRates_{ n - 1 } }{ \generatorRates_n } \Big) I
    			=
            		 \upDownOperator_n
    			,
    				\qquad
    				n \ge 1
    			.
    	\end{equation*}
    
    \noindent
    It only remains to prove the first claim.
    For this, we substitute the two forms of the generator into the intertwining relation \ref{assumption intertwining} to obtain
    $$
    	\downOperator_{ n + 1 }
    	\generatorRates_n ( \upDownOperator_n - I )
			=
            	\generatorRates_n
            	(
                	\downOperator_{ n + 1 }
                	\upOperator_n
                	-
                	I
            	)
            	\downOperator_{ n + 1 }
			=
            	\generatorRates_n
				\downOperator_{ n + 1 }
            	(
                	\upOperator_n
					\downOperator_{ n + 1 }
                	-
                	I
            	)
			,
				\qquad
				n \ge 0
			.
    $$
    The factorization \eqref{UD factorization} now follows from the fact that $ \generatorRates_n \neq 0 $ and $ \downOperator_{ n + 1 } $ is injective (see \ref{assumption increasing generator rates} and \ref{assumption injective down operators}).
\end{proof}

\orange{Our next result is a natural extension of the above commutation relations} to operators going down more than one step at a time\footnote{\orange{It also appears in \cite{fulmanCommutation} as Lemma 4.4.}}. 
To state it, we will need the constants 
\begin{equation}
    \label{defn extended commutation weights}
    	\generatorRates_{ -1 }
    		=
    			0,
    		\quad
    	\commutationWeightProduct_{ k, n }
    		=
    			\frac{ \generatorRates_{ k - 1 } }{ \generatorRates_n }
    		,
    			\quad\qquad
    			0 \le n,
    			\, \,
    			0 \le k \le n + 1
    		,
\end{equation}
and the operators
\begin{equation}
\label{defn extended down operators}
	\downOperator_{ n, k }
		=
			\downOperator_n
			\downOperator_{ n - 1 }
			\ldots
			\downOperator_{ k + 1 }
		,
			\qquad
			0 \le k \le n
		.
\end{equation}
Note that $ \downOperator_{ n, k } $ is a transition operator from $ C( \stateSpace_k ) $ to $ C( \stateSpace_n ) $ and we have the special case
$
	\downOperator_{ n, n }
		=
			I
		.
$
\begin{prop}%
\label{prop extended commutation relations}
The following relations hold:
$$
	\upOperator_n
	\downOperator_{ n + 1, k }
		= 
        		\commutationWeightProduct_{ k, n }
            	\downOperator_{ n, k - 1 }
    			\upOperator_{ k - 1 }
			+
    			( 
    				1 - \commutationWeightProduct_{ k, n }
    			)
            	\downOperator_{ n, k }
		,
			\qquad
			1 \le k \le n
		.
$$

\end{prop}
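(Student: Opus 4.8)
The plan is to prove this by downward induction on $k$, starting from $k = n$ and descending to $k = 1$, using the one-step commutation relation \eqref{eq:commutation_relations} as the engine. For the base case $k = n$, the operator $\downOperator_{n+1,n}$ is just $\downOperator_{n+1}$ (by \eqref{defn extended down operators}, since $\downOperator_{n,n} = I$), and $\commutationWeightProduct_{n,n} = \generatorRates_{n-1}/\generatorRates_n$, so the claimed identity reads
$$
	\upOperator_n \downOperator_{n+1}
		=
			\frac{\generatorRates_{n-1}}{\generatorRates_n}
			\downOperator_{n, n-1}
			\upOperator_{n-1}
		+
			\Big(1 - \frac{\generatorRates_{n-1}}{\generatorRates_n}\Big)
			\downOperator_{n,n}
		,
$$
and since $\downOperator_{n,n-1} = \downOperator_n$ and $\downOperator_{n,n} = I$, this is exactly \eqref{eq:commutation_relations} with index $n$. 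So the base case holds.

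For the inductive step, suppose the identity holds for some $k$ with $2 \le k \le n$; I will derive it for $k - 1$. Starting from $\upOperator_n \downOperator_{n+1, k-1}$, I would peel off the last (rightmost) factor of $\downOperator_{n+1,k-1}$: by the definition \eqref{defn extended down operators}, $\downOperator_{n+1,k-1} = \downOperator_{n+1,k}\,\downOperator_k$. Then $\upOperator_n \downOperator_{n+1,k-1} = (\upOperator_n \downOperator_{n+1,k})\,\downOperator_k$, and applying the induction hypothesis to the parenthesized part gives
$$
	\upOperator_n \downOperator_{n+1,k-1}
		=
			\commutationWeightProduct_{k,n}
			\downOperator_{n,k-1}\upOperator_{k-1}\downOperator_k
		+
			(1 - \commutationWeightProduct_{k,n})
			\downOperator_{n,k}\downOperator_k
		.
$$
Now I substitute for $\upOperator_{k-1}\downOperator_k$ using \eqref{eq:commutation_relations} with index $k-1$ (valid since $k - 1 \ge 1$), namely $\upOperator_{k-1}\downOperator_k = \frac{\generatorRates_{k-2}}{\generatorRates_{k-1}}\downOperator_{k-1}\upOperator_{k-2} + \big(1 - \frac{\generatorRates_{k-2}}{\generatorRates_{k-1}}\big) I$, and also note $\downOperator_{n,k}\downOperator_k = \downOperator_{n,k-1}$ from \eqref{defn extended down operators}. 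After collecting terms, the coefficient of $\downOperator_{n,k-1}\upOperator_{k-2}$ becomes $\commutationWeightProduct_{k,n}\cdot\frac{\generatorRates_{k-2}}{\generatorRates_{k-1}} = \frac{\generatorRates_{k-1}}{\generatorRates_n}\cdot\frac{\generatorRates_{k-2}}{\generatorRates_{k-1}} = \frac{\generatorRates_{k-2}}{\generatorRates_n} = \commutationWeightProduct_{k-1,n}$, and the coefficient of $\downOperator_{n,k-1}$ works out to $\commutationWeightProduct_{k,n}\big(1 - \frac{\generatorRates_{k-2}}{\generatorRates_{k-1}}\big) + (1 - \commutationWeightProduct_{k,n}) = 1 - \commutationWeightProduct_{k-1,n}$. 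This is precisely the claimed identity at level $k-1$.

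The argument is entirely routine once the bookkeeping is set up; the only point requiring a little care is the algebraic simplification of the two coefficients and the telescoping of the $\generatorRates$-ratios — this is where the definition $\commutationWeightProduct_{k,n} = \generatorRates_{k-1}/\generatorRates_n$ makes everything collapse cleanly, and I would present that computation explicitly since it is the crux. One should also double-check the edge cases: when $k = 1$ the relation \eqref{eq:commutation_relations} with index $k-1 = 0$ is not available, but the statement only claims $1 \le k \le n$, so the induction stops at $k = 1$ and no issue arises; and when $n = 0$ or $n = 1$ the range of valid $k$ is empty or a single value, handled by the base case alone. I do not anticipate a genuine obstacle here — the main (mild) hazard is simply sign/index errors in manipulating the nested products $\downOperator_{n,k}$, so I would be careful to always append new down-steps on the correct (right) side.
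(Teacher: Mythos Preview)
Your proposal is correct and follows essentially the same approach as the paper's proof: downward induction on $k$ from the base case $k=n$ (which is \eqref{eq:commutation_relations}), peeling off $\downOperator_k$ on the right via $\downOperator_{n+1,k-1}=\downOperator_{n+1,k}\downOperator_k$, applying the inductive hypothesis, then using \eqref{eq:commutation_relations} at index $k-1$ and simplifying the coefficients through $\commutationWeightProduct_{k-1,n}=\commutationWeightProduct_{k,n}\,\commutationWeightProduct_{k-1,k-1}$. One small slip to fix when you write it up: the term carrying $\upOperator_{k-2}$ is $\downOperator_{n,k-2}\upOperator_{k-2}$ (since $\downOperator_{n,k-1}\downOperator_{k-1}=\downOperator_{n,k-2}$), not $\downOperator_{n,k-1}\upOperator_{k-2}$.
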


\begin{proof}
	Let us fix $ n \ge 1 $ and induct on $ k $.
	The case $ k = n $ has already been established (Proposition \ref{prop commutation relations}).
	Assume now that the result holds for some $ k $ satisfying $ 2 \le k \le n $.
	Using 
	(\ref{defn extended down operators}),
	the inductive hypothesis,
	Proposition \ref{prop commutation relations},
	and
	(\ref{defn extended commutation weights}),
	we can compute
	\begin{align*}
    	\upOperator_n
    	\downOperator_{ n + 1, k - 1 }
			& =
            	\upOperator_n
            	\downOperator_{ n + 1, k }
				\downOperator_k
			\\
			& =
            		\commutationWeightProduct_{ k, n }
                	\downOperator_{ n, k - 1 }
            			\upOperator_{ k - 1 }
            			\downOperator_k
    			+
        			( 
        				1 - \commutationWeightProduct_{ k, n }
        			)
                	\downOperator_{ n, k  }D_k
			\\
			& =
            		\commutationWeightProduct_{ k, n }
                	\downOperator_{ n, k - 1 }
        			\left(
                		\commutationWeightProduct_{ k - 1, k - 1 }
            				\downOperator_{ k - 1 }
            				\upOperator_{ k - 2 }
            			+
            			\left( 
            				1 - \commutationWeightProduct_{ k - 1, k - 1 }
            			\right)
            			I 
					\right)
    			+
        			( 
        				1 - \commutationWeightProduct_{ k, n }
        			)
                	\downOperator_{ n, k - 1 }
			\\
			& =
            		\commutationWeightProduct_{ k, n }
            		\commutationWeightProduct_{ k - 1, k - 1 }
                	\downOperator_{ n, k - 2 }
    				\upOperator_{ k - 2 }
				+
            		\commutationWeightProduct_{ k, n }
        			\left( 
        				1 - \commutationWeightProduct_{ k - 1, k - 1 }
        			\right)
                	\downOperator_{ n, k - 1 }
    			+
        			( 
        				1 - \commutationWeightProduct_{ k, n }
        			)
                	\downOperator_{ n, k - 1 }
			\\
			& =
            		\commutationWeightProduct_{ k, n }
            		\commutationWeightProduct_{ k - 1, k - 1 }
                	\downOperator_{ n, k - 2 }
    				\upOperator_{ k - 2 }
				+
            		\left(
						1 
					- 
						\commutationWeightProduct_{ k, n } 
						\commutationWeightProduct_{ k - 1, k - 1 }
					\right)
					\downOperator_{ n, k - 1 }
			.
   	\end{align*}
	
	\noindent
	Verifying that 
	$ 
		\commutationWeightProduct_{ k - 1, n }
			=
        		\commutationWeightProduct_{ k, n }
        		\commutationWeightProduct_{ k - 1, k - 1 }
	$
	concludes the proof.
\end{proof}

\subsection{The density functions and the triangular descriptions}
\label{ssec:discrete_triangular}

In what follows, it will be useful to consider elements from arbitrary state spaces.
For this, we define
	$ 
		\stateSpace 
			= 
				\sqcup_{ n \ge 0 }
					\stateSpace_n
	$ 
and reserve the symbols $ r $, $ s $, and $ u $ for elements of this set. 
We will also need to consider elements from certain state spaces.
To do this, we let $ | s | $ denote the index of the state space that $ s $ belongs to and impose restrictions on this quantity.
For example, $ | s | \ge 1 $ specifies that $ s \in \sqcup_{ n \ge 1 } \stateSpace_n $ and $ | s | = k $ specifies that $ s \in \stateSpace_k $. 
We will use this notation freely, generalizing it in a natural way.

Let $ \upDownKernel_n $, $ \downKernel_n $, and $ \downKernel_{ n, k } $ be the kernels associated with the operators $ \upDownOperator_n $, $ \downOperator_n $, and $ \downOperator_{ n, k } $.
These can be computed using (\ref{identity kernel from operator}).
Let us also define matrix analogues for the pseudo up-operators,
\begin{align}
	\begin{split}
	\upKernel_n( s, u )
		& = 
			(
				\upOperator_n
    			\indicator_u
			)( s )
		,
			\hspace{41 mm}
			| u | - 1 = | s | = n \ge 0
		,
		\\
	\upKernel_{ k, n }( s, u )
		& = 
			(
				\upOperator_k
				\upOperator_{ k + 1 }
				\ldots
				\upOperator_{ n - 1 }
    			\indicator_u
			)( s )
		,
			\hspace{20 mm}
			n = | u | \ge | s | = k \ge 0
		,
	\end{split}
	\label{def of up-matrix}
\end{align}
keeping in mind that these need not be transition matrices (although they are \orange{in the up-down setting}).

To describe the transition operators, we will rely on the following family of functions
    \begin{equation}
    \label{definition of density functions}
    	\density_s ( u )
    		=
    			\begin{cases}
                    \downKernel_{ | u |, | s | }( u, s ),
						&
							| u | \ge | s |,
						\\
					0,
						&
							\text{else}.
				\end{cases}
    \end{equation}

\noindent
We will also make use of their restrictions
	$$ 
		( \density_s )_n
			=
				\density_s
					\vert_{ \stateSpace_n }
			,
				\qquad
				n \ge 0
			.
	$$

\noindent 
The Assumptions \ref{assumption finite state spaces} and \ref{assumption down operators} lead to the special case
$
	\density_\zeroVertex 
		\equiv 
			1
		.
$
We note also that
	\begin{equation}
		( \density_s )_{ | s | }
			=
				\indicator_s
			,
				\qquad
				s \in \stateSpace
			.
		\label{identity density as delta function}
	\end{equation}

\noindent
We like to think of $ \density_s( u ) $ as the \emph{density} of $ s $ in $ u $.
This is inspired by the probabilistic definition and by the fact that in certain contexts, these quantities yield familiar notions of density.
The basic properties of the density functions are summarized below.

\begin{prop}
	\label{prop properties of density functions}

    Let $ s \in \stateSpace $ and $ n \ge k \ge | s | $.
    The following identities hold:
    \begin{enumerate}[ label = (\roman*) ]
    	
		\item
		\label{Pieri rule for density functions}
		$
    		( \density_s )_n
                =
            			\sum_{ | u | = k }
            				\density_s( u )
            				( \density_u )_n
        		,
		$

		\item
        \label{density functions form partition of unity}
        $
			1
    			\equiv
                	\sum_{ | u | = k }
                		( \density_u )_n
					,
        $
		and

		\item
		\label{action of down operators on density functions}
		$
        	\downOperator_{ n + 1 }
    		( \density_s )_n
    			=	
                		( \density_s )_{ n + 1 }
				.
		$
    	
    \end{enumerate}

\end{prop}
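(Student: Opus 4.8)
The plan is to prove the three identities in order, using the kernel/operator dictionary together with the extended commutation relations of \cref{prop extended commutation relations}. For \ref{Pieri rule for density functions}, I would unwind the definition \eqref{definition of density functions} and the semigroup-like composition of down-kernels. Since $\downOperator_{n,|s|} = \downOperator_{n,k}\downOperator_{k,|s|}$ (immediate from \eqref{defn extended down operators} whenever $n \ge k \ge |s|$), the associated kernels satisfy $\downKernel_{n,|s|}(u,s) = \sum_{|r|=k}\downKernel_{n,k}(u,r)\,\downKernel_{k,|s|}(r,s)$ by the product-of-kernels formula from \cref{ssec:def_kernels_operators}. Reading this through \eqref{definition of density functions}, the left side is $(\density_s)_n(u)$ and the summand is $\density_r(u)\,\density_s(r)$, so I get $(\density_s)_n(u) = \sum_{|r|=k}\density_s(r)\,(\density_r)_n(u)$, which is exactly \ref{Pieri rule for density functions} after renaming $r$ to $u$ and the bound variable. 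One only has to be a little careful that the support conditions in \eqref{definition of density functions} match up, but they do because $n \ge k \ge |s|$.

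For \ref{density functions form partition of unity}, I would simply take $s = \zeroVertex$ in \ref{Pieri rule for density functions} (legitimate since $|\zeroVertex| = 0 \le k$) and use $\density_\zeroVertex \equiv 1$, which was noted just above the proposition as a consequence of \ref{assumption finite state spaces} and \ref{assumption down operators}. This immediately yields $1 \equiv \sum_{|u|=k}(\density_u)_n$. Alternatively, and perhaps more transparently, each $\downOperator_{n,k}$ is a transition operator (stated right after \eqref{defn extended down operators}), so $\sum_{|u|=k}\downKernel_{n,k}(r,u) = 1$ for every $r \in \stateSpace_n$; rewriting the summand as $\density_u(r)$ gives the claim directly. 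I would present whichever is shorter.

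For \ref{action of down operators on density functions}, the cleanest route is to write $(\density_s)_n = \downOperator_{n,|s|}\indicator_s$ for $n \ge |s|$ — this follows from \eqref{identity density as delta function}, \eqref{definition of density functions}, and the definition of the transition operator, since $(\downOperator_{n,|s|}\indicator_s)(u) = \downKernel_{n,|s|}(u,s) = \density_s(u)$. Then $\downOperator_{n+1}(\density_s)_n = \downOperator_{n+1}\downOperator_{n,|s|}\indicator_s = \downOperator_{n+1,|s|}\indicator_s = (\density_s)_{n+1}$, using $\downOperator_{n+1}\downOperator_{n,|s|} = \downOperator_{n+1,|s|}$ from \eqref{defn extended down operators}. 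The main obstacle, if any, is purely bookkeeping: making sure the index ranges ($n \ge k \ge |s|$, and $|s| \le n$ in the last part) are respected throughout so that all the kernels and operators involved are actually defined, and that the ``else'' branch in \eqref{definition of density functions} never interferes. There is no real analytic or combinatorial difficulty — everything reduces to the associativity of kernel composition and the fact that down-operators compose to down-operators.
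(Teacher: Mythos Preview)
Your proposal is correct and essentially matches the paper's proof: both arguments for \ref{Pieri rule for density functions} and \ref{density functions form partition of unity} are identical to the paper's (kernel composition $\downKernel_{n,|s|}=\downKernel_{n,k}\downKernel_{k,|s|}$, then specialize $s=\zeroVertex$). For \ref{action of down operators on density functions} you take the slightly slicker route $(\density_s)_n=\downOperator_{n,|s|}\indicator_s$ and compose, whereas the paper expands $\downOperator_{n+1}(\density_s)_n$ as a sum and invokes \ref{Pieri rule for density functions} with $k=n$; these are the same computation in different clothing. One cosmetic remark: the extended commutation relations you mention in your plan are not actually needed anywhere in this proof, and indeed you never use them.
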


\begin{remark}
	\label{remark discrete filtrations and lower order terms}
	The first identity above implies that the state spaces of our Markov chains induce a filtration on each $ C( \stateSpace_n ) $:
	namely,
	the subspaces
    $
    	\finiteFiltrations_{ k, n }
    		=
    			\linearSpan 	
    					\{
    						( \density_s )_n
    					\}_{ | s | = k }
    $
	form the filtrations
	$$
    	\finiteFiltrations_{ 0, n }
    		\subset
    			\finiteFiltrations_{ 1, n }
					\subset
					\ldots
            		\subset
                    	\finiteFiltrations_{ n, n }
                    		=
                    			C( \stateSpace_n )
		,
			\qquad
			n \ge 0
		.
	$$

	\noindent
	For this reason, we view $ ( \density_r )_n $ as being of `lower order' than $ ( \density_s )_n $ whenever $ | r | < | s | \le n $, and $ \density_r $ as lower order than $ \density_s $ whenever $ | r | < | s | $.
\end{remark}

\begin{proof}

	For the first identity, we evaluate the right-hand side at $ r \in \stateSpace_n $ and apply (\ref{definition of density functions}) and (\ref{defn extended down operators}):
	\begin{align*}
		\sum_{ | u | = k }
    		\density_s( u )
			( \density_u )_n( r )
				 =
    	           		\sum_{ | u | = k }
                            \downKernel_{ k, | s | }( u, s )
                            \downKernel_{ n, k }( r, u )
				 =
                        ( \downKernel_{ n, k } \downKernel_{ k, | s | } )( r, s )
				 =
                        \downKernel_{ n, | s | }( r, s )
				 =
                		\density_s( r )
				.
	\end{align*}

	\noindent
	Taking $ s = \zeroVertex $ in \ref{Pieri rule for density functions} then establishes the second identity (recall that $ \density_\zeroVertex \equiv 1 $).
	The third identity follows immediately from \ref{Pieri rule for density functions} and (\ref{definition of density functions}):
    \[
    	\downOperator_{ n + 1 }
		( \density_s )_n
				 =	
        				\sum_{ | u | = n }
        					\downKernel_{ n + 1 }( \, \cdot \,, u ) 
        					( \density_s )_n( u )
				 =	
        				\sum_{ | u | = n }
        					( \density_u )_{ n + 1 }
        					\density_s( u )
				 = 
    				( \density_s )_{ n + 1 }
				.
				\qedhere
                \]
\end{proof}

The next result illustrates the primary reason we consider the density functions: 
	they lead to triangular descriptions of the transition operators.
\begin{prop}
	\label{prop action of transition operators}

	Let $ n \ge 0 $. 
	The operator
	$
		\upDownOperator_n
	$
	is completely described by the identities
	\begin{align*}
		\upDownOperator_n
		( \density_s )_n
			& =
    				( 1 - \commutationWeightProduct_{ | s |, n } )
            		( \density_s )_n
    			+
    				\commutationWeightProduct_{ | s |, n }
    				\sum_{ | r | = | s | - 1 }
						( \density_r )_n
						\,
        				\upKernel_{ | r | }( r, s )
			,
				\qquad
				| s | \le n
			.
	\end{align*}
\end{prop}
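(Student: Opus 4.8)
The plan is to use the pseudo up-down factorization $\upDownOperator_n = \upOperator_n \downOperator_{n+1}$ from \eqref{UD factorization} together with the extended commutation relation of \cref{prop extended commutation relations}, applied to the density functions. First I would write $( \density_s )_n = \downOperator_{n,|s|}( \density_s )_{|s|}$; this holds because iterating \ref{action of down operators on density functions} from \cref{prop properties of density functions} gives $\downOperator_{n,k}( \density_s )_k = ( \density_s )_n$, and by \eqref{identity density as delta function} we have $( \density_s )_{|s|} = \indicator_s$. Hence
$$
	\upDownOperator_n ( \density_s )_n
		= \upOperator_n \downOperator_{n+1} \downOperator_{n, |s|} \indicator_s
		= \upOperator_n \downOperator_{n+1, |s|} \indicator_s,
$$
where the last equality uses the composition law for the extended down operators (definition \eqref{defn extended down operators}).

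Next I would apply \cref{prop extended commutation relations} with $k = |s|$ (valid since $1 \le |s| \le n$ in the case $|s| \ge 1$) to rewrite $\upOperator_n \downOperator_{n+1, |s|}$ as
$$
	\commutationWeightProduct_{ |s|, n } \downOperator_{ n, |s| - 1 } \upOperator_{ |s| - 1 }
	+ ( 1 - \commutationWeightProduct_{ |s|, n } ) \downOperator_{ n, |s| }.
$$
Applying this to $\indicator_s$: the second term gives $( 1 - \commutationWeightProduct_{ |s|, n } ) \downOperator_{ n, |s| } \indicator_s = ( 1 - \commutationWeightProduct_{ |s|, n } )( \density_s )_n$, again by the iterated form of \ref{action of down operators on density functions} and \eqref{identity density as delta function}. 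For the first term, I would expand $\upOperator_{|s|-1} \indicator_s$ in terms of the up-matrix entries: by the definition \eqref{def of up-matrix}, $\upOperator_{|s|-1}\indicator_s = \sum_{|r| = |s|-1} \upKernel_{|r|}(r,s)\, \indicator_r$ (since $\upOperator_{|s|-1}$ maps $C(\stateSpace_{|s|})$ to $C(\stateSpace_{|s|-1})$, its value at $r$ is the matrix entry). Then $\downOperator_{n,|s|-1} \indicator_r = \downOperator_{n,|s|-1}( \density_r )_{|s|-1} = ( \density_r )_n$, so the first term becomes $\commutationWeightProduct_{ |s|, n } \sum_{|r|=|s|-1} ( \density_r )_n\, \upKernel_{|r|}(r,s)$, which is exactly the claimed formula. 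The case $|s| = 0$ is separate but trivial: then $\commutationWeightProduct_{0,n} = \generatorRates_{-1}/\generatorRates_n = 0$, so the formula reads $\upDownOperator_n( \density_\zeroVertex )_n = ( \density_\zeroVertex )_n$, i.e.\ $\upDownOperator_n \, 1 = 1$, which holds since $\upDownOperator_n$ is conservative.

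Finally, for the claim that these identities \emph{completely describe} $\upDownOperator_n$, I would invoke \cref{remark discrete filtrations and lower order terms}: the functions $\{( \density_s )_n : |s| \le n\}$ span $C(\stateSpace_n)$ (taking $k = n$ in the filtration, $\finiteFiltrations_{n,n} = C(\stateSpace_n)$, and the lower $\finiteFiltrations_{k,n}$ are spanned by the $( \density_s )_n$ with $|s| = k$), and a linear operator on a finite-dimensional space is determined by its values on a spanning set. I expect the main obstacle to be bookkeeping the index ranges carefully — in particular making sure the sub-one-step case $|s|=0$ is handled by the convention $\generatorRates_{-1}=0$ rather than by the commutation relation — and making explicit the (routine but easily mis-stated) identifications $\upOperator_{k-1}\indicator_s = \sum_{|r|=k-1}\upKernel_{|r|}(r,s)\indicator_r$ and $\downOperator_{n,k}\indicator_r = ( \density_r )_n$; everything else is a direct substitution.
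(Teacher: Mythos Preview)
Your proposal is correct and follows essentially the same route as the paper: both arguments use the factorization $\upDownOperator_n = \upOperator_n \downOperator_{n+1}$, collapse the down-operators to $\downOperator_{n+1,|s|}$, apply the extended commutation relation of \cref{prop extended commutation relations}, and then identify the resulting pieces as density functions, with the $|s|=0$ case and the spanning argument handled separately. The only cosmetic difference is that the paper carries the computation out on the kernel side (writing $\upDownOperator_n(\density_s)_n = (\upKernel_n\,\downKernel_{n+1,|s|})(\cdot,s)$ and evaluating pointwise), whereas you stay with operators applied to $\indicator_s$; the content is identical.
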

\begin{remark}
	\label{remark triangularity}
	We use the term {\em triangular description} instead of triangularization because the functions 
	$
		\{ ( \density_s )_n \}_{ | s | \le n }
	$
	are not linearly independent (recall Proposition \ref{prop properties of density functions}\ref{Pieri rule for density functions}).
	\end{remark}

\begin{proof}

	\noindent
	When $ | s | = 0 $, the identity follows from the fact that $ s = \zeroVertex $, $ ( d_\zeroVertex )_n \equiv 1 $, and $ \commutationWeightProduct_{ 0, n } = 0 $ (see (\ref{defn extended commutation weights})).
	Let then $ n \ge | s | \ge 1 $.
    Combining \eqref{UD factorization}, \eqref{definition of density functions}, and \eqref{defn extended down operators} yields
	\begin{align*}
	\upDownOperator_n
	( \density_s )_n
			& = 
					\sum_{ | r | = n + 1 }
						\upKernel_n( \, \cdot \,, r )
    					\sum_{ | u | = n }
    						\downKernel_{ n + 1 }( r, u )
    						( \density_s )_n( u )
			= 
					\sum_{ | r | = n + 1 }
					\sum_{ | u | = n }
						\upKernel_n( \, \cdot \,, r )
    					\downKernel_{ n + 1, n }( r, u )
						\downKernel_{ n, | s | }( u, s )
			\\
			& = 
    				(
        				\upKernel_n
						\downKernel_{ n + 1, n }
        				\downKernel_{ n, | s | }
    				)( \, \cdot \,, s )
			= 
    				(
        				\upKernel_n
						\downKernel_{ n + 1, | s | }
    				)( \, \cdot \,, s )
			.
	\end{align*}

	Now we use Proposition \ref{prop extended commutation relations} and (\ref{definition of density functions}) to evaluate at some $ u \in \stateSpace_n $:
    \begin{align*}
		(
			\upKernel_n
			\downKernel_{ n + 1, | s | }
		)( u, s )
			& = 
    				\commutationWeightProduct_{ | s |, n }
    				(
						\downKernel_{ n, | s | - 1 }
        				\upKernel_{ | s | - 1 }
    				)( u, s )
    			+
    				( 1 - \commutationWeightProduct_{ | s |, n } )
            		\downKernel_{ n, | s | }( u, s )
			\\
			& = 
    				\commutationWeightProduct_{ | s |, n }
    				\sum_{ | r | = | s | - 1 }
						\downKernel_{ n, | s | - 1 }( u, r )
        				\upKernel_{ | r | }( r, s )
    			+
    				( 1 - \commutationWeightProduct_{ | s |, n } )
            		( \density_s )( u )
			\\
			& = 
    				\commutationWeightProduct_{ | s |, n }
    				\sum_{ | r | = | s | - 1 }
						\density_r ( u )
        				\upKernel_{ | r | }( r, s )
    			+
    				( 1 - \commutationWeightProduct_{ | s |, n } )
            		( \density_s )( u )
			.
    \end{align*}

	\noindent
	This establishes the identity.
	To see that these formulas completely describe the operator, recall that
	$
		\upDownOperator_n
	$
	acts on $ C( \stateSpace_n ) $, which is spanned by the functions 
	$ 
		\{
			( \density_s )_n
		\}_{ | s | = n }
	$
	(see (\ref{identity density as delta function})).
\end{proof}

\subsection{The diagonal descriptions}

The next step in our analysis is to obtain diagonal descriptions of the transition operators.
For this, we will need the constants
$$
	\coeffEigInDensityBasis_{ i, j }
		=
			\prod_{ m = i }^{ j - 1 }
				\frac{
						\generatorRates_m
					}{
						\generatorRates_{ m - 1 } - \generatorRates_{ j - 1 }
					}
		,\qquad
	\coeffDensityInEigBasis_{ i, j }
		=
			\prod_{ m = i }^{ j - 1 }
				\frac{
						\generatorRates_m
					}{
						\generatorRates_m - \generatorRates_{ i - 1 }
					}
		,
			\quad\qquad
			0 \le i \le j
		,
$$
and the functions
\begin{equation}
	\label{identity expansion of eigenfunctions}
	\eigenfunction_s
		=
    		\sum_{ | r | \le | s | }
    			\upKernel_{ | r |, | s | }( r, s )
				\,
				\coeffEigInDensityBasis_{ | r |, | s | }
				\,
    			\density_r
		,
			\qquad
			s \in \stateSpace
		.
\end{equation}

\noindent
Note that \ref{assumption increasing generator rates} ensures that the denominators above are nonzero and we have the special cases  $ \coeffDensityInEigBasis_{ i, i } \equiv 1 $ and $ \coeffDensityInEigBasis_{ 0, j } \equiv 1 $ (see (\ref{defn extended commutation weights})).
Moreover, the special case $ \coeffEigInDensityBasis_{ i, i } \equiv 1 $ reveals that
$
	\eigenfunction_s
		=
			\density_s
		+	
			\text{`lower order terms'}
$
and in particular,
$
	\eigenfunction_\zeroVertex
		=
			\density_\zeroVertex
		\equiv
			1
		.
$
We will need to express the density functions in terms of these new functions.
This is done in the next few results.
\begin{lemma}
    \label{lem identity for coefficient of density in eigenbasis}
    The following identity holds:
    \begin{equation*}
		\sum_{ k = i }^{ j }
			\coeffEigInDensityBasis_{ i, k }
			\coeffDensityInEigBasis_{ k, j }
    		=
				\indicator( i = j )
    		,
    			\qquad
				0 \le i \le j
			.
    \end{equation*}
\end{lemma}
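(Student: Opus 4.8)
The identity to prove is $\sum_{k=i}^j \coeffEigInDensityBasis_{i,k}\coeffDensityInEigBasis_{k,j} = \indicator(i=j)$. The two families of constants are defined as products:
$$\coeffEigInDensityBasis_{i,k} = \prod_{m=i}^{k-1}\frac{\generatorRates_m}{\generatorRates_{m-1}-\generatorRates_{k-1}}, \qquad \coeffDensityInEigBasis_{k,j} = \prod_{m=k}^{j-1}\frac{\generatorRates_m}{\generatorRates_m-\generatorRates_{k-1}}.$$

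These look like they should be the entries of two mutually inverse triangular matrices, so the natural strategy is a partial-fraction / generating-function argument. Let me think about what structure is hiding here. The plan is to recognize these as coefficients in a Lagrange-interpolation-type expansion.

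Let me set it up. The numbers $\generatorRates_{-1}=0 < \generatorRates_0 < \generatorRates_1 < \cdots$ are distinct (by Assumption \ref{assumption increasing generator rates}). First I would rewrite $\coeffEigInDensityBasis_{i,k}$: the product $\prod_{m=i}^{k-1}\generatorRates_m$ over $\prod_{m=i}^{k-1}(\generatorRates_{m-1}-\generatorRates_{k-1})$. The denominator, reindexing $m-1 = \ell$, is $\prod_{\ell=i-1}^{k-2}(\generatorRates_\ell - \generatorRates_{k-1}) = (-1)^{k-i}\prod_{\ell=i-1}^{k-2}(\generatorRates_{k-1}-\generatorRates_\ell)$. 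So $\coeffEigInDensityBasis_{i,k} = (-1)^{k-i}\frac{\prod_{m=i}^{k-1}\generatorRates_m}{\prod_{\ell=i-1}^{k-2}(\generatorRates_{k-1}-\generatorRates_\ell)}$. Similarly $\coeffDensityInEigBasis_{k,j} = \frac{\prod_{m=k}^{j-1}\generatorRates_m}{\prod_{m=k}^{j-1}(\generatorRates_m-\generatorRates_{k-1})}$. The key observation I would aim for is that the combined sum telescopes via a partial-fraction identity: consider the rational function in a variable $x$,
$$ \prod_{m=i}^{j-1}\frac{\generatorRates_m}{\generatorRates_m - x}, $$
and expand it in partial fractions as $\sum_{k=i}^{j} \frac{c_k}{\generatorRates_{k-1}-x}$ plus possibly a constant; evaluate at appropriate points. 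Actually a cleaner route: I would try to show directly that $\coeffEigInDensityBasis_{i,k}$ is the coefficient extracting $\generatorRates_{k-1}$ in a partial fraction decomposition of $\prod_{m=i}^{?}(\generatorRates_m - x)^{-1}$-type expressions, and that summing against $\coeffDensityInEigBasis_{k,j}$ reconstructs an expression whose value is $\indicator(i=j)$.

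Concretely, here is the cleanest version I would write up. Define, for fixed $i \le j$, the function $R(x) = \prod_{m=i}^{j-1}\frac{\generatorRates_m}{\generatorRates_m - x}$. When $i=j$ this is the empty product $\equiv 1$; when $i<j$ it is a proper rational function vanishing at $x=\infty$ with simple poles at $x = \generatorRates_i,\dots,\generatorRates_{j-1}$. Partial fractions give $R(x) = \sum_{m=i}^{j-1}\frac{\rho_m}{\generatorRates_m - x}$ with $\rho_m = \generatorRates_m \prod_{m'\ne m, i\le m' \le j-1}\frac{\generatorRates_{m'}}{\generatorRates_{m'}-\generatorRates_m}$. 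The plan is then to check, by a direct computation with these products, that $\coeffDensityInEigBasis_{k,j} = R(\generatorRates_{k-1})$ for each $k$ with $i < k \le j$, while $\coeffEigInDensityBasis_{i,k}$ for $i < k \le j$ is exactly the coefficient in the expansion of the constant function (or of $1 - R$) picking out $R(\generatorRates_{k-1})$; summing then yields a telescoping identity evaluating to $\indicator(i=j)$. The honest statement is that after substituting the product formulas, both sides become explicit rational expressions in $\generatorRates_i,\dots,\generatorRates_{j-1}$, and the identity is an instance of the standard Lagrange/partial-fraction identity $\sum_{m}\prod_{m'\ne m}\frac{z-z_{m'}}{z_m - z_{m'}} = 1$ (and its consequence that a certain alternating sum of such products vanishes when there is a degree gap).

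**Main obstacle.** The combinatorics of matching the product formulas for $\coeffEigInDensityBasis$ and $\coeffDensityInEigBasis$ to the partial-fraction coefficients is fiddly: one has to track the sign $(-1)^{k-i}$, the shift between $\generatorRates_{m-1}$ and $\generatorRates_m$ in the two denominators, and the boundary term at $k=i$ (where $\coeffEigInDensityBasis_{i,i}=1$) versus $k=j$. I expect the bulk of the effort to be bookkeeping of these index shifts to reduce the claim to the classical identity $\sum_{m=0}^{N}\prod_{m'\ne m}\frac{1}{z_m - z_{m'}}\cdot(\text{appropriate factor}) = \indicator(N=0)$, after which it is immediate. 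An alternative that avoids partial fractions entirely — and which I would fall back on if the generating-function route gets messy — is a direct induction on $j-i$: fix $i$, assume the identity for all smaller gaps, split off the $k=j$ term, and use the recursions $\coeffEigInDensityBasis_{i,j} = \coeffEigInDensityBasis_{i,j-1}\cdot\frac{\generatorRates_{j-1}}{\generatorRates_{j-2}-\generatorRates_{j-1}}$-type relations relating the size-$(j-i)$ constants to size-$(j-1-i)$ ones; this turns the identity into a one-line algebraic cancellation at each step, at the cost of being less conceptual.
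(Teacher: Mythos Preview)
Your overall strategy---reduce to a Lagrange interpolation identity---matches the paper's and is correct. However, your intermediate claim that $\coeffDensityInEigBasis_{k,j} = R(\generatorRates_{k-1})$ for $i < k \le j$ fails as stated: for such $k$ the point $\generatorRates_{k-1}$ lies in $\{\generatorRates_i,\dots,\generatorRates_{j-1}\}$, which are precisely the poles of your $R(x) = \prod_{m=i}^{j-1}\frac{\generatorRates_m}{\generatorRates_m - x}$, so $R(\generatorRates_{k-1})$ is undefined. This particular $R$ is not the object to evaluate at these points.

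The cleaner route (and essentially the paper's) is to multiply out $\coeffEigInDensityBasis_{i,k}\,\coeffDensityInEigBasis_{k,j}$ directly. Reindexing the first denominator via $\ell = m-1$ gives
\[
\coeffEigInDensityBasis_{i,k}\,\coeffDensityInEigBasis_{k,j} \;=\; \frac{\prod_{m=i}^{j-1} \generatorRates_m}{\displaystyle\prod_{\substack{\ell = i-1 \\ \ell \ne k-1}}^{j-1}(\generatorRates_\ell - \generatorRates_{k-1})},
\]
so after setting $p=k-1$ the full sum is $\bigl(\prod_{m=i}^{j-1} \generatorRates_m\bigr)$ times the divided difference of the constant function $1$ at the $j-i+1$ distinct nodes $\generatorRates_{i-1},\dots,\generatorRates_{j-1}$, which vanishes unless $i=j$. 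The paper packages the same computation slightly differently: it interpolates the constant $\coeffDensityInEigBasis_{i,j}$ at the nodes $\generatorRates_i,\dots,\generatorRates_{j-1}$ in the Lagrange basis and evaluates the resulting identity at $x = \generatorRates_{i-1}$, obtaining $\coeffDensityInEigBasis_{i,j} = -\sum_{k=i+1}^{j}\coeffEigInDensityBasis_{i,k}\,\coeffDensityInEigBasis_{k,j}$ after simplification. Your induction fallback would also succeed but is unnecessary once the product is written in the form above.
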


\begin{remark}
	Interpreting this as a matrix identity reveals that $ \coeffEigInDensityBasis $ and $ \coeffDensityInEigBasis $ can be interchanged.
\end{remark}

\begin{proof}

	The $ i = j $ case is trivial since $ \coeffEigInDensityBasis_{ i, i } = \coeffDensityInEigBasis_{ i, i } = 1 $.
	Let then $ 0 \le i < j $.
	Consider the Lagrange interpolating polynomial for the points
	$
		\{
			( \generatorRates_{ i }, \coeffDensityInEigBasis_{ i, j } ),
			( \generatorRates_{ i + 1 }, \coeffDensityInEigBasis_{ i, j } ),
			\ldots,
			( \generatorRates_{ j - 1 }, \coeffDensityInEigBasis_{ i, j } )
		\}.
	$
	This polynomial must be $ L( x ) \equiv \coeffDensityInEigBasis_{ i, j } $, since it interpolates the points and its degree is less than the number of points.
	Using the Lagrange basis expansion, we can write this as
	$$
		\coeffDensityInEigBasis_{ i, j }
			\equiv
				\sum_{ k = i }^{ j - 1 }
					\coeffDensityInEigBasis_{ i, j }
					\prod_{ \substack{ m = i \\ m \neq k } }^{ j - 1 }
						\frac{ \generatorRates_m - x }{ \generatorRates_m - \generatorRates_k }
			.
	$$
	
	\noindent
	Evaluating now at $ x = \generatorRates_{ i - 1 } $, we obtain
	\begin{align*}
		\coeffDensityInEigBasis_{ i, j }
			& =
				\sum_{ k = i }^{ j - 1 }
    			\prod_{ m = i }^{ j - 1 }
    				\frac{
    						\generatorRates_m
    					}{
    						\generatorRates_m - \generatorRates_{ i - 1 }
    					}
					\prod_{ \substack{ m = i \\ m \neq k } }^{ j - 1 }
						\frac{ 
							\generatorRates_m - \generatorRates_{ i - 1 } 
						}{ 
							\generatorRates_m - \generatorRates_k 
						}
			=
				\sum_{ k = i }^{ j - 1 }
    				\frac{
							1
    					}{
    						\generatorRates_k - \generatorRates_{ i - 1 }
    					}
    			\prod_{ m = i }^{ j - 1 }
					\generatorRates_m
					\prod_{ m = i }^{ k - 1 }
						\frac{ 
							1
						}{ 
							\generatorRates_m - \generatorRates_k 
						}
					\prod_{ m = k + 1 }^{ j - 1 }
						\frac{ 
							1
						}{ 
							\generatorRates_m - \generatorRates_k 
						}
			\\
			& =
				-\sum_{ k = i }^{ j - 1 }
					\prod_{ m = i }^{ k }
						\frac{ 
							\generatorRates_m
						}{ 
							\generatorRates_{ m - 1 } - \generatorRates_k 
						}
					\prod_{ m = k + 1 }^{ j - 1 }
						\frac{ 
							\generatorRates_m
						}{ 
							\generatorRates_m - \generatorRates_k 
						}
			=
                -\sum_{ k = i }^{ j - 1 }
                    \coeffEigInDensityBasis_{ i, k + 1 }
                    \coeffDensityInEigBasis_{ k + 1, j }
			=
                -\sum_{ k = i + 1 }^{ j }
                    \coeffEigInDensityBasis_{ i, k }
                    \coeffDensityInEigBasis_{ k, j }
			.
	\end{align*}
	
	\noindent
	Recalling that $ \coeffEigInDensityBasis_{ i, i } = 1 $ concludes the proof.
\end{proof}

\begin{prop}
	\label{prop expansion of density in eigenbasis}
	The following expansions hold:
    $$
    	\density_s
    		=
        		\sum_{ | r | \le | s | }
        			\upKernel_{ | r |, | s | }( r, s )
    				\,
    				\coeffDensityInEigBasis_{ | r |, | s | }
    				\,
        			\eigenfunction_r
    		,
				\qquad
				s \in \stateSpace
			.
    $$
\end{prop}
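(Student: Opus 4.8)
The plan is to substitute the definition \eqref{identity expansion of eigenfunctions} of the functions $\eigenfunction_r$ directly into the claimed right-hand side, interchange the (finite) order of summation, and collapse the resulting inner sum with \cref{lem identity for coefficient of density in eigenbasis}. Concretely, writing $j=|s|$ and using \eqref{identity expansion of eigenfunctions} for each $\eigenfunction_r$, one gets
\begin{align*}
\sum_{|r| \le |s|} \upKernel_{|r|,|s|}(r,s)\, \coeffDensityInEigBasis_{|r|,|s|}\, \eigenfunction_r
	&= \sum_{|r| \le j} \upKernel_{|r|,j}(r,s)\, \coeffDensityInEigBasis_{|r|,j}
		\sum_{|t| \le |r|} \upKernel_{|t|,|r|}(t,r)\, \coeffEigInDensityBasis_{|t|,|r|}\, \density_t .
\end{align*}
Collecting the coefficient of $\density_t$, with $i := |t|$, and splitting the sum over $r$ according to its level $k=|r|$ (so $i \le k \le j$), the coefficient becomes
\[
\sum_{k=i}^{j} \coeffEigInDensityBasis_{i,k}\, \coeffDensityInEigBasis_{k,j}
	\Big( \sum_{|r| = k} \upKernel_{i,k}(t,r)\, \upKernel_{k,j}(r,s) \Big).
\]

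Next I would record the "cocycle" (semigroup) identity for the pseudo up-matrices, $\sum_{|r|=k}\upKernel_{i,k}(t,r)\,\upKernel_{k,j}(r,s) = \upKernel_{i,j}(t,s)$ for $i\le k\le j$. This is immediate from \eqref{def of up-matrix}: the operator $\upOperator_i\cdots\upOperator_{j-1}$ factors through $C(\stateSpace_k)$ as $(\upOperator_i\cdots\upOperator_{k-1})(\upOperator_k\cdots\upOperator_{j-1})$, and expanding $\upOperator_k\cdots\upOperator_{j-1}\indicator_s = \sum_{|r|=k}\upKernel_{k,j}(r,s)\indicator_r$ in $C(\stateSpace_k)$ and applying $\upOperator_i\cdots\upOperator_{k-1}$ yields the claim. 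Pulling $\upKernel_{i,j}(t,s)$ out of the $k$-sum, the coefficient of $\density_t$ reduces to $\upKernel_{i,j}(t,s)\sum_{k=i}^{j}\coeffEigInDensityBasis_{i,k}\coeffDensityInEigBasis_{k,j}$, which by \cref{lem identity for coefficient of density in eigenbasis} equals $\upKernel_{i,j}(t,s)\,\indicator(i=j)$.

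Consequently, only the terms with $|t|=|s|=j$ survive, and for those the coefficient is $\upKernel_{j,j}(t,s) = (\indicator_s)(t) = \indicator(t=s)$ (the empty product of up-operators being the identity, so $\upKernel_{j,j}$ is the identity matrix). Hence the whole expression collapses to $\density_s$, which is exactly the claim, and no linear independence of the $\density_t$ is needed since we have computed the sum explicitly. I expect the main obstacle to be purely the bookkeeping of the triple sum together with the (elementary) verification of the cocycle identity for $\upKernel$; the two substantive inputs, namely \eqref{identity expansion of eigenfunctions} and \cref{lem identity for coefficient of density in eigenbasis}, carry the essential content. An alternative route would be an induction on $|s|$ peeling off the top level via \cref{prop properties of density functions}\ref{Pieri rule for density functions} and the commutation relations, but the direct substitution above seems cleaner and avoids handling lower-order terms by hand.
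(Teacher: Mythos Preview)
Your proposal is correct and follows essentially the same approach as the paper: substitute \eqref{identity expansion of eigenfunctions} into the right-hand side, swap the finite sums, use the semigroup identity $\sum_{|r|=k}\upKernel_{i,k}(t,r)\,\upKernel_{k,j}(r,s)=\upKernel_{i,j}(t,s)$ coming from \eqref{def of up-matrix}, and then collapse with \cref{lem identity for coefficient of density in eigenbasis} to leave only the $|t|=|s|$ term $\density_s$. The paper's proof is exactly this computation.
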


\begin{proof}%
    Let $ s \in \stateSpace $.
    Using (\ref{identity expansion of eigenfunctions}), the definition of the up-matrices (\ref{def of up-matrix}), and Lemma \ref{lem identity for coefficient of density in eigenbasis}, we can compute
    \begin{align*}
		\sum_{ | r | \le | s | }
			\upKernel_{ | r |, | s | }( r, s )
			\,
			\coeffDensityInEigBasis_{ | r |, | s | }
			\,
			\eigenfunction_r
				& = 
            		\sum_{ | r | \le | s | }
            			\upKernel_{ | r |, | s | }( r, s )
            			\,
            			\coeffDensityInEigBasis_{ | r |, | s | }
            			\,
                		\left(\sum_{ | u | \le | r | }
                    			\upKernel_{ | u |, | r | }( u, r )
                				\,
                				\coeffEigInDensityBasis_{ | u |, | r | }
                				\,
                    			\density_u\right)
				\\
				& = 
            		\sum_{ | u | \le | s | }
            			\density_u
    					\left(\sum_{ k = | u | }^{ | s | }
            				\coeffEigInDensityBasis_{ | u |, k }
            				\,
                			\coeffDensityInEigBasis_{ k, | s | }
                    		\sum_{ | r | = k }
                    			\upKernel_{ | u |, k }( u, r )
                    			\upKernel_{ k, | s | }( r, s ) \right)
				\\
				& = 
            		\sum_{ | u | \le | s | }
            			\density_u
    					\left( \sum_{ k = | u | }^{ | s | }
            				\coeffEigInDensityBasis_{ | u |, k }
            				\,
                			\coeffDensityInEigBasis_{ k, | s | }
            				\,
                			\upKernel_{ | u |, | s | }( u, s )  \right)
				\\
				& = 
            		\sum_{ | u | \le | s | }
            			\density_u
        				\,
            			\upKernel_{ | u |, | s | }( u, s ) 
    					\indicator( | u | = | s | )
				\\
				& = 
            		\sum_{ | u | = | s | }
            			\density_u
        				\,
            			\upKernel_{ | s |, | s | }( u, s ) 
				\\			
				& = 
            		\density_s
				. \qedhere
    \end{align*}
\end{proof}

We now obtain diagonal descriptions for the transition operators.
\begin{prop}
	\label{prop eigenbasis}
	Let $ n \ge 0 $. 
	The operator
	$
		\upDownOperator_n
	$
	is completely described by the identities 
	\begin{equation}\label{eq:Tnh}
    	\upDownOperator_n
    	( \eigenfunction_s )_n
    		=
        		( 1 - \commutationWeightProduct_{ | s |, n } )
				( \eigenfunction_s )_n
			,
				\qquad
				| s | \le n
			.
		\end{equation}
    Moreover, the eigenspaces of $T_n$ provide the decomposition
	\begin{equation}\label{eq:eigenspaces}
		C( \stateSpace_n )
			=
        		\bigoplus_{ k = 0 }^n
            		\,
        			\linearSpan 	
            			\{
            				( \eigenfunction_s )_n
            			\}_{ | s | = k }
			,
	\end{equation}
	and the multiplicities of the eigenvalues are given by
	\begin{equation*}%
		\dim 
		\linearSpan 	
    		\{
    			( \eigenfunction_s )_n
    		\}_{ | s | = k }
       			=
        			\begin{cases}
           				| \stateSpace_k | - | \stateSpace_{ k - 1 } |,
            				&
            					\quad
								1 \le k \le n,
            				\\
        				1,
            				&
            					\quad
								k = 0
            				.
        			\end{cases}
	\end{equation*}
\end{prop}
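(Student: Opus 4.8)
\emph{The eigenvalue identity \eqref{eq:Tnh}.} I would substitute the expansion \eqref{identity expansion of eigenfunctions} of $\eigenfunction_s$ into $\upDownOperator_n(\eigenfunction_s)_n$ and apply the triangular description of Proposition~\ref{prop action of transition operators} to each $(\density_r)_n$ that appears. Regrouping the result by the density functions $(\density_u)_n$, and using the composition rule $\sum_{|r| = k+1}\upKernel_k(u,r)\,\upKernel_{k+1,|s|}(r,s) = \upKernel_{k,|s|}(u,s)$ for the up-matrices of \eqref{def of up-matrix}, the identity \eqref{eq:Tnh} reduces --- away from the trivial top term $k = |s|$ --- to the scalar relation
$$\coeffEigInDensityBasis_{k,|s|}\,(\generatorRates_{|s|-1} - \generatorRates_{k-1}) + \coeffEigInDensityBasis_{k+1,|s|}\,\generatorRates_k = 0, \qquad 0 \le k < |s|,$$
which is precisely the one-step recursion in the product defining $\coeffEigInDensityBasis_{k,|s|}$ (the denominators being nonzero thanks to \ref{assumption increasing generator rates}). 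This bookkeeping with the constants is the only genuinely computational step and the one I expect to be the main obstacle; everything after it is linear algebra.

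\emph{The decomposition \eqref{eq:eigenspaces}.} By \eqref{identity density as delta function} the functions $(\density_s)_n$ with $|s| = n$ form the standard basis of $C(\stateSpace_n)$, and Proposition~\ref{prop expansion of density in eigenbasis} expresses each of them as a combination of the $(\eigenfunction_r)_n$ with $|r| \le n$; hence $\{(\eigenfunction_s)_n\}_{|s| \le n}$ spans $C(\stateSpace_n)$, so in particular \eqref{eq:Tnh} determines $\upDownOperator_n$ completely. By the first step, $\linearSpan\{(\eigenfunction_s)_n\}_{|s| = k}$ lies in the $(1 - \commutationWeightProduct_{k,n})$-eigenspace of $\upDownOperator_n$, and the values $1 - \commutationWeightProduct_{k,n} = 1 - \generatorRates_{k-1}/\generatorRates_n$ are pairwise distinct for $0 \le k \le n$ since $\generatorRates_{-1} = 0 < \generatorRates_0 < \generatorRates_1 < \cdots$ by \ref{assumption increasing generator rates}. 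Therefore the sum over $k$ of these subspaces is direct, and it equals $C(\stateSpace_n)$ by the spanning property; this is \eqref{eq:eigenspaces}, and it also forces each $\linearSpan\{(\eigenfunction_s)_n\}_{|s|=k}$ to be the full eigenspace, so that $\upDownOperator_n$ is diagonalizable.

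\emph{The multiplicities.} For the dimension count I would work with the filtration $\finiteFiltrations_{k,n} = \linearSpan\{(\density_s)_n\}_{|s|=k}$ of Remark~\ref{remark discrete filtrations and lower order terms}. The key observation is that $(\density_s)_n = \downOperator_{n,|s|}\indicator_s$, immediate from \eqref{definition of density functions} and \eqref{defn extended down operators}; consequently $\finiteFiltrations_{k,n} = \downOperator_{n,k}\big(C(\stateSpace_k)\big)$, and since $\downOperator_{n,k}$ is a composition of the injective operators $\downOperator_j$ of \ref{assumption injective down operators} it is injective, so $\dim \finiteFiltrations_{k,n} = |\stateSpace_k|$. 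On the other hand, \eqref{identity expansion of eigenfunctions} gives $(\eigenfunction_s)_n \in \finiteFiltrations_{|s|,n}$ while Proposition~\ref{prop expansion of density in eigenbasis} gives $(\density_s)_n \in \sum_{j \le |s|} \linearSpan\{(\eigenfunction_r)_n\}_{|r| = j}$; together with the directness from the previous step this forces $\finiteFiltrations_{k,n} = \bigoplus_{j=0}^k \linearSpan\{(\eigenfunction_s)_n\}_{|s|=j}$. Comparing dimensions then gives $\dim \linearSpan\{(\eigenfunction_s)_n\}_{|s|=k} = |\stateSpace_k| - |\stateSpace_{k-1}|$ for $1 \le k \le n$ and $= |\stateSpace_0| = 1$ for $k = 0$, which completes the proof.
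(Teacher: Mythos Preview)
Your proposal is correct and follows essentially the same route as the paper: the eigenvalue identity is obtained by expanding $\eigenfunction_s$ in the $\density_r$, applying the triangular formula of Proposition~\ref{prop action of transition operators}, and invoking the one-step recursion for $\coeffEigInDensityBasis$ (which the paper records as \eqref{eta recursion}); the spanning and direct-sum arguments are identical; and your multiplicity computation via $\finiteFiltrations_{k,n}=\downOperator_{n,k}(C(\stateSpace_k))$ is just a cleaner packaging of the paper's argument, which establishes $\dim\finiteFiltrations_{k,n}=|\stateSpace_k|$ by showing $\{(\density_s)_n\}_{|s|=k}$ is independent (as the image of an independent set under the injective $\downOperator_{n,k}$) and then takes the same dimension differences.
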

\begin{remark}
	\label{remark diagonal}
	
	We call this a \emph{diagonal description} instead of a diagonalization because the functions $ \{ ( \eigenfunction_s )_n \}_{ | s | \le n } $ need not be non-zero or independent.
\end{remark}

\begin{proof}
	We first prove \eqref{eq:Tnh}. The case $s=\zeroVertex$ is trivial, so let us assume $1 \le |s| \le n$.
	Applying (\ref{identity expansion of eigenfunctions}) and Proposition \ref{prop action of transition operators} gives us that
	\begin{align}
		( \upDownOperator_n \orange{- I} )
		( \eigenfunction_s )_n
			& =
        			\sum_{ | r | \le | s | }
        				\upKernel_{ | r |, | s | }( r, s )
        				\,
        				\coeffEigInDensityBasis_{ | r |, | s | }
        				\,
                		( \upDownOperator_n - I )
        				( \density_r )_n
		\nonumber	\\
			& = 
					\sum_{ | r | \le | s | }
        				\upKernel_{ | r |, | s | }( r, s )
        				\,
        				\coeffEigInDensityBasis_{ | r |, | s | }
        				\,
						\bigg(
                				( - \commutationWeightProduct_{ | r |, n } )
                        		( \density_r )_n
 				+
                				\commutationWeightProduct_{ | r |, n }
                				\sum_{ | u | = | r | - 1 }
            						( \density_u )_n
            						\,
                    				\upKernel_{ | u | }( u, r )        				
        				\bigg)
			\nonumber	\\
			& =
				-\!\!\sum_{ | r | \le | s | }
        				\upKernel_{ | r |, | s | }( r, s )
        				\,
        				\coeffEigInDensityBasis_{ | r |, | s | }
        				\,
				\commutationWeightProduct_{ | r |, n }  ( \density_r )_n
			+
				\hspace{-3mm}
				\sum_{ | r | \le | s | \atop  | u | = | r | - 1  }
					\hspace{-2mm}
					\upKernel_{ | r |, | s | }( r, s )
    				\,
    				\coeffEigInDensityBasis_{ | r |, | s | }
					\,
					\commutationWeightProduct_{ | r |, n }
                	( \density_u )_n
					\,
    				\upKernel_{ | u | }( u, r )     .\label{eq:Tsh}
	\end{align}
We then use (\ref{defn extended commutation weights}) and the identity
	\begin{equation}
		\label{eta recursion}
			(
						\generatorRates_{ i - 1 }
			-
						\generatorRates_{ j - 1 }
			)\,
			\coeffEigInDensityBasis_{ i, j }
			=
				\coeffEigInDensityBasis_{ i + 1, j }
				\,
						\generatorRates_i,
				\qquad
				0 \le i \le j - 1
			,
	\end{equation}
	to rewrite the contribution from the double sum as
	\begin{align*}
		\sum_{ | r | \le | s | \atop  | u | = | r | - 1  }
			\upKernel_{ | r |, | s | }( r, s )
			\,
			\coeffEigInDensityBasis_{ | r |, | s | }
			\,
			\commutationWeightProduct_{ | r |, n }
			( \density_u )_n
			\,
			\upKernel_{ | u | }( u, r )
        			& = 
        				 	\sum_{ | u | < | s | }
        						( \density_u )_n
                				\coeffEigInDensityBasis_{ | u | + 1, | s | }
                				\,
                				\commutationWeightProduct_{ | u | + 1, n }
                			\hspace{-2.9mm}
							\sum_{ | r | = | u | + 1 }
                				\hspace{-3mm}
								\upKernel_{ | u |, | r | }( u, r )
                				\,
        						\upKernel_{ | r |, | s | }( r, s )
        			\\
        			& = 
        					\sum_{ | u | < | s | }
        						( \density_u )_n
                				\frac{
									\coeffEigInDensityBasis_{ | u | + 1, | s | }
                    				\,
									\generatorRates_{ | u | }
								}{ 
									\generatorRates_{ n }
								}
        						\upKernel_{ | u |, | s | }( u, s )
        			\\
        			& = 
        					\sum_{ | u | < | s | }
        						\upKernel_{ | u |, | s | }( u, s )
                				\,
        						( \density_u )_n
                				\frac{
									\coeffEigInDensityBasis_{ | u |, | s | }
									( 
										\generatorRates_{ | u | - 1 }
									-
										\generatorRates_{ | s | - 1 }
									)	
								}{ 
									\generatorRates_{ n }
								}
        			\\
        			& = 
        					\sum_{ | u | \le | s | }
        						\upKernel_{ | u |, | s | }( u, s )
                				\,
        						( \density_u )_n
                				\,
                				\coeffEigInDensityBasis_{ | u |, | s | }
                				\,
        						( \commutationWeightProduct_{ | u |, n } - \commutationWeightProduct_{ | s |, n } )
					\\
        			& = 
				- \commutationWeightProduct_{ | s |, n } (h_s)_n + 
				\sum_{ | u | \le | s | }
        						\upKernel_{ | u |, | s | }( u, s )
                				\,
        						( \density_u )_n
                				\,
                				\coeffEigInDensityBasis_{ | u |, | s | }
                				\,
 \commutationWeightProduct_{ | u |, n }
        			.
	\end{align*}
	\orange{Substituting this form} into \eqref{eq:Tsh} \orange{then results in a cancellation of sums, establishing \eqref{eq:Tnh}}.
	\orange{Moving on to \eqref{eq:eigenspaces}}, Proposition \ref{prop expansion of density in eigenbasis} reveals that the functions
	$
		\{
			( \eigenfunction_s )_n
		\}_{ | s | \le n }
	$
	span
	$
		C( \stateSpace_n ).
	$
	The direct sum decomposition then follows from observing that the summands correspond to eigenspaces \orange{(see \eqref{eq:Tnh})} with distinct eigenvalues (see (\ref{defn extended commutation weights}) and \ref{assumption increasing generator rates}.
	\orange{Finally}, recall from \ref{assumption injective down operators} and (\ref{identity density as delta function}) that the down-operators are injective and the set
	$
		\{
			( \density_s )_k
		\}_{ | s | = k }
	$
	is independent for any $ k $.
	Applying Proposition \ref{prop properties of density functions}\ref{action of down operators on density functions}, we find that the following sets are independent:
	$$
		\{
			( \density_s )_n	
		\}_{ | s | = k }
			=
        		\downOperator_n
				\downOperator_{ n - 1 }
				\cdots
				\downOperator_{ k + 1 }
				\{
        			( \density_s )_k
        		\}_{ | s | = k }
			,
				\qquad
				0 \le k \le n
			.
	$$

	\noindent
	Combining this with Propositions~\ref{prop properties of density functions}\ref{Pieri rule for density functions} and \ref{prop expansion of density in eigenbasis} and the independence of the eigenspaces, we obtain the identity
	\begin{align*}
		| \stateSpace_k |
	-
		| \stateSpace_{ k - 1 } |
			& = 
            		\dim
            		\linearSpan 	
                		\{
                			( \density_s )_n
                		\}_{ | s | = k }
            	-
            		\dim
            		\linearSpan 	
                		\{
                			( \density_s )_n
                		\}_{ | s | = k - 1 }
			\\
			& = 
               		\dim
               		\linearSpan 	
                   		\{
                   			( \density_s )_n
                   		\}_{ | s | \le k }
               	-
               		\dim
               		\linearSpan 	
                   		\{
                   			( \density_s )_n
                   		\}_{ | s | \le k - 1 }
			\\
			& = 
               		\dim
               		\linearSpan 	
                   		\{
                   			( \eigenfunction_s )_n
                   		\}_{ | s | \le k }
               	-
               		\dim
               		\linearSpan 	
                   		\{
                   			( \eigenfunction_s )_n
                   		\}_{ | s | \le k - 1 }
			\\
			& = 
            		\dim
            		\linearSpan 	
                		\{
                			( \eigenfunction_s )_n
                		\}_{ | s | = k }
	\end{align*}

	\noindent
	for $ 1 \le k \le n $.
	This establishes the final claim (the $ k = 0 $ case is trivial).
\end{proof}

\subsection{Large time behavior}
\label{ssec:large_time}
We now use the diagonal descriptions to carry out an asymptotic analysis.
We begin by considering the behavior of the chains for large time.
\begin{prop}
	\label{prop density estimate}
	Let
	$
		f 
			= 
				a_\zeroVertex
				\eigenfunction_\zeroVertex
			+
				\sum_{ | s | = j }^k
    				a_s
    				\eigenfunction_s
	$
	for some $ k \ge j > 0 $.
	Then there exists some $B_f>0$ such that
	$$
		\Big|
    		\mbb E
    		\left[
    				f
    				\big( 
    					\upDownChain_n ( m )
    				\big)
    		\right]
		-
    		a_\zeroVertex
		\Big|
			\le
        		B_f
                ( 1 - \commutationWeightProduct_{ j, n } )^m
			,
				\qquad
				m \ge 0,
				\,
				n \ge k,
	$$
	\noindent
	for any initial distribution.
	Consequently, we have the convergence (for any initial distribution)
	$$
		\mbb E
		\left[
				\density_s
				\big( 
					\upDownChain_n ( m )
				\big)
		\right]
				\xrightarrow[ m \to \infty ]{}
					\upKernel_{ 0, | s | }( \zeroVertex, s )
				,
					\qquad
					n \ge | s |
				.
	$$
\end{prop}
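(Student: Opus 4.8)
The plan is to reduce everything to the diagonal description of $\upDownOperator_n$ from \cref{prop eigenbasis}. Fix an arbitrary initial distribution $\lambda$ for $\upDownChain_n$ (a probability measure on $\stateSpace_n$); then $\mbb E[g(\upDownChain_n(m))] = \sum_{x \in \stateSpace_n}\lambda(x)(\upDownOperator_n^m g)(x)$ for $g \in C(\stateSpace_n)$, which is bounded in absolute value by $\norm{\upDownOperator_n^m g}_{C(\stateSpace_n)}$ since $\lambda$ is a probability measure. I would apply this with $g = (f)_n$. By \cref{prop eigenbasis} and linearity, using $\eigenfunction_\zeroVertex \equiv 1$ and $\commutationWeightProduct_{0,n} = 0$ (see \eqref{defn extended commutation weights}),
\[
	\upDownOperator_n^m (f)_n
		=
			a_\zeroVertex
		+
			\sum_{|s| = j}^{k}
				a_s\,
				(1 - \commutationWeightProduct_{|s|, n})^m\,
				(\eigenfunction_s)_n
	,
		\qquad
		n \ge k,
		\,
		m \ge 0
	,
\]
so $\big|\mbb E[f(\upDownChain_n(m))] - a_\zeroVertex\big| \le \sum_{|s| = j}^{k}|a_s|\,(1 - \commutationWeightProduct_{|s|,n})^m\,\norm{(\eigenfunction_s)_n}_{C(\stateSpace_n)}$.

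Two elementary facts then finish the first assertion. Since $\{\generatorRates_n\}$ is increasing (Assumption~\ref{assumption increasing generator rates}) and $j \le |s| \le k \le n$, the numbers $\commutationWeightProduct_{|s|,n} = \generatorRates_{|s|-1}/\generatorRates_n$ satisfy $0 \le 1 - \commutationWeightProduct_{|s|,n} \le 1 - \commutationWeightProduct_{j,n} < 1$, whence $(1 - \commutationWeightProduct_{|s|,n})^m \le (1 - \commutationWeightProduct_{j,n})^m$. The one point that needs genuine care is a bound on $\norm{(\eigenfunction_s)_n}_{C(\stateSpace_n)}$ that is \emph{uniform in $n$}: expanding via \eqref{identity expansion of eigenfunctions} gives $\norm{(\eigenfunction_s)_n}_{C(\stateSpace_n)} \le \sum_{|r| \le |s|}\big|\upKernel_{|r|,|s|}(r,s)\big|\,\big|\coeffEigInDensityBasis_{|r|,|s|}\big|\,\norm{(\density_r)_n}_{C(\stateSpace_n)}$, and since $\downOperator_{n+1}$ --- hence every $\downOperator_{n,k}$ --- is a genuine transition operator (Assumption~\ref{assumption down operators}), the values $(\density_r)_n(u) = \downKernel_{n,|r|}(u,r)$ lie in $[0,1]$ (see \eqref{definition of density functions}), so $\norm{(\density_r)_n}_{C(\stateSpace_n)} \le 1$ for all $n$. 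The bound on $\norm{(\eigenfunction_s)_n}_{C(\stateSpace_n)}$ is thus a constant $C_s$ independent of $n$, and one may take $B_f = \sum_{|s|=j}^{k}|a_s|\,C_s$.

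For the second assertion, the case $|s| = 0$ is immediate ($\density_\zeroVertex \equiv 1$ and $\upKernel_{0,0}(\zeroVertex,\zeroVertex) = 1$). For $|s| \ge 1$, I would apply \cref{prop expansion of density in eigenbasis}, isolating its level-$0$ term by means of $\coeffDensityInEigBasis_{0,|s|} \equiv 1$ and $\eigenfunction_\zeroVertex \equiv 1$, to get
\[
	\density_s
		=
			\upKernel_{0,|s|}(\zeroVertex, s)
		+
			\sum_{1 \le |r| \le |s|}
				\upKernel_{|r|,|s|}(r,s)\,
				\coeffDensityInEigBasis_{|r|,|s|}\,
				\eigenfunction_r
	.
\]
This is of the form just treated, with $a_\zeroVertex = \upKernel_{0,|s|}(\zeroVertex,s)$, $j = 1$, and $k = |s|$, so the first assertion yields $\big|\mbb E[\density_s(\upDownChain_n(m))] - \upKernel_{0,|s|}(\zeroVertex,s)\big| \le B_{\density_s}(1 - \commutationWeightProduct_{1,n})^m$ for every $n \ge |s|$. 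Since $\commutationWeightProduct_{1,n} = \generatorRates_0/\generatorRates_n \in (0,1)$ for $n \ge 1$ (by strict monotonicity of $\{\generatorRates_n\}$), the right-hand side tends to $0$ as $m \to \infty$, which is the stated convergence. The one real obstacle throughout is the $n$-uniform control of $\norm{(\eigenfunction_s)_n}_{C(\stateSpace_n)}$; once one observes that the density functions take values in $[0,1]$ because the down-operators are honest Markov kernels, the rest is bookkeeping with \cref{prop eigenbasis,prop expansion of density in eigenbasis}.
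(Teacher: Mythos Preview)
The proposal is correct and follows essentially the same approach as the paper: both apply the diagonal description from \cref{prop eigenbasis} to get $\upDownOperator_n^m(\eigenfunction_s)_n=(1-\commutationWeightProduct_{|s|,n})^m(\eigenfunction_s)_n$, bound the result using a uniform-in-$n$ bound on $\|(\eigenfunction_s)_n\|_{C(\stateSpace_n)}$ coming from $0\le\density_r\le 1$, and then read off the $a_\zeroVertex$ coefficient for $\density_s$ via \cref{prop expansion of density in eigenbasis}. Your write-up is slightly more explicit about the source of the uniform bound (Markov kernels force $\density_r\in[0,1]$) and about the trivial $|s|=0$ case, but the structure is identical to the paper's argument.
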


\begin{proof}
    First observe that each $ \eigenfunction_s $ is bounded on $ \stateSpace $: 
    indeed, each $\density_r$ is bounded, and the functions $\eigenfunction_s$
    are linear combinations of $\density_r$ (see \eqref{identity expansion of eigenfunctions}).
    Therefore, for $ s \in \stateSpace $, $ m \ge 0 $, and $ n \ge | s | $, 
    we can apply Proposition \ref{prop eigenbasis} to obtain the estimate
    \begin{multline*}
    	\left|
        	\mbb E
        	\left[
        			\eigenfunction_s
        			\big( 
        				\upDownChain_n ( m )
        			\big)
        	\right]
    	\right|
    	    		 \le 
            	\sup_{ u \in \stateSpace_n }
    			\left|
        				\mbb E
                    	\left[
                    			\eigenfunction_s
                    			\big( 
                    				\upDownChain_n ( m )
                    			\big)
        				\big|
        						\upDownChain_n( 0 ) = u
                    	\right]
            	\right| \\
    	= 
            	\sup_{ u \in \stateSpace_n }
    			\left|
    				( 
					\upDownOperator_n^m
                	( \eigenfunction_s )_n
                	)( u )
            	\right|
    	 = 
                ( 1 - \commutationWeightProduct_{ | s |, n } )^m
                \sup_{ u \in \stateSpace_n }
                	| \eigenfunction_s ( u ) |
        \le 
        		( 1 - \commutationWeightProduct_{ | s |, n } )^m
                \sup_{ u \in \stateSpace }
                	| \eigenfunction_s ( u ) |
		.
    \end{multline*}

    \noindent
    Given now
	$
		f 
			= 
				a_\zeroVertex
				\eigenfunction_\zeroVertex
			+
				\sum_{ | s | = j }^k
    				a_s
    				\eigenfunction_s
			,
	$
	$ m \ge 0 $, and $ n \ge k $, we can write (recall that $ \eigenfunction_\zeroVertex \equiv 1 $)
    \begin{align*}
    	\Big|
        	\mbb E
        	\left[
        			f
        			\big( 
        				\upDownChain_n ( m )
        			\big)
        	\right]
    	-
    		a_\zeroVertex
    	\Big|
			& =
            	\bigg|
    				a_\zeroVertex
                	\mbb E
                	\left[
    				\eigenfunction_\zeroVertex
        			\big( 
        				\upDownChain_n ( m )
        			\big)
                	\right]
				+
    				\sum_{ | s | = j }^k
        				a_s
                    	\mbb E
                    	\left[
        				\eigenfunction_s
            			\big( 
            				\upDownChain_n ( m )
            			\big)
                    	\right]
            	-
            		a_\zeroVertex
            	\bigg| 
		\\
		 & \le
				\sum_{ | s | = j }^k
    				| a_s |
                	\left|
						\mbb E
                        	\left[
                				\eigenfunction_s
                    			\big( 
                    				\upDownChain_n ( m )
                    			\big)
                        	\right]
					\right|
		 \le
        		( 1 - \commutationWeightProduct_{ j, n } )^m
				\sum_{ | s | = j }^k
    				| a_s |
                \sup_{ u \in \stateSpace }
                	| \eigenfunction_s ( u ) |
			.
    \end{align*}
    Taking $B_f$ to be the above sum establishes the inequality.
    Recalling that $ \commutationWeightProduct_{ j, n } $ lies in $ ( 0, 1 ) $ shows that the expectations should converge to $ a_\zeroVertex $ as $ m \to \infty $.
	In the case when $ f = \density_s $, this coefficient can be identified from the expansion in Proposition \ref{prop expansion of density in eigenbasis} (recall that $ \coeffDensityInEigBasis_{ 0, j } \equiv 1 $).
\end{proof}

We wish to obtain convergence in distribution from this result.
For this, we will show that the limiting densities $ \upKernel_{ 0, | s | }( \zeroVertex, s ) $ are the expected densities with respect to some distribution.
A crucial observation here is that the formula
\begin{equation}
	\upDownDist_n
		=
        	\upKernel_{ 0, n }( \zeroVertex, \, \cdot \, )
		,
			\qquad
			n \ge 0
		,
	\label{defn stationary measures}
\end{equation}

\noindent
defines a probability measure on each state space.
This can be verified using Propositions~\ref{prop density estimate}~and~\ref{prop properties of density functions}\ref{density functions form partition of unity}.
We investigate the properties of these measures in the following result.
Here we will need to consider the spaces
\begin{equation*}
	\orange{\spaceOfDensityFunctions
		=
			\linearSpan
        		\{
        			\eigenfunction_s
        		\}_{ s \in \stateSpace }
		,
			\quad\qquad
    \spaceOfDensityFunctions_n
    	=
    		\linearSpan\{ \eigenfunction_s \}_{ | s | \le n }
		,
			\quad
			n \ge 0}
		,
\end{equation*}
and the coefficient functional $[ \eigenfunction_\zeroVertex ] \colon \spaceOfDensityFunctions \to \mbb R $ mapping
$
	f = {\textstyle \sum_s} a_s \eigenfunction_s
$
to $ a_\zeroVertex $.
Note that this functional is well-defined since the functions
$
	\{
		\eigenfunction_s
	\}_{ s \in \stateSpace }
$
are independent.
\begin{prop}
    \label{prop properties of stationary measures}
    Let $ n \ge 0 $.
    The following statements hold:
    \begin{enumerate}[ label = (\roman*) ]
    	
    	\item
    	\label{up consistency of stationary measures}
    	$
    		\upDownDist_n \upKernel_n
    			=
    				\upDownDist_{ n + 1 }
    			,
    	$
    
    	\item
    	\label{down consistency of stationary measures}
    	$
    		\upDownDist_{ n + 1 } 
    		\downKernel_{ n + 1 }
    			=
    				\upDownDist_n
    			,
    	$
    		
    	\item
    	\label{up down stationary measures}
    	$ \upDownDist_n $ is the unique stationary distribution of $ \upDownChain_n $, 
    
    	\item
    	\label{action of stationary measures on density functions}
    	$
    		\upDownDist_{ | s | }( s )
				=
					\orange{\int_{ \stateSpace_n }
						( \density_s )_n
						\,
						d\upDownDist_n}
    	$
    	for $ | s | \le n $,
    	and
    	\item
    	\label{integrating against stationary measure}
    	$
    		{
    		[ \eigenfunction_\zeroVertex ]
    		f
    			=
    				\int_{ \stateSpace_n }
    					( f )_n
    					d \upDownDist_n
    		}
    	$
    	for $ f \in \spaceOfDensityFunctions_n $.
    \end{enumerate}
\end{prop}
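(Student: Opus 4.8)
The plan is to prove the five claims in the order \ref{up consistency of stationary measures}, \ref{up down stationary measures}, \ref{down consistency of stationary measures}, \ref{action of stationary measures on density functions}, \ref{integrating against stationary measure}, the only substantive input being the large-time estimate \cref{prop density estimate} (and hence, ultimately, the diagonal description); everything else is manipulation of the kernel identities coming from the commutation relation.

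Claim \ref{up consistency of stationary measures} is immediate: by \eqref{def of up-matrix}, $\upDownDist_n(\cdot) = \upKernel_{0,n}(\zeroVertex,\cdot)$ is the $\zeroVertex$-row of the (pseudo-)kernel of $\upOperator_0\cdots\upOperator_{n-1}$, so associativity of composition gives $\upKernel_{0,n}\upKernel_n = \upKernel_{0,n+1}$, that is, $\upDownDist_n\upKernel_n = \upDownDist_{n+1}$. For \ref{up down stationary measures}, apply \cref{prop density estimate} to $f = \density_s$ with $|s| = n$: since $(\density_s)_n = \indicator_s$ by \eqref{identity density as delta function}, it asserts that $\prob(\upDownChain_n(m) = s) \to \upKernel_{0,n}(\zeroVertex,s) = \upDownDist_n(s)$ as $m\to\infty$, for every $s\in\stateSpace_n$ and every initial law. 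As $\stateSpace_n$ is finite and $\upDownDist_n$ is a probability measure, this says that the law of $\upDownChain_n(m)$ converges to $\upDownDist_n$ from any starting point, i.e.\ that $\upDownChain_n$ is ergodic; the limit $\upDownDist_n$ is then automatically its unique stationary distribution. Claim \ref{down consistency of stationary measures} combines these two: \eqref{UD factorization} reads $\upDownKernel_n = \upKernel_n\downKernel_{n+1}$ at the level of (pseudo-)kernels, so stationarity gives $\upDownDist_n = \upDownDist_n\upDownKernel_n = (\upDownDist_n\upKernel_n)\downKernel_{n+1} = \upDownDist_{n+1}\downKernel_{n+1}$.

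For \ref{action of stationary measures on density functions}, iterate \ref{down consistency of stationary measures} to obtain $\upDownDist_n\downKernel_{n,k} = \upDownDist_k$ for $0\le k\le n$, where $\downKernel_{n,k}$ denotes the kernel of $\downOperator_{n,k}$; evaluating at $s$ with $|s| = k$ and using $\density_s(u) = \downKernel_{n,|s|}(u,s)$ for $|u| = n$ gives $\int_{\stateSpace_n}(\density_s)_n\,d\upDownDist_n = (\upDownDist_n\downKernel_{n,|s|})(s) = \upDownDist_{|s|}(s)$. For \ref{integrating against stationary measure}, run $\upDownChain_n$ from the stationary distribution $\upDownDist_n$, so that $\int_{\stateSpace_n}(f)_n\,d\upDownDist_n = \mbb E_{\upDownDist_n}\!\big[(f)_n(\upDownChain_n(m))\big]$ for all $m\ge 0$ and all $f\in\spaceOfDensityFunctions_n$; writing $f = a_\zeroVertex\eigenfunction_\zeroVertex + \sum_{1\le|s|\le n}a_s\eigenfunction_s$, \cref{prop density estimate} (with $k=n$) shows the right-hand side converges to $a_\zeroVertex = [\eigenfunction_\zeroVertex]f$, so the two sides agree. (Alternatively, expand $(\eigenfunction_s)_n$ using \eqref{identity expansion of eigenfunctions}, apply \ref{action of stationary measures on density functions} and iterate \ref{up consistency of stationary measures} to reduce $\int_{\stateSpace_n}(\eigenfunction_s)_n\,d\upDownDist_n$ to $\upDownDist_{|s|}(s)\sum_{k=0}^{|s|}\coeffEigInDensityBasis_{k,|s|}$, and conclude with \cref{lem identity for coefficient of density in eigenbasis}.) There is essentially no obstacle here beyond keeping the conventions straight — in particular, remembering that the $\upKernel_n$ are only pseudo-kernels, while the matrix identities invoked above are valid regardless.
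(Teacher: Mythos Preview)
Your proof is correct, but you take a different route from the paper for \ref{up down stationary measures}, \ref{down consistency of stationary measures}, and \ref{integrating against stationary measure}. The paper establishes \ref{up down stationary measures} by an induction on $n$ using the commutation relation \eqref{eq:commutation_relations} to push stationarity from level $n$ to level $n+1$, and invokes the multiplicity-$1$ eigenvalue $1$ from \cref{prop eigenbasis} for uniqueness; it then proves \ref{down consistency of stationary measures} by showing directly (again via \eqref{eq:commutation_relations}) that $\upDownDist_{n+1}\downKernel_{n+1}$ is stationary for $\upDownChain_n$ and invoking uniqueness. Your approach instead leverages \cref{prop density estimate} with $|s|=n$ to get ergodicity of $\upDownChain_n$ outright (so stationarity and uniqueness come for free), and then obtains \ref{down consistency of stationary measures} in one line from the factorization $\upDownKernel_n=\upKernel_n\downKernel_{n+1}$ --- this is cleaner than the paper's computation. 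For \ref{integrating against stationary measure}, the paper checks the identity on each $\density_s$ using \ref{action of stationary measures on density functions} together with \cref{prop expansion of density in eigenbasis} (which identifies $[\eigenfunction_\zeroVertex]\density_s=\upDownDist_{|s|}(s)$), whereas your primary argument runs the chain from stationarity and reads off the constant via \cref{prop density estimate}. Your alternative route through \cref{lem identity for coefficient of density in eigenbasis} also works, using the interchange $\eta\leftrightarrow\eta^*$ noted in the remark after that lemma together with $\coeffDensityInEigBasis_{0,k}\equiv 1$ to get $\sum_{k=0}^{|s|}\coeffEigInDensityBasis_{k,|s|}=\indicator(|s|=0)$. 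Overall your approach trades algebraic manipulation of the commutation relations for heavier reliance on the already-proved large-time estimate; both are sound, and yours is arguably more economical.
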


\begin{proof}

	The first identity follows from definitions:
	given $ n \ge 0 $ and $ s \in \stateSpace_{ n + 1 } $, we simply compute
	\begin{equation*}
		(
    		\upDownDist_n
    		\upKernel_n
		)( s )
			= 
					\sum_{ | r | = n }
						\upDownDist_n( r )
						\upKernel_n( r, s )
			= 
					\sum_{ | r | = n }
						\upKernel_{ 0, n }( \zeroVertex, r )
						\upKernel_n( r, s )
			 = 
            		\upKernel_{ 0, n + 1 }( \zeroVertex, s )
			 = 
            		\upDownDist_{ n + 1 }( s )
			.
	\end{equation*}

	Let us show \ref{up down stationary measures}.
	We show that $ \upDownDist_n $ is a stationary distribution for $ \upDownChain_n $ via induction.
	The $ n = 0 $ case is trivial.
	Suppose now that $ \upDownDist_n $ is a stationary distribution for $ \upDownChain_n $ for some $ n \ge 0 $.
	Applying \ref{up consistency of stationary measures}, Proposition \ref{prop commutation relations}, and the induction hypothesis, we find that
	$ 
		\upDownDist_{ n + 1 }
	$ 
	is stationary for $ \upDownChain_{ n + 1 } $:
	\begin{align*}
		\upDownDist_{ n + 1 }
		\upDownKernel_{ n + 1 }
			& = 
            		(
                		\upDownDist_n
                		\upKernel_n
            		)
            		\upKernel_{ n + 1 }
					\downKernel_{ n + 2 }
			\\
			& = 
                	\upDownDist_n
                	\upKernel_n
            		(
    					\tfrac{c_n}{c_{n+1}}
						\downKernel_{ n + 1 }
    					\upKernel_n
					+
						( 1 - \tfrac{c_n}{c_{n+1}} )
						I
            		)
			\\
			& = 	
    					\tfrac{c_n}{c_{n+1}}
                        \upDownDist_n
						\upDownKernel_n
    					\upKernel_n
					+
						( 1 - \tfrac{c_n}{c_{n+1}} )
                        \upDownDist_n
						\upKernel_n
			\\
			& = 	
    					\tfrac{c_n}{c_{n+1}}
                        \upDownDist_n
    					\upKernel_n
					+
						( 1 - \tfrac{c_n}{c_{n+1}} )
                        \upDownDist_n
						\upKernel_n
			= 	
                        \upDownDist_n
    					\upKernel_n
			= 
						\upDownDist_{ n + 1 }
			.
	\end{align*}
	
	\noindent
	The uniqueness follows from Proposition \ref{prop eigenbasis}, which showed that the eigenvalue 1 has multiplicity 1 in each transition operator.

	We now show, using Proposition \ref{prop commutation relations}, that
	$ 
		\upDownDist_{ n + 1 } 
		\downKernel_{ n + 1 }
	$ 
	is a stationary measure for $ \upDownChain_n $:
	\begin{align*}
		(
    		\upDownDist_{ n + 1 }
    		\downKernel_{ n + 1 }
		)
		\upDownKernel_n
			& = 
            		\upDownDist_{ n + 1 }
            		\downKernel_{ n + 1 }
            		\,
					\upKernel_n
					\downKernel_{ n + 1 }
			\\
			& = 
                	\upDownDist_{ n + 1 }\,
            				\tfrac{c_{n+1}}{c_{n}}
            		\big(
						\upKernel_{ n + 1 }
    					\downKernel_{ n + 2 }
					-
						( 1 - \tfrac{c_n}{c_{n+1}} )
						I
            		\big)
					\downKernel_{ n + 1 }
			\\
			& = 
            		\big(
						\tfrac{c_{n+1}}{c_{n}}\upDownDist_{ n + 1 }
						\upDownKernel_{ n + 1 }
					-
						( \tfrac{c_{n+1}}{c_{n}} - 1 )
						\upDownDist_{ n + 1 }
            		\big)
            		\,
					\downKernel_{ n + 1 }
			\\
			& = 
            		\big(
						\tfrac{c_{n+1}}{c_{n}} \upDownDist_{ n + 1 }
					-
						( \tfrac{c_{n+1}}{c_{n}} - 1 )
						\upDownDist_{ n + 1 }
            		\big)            		\,
					\downKernel_{ n + 1 }
			\\
			& = 	
                    \upDownDist_{ n + 1 }
					\downKernel_{ n + 1 }
			.
	\end{align*}

	\noindent
	Therefore, \ref{down consistency of stationary measures} follows from \ref{up down stationary measures}.
	Claim \ref{action of stationary measures on density functions} follows from (\ref{definition of density functions}) and \ref{down consistency of stationary measures}: for $ n \ge | s | $,
	\begin{align*}
		\orange{\int_{ \stateSpace_n }
			( \density_s )_n
			\,
			d\upDownDist_n}
				& = 
                		\orange{\int_{ \stateSpace_n }
                			\downKernel_{ n, | s | }( \, \cdot \,, s )
                			\,
                			d\upDownDist_n}
				= 
                		\sum_{ | r | = n }
                			\upDownDist_n( r )
                			\downKernel_{ n, | s | }( r, s )
				= 
                		(
							\upDownDist_n
                    		\downKernel_{ n, | s | }
						)( s )
				= 
                		\upDownDist_{ | s | }( s )
				.
	\end{align*}

	The $ f = \density_s $ case of \ref{integrating against stationary measure} follows from \cref{prop expansion of density in eigenbasis}, (\ref{defn stationary measures}), and \ref{action of stationary measures on density functions}:
	$$
		[ \eigenfunction_\zeroVertex ]
		\density_s
			=
        		\upDownDist_{ | s | }( s )
			=
				\int_{ \stateSpace_n }
					( \density_s )_n
					\,
					d \upDownDist_n
			,
				\qquad
				| s | \le n
			.
	$$
	
	\noindent
	This identity then extends to $ \spaceOfDensityFunctions_n = \linearSpan\{ \density_s \}_{ | s | \le n } $ by linearity.
\end{proof}

We immediately obtain the desired convergence in distribution.
\begin{corollary}
	\label{cor ergodicity of chains}
	Let $ n \ge 0 $. For any initial condition, $\upDownChain_n( m )$ converges to $ \upDownDist_n $ in distribution~as~$ m \to \infty $.
\end{corollary}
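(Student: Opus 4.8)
The plan is to use that $\stateSpace_n$ is finite, so that convergence in distribution of $\upDownChain_n(m)$ to a probability measure on $\stateSpace_n$ is equivalent to pointwise convergence of the probability mass function; that is, it suffices to prove $\mbb P(\upDownChain_n(m) = s) \to \upDownDist_n(s)$ as $m \to \infty$ for every $s \in \stateSpace_n$, and this should hold independently of the initial distribution.

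First I would rewrite the left-hand side in terms of density functions: by \eqref{identity density as delta function} we have $(\density_s)_n = \indicator_s$ for $s \in \stateSpace_n$, hence $\mbb P(\upDownChain_n(m) = s) = \mbb E[\density_s(\upDownChain_n(m))]$. Then I would invoke the second convergence statement in \cref{prop density estimate}, applied with $|s| = n$, which yields $\mbb E[\density_s(\upDownChain_n(m))] \to \upKernel_{0,n}(\zeroVertex,s)$ for any initial distribution. Finally, by the definition \eqref{defn stationary measures} of the candidate stationary measure we have $\upKernel_{0,n}(\zeroVertex,s) = \upDownDist_n(s)$, and \cref{prop properties of stationary measures}\ref{up down stationary measures} identifies $\upDownDist_n$ as the unique stationary distribution of $\upDownChain_n$. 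Chaining these three observations gives the claim.

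I do not expect any genuine obstacle: the corollary is essentially a repackaging of \cref{prop density estimate,prop properties of stationary measures}. The only point worth making explicit is the reduction at the start — on a finite state space, checking $\mbb E[f(\upDownChain_n(m))] \to \int f \, d\upDownDist_n$ for $f$ ranging over the indicator basis $\{\indicator_s\}_{s \in \stateSpace_n} = \{(\density_s)_n\}_{|s| = n}$ of $C(\stateSpace_n)$ is exactly convergence in distribution, so no tightness argument or appeal to a separating class of test functions is needed.
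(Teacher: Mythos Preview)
Your proof is correct and follows essentially the same approach as the paper: apply \cref{prop density estimate} to the density functions and identify the limit via the stationary measure. The only cosmetic difference is that the paper tests against the full spanning set $\{(\density_s)_n\}_{|s|\le n}$ and invokes \cref{prop properties of stationary measures}\ref{action of stationary measures on density functions} to identify the limits, whereas you restrict to the indicator basis $\{(\density_s)_n\}_{|s|=n}$ and read off $\upDownDist_n(s)$ directly from \eqref{defn stationary measures}; both choices are fine.
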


\begin{proof}
	Let $ n \ge 0 $.
	Since the density functions $ \{ ( \density_s )_n \}_{ | s | \le n } $ span $ C( \stateSpace_n ) $, it suffices to show that the expected densities of
	$
		\{ \upDownChain_n( m ) \}_{ m \ge 0 }
	$
	converge to the expected densities with respect to $ \upDownDist_n $.
	This follows from Proposition \ref{prop density estimate} and Proposition \ref{prop properties of stationary measures}\ref{action of stationary measures on density functions}.
\end{proof}

\subsection{Large time and large size asymptotics}
\label{ssec:large_time_large_size}
We now consider the behavior of the chains when both the size of the objects and the number of steps are large.
Here, we cannot yet identify a limit object, but we can obtain an asymptotic estimate.
We will see that, under an appropriate scaling, our semigroups are comparable to the artificial\footnote{The semigroup identity holds but $ \spaceOfDensityFunctions $ is not a Banach space since it has a countable Hamel basis.} semigroup $ \{ \limitSemigroupOnGraph_t \}_{ t \ge 0 } $ defined on 
$
	\spaceOfDensityFunctions
		\subset
			C( \stateSpace )
$
\noindent
by
$$
	\limitSemigroupOnGraph_t
	\eigenfunction_s
		=
			e^{- t \generatorRates_{ | s | - 1 } }
			\,
			\eigenfunction_s
		,
			\qquad
			s \in \stateSpace
		.
$$

\noindent
Note that these operators are well-defined since the functions
$
	\{
		\eigenfunction_s
	\}_{ s \in \stateSpace }
$
are independent.

For convenience, we begin with a lemma.
The integer part of a real number $ z $ is denoted by $ \floor{ z } $.
\begin{lemma}
	\label{lemma exponential estimate}
    The following inequality holds:
    $$
    	| ( 1 - y )^{ \floor{x}} - e^{ -x y } |
    		\le
    			y e^{1 - x y } 
				\frac{ \max( 1, xy ) }{ 1 - y } 
    		,
    			\qquad
    			0 \le x,
    			\,
    			0 \le y < 1
    		.
    $$
\end{lemma}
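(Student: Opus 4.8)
The plan is to split the estimate using the triangle inequality through the intermediate quantity $(1-y)^x$, writing
\[
\big| (1-y)^{\floor{x}} - e^{-xy} \big|
\le
\big| (1-y)^{\floor{x}} - (1-y)^{x} \big|
+
\big| (1-y)^{x} - e^{-xy} \big|,
\]
and bounding each term by something of order $y\,e^{-xy}$ (up to the stated factors). For the second term, I would use $(1-y)^x = e^{x\ln(1-y)}$ and the elementary bound $\ln(1-y) \le -y$, which gives $(1-y)^x \le e^{-xy}$ on the one side; for the other side one needs $e^{x\ln(1-y)} \ge e^{-xy}(1 - (\text{something}))$, using $\ln(1-y) \ge -y - \tfrac{y^2}{1-y}$ or a comparable expansion, and then $|e^{-z}-1| \le z$ for $z \ge 0$ to convert the exponent difference $x(y + \ln(1-y))$ into the prefactor. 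This produces a bound of the form $x y^2 e^{-xy}/(1-y)$ or so, which is absorbed into $y e^{1-xy}\max(1,xy)/(1-y)$.

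For the first term, $(1-y)^{\floor{x}} - (1-y)^{x} = (1-y)^{\floor{x}}\big(1 - (1-y)^{x-\floor{x}}\big)$, where $x - \floor{x} \in [0,1)$. Since $1-y \in (0,1]$, the factor $1 - (1-y)^{x-\floor{x}}$ lies in $[0, 1-(1-y)] = [0,y]$ (the map $t\mapsto (1-y)^t$ is decreasing in $t$, so it is at least $(1-y)^1 = 1-y$ on $[0,1]$). Hence this term is bounded by $y\,(1-y)^{\floor{x}} \le y\, e^{-y\floor{x}}$. To match the target expression, I would then compare $e^{-y\floor{x}}$ with $e^{-xy}$: since $\floor{x} \ge x - 1$, we get $e^{-y\floor{x}} \le e^{y} e^{-xy} \le e^{1-xy}$ (using $y < 1$), which again fits under the claimed bound.

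Putting the two pieces together, each is at most a constant times $y\, e^{1-xy}\,\max(1,xy)/(1-y)$ — the $\max(1,xy)$ factor is what allows the $xy^2$-type term from the second piece to be controlled uniformly (when $xy \le 1$ it is bounded by $y$, when $xy \ge 1$ by $xy \cdot y$), and the $e^{1}$ slack covers the $\floor{x}$ versus $x$ discrepancy. The main obstacle I anticipate is purely bookkeeping: choosing the right one-sided logarithm inequalities so that the constants collapse exactly into the stated clean form $y e^{1-xy}\max(1,xy)/(1-y)$ rather than some messier expression, and handling the edge cases $y=0$, $x<1$, and $xy$ near $1$ cleanly. I would double-check the constant by testing $x$ large with $y$ fixed small (where $(1-y)^{\floor x}$ and $e^{-xy}$ are both tiny and the $1/(1-y)$ and $\max(1,xy)=xy$ factors are doing the work) and $x$ of order $1/y$ (the transition regime).
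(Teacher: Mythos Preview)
Your approach is correct and genuinely different from the paper's. The paper does not split through the intermediate point $(1-y)^x$; instead it writes both terms as exponentials and applies the Mean Value Theorem in the form $|e^a-e^b|\le |a-b|\,e^{\max(a,b)}$ with $a=\lfloor x\rfloor\ln(1-y)$ and $b=-xy$, then bounds $|a-b|$ and $\max(a,b)$ separately using the two-sided estimate $\tfrac{z-1}{z}\le\ln z\le z-1$. This is a one-shot argument, whereas your triangle split handles the floor discrepancy and the $\ln(1-y)$-versus-$-y$ discrepancy in two separate pieces. Both are clean; the paper's is marginally shorter, yours is arguably more elementary (no MVT needed).

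One small bookkeeping point you flagged yourself: with your stated bound $y\,e^{1-xy}$ on the first piece and $\tfrac{xy^2}{1-y}e^{-xy}$ on the second, each is indeed at most $\tfrac{y\max(1,xy)}{1-y}e^{1-xy}$, but summing gives a factor of~$2$ too much. The fix is to sharpen the first piece: use $(1-y)^{\lfloor x\rfloor}\le (1-y)^{x-1}\le \tfrac{e^{-xy}}{1-y}$ instead of $e^{1-xy}$, so the first piece is $\le \tfrac{y}{1-y}e^{-xy}$. Then the sum is $\le \tfrac{y(1+xy)}{1-y}e^{-xy}$, and since $1+t\le 2\max(1,t)\le e\max(1,t)$ for all $t\ge 0$, this is at most $\tfrac{y\max(1,xy)}{1-y}e^{1-xy}$ on the nose.
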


\begin{proof}
	\noindent
	Let $ 0 \le x $ and $ 0 \le y < 1$.
	The Mean Value Theorem gives us that
    \begin{equation*}
		\big|
				\left(1-y\right)^{ \floor{ x }  }
    		-
            	e^{ -x y }
		\big| =
            		\big|
            				e^{
                				\floor{ x }
                				\ln\left( 1-y\right)
							}
					-
                		e^{ -x y }
            		\big|
			 \le
            		\big|
            				\floor{ x }
            				\ln\left(1-y\right)
                		+
                        	x y
            		\big|
            		\,e^{\max\big( 
                				\floor{ x }
                				\ln\left(1-y\right)
            				,\,           				
                    		-x y
                		\big)}
			.
      \end{equation*}
	The first factor here can be controlled using the classical inequalities $ \frac{ z - 1 }{ z } \le \ln z \le z - 1$ and $ 0 \le z - \floor{ z } \le 1 $ and the hypothesis.
	We obtain the upper and lower bounds
	\[ 
		-\frac{x y^2}{1-y} = - \frac{x y}{1-y} + xy
			\le
				\floor{ x } \ln(1-y) + x y 
			\le 
				-\floor{ x } y + xy
			\le 
				y
			\le 
				\frac{y}{ 1 - y }
			,
	\]
   which combine into
   \[\big|
            \floor{ x }
            \ln
            \left(
                    1
                -
                    y
            \right)
        +
            x y                                                                
	\big| 
		\le 
			\frac{y}{1-y} \max(1, \, x y) 
		.
	\]
    A similar approach to the maximum concludes the proof:
    \begin{equation*}%
		\max\big( 
				\floor{ x }
				\ln\left(1-y\right)
            	,\,           				
				-x y
			\big)
				\le
            		\max( 
            				-\floor{ x } y
                        	,\,           				
            				-x y
            		)
				\le
    				( 1 - x ) y
				\le
    				1 - x y
				.\qedhere
	\end{equation*}
\end{proof}

\begin{prop}
	\label{prop initial convergence}	
	There exists a sequence of constants
	$
		\{ \constantsInEstimate_m \}_{ m \ge 1 }
	$
	such that
	$$
		\Big\Vert
                \upDownOperator_n
    				^{ 
    					\floor{ t \generatorRates_n }
    				}
				( f )_n
    		-
        		(
    				\limitSemigroupOnGraph_t
        			f
    			)_n
		\Big\Vert_{ C( \stateSpace_n ) }
			\le
        		\frac{
        				\max( 1, t )
				}{
        				\generatorRates_n				
				}
        		\,
				\sum_{ | s | = 1 }^k
					| a_s |
					\constantsInEstimate_{ | s | }
					e^{ - t \generatorRates_{ | s | - 1 } }
	$$

	\noindent
	whenever
	$
		f 
			= 
				\sum_{ | s | \le k }
    				a_s
    				\eigenfunction_s
			,
	$
	$ t \ge 0 $,
	and $ n \ge k $.
\end{prop}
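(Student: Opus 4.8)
The plan is to reduce everything to the scalar estimate of \cref{lemma exponential estimate} by diagonalizing. Write $f = \sum_{|s|\le k} a_s \eigenfunction_s$. By \eqref{eq:Tnh}, $\upDownOperator_n^{\floor{t\generatorRates_n}}(\eigenfunction_s)_n = (1-\commutationWeightProduct_{|s|,n})^{\floor{t\generatorRates_n}}(\eigenfunction_s)_n$ whenever $|s|\le n$, where $\commutationWeightProduct_{|s|,n} = \generatorRates_{|s|-1}/\generatorRates_n$, while by definition $(\limitSemigroupOnGraph_t f)_n = \sum_{|s|\le k} a_s e^{-t\generatorRates_{|s|-1}}(\eigenfunction_s)_n$. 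Subtracting, the term $s=\zeroVertex$ cancels, since $\generatorRates_{-1}=0$ makes both contributions equal to $a_\zeroVertex(\eigenfunction_\zeroVertex)_n$, so
\[
\upDownOperator_n^{\floor{t\generatorRates_n}}(f)_n - (\limitSemigroupOnGraph_t f)_n = \sum_{1\le |s|\le k} a_s\Big[\big(1-\tfrac{\generatorRates_{|s|-1}}{\generatorRates_n}\big)^{\floor{t\generatorRates_n}} - e^{-t\generatorRates_{|s|-1}}\Big](\eigenfunction_s)_n .
\]
I would then take sup-norms and bound $\norm{(\eigenfunction_s)_n}_{C(\stateSpace_n)}\le\norm{\eigenfunction_s}_{C(\stateSpace)}$, which is finite because each $\eigenfunction_s$ is a finite linear combination of the bounded density functions $\density_r$ (exactly as recalled at the start of the proof of \cref{prop density estimate}); this is what makes the $s=\zeroVertex$ term drop and explains why the sum in the statement runs over $1\le|s|\le k$.

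The heart of the argument is estimating each bracket uniformly in $n$. For $1\le|s|\le k\le n$ the number $y:=\generatorRates_{|s|-1}/\generatorRates_n$ lies in $[0,1)$ by \ref{assumption increasing generator rates} (since $|s|-1<n$), so \cref{lemma exponential estimate} with $x=t\generatorRates_n$, hence $xy = t\generatorRates_{|s|-1}$, gives a bound of $e^{1-t\generatorRates_{|s|-1}}\,\tfrac{y}{1-y}\,\max(1,t\generatorRates_{|s|-1})$. The delicate point is that the naive factor $\tfrac{y}{1-y}$ does not obviously carry the required $1/\generatorRates_n$ decay; I would extract it by rewriting
\[
\tfrac{y}{1-y} = \tfrac{\generatorRates_{|s|-1}}{\generatorRates_n - \generatorRates_{|s|-1}} = \tfrac{1}{\generatorRates_n}\cdot\tfrac{\generatorRates_{|s|-1}}{1-\generatorRates_{|s|-1}/\generatorRates_n}
\]
and noting that, since $n\mapsto\generatorRates_n$ is increasing (\ref{assumption increasing generator rates}), the last factor is nonincreasing in $n$ and hence at most its value at $n=|s|$, namely $\tfrac{\generatorRates_{|s|-1}\generatorRates_{|s|}}{\generatorRates_{|s|}-\generatorRates_{|s|-1}}$. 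Combining this with $\max(1,t\generatorRates_{|s|-1})\le\max(1,t)\max(1,\generatorRates_{|s|-1})$ and $e^{1-t\generatorRates_{|s|-1}} = e\cdot e^{-t\generatorRates_{|s|-1}}$ bounds each bracket by $\tfrac{\max(1,t)}{\generatorRates_n}\,e^{-t\generatorRates_{|s|-1}}$ times a constant that depends only on $|s|$.

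Finally I would collect the constants by setting, for $m\ge 1$,
\[
\constantsInEstimate_m = e\cdot\tfrac{\generatorRates_{m-1}\generatorRates_m}{\generatorRates_m-\generatorRates_{m-1}}\cdot\max(1,\generatorRates_{m-1})\cdot\max_{|s|=m}\norm{\eigenfunction_s}_{C(\stateSpace)}
\]
(the maxima being over the finite sets $\stateSpace_m$ by \ref{assumption finite state spaces}), and then summing the per-term bounds over $1\le|s|\le k$ gives precisely the asserted inequality. I do not expect a genuine obstacle here; the only subtlety, flagged above, is that one must pull the $1/\generatorRates_n$ out of \cref{lemma exponential estimate} \emph{before} passing to the worst case over $n\ge k$, since the crude bound degenerates as $n$ approaches $|s|$ — the monotonicity of the sequence $\generatorRates_\bullet$ is exactly what resolves this.
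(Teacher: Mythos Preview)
Your proof is correct and follows essentially the same route as the paper's: diagonalize via \eqref{eq:Tnh}, apply \cref{lemma exponential estimate} with $x=t\generatorRates_n$ and $y=\generatorRates_{|s|-1}/\generatorRates_n$, and absorb the $n$-dependent factor $1/(1-y)$ into a constant depending only on $|s|$ using the monotonicity of $\generatorRates_\bullet$ from \ref{assumption increasing generator rates}. You are in fact more explicit than the paper, which simply asserts that the factor $\tfrac{\max(1,t\generatorRates_{|s|-1})}{1-\generatorRates_{|s|-1}/\generatorRates_n}$ is bounded by $q'_{|s|}\max(1,t)$ ``using \ref{assumption increasing generator rates}''; your monotonicity-in-$n$ argument and the splitting $\max(1,t\generatorRates_{|s|-1})\le\max(1,t)\max(1,\generatorRates_{|s|-1})$ spell out exactly why.
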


\begin{remark}
    \label{remark limit semigroup vs limiting process}	
	In the short-term, this result provides some useful scaling limit-type estimates.
	For example, if $ | s | = 1 $, the expansion $ \density_s = \eigenfunction_s + \upDownDist_1( s ) \eigenfunction_\zeroVertex $ gives us that
	$$
		\mbb E_{\mu_n}\big[ 
			\density_s( \upDownChain_n( \floor{ \generatorRates_n t } ) ) 
		\big]
			=
    			\upDownDist_1( s ) ( 1 - e^{- t \generatorRates_0 } )
                +
        		e^{- t \generatorRates_0 }
				{\textstyle \int_{ \stateSpace_n }}
        			\,
        			( \density_s )_n
					\,
					d \mu_n
				+
    				\max( 1, t )
					e^{ - t \generatorRates_0 }
					O(\generatorRates_n^{-1})
			,
				\qquad
				t \ge 0
			,				
	$$
	
	\noindent
	as $ n \to \infty $, where $ \mu_n $ is any distribution on $ \stateSpace_n $.
	In the long-term, this result will provide a crucial ingredient towards identifying a scaling limit.
    Indeed, it essentially establishes the semigroup convergence needed to apply \cref{theorem ethier kurtz path convergence} (if $ \generatorRates_n \to \infty $, the above norms converge to zero for all $ f \in \spaceOfDensityFunctions $). 
	What remains now is to address the fact that $ \{ \limitSemigroupOnGraph_t \}_{ t \ge 0 } $ is not a Feller semigroup on $ C( E ) $ for some compact metric space $ E $.
	This is done in \cref{section convergence} by introducing some additional structure.
	\end{remark}
\begin{proof}
	Let $ n \ge | s | $ and $ t \ge 0 $.
	The identity
    $
    	\upDownOperator_n
    		^{ 
    			\floor{ t \generatorRates_n }  
    		}
		( \eigenfunction_s )_n
			= 
                	( 1 - \commutationWeightProduct_{ | s |, n } )
						^{ 
								\floor{ t \generatorRates_n } 
						}
					( \eigenfunction_s )_n
    $ (see \cref{prop eigenbasis})
    and the estimate in \cref{lemma exponential estimate} (taking $ x = t \generatorRates_n $ and $ y = \generatorRates_{ | s | - 1 }/\generatorRates_n $) give us that
	\begin{align*}
		\Big\Vert
            	\upDownOperator_n
					^{ 
						\floor{ t \generatorRates_n }  
					}
				( \eigenfunction_s )_n
    		-
        		(
    				\limitSemigroupOnGraph_t
        			\eigenfunction_s
    			)_n
		\Big\Vert_{ C( \stateSpace_n ) }
				& =	
                		\left\Vert
                        \left(
                            ( 1 - \commutationWeightProduct_{ | s |, n } )
        						^{ 
        							\floor{ t \generatorRates_n }  
        						}
                		-
                                e^{- t \generatorRates_{ | s | - 1 } }
        				\right)
						( \eigenfunction_s )_n
                		\right\Vert_{ C( \stateSpace_n ) }
				\\
				& =	
                		\Big|
								\left(
            						1
            					-
            						\tfrac{
                            				\generatorRates_{ | s | - 1 }
            							}{
                            				\generatorRates_n				
            							}
            					\right)
            						^{ 
            							\floor{ t \generatorRates_n }  
            						}                				
                    		-
                                e^{- t \generatorRates_{ | s | - 1 } }
						\Big|
						\Vert
							( \eigenfunction_s )_n
						\Vert_{ C( \stateSpace_n ) }
				\\
				& \le
        			\frac{
						\generatorRates_{ | s | - 1 } 
						\, e^{1 - t \generatorRates_{ | s | - 1 } } 
					}{ 
						\generatorRates_n
					}  
        			\cdot
					\frac{
            			\max( 1, t \generatorRates_{ | s | - 1 } )
					}{ 
						1 - \frac{\generatorRates_{ | s | - 1 }}{\generatorRates_n}
					}  
					\cdot
					\max_{ |r|= | s | } 
    					\Vert
    							( \eigenfunction_r )_n
    					\Vert_{ C( \stateSpace_n ) }
				.
	\end{align*}
   Using \ref{assumption increasing generator rates}, the second fraction here can be bounded above by $ q'_{|s|} \max(1,t) $, where $ q'_{|s|} $ depends only on $| s |$.
	Setting
	$
		\constantsInEstimate_{|s|} 
			= 
				\generatorRates_{ | s | - 1 } 
				e q'_{|s|} \max_{ | r | = | s |} \Vert ( \eigenfunction_s )_n \Vert_{ C( \stateSpace_n ) }
	$
    and \orange{observing that $ \constantsInEstimate_0 = 0 $}
	then establishes the result whenever $ f $ is equal to some $ \eigenfunction_s $.
	This extends to general $ f $ by linearity. 
\end{proof}

\subsection{Separation distance}
\label{ssec:separation_distance_discrete}
We conclude our analysis in the discrete setting by considering the separation distance of the chains and their continuous-time variants.
In this section, we do actually need the $ \upOperator_n $ to be positive operators, or equivalently, the $ \upKernel_n $ to have nonnegative entries.
Therefore, we will now assume that the $ \{ \upDownChain_n \}_{ n \ge 0 } $ are up-down chains satisfying \ref{assumption finite state spaces} and \ref{assumption:commutation}.
In addition, \orange{throughout the section}, we assume that there exist sequences
$ \{ r_n \}_{ n \ge 1 } $
and
$ \{ s_n \}_{ n \ge 1 } $
with $ r_n, s_n \in \stateSpace_n $ satisfying the following condition for $ n \ge 1 $:
\begin{enumerate}[ label = (S\arabic*), leftmargin = \indentForAssumptions ]
	\item
	\label{assumption distant elements}
    $\upDownDist_n(s_n)>0$ and $ n = \min\{ m \ge 0 : p_n^{m}(r_n, s_n) > 0 \} $ (i.e.~it takes $ \upDownChain_n $ exactly $ n $ transitions to reach $ s_n $ from $ r_n $).
    
\end{enumerate}

\noindent
For certain results, we will also require that
\begin{enumerate}[ label = (S\arabic*), leftmargin = \indentForAssumptions, resume ]
    \item
	\label{assumption consistency rn}
	$ \downKernel_{ n + 1 }( r_{ n + 1 }, r_n ) = 1 $ for $ n \ge 1 $.
\end{enumerate}

\noindent
We note that these new conditions are typically straightforward to verify for concrete examples.

To begin, let us denote the separation distance of $ \upDownChain_n $ by
\begin{equation}\label{eq:def_separation}
	\sepDist_n(m) 
		= 
			\max_{ r, s \in \stateSpace_n, \, M_n(s) \ne 0 } 
				\bigg(
					1 - \frac{\upDownKernel_{ n }^m (r,s)}{M_n(s)}
				\bigg)
		,
			\qquad
			m \ge 0
		,
\end{equation}

\noindent
and the separation distance of its continuous-time variant by
\begin{equation*}%
    \sepDist_n^*(t) 
    	= 
    		\sup_{ r \in\stateSpace_n , f \in C_+(\stateSpace_n )} 
    			\bigg( 
    				1 - \frac{ ( e^{ t \discreteGenerators_n } f )( r ) }{ \int_{\stateSpace_n} f d\upDownDist_n}
    			\bigg)
		,
			\qquad
			t \ge 0
		.
\end{equation*}

\noindent
The following result provides explicit formulas for these quantities;
note that \ref{assumption consistency rn} is not yet required.

\begin{theorem}
	\label{thm:separation_distance}
    The following identities hold: %
  \begin{equation}
  \sepDist_n(m) 
  	= 
		\sum_{i=0}^{n-1} 
			\bigg(1-\frac{c_{i}}{c_n}\bigg)^{m} 
            \prod_{\substack{{0 \le j \le n-1} \\ j \ne i}}
					\frac{c_{j}}{c_{j}-c_{i}}
	,
		\qquad {m \ge 0}, \, n \ge 1
	,
	\label{eq:Deltanm}
  \end{equation}

  \noindent
  and
    \begin{equation*}
\sepDist_n^*(t) 
  	= 
		\sum_{i=0}^{n-1} 
			e^{ - \generatorRates_i t }
            \prod_{\substack{ 0 \le j \le n-1 \\ j \ne i}}
				\frac{c_{j}}{c_{j}-c_{i}}
	,
		\qquad t \ge 0, \, n \ge 1
	.
  \end{equation*}
\end{theorem}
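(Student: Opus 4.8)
The plan is to follow Fulman's approach \cite{fulmanCommutation}, building on the diagonal description in \cref{prop eigenbasis}. That result tells us that, for each $n \ge 1$, the transition matrix $\upDownKernel_n$ of $\upDownChain_n$ has pairwise distinct eigenvalues $1 - \commutationWeightProduct_{k,n} = 1 - \generatorRates_{k-1}/\generatorRates_n$ for $k = 0, \dots, n$ (distinctness by \ref{assumption increasing generator rates}); the one for $k = 0$ equals $1$ since $\generatorRates_{-1} = 0$, the others being $1 - \generatorRates_i/\generatorRates_n \in (0,1)$, $i = 0, \dots, n-1$. Since $\upDownOperator_n$ is thus diagonalizable, for every $r, s \in \stateSpace_n$ there are constants $a_k(r,s)$ with
\[
	\upDownKernel_n^m(r,s) = \sum_{k=0}^n a_k(r,s)\,(1 - \commutationWeightProduct_{k,n})^m, \qquad m \ge 0 .
\]
The proof then rests on two ingredients: (i) computing this expansion explicitly when $(r,s) = (r_n, s_n)$ for the distinguished pair from \ref{assumption distant elements}, and (ii) showing that this pair realizes the maximum in the definition \eqref{eq:def_separation} of $\sepDist_n(m)$.

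For (i), set $f(m) = \upDownKernel_n^m(r_n, s_n)$. Since $\upDownChain_n$ is ergodic with stationary law $\upDownDist_n$ (\cref{cor ergodicity of chains}, \cref{prop properties of stationary measures}) and $|1 - \commutationWeightProduct_{k,n}| < 1$ for $k \ge 1$, letting $m \to \infty$ in the expansion forces $a_0(r_n,s_n) = \upDownDist_n(s_n)$. On the other hand, \ref{assumption distant elements} gives $f(0) = f(1) = \dots = f(n-1) = 0$, i.e.
\[
	\sum_{k=1}^n a_k(r_n,s_n)\,(1 - \commutationWeightProduct_{k,n})^m = -\upDownDist_n(s_n), \qquad m = 0, \dots, n-1 .
\]
This is a Vandermonde system in the $n$ distinct nodes $1 - \commutationWeightProduct_{k,n}$, $k = 1, \dots, n$; solving it by Lagrange interpolation at these nodes (and evaluating the interpolant at $1$) gives $a_k(r_n,s_n) = -\upDownDist_n(s_n)\prod_{\ell \ne k,\, 1 \le \ell \le n}\commutationWeightProduct_{\ell,n}/(\commutationWeightProduct_{\ell,n} - \commutationWeightProduct_{k,n})$. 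Substituting $\commutationWeightProduct_{k,n} = \generatorRates_{k-1}/\generatorRates_n$, dividing by $\upDownDist_n(s_n)$, and reindexing with $i = k - 1$ then yields
\[
	1 - \frac{\upDownKernel_n^m(r_n,s_n)}{\upDownDist_n(s_n)} = \sum_{i=0}^{n-1}\Big(1 - \tfrac{\generatorRates_i}{\generatorRates_n}\Big)^m \prod_{\substack{0 \le j \le n-1 \\ j \ne i}} \frac{\generatorRates_j}{\generatorRates_j - \generatorRates_i} ,
\]
which is exactly the right-hand side of \eqref{eq:Deltanm}; this already gives the bound $\sepDist_n(m) \ge \text{(RHS)}$.

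The matching upper bound is ingredient (ii), and I expect it to be the main obstacle. Concretely, one must show that $(r_n, s_n)$ \emph{minimizes} $\upDownKernel_n^m(r,s)/\upDownDist_n(s)$ over all $r, s \in \stateSpace_n$ with $\upDownDist_n(s) > 0$ — equivalently, that $r_n$ is the worst initial state and, started from $r_n$, that $s_n$ is the worst target. This is where the positivity of the up-steps (the nonnegativity of $\upDownKernel_n^m$) genuinely enters, as flagged at the start of the subsection. Expanding $\indicator_s = (\density_s)_n$ in the eigenbasis via \cref{prop expansion of density in eigenbasis} gives
\[
	\frac{\upDownKernel_n^m(r,s)}{\upDownDist_n(s)} = 1 + \frac{1}{\upDownDist_n(s)}\sum_{1 \le |u| \le n} \upKernel_{|u|,n}(u,s)\,\coeffDensityInEigBasis_{|u|,n}\,(1 - \commutationWeightProduct_{|u|,n})^m\,(\eigenfunction_u)_n(r) ,
\]
so the task reduces to controlling the sign and size of this correction term uniformly in $(r,s)$; following \cite{fulmanCommutation}, this is carried out using the structure of $r_n$ and $s_n$ as a pair that is ``as far apart as possible'' in $\stateSpace_n$ (condition \ref{assumption consistency rn} is not needed here). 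Combining the two bounds establishes \eqref{eq:Deltanm}.

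Finally, the formula for $\sepDist_n^*(t)$ follows by Poissonisation. The continuous-time variant of $\upDownChain_n$ is the pseudo-Poisson process with generator $\discreteGenerators_n = \generatorRates_n(\upDownOperator_n - I)$, i.e.\ with jump rate $\generatorRates_n$, so its transition kernel at time $t$ is $e^{t\discreteGenerators_n} = e^{-\generatorRates_n t}\sum_{m \ge 0}\tfrac{(\generatorRates_n t)^m}{m!}\upDownKernel_n^m$. By \cref{prop functional form of sep dist}, $\sepDist_n^*(t) = \max_{r,s:\,\upDownDist_n(s) > 0}\big(1 - (e^{t\discreteGenerators_n})(r,s)/\upDownDist_n(s)\big)$, and $1 - (e^{t\discreteGenerators_n})(r,s)/\upDownDist_n(s)$ is the Poisson$(\generatorRates_n t)$-average of the quantities $1 - \upDownKernel_n^m(r,s)/\upDownDist_n(s)$ over $m \ge 0$. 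Since, by (ii), the single pair $(r_n, s_n)$ maximizes each of these quantities over $(r,s)$, it maximizes the average as well, so $\sepDist_n^*(t) = \sum_{m \ge 0} e^{-\generatorRates_n t}\tfrac{(\generatorRates_n t)^m}{m!}\sepDist_n(m)$. Plugging in \eqref{eq:Deltanm}, interchanging the two sums, and using $\sum_{m \ge 0} e^{-\generatorRates_n t}\tfrac{(\generatorRates_n t)^m}{m!}(1 - \generatorRates_i/\generatorRates_n)^m = e^{-\generatorRates_i t}$ produces the claimed expression for $\sepDist_n^*(t)$.
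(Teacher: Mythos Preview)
Your outline for ingredient (i) is correct and matches the paper: the Vandermonde/Lagrange computation you describe is exactly the content of \cite[Proposition 5.1]{Fulman2010separation}, which the paper invokes. Likewise, the Poissonisation for $\sepDist_n^*(t)$ is the same as the paper's.

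The gap is ingredient (ii). Your proposed route via the eigenfunction expansion
\[
	\frac{\upDownKernel_n^m(r,s)}{\upDownDist_n(s)} = 1 + \frac{1}{\upDownDist_n(s)}\sum_{1 \le |u| \le n} \upKernel_{|u|,n}(u,s)\,\coeffDensityInEigBasis_{|u|,n}\,(1 - \commutationWeightProduct_{|u|,n})^m\,(\eigenfunction_u)_n(r)
\]
does not obviously lead anywhere: the values $(\eigenfunction_u)_n(r)$ have no sign control, so there is no way to compare this sum across pairs $(r,s)$. The paper's argument is different and cleaner. Iterating the commutation relation \ref{assumption:commutation} (cf.\ \cite[Proposition 4.5]{fulmanCommutation}) gives
\[
	\upDownKernel_n^m = \sum_{k=0}^{\min(m,n)} \alpha_{n,k,m}\,\downKernel_{n,n-k}\,\upKernel_{n-k,n}, \qquad \alpha_{n,k,m} > 0 .
\]
Since $\stateSpace_0 = \{\zeroVertex\}$, the $k=n$ term satisfies $(\downKernel_{n,0}\,\upKernel_{0,n})(r,s) = \upDownDist_n(s)$ for \emph{every} $r,s$. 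Assumption~\ref{assumption distant elements} gives $\upDownKernel_n^{n-1}(r_n,s_n) = 0$, and since all $\alpha$'s and all entries of $\downKernel_{n,n-k}\,\upKernel_{n-k,n}$ are nonnegative, this forces $(\downKernel_{n,n-k}\,\upKernel_{n-k,n})(r_n,s_n) = 0$ for $k \le n-1$. Hence $\upDownKernel_n^m(r_n,s_n) = \alpha_{n,n,m}\,\upDownDist_n(s_n)$ for $m \ge n$, while for arbitrary $(r,s)$ one only has $\upDownKernel_n^m(r,s) \ge \alpha_{n,n,m}\,\upDownDist_n(s)$. Dividing yields $\upDownKernel_n^m(r,s)/\upDownDist_n(s) \ge \upDownKernel_n^m(r_n,s_n)/\upDownDist_n(s_n)$, which is exactly the maximizer claim (for $m < n$ it is trivial since $\upDownKernel_n^m(r_n,s_n) = 0$). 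This is where the positivity of the up-kernels is genuinely used, as you correctly anticipated.
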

\begin{proof}
  Fix $ n \ge 1 $.
  Let us show that the maximum in \eqref{eq:def_separation} is attained by taking $r=r_n$ and $s=s_n$ (for every $ m $).
  By iterating the commutation relation \eqref{eq:commutation_relations}, it can be seen that the powers 
  of the transition matrix $\upDownKernel_n=\upKernel_n \downKernel_{n-1}$ can be written as
  \orange{follows}\footnote{See \cite[Proposition 4.5]{fulmanCommutation} for a recursive description of the coefficients.}:
  for $m \ge0$, %
  \[p_n^m= \sum_{k=0}^{\min(m,n)} \alpha_{n,k,m} ( \downKernel_{n, n-k} \upKernel_{n-k, n} ),
  	\qquad
    \text{\orange{for some positive real numbers} } \alpha_{n,k,m}>0.
	\]

  \noindent
  Recalling from Assumption \ref{assumption distant elements} that $ \upDownKernel_n^{\orange{n-1}}( r_n, s_n ) = 0 $,
  it then follows from this formula that
  \[ ( \downKernel_{n, n-k} \upKernel_{n-k, n} )(r_n, s_n) 
  	=
		0
	,
		\qquad
		0 \le k \le n - 1
	.
  \]
  This vanishing of terms then leads to the simplification
  \[p_n^m(r_n, s_n)= \alpha_{n,n,m} ( \downKernel_{n, 0} \upKernel_{0, n} )(r_n, s_n),
  	\qquad
	m \ge n.
  \] 
  On the other hand, since $\stateSpace_0 = \{ \zeroVertex \}$, 
  for every $r,s$ in $\stateSpace_n$ we have
  \[ ( \downKernel_{n, 0} \upKernel_{0, n} )(r, s)
  = \downKernel_{n, 0}(r,\zeroVertex)
  \cdot \upKernel_{0, n} (\zeroVertex,s) = 1 \cdot M_n(s)=M_n(s).
  \]
  \orange{Using this, together with the nonnegativity of the up- and down-kernels,}
  we find that, for $m \ge n$, $r,s \in \stateSpace_n$, whenever $M_n(s) \ne 0$,
  we have
  \[\frac{p_n^m(r, s)}{M_n(s)} \ge \frac{\alpha_{n,n,m} \cdot ( \downKernel_{n, 0} \upKernel_{0, n} )(r, s)}{M_n(s)} = \alpha_{n,n,m} = \frac{p_n^m(r_n, s_n)}{M_n(s_n)}.
 \]
  
  \noindent
  This shows that the maximum in \eqref{eq:def_separation}
  is indeed attained for $r = r_n$ and $s = s_n$ when $ m \ge n $.
  For $ m < n $, this fact is an immediate consequence of Assumption \ref{assumption distant elements}, which implies that $ \upDownKernel_n^m( r_n, s_n ) = 0 $.

  Having established that $ \sepDist_n(m) = 1- ( \upDownKernel_n^m(r_n, s_n) / \upDownDist_n(s_n) )$, we proceed by applying Proposition 5.1 in \cite{Fulman2010separation} (we verify its hypothesis later).
  This result provides the explicit form
  \[1-\frac{p_n^m(r_n, s_n)}{M_n(s_n)} = \sum_{i=1}^n \la_i^m \prod_{j \ne i}\frac{1-\la_j}{\la_i-\la_j},\]
  where $\la_1,\dots\la_n$ are the eigenvalues of $\upDownOperator_n$ different from $1$.
  Since \cref{prop eigenbasis} identified these eigenvalues as $\la_i = 1-\omega_{i,n}=1-(\generatorRates_{i-1}/\generatorRates_n)$, the desired formula is obtained by reindexing.

  Let us now verify the hypothesis of \cite[Proposition 5.1]{Fulman2010separation}.
  The ergodicity of $ \upDownChain_n $ was established in \cref{cor ergodicity of chains}.
  Due to \cite[Remark 5.2, item (3)]{Fulman2010separation}, we can replace the reversibility requirement with the diagonalizability of $ \upDownOperator_n $, which was established in \cref{prop eigenbasis}.
  Finally, \cref{prop eigenbasis} shows that $ \upDownOperator_n $ has exactly $n+1$ distinct eigenvalues, one more than the distance between $r_n$ and $s_n$.

	Turning our attention to the second identity, fix $ t \ge 0 $ and $ n \ge 1 $. 
	Let $ r \in \stateSpace_n $ and $ f \in C_+( \stateSpace_n ) $.
	Combining \cref{prop functional form of sep dist} with the fact that $ r_n $ and $ s_n $ maximize (\ref{eq:def_separation}), we have that (recall (\ref{identity kernel from operator}) and (\ref{identity density as delta function}))
	$$
		1 - \frac{ ( \upDownOperator_n^m f )( r ) }{ \int_{\stateSpace_n} f d\upDownDist_n}
		\le
			\sepDist_n( m )
		=
    		1 - \frac{ ( \upDownOperator_n^m ( \density_{ s_n } )_n )( r_n ) }{ \int_{\stateSpace_n} ( \density_{ s _n } )_n d\upDownDist_n}
		,
			\qquad
			m \ge 0
		.
	$$
    \noindent
	Multiplying by $ e^{ - t \generatorRates_n } ( t \generatorRates_n )^m/m! $ (with the convention $ 0^0 = 1 $), summing over $ m $, and recalling that $A_n=   \generatorRates_n ( \upDownOperator_n - I) $, this becomes
	$$
		1 - \frac{ ( e^{ t A_n } f )( r ) }{ \int_{\stateSpace_n} f d\upDownDist_n}
		\le
			e^{ - t \generatorRates_n }
			\sum_{ m = 0 }^\infty
				\frac{( t \generatorRates_n )^m}{ m! }
				\sepDist_n( m )
		=
    		1 - \frac{ ( e^{ t A_n} ( \density_{ s_n } )_n )( r_n ) }{ \int_{\stateSpace_n} ( \density_{ s _n } )_n d\upDownDist_n}
		.
	$$
	Since this holds for arbitrary $r \in \stateSpace_n$ and $f \in  C_+( \stateSpace_n )$, we have that
\[  \sepDist_n^*( t )  \le 1 - \frac{ ( e^{ t A_n} ( \density_{ s_n } )_n )( r_n ) }{ \int_{\stateSpace_n} ( \density_{ s _n } )_n d\upDownDist_n}. \]
On the other hand, the reverse inequality can be established using \eqref{claim compare positive and nonnegative functions} and the positivity of $\int_{\stateSpace_n} ( \density_{ s_n } )_n d\upDownDist_n $ (see \cref{prop properties of stationary measures}\ref{action of stationary measures on density functions} \orange{and \ref{assumption distant elements}}).
We must therefore have equality:
\begin{equation}\sepDist_n^*( t ) =1 - \frac{ ( e^{ t A_n} ( \density_{ s_n } )_n )( r_n ) }{ \int_{\stateSpace_n} ( \density_{ s _n } )_n d\upDownDist_n}=e^{ - t \generatorRates_n }
			\sum_{ m = 0 }^\infty
				\frac{( t \generatorRates_n )^m}{ m! }
				\sepDist_n( m ).
				\label{eq:1mDeltakStar}
				\end{equation}
	Using now the expression for the separation distance in \eqref{eq:Deltanm}, we obtain the desired formula: 
	\begin{align*}
		\sepDist_n^*( t ) 
			& =
    			e^{ - t \generatorRates_n }
    			\sum_{ m = 0 }^\infty
    				\frac{( t \generatorRates_n )^m}{ m! }
        		\sum_{i=0}^{n-1} 
        			\bigg(1-\frac{\generatorRates_{i}}{\generatorRates_n}\bigg)^{m} 
        			\prod_{0 \le j \le n-1 \atop j \ne i} 
        				\frac{c_{j}}{c_{j}-c_{i}}
			\\
			& =
    			e^{ - t \generatorRates_n }
        		\sum_{i=0}^{n-1} 
					e^{ t \generatorRates_n ( 1-\frac{\generatorRates_{i}}{\generatorRates_n})}
        			\prod_{0 \le j \le n-1 \atop j \ne i} 
        				\frac{c_{j}}{c_{j}-c_{i}}
			\\
			& =
        		\sum_{i=0}^{n-1} 
					e^{ - t \generatorRates_{i} }
        			\prod_{0 \le j \le n-1 \atop j \ne i} 
        				\frac{c_{j}}{c_{j}-c_{i}}
			.\qedhere
	\end{align*}
\end{proof}

\orange{We note that the above formulas lead to asymptotic estimates whenever the rates are sufficiently well-behaved.}
An example is given in the following result.

\begin{prop}
	\label{prop convergence of sep dist}
	Suppose that $ \sum_{ n \ge 0 } \tfrac{1}{\generatorRates_n} < \infty $ and that $ \{ \generatorRates_{n+1} - \generatorRates_{n} \}_{ n \ge 0 } $ is an unbounded, nondecreasing sequence.
	Then we have the limits
	$$ 
    	\lim_{ n \to \infty }
    		\sepDist_n(\floor{ \generatorRates_n t})
      		=
    	\lim_{ n \to \infty }
    		\sepDist_n^*( t )
      		=
            	\sum_{i=0}^{\infty} 
            		e^{- t \generatorRates_i } 
            		\prod_{j = 0 \atop j \ne i}^\infty 
    					\frac{\generatorRates_{j}}{\generatorRates_{j}-\generatorRates_{i}}
    		,
    			\qquad
    			t > 0
    		.
	$$  

	\noindent
	Moreover, 
	for $ n \ge 0 $ we have the asymptotic description
    \begin{equation}
    	\sum_{ i = n + 1 }^{\infty} 
    		e^{- t \generatorRates_i } 
    		\prod_{j = 0 \atop j \ne i}^\infty 
				\frac{\generatorRates_{j}}{\generatorRates_{j}-\generatorRates_{i}}
			=
				o( e^{ - t \generatorRates_n } )
		,
			\qquad
			t \to \infty
      \label{eq:bound_remainder_Sep}
    \end{equation}
\end{prop}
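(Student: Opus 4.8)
The plan is to work directly from the closed forms in \cref{thm:separation_distance}. Write $P_i\ldef\prod_{j\ne i}\frac{\generatorRates_j}{\generatorRates_j-\generatorRates_i}$ and $P_i^{(n)}\ldef\prod_{0\le j\le n-1,\,j\ne i}\frac{\generatorRates_j}{\generatorRates_j-\generatorRates_i}$, so that the theorem reads $\sepDist_n^*(t)=\sum_{i=0}^{n-1}P_i^{(n)}e^{-\generatorRates_i t}$ and $\sepDist_n(m)=\sum_{i=0}^{n-1}P_i^{(n)}(1-\generatorRates_i/\generatorRates_n)^m$. Since $\{\generatorRates_{n+1}-\generatorRates_n\}$ is unbounded and nondecreasing, $\generatorRates_n\to\infty$; since $\sum_j\generatorRates_j^{-1}<\infty$, the infinite products $P_i$ converge, and as each factor $\frac{\generatorRates_j}{\generatorRates_j-\generatorRates_i}$ with $i\le n-1\le j-1$ exceeds $1$, the identity $P_i=P_i^{(n)}\prod_{j\ge n}\frac{\generatorRates_j}{\generatorRates_j-\generatorRates_i}$ gives $|P_i^{(n)}|\le|P_i|$ for all $i\le n-1$ and $P_i^{(n)}\to P_i$ as $n\to\infty$ for fixed $i$. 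The proposition will then follow from the single estimate $\log|P_i|=o(\generatorRates_i)$ as $i\to\infty$, which together with $\generatorRates_i\to\infty$ makes $\sum_i|P_i|e^{-\generatorRates_i t}$ convergent for every $t>0$, with room to spare for polynomial-in-$(i,\generatorRates_i)$ factors. Granting this, $|P_i^{(n)}e^{-\generatorRates_i t}|\le|P_i|e^{-\generatorRates_i t}$ and $P_i^{(n)}\to P_i$, so dominated convergence yields $\sepDist_n^*(t)\to\sum_{i\ge0}P_ie^{-\generatorRates_i t}$, the announced series.

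The heart of the argument --- and the step I expect to be the main obstacle, since the individual ratios $\frac{\generatorRates_j}{\generatorRates_j-\generatorRates_i}$ can be large --- is the estimate $\log|P_i|=o(\generatorRates_i)$. I would split $\log|P_i|=\sum_{j>i}\bigl(-\log(1-\generatorRates_i/\generatorRates_j)\bigr)+\sum_{j<i}\log\frac{\generatorRates_j}{\generatorRates_i-\generatorRates_j}$ and estimate each sum via the two hypotheses. Writing $d_k\ldef\generatorRates_k-\generatorRates_{k-1}$, the monotonicity of $\{d_k\}$ yields $\generatorRates_j-\generatorRates_i\ge|j-i|\,d_{\min(i,j)+1}$ and $\generatorRates_i=O(i\,d_i)$, so $\sum_j\generatorRates_j^{-1}<\infty$ forces $\sum_j(jd_j)^{-1}<\infty$. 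Using the elementary principle that $\sum_k a_k^{-1}<\infty$ with $a_k$ nondecreasing implies $a_k/k\to\infty$ --- applied to $\{\generatorRates_j\}$, and, after a Cauchy condensation of $\sum_j(jd_j)^{-1}$, to $\{d_{2^k}\}$ --- one obtains $\generatorRates_j/j\to\infty$ and $d_j/\log j\to\infty$. Feeding these growth rates into the two sums (splitting the index $j$ according to whether $\generatorRates_j\lessgtr2\generatorRates_i$, and invoking Stirling on the block of $j$ close to $i$ in the second sum) gives $\log|P_i|=o(\generatorRates_i)$ after routine bookkeeping.

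It remains to treat the other two conclusions. For the limit of $\sepDist_n(\floor{\generatorRates_n t})$, I would compare term by term with $\sepDist_n^*(t)$ via \cref{lemma exponential estimate} applied with $x=\generatorRates_n t$ and $y=\generatorRates_i/\generatorRates_n\in[0,1)$: this bounds $\bigl|(1-\generatorRates_i/\generatorRates_n)^{\floor{\generatorRates_n t}}-e^{-\generatorRates_i t}\bigr|$ by $\tfrac{\generatorRates_i}{\generatorRates_n}e^{1-\generatorRates_i t}\tfrac{\max(1,\generatorRates_i t)}{1-\generatorRates_i/\generatorRates_n}$, and since $\generatorRates_n-\generatorRates_i\ge d_{i+1}$ the prefactor $\tfrac{\generatorRates_i}{\generatorRates_n}\cdot\tfrac1{1-\generatorRates_i/\generatorRates_n}=\tfrac{\generatorRates_i}{\generatorRates_n-\generatorRates_i}\le\tfrac{\generatorRates_i}{d_{i+1}}$ is bounded uniformly in $n$; multiplying by $|P_i^{(n)}|\le|P_i|$, summing, and using the estimate of the first paragraph, the term-by-term limit vanishes and the sum is dominated, so $\sepDist_n(\floor{\generatorRates_n t})-\sepDist_n^*(t)\to0$, which with the first paragraph gives the first displayed limit. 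For \eqref{eq:bound_remainder_Sep}, factor $e^{-\generatorRates_{n+1}t}$ out of $\sum_{i\ge n+1}P_ie^{-\generatorRates_i t}$: the remaining series $\sum_{i\ge n+1}P_ie^{-(\generatorRates_i-\generatorRates_{n+1})t}$ stays bounded as $t\to\infty$ (its $i=n+1$ term is $P_{n+1}$ and the rest tends to $0$, dominated for $t\ge1$ by $\sum_i|P_i|e^{-\generatorRates_i}<\infty$), so since $\generatorRates_{n+1}-\generatorRates_n=d_{n+1}>0$ we get $\sum_{i\ge n+1}P_ie^{-\generatorRates_i t}=e^{-\generatorRates_{n+1}t}\,O(1)=o(e^{-\generatorRates_n t})$. (Equivalently, the first two paragraphs can be phrased through the Laplace transform: $\sepDist_n^*$ is the inverse transform of $\tfrac1s\bigl(1-\prod_{j<n}\tfrac{\generatorRates_j}{\generatorRates_j+s}\bigr)$, which increases pointwise to $\tfrac1s\bigl(1-\prod_{j\ge0}\tfrac{\generatorRates_j}{\generatorRates_j+s}\bigr)$, and a Mittag--Leffler expansion of the latter recovers $\sum_iP_ie^{-\generatorRates_i t}$.)
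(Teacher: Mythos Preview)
Your proposal is correct and shares the paper's overall architecture: dominated convergence on the explicit series from \cref{thm:separation_distance}, with the key input being $\log|P_i|=o(\generatorRates_i)$. The differences are in the bookkeeping of that estimate. For the factors with $j<i$, the paper uses a short reindexing trick: from $\generatorRates_i-\generatorRates_j\ge \generatorRates_{i-j}-\generatorRates_0$ (by monotonicity of the increments), one gets
\[
\prod_{j=1}^{i-1}\frac{\generatorRates_j}{\generatorRates_i-\generatorRates_j}
\le \prod_{j=1}^{i-1}\frac{\generatorRates_j}{\generatorRates_{i-j}-\generatorRates_0}
= \prod_{j=1}^{i-1}\frac{\generatorRates_j}{\generatorRates_j-\generatorRates_0}
\le \exp\Big(\sum_{j\ge1}\frac{\generatorRates_0}{\generatorRates_j-\generatorRates_0}\Big)<\infty,
\]
so this block is bounded by an absolute constant, sharper than your $O(i)$ bound. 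For the factors with $j>i$, the paper bounds $\sum_{j>i}\frac{\generatorRates_i}{\generatorRates_j-\generatorRates_i}=\generatorRates_i\sum_{k\ge1}\frac{1}{\generatorRates_{i+k}-\generatorRates_i}$ and shows the inner sum tends to zero by a second application of dominated convergence (using $\generatorRates_{i+k}-\generatorRates_i\ge \generatorRates_k-\generatorRates_0$), which is lighter than your Stirling/splitting argument. In particular, the growth rate $d_j/\log j\to\infty$ you extract via Cauchy condensation is not needed; only $d_j\to\infty$ (already in the hypotheses) and $\sum_j\generatorRates_j^{-1}<\infty$ are used. Finally, for the discrete-time limit the paper does not compare $\sepDist_n(\lfloor\generatorRates_n t\rfloor)$ with $\sepDist_n^*(t)$ via \cref{lemma exponential estimate}; it simply dominates $(1-\generatorRates_i/\generatorRates_n)^{\lfloor\generatorRates_n t\rfloor}\le e^{1-\generatorRates_i t}$ directly and runs dominated convergence once. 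Your route works, but the paper's is shorter at every step.
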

\begin{proof}
	Fix $t>0$. Let us address the first limit (the second limit can be established similarly).
	Using \cref{thm:separation_distance}, the first limit above can be written as
      \begin{equation}\label{convergence to prove}
		\sum_{i=0}^{\infty} 
			\indicator( i < n )
			\bigg(1-\frac{\generatorRates_{i}}{\generatorRates_n}\bigg)^{\floor{\generatorRates_n t}} 
			\prod_{ j = 0 \atop j \ne i}^{n-1} 
				\frac{\generatorRates_{j}}{\generatorRates_{j}-\generatorRates_{i}}
  		\longrightarrow
        	\sum_{i=0}^{\infty} 
        		e^{- t \generatorRates_i } 
        		\prod_{j = 0 \atop j \ne i}^\infty 
					\frac{\generatorRates_{j}}{\generatorRates_{j}-\generatorRates_{i}}
		.
    	\end{equation}

	\noindent
	We will establish this using the dominated convergence theorem.
	It should be clear that the summands on the right are the limits of those on the left.
	{Therefore, it only remains to show} that the summands on the left are uniformly bounded in $n$ 
	by a summable sequence of $i$. 
	
	The first \orange{two factors} can be handled by the inequality $c_i<c_n$ and some standard estimates:
	\begin{equation}\label{bound power}
		0 
			\le 
		{\indicator( i < n )}
		\bigg( 
			1 - \frac{\generatorRates_{i}}{\generatorRates_n}
		\bigg)^{\floor{\generatorRates_n t}} 
			\le
        		\left( 
        			e^{ - \generatorRates_{i}/\generatorRates_n }
        		\right)^{\floor{\generatorRates_n t}} 
			\le
        		\left( 
        			e^{ - \generatorRates_{i}/\generatorRates_n }
        		\right)^{\generatorRates_n t - 1} 
			\le
        			e^{ 1 - \generatorRates_{i} t } 
			.
		\end{equation}
	Next, we consider the product of the $c_j/(c_j-c_i)$ for $j<i$.
	Here, we use the inequality $c_j<c_i$, the monotonicity of $ \{ c_{k+1} - c_k \}_{ k \ge 0 } $, and the summability of $ \{ \frac{1}{\generatorRates_k} \}_{ k \ge 0 } $:
	\begin{equation}\label{bound finite product}
		0
		\le
    		\prod_{j = 1 }^{ i - 1 }
        		\frac{\generatorRates_{j}}{\generatorRates_{i}-\generatorRates_{j}}	
		\le
    		\prod_{j = 1 }^{ i - 1 }
        		\frac{\generatorRates_{j}}{\generatorRates_{ i - j }-\generatorRates_{0}}	
		=
    		\prod_{j = 1 }^{ i - 1 }
        		\frac{ \generatorRates_{j} }{ \generatorRates_{j} - \generatorRates_{0} }
		=
    		\prod_{j = 1 }^{ i - 1 }
        		\left(1+\frac{ \generatorRates_{0} }{ \generatorRates_{j} - \generatorRates_{0} }\right)
				\le
			\exp\bigg( 
        			\sum_{j = 1 }^\infty 
        				\frac{ \generatorRates_{0} }{ \generatorRates_{j}-\generatorRates_{0} } 
				\bigg)
		<
			\infty
		.
	\end{equation}
	The remaining terms are the product of the $c_j/(c_j-c_i)$ for $j>i$.
	We claim that
	\begin{align} \label{bound infinite product}
		0
		\le
		\prod_{j = i + 1}^{{n-1}}
			\frac{\generatorRates_{j}}{\generatorRates_{j}-\generatorRates_{i}}
		\le
		\prod_{j = i + 1}^\infty 
			\left(1+\frac{\generatorRates_{i}}{\generatorRates_{j}-\generatorRates_{i}}\right)
    			\le 
    				\prod_{j = i + 1}^\infty 
                		e^{ \frac{\generatorRates_{i}}{\generatorRates_{j}-\generatorRates_{i}} }
    			=
    				\exp\bigg( 
    						\sum_{j = 1 }^\infty 
    							\frac{\generatorRates_{i}}{\generatorRates_{j + i}-\generatorRates_{i}} 
    					\bigg)=\exp(o(c_i))
				.
	\end{align}
    \orange{Since $c_j>c_i$ whenever $ j > i $, we only need to justify the limit}
    \begin{equation}\label{eq:Tech}
      \lim_{i \to +\infty} \sum_{j = 1 }^\infty                                      
            \frac{1}{\generatorRates_{j + i}-\generatorRates_{i}}  =0.
          \end{equation}
    For this, we shall apply the dominated convergence theorem.
    The termwise limits \orange{are zero} since, by hypothesis, $ \{ c_{k+1} - c_k \}_{ k \ge 0 } $ is unbounded.
	\orange{To identify a summable bound}, we use the monotonicity of $ \{ c_{k+1} - c_k \}_{ k \ge 0 } $ and the summability of $ \{ \frac{1}{\generatorRates_k} \}_{ k \ge 0 } $:
	\[
	\sum_{ j = 1 }^\infty
		\frac{1}{\generatorRates_{j + i}-\generatorRates_{i}} 
			\le 
	\sum_{ j = 1 }^\infty
			\frac{1}{\generatorRates_{j}-c_0}
			<
				\infty
			. 
	\]
	
	\noindent
    This establishes \eqref{eq:Tech} and thus \eqref{bound infinite product}.
	Combining \cref{bound power,bound finite product,bound infinite product} \orange{then yields our final bound}:
	\[\indicator( i < n )\,
			\bigg(1-\frac{\generatorRates_{i}}{\generatorRates_n}\bigg)^{\floor{\generatorRates_n t}} \,
			\prod_{ j = 0 \atop j \ne i}^{n-1} 
				\frac{\generatorRates_{j}}{\generatorRates_{j}-\generatorRates_{i}}
				\le 
					\exp\big(1-\generatorRates_i t +O(1) + o(\generatorRates_i)\big)
				= 
					e^{-\generatorRates_i (t-o(1))}
				.
	\]
	We remark that this bound is indeed independent of $n$.
	Moreover, since the $\generatorRates_i$ grow at least linearly ($ \generatorRates_{i+1} - \generatorRates_i$ is nondecreasing), this bound is summable in $i$ for any fixed $t>0$.
	This verifies that the dominated convergence theorem applies, establishing the limit in
	\eqref{convergence to prove}.
	
	For the final claim in the proposition, we must show that
	$$ 
    	\sum_{ i = n + 1 }^{\infty} 
    		e^{- t ( \generatorRates_i - \generatorRates_n ) } 
    		\prod_{j = 0 \atop j \ne i}^\infty 
				\frac{\generatorRates_{j}}{\generatorRates_{j}-\generatorRates_{i}}
			\xrightarrow[t \to \infty]{}
				0
		,
			\qquad
			n \ge 0
		.
	$$  

	\noindent
	Once again, we apply the dominated convergence theorem.
	Since $ c_i > c_n $ for $ i > n $, the termwise limits are each zero. 
	It thus only remains to identify a suitable bound.
	Here, we can reuse the estimates in \eqref{bound finite product} and \eqref{bound infinite product}.
	We see that
	$$
		e^{- t ( \generatorRates_i - \generatorRates_n ) } 
		\prod_{j = 0 \atop j \ne i}^\infty 
			\frac{\generatorRates_{j}}{\generatorRates_{j}-\generatorRates_{i}}
				\le
            		\exp( - ( \generatorRates_i - \generatorRates_n ) + O( 1 ) + o(c_i)),
					\qquad
					t \ge 1
				.
	$$
	
	\noindent
    For fixed $n$, this upper bound is independent of $t$ and summable in $i$.
    Thus, the dominated convergence theorem applies and \eqref{eq:bound_remainder_Sep} is proved.
\end{proof}

The following lemma will prepare us for the final result of the section.
Recall that $ \spaceOfDensityFunctions_n = \linearSpan \{\density_s\}_{|s| \le n }$ and define
$$
	\spaceOfDensityFunctions_n^+ 
		=
			\{ f \in \spaceOfDensityFunctions_n : ( f )_n > 0 \} 
		,
			\qquad
			n \ge 0
		.
$$

\begin{lemma}
	\label{lemma H+ properties}
	For $ n \ge 0 $, 
    $C_+( \stateSpace_n ) 
			=
            	\{(f)_n : f\in \spaceOfDensityFunctions_n^+\}
    $
    and
    $ \spaceOfDensityFunctions_n^+ \subseteq \spaceOfDensityFunctions_{ n + 1}^+ $.
\end{lemma}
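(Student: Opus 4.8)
The plan is to reduce both claims to two elementary facts about the density functions: that the restriction map $f \mapsto (f)_n$ carries $\spaceOfDensityFunctions_n$ onto $C(\stateSpace_n)$, and that $\downOperator_{n+1}$ sends $(f)_n$ to $(f)_{n+1}$. For the first fact, I would recall that $\spaceOfDensityFunctions_n = \linearSpan\{\eigenfunction_s\}_{|s|\le n}$ coincides with $\linearSpan\{\density_s\}_{|s|\le n}$, since \eqref{identity expansion of eigenfunctions} and \cref{prop expansion of density in eigenbasis} express each family in terms of the other, and then use \eqref{identity density as delta function}, which gives $(\density_s)_n = \indicator_s$ for $|s| = n$. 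Thus $\{(\density_s)_n\}_{|s|=n}$ is a basis of $C(\stateSpace_n)$, so the restriction map $\spaceOfDensityFunctions_n \to C(\stateSpace_n)$ is surjective. From here the equality $C_+(\stateSpace_n) = \{(f)_n : f \in \spaceOfDensityFunctions_n^+\}$ is immediate: the inclusion $\supseteq$ is just the definition of $\spaceOfDensityFunctions_n^+$, while for $\subseteq$ one lifts any $g \in C_+(\stateSpace_n)$ (by surjectivity) to some $f \in \spaceOfDensityFunctions_n$ with $(f)_n = g$, which then lies in $\spaceOfDensityFunctions_n^+$ and exhibits $g$ as a restriction.

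For the second claim, I would first observe that $\spaceOfDensityFunctions_n \subseteq \spaceOfDensityFunctions_{n+1}$ holds trivially, so the only content is that strict positivity of $(f)_n$ forces strict positivity of $(f)_{n+1}$. Here I would expand $f = \sum_{|s|\le n} a_s \density_s$ and apply \cref{prop properties of density functions}\ref{action of down operators on density functions} termwise to get $\downOperator_{n+1}(f)_n = \sum_{|s|\le n} a_s (\density_s)_{n+1} = (f)_{n+1}$. Since $\downOperator_{n+1}$ is, by \ref{assumption down operators}, the transition operator associated with a probability kernel $\mu$ from $\stateSpace_n$ to $\stateSpace_{n+1}$, and $\stateSpace_n$ is finite by \ref{assumption finite state spaces}, we have $(f)_{n+1}(u) = \int_{\stateSpace_n} (f)_n(r)\,\mu(u,dr) > 0$ for every $u \in \stateSpace_{n+1}$ whenever $(f)_n > 0$. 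Hence $f \in \spaceOfDensityFunctions_{n+1}^+$.

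I do not expect a genuine obstacle; the argument is essentially bookkeeping on top of identities already established. The one point worth stating carefully is that a transition operator maps strictly positive functions to strictly positive functions — this uses that the underlying measures $\mu(u,\cdot)$ are probability measures (total mass one) and that $\stateSpace_n$ is finite, so that the integral of a strictly positive function is bounded below by its minimum value. Everything else is linear algebra resting on the density-function identities.
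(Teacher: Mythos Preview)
Your proof is correct and follows essentially the same approach as the paper's: both arguments use \eqref{identity density as delta function} to see that restriction $\spaceOfDensityFunctions_n \to C(\stateSpace_n)$ is onto (the paper just writes the explicit preimage $f = \sum_{|s|=n} g(s)\,\density_s$), and both prove the inclusion by extending $\downOperator_{n+1}(\density_s)_n = (\density_s)_{n+1}$ linearly and invoking that transition operators preserve strict positivity. One small slip: the kernel underlying $\downOperator_{n+1}$ goes from $\stateSpace_{n+1}$ to $\stateSpace_n$, not the other way (as your own integral formula shows), but this does not affect the argument.
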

\begin{proof}
	For the first claim, the reverse containment follows from definitions. 
	To establish the forward containment, we take $ g$ in $C_+( \stateSpace_n ) $ and observe that the function
    $
    	f 
			= 
			\sum_{ | s | = n }
					g( s ) 
					\density_s 
	$
    lies in \orange{$ \spaceOfDensityFunctions_n $} and satisfies $ ( f )_n = g > 0 $ 
    (recall \eqref{identity density as delta function}).

	Moving on to the second claim, observe that \cref{prop properties of density functions}\ref{action of down operators on density functions} can be extended linearly as follows:
		$$
        	\downOperator_{ n + 1 }
    		( f )_n
    			=	
                		( f )_{ n + 1 }
				,
					\qquad
					f \in \spaceOfDensityFunctions_n,
					\,
					n \ge 0
				.
		$$
	
	\noindent
	Let now $ n \ge 0 $ and $f$ be in $ \spaceOfDensityFunctions^+_n$. 
	Since $ ( f )_n $ is positive and $ \downOperator_{ n + 1 } $ is a transition operator, the above identity implies that $ ( f )_{ n + 1 } $ is also positive.
	Moreover, $f \in \spaceOfDensityFunctions_n \subseteq \spaceOfDensityFunctions_{ n + 1 } $.
	This concludes the proof.	
\end{proof}

We conclude this section by demonstrating an interplay between separation distance and intertwining -- namely, that the separation distance of a process can be computed using the dynamics of an intertwined process.
This leads to a certain monotonicity in the separation distances, which will be crucial later when we compute their limit.

\begin{prop}
	Suppose that \ref{assumption consistency rn} holds.
	Then we have the identities\footnote{\cref{prop properties of stationary measures}\ref{integrating against stationary measure} ensures that the denominators are nonzero.}
    \begin{equation}
    	\label{eqn recursive structure in sep dist}
        \sepDist_k^*(t) 
        	= 
        		\sup_{ r \in  \stateSpace_n, \, f \in \spaceOfDensityFunctions_k^+ } 
        			\bigg( 
        				1 - \frac{ ( e^{ t \discreteGenerators_n } ( f )_n )( r ) }{ [ \eigenfunction_\zeroVertex ] f}
        			\bigg)
    		,
				\qquad
				n \ge k,
				\,
				t \ge 0
			.
    \end{equation}
   Consequently, $ \sepDist^*_k( t ) $ is nondecreasing in $ k $.
	\label{prop recursive structure in sep dist}
\end{prop}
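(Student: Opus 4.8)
The plan is to prove the identity \eqref{eqn recursive structure in sep dist} by two opposite inequalities and then read off the monotonicity. First I would record three preliminary facts, all consequences of material already established. (i) Iterating the intertwining \ref{assumption intertwining} along the factorization $\downOperator_{n,k}=\downOperator_n\downOperator_{n-1}\cdots\downOperator_{k+1}$ gives $\discreteGenerators_n\downOperator_{n,k}=\downOperator_{n,k}\discreteGenerators_k$, and since all the operators involved are bounded, exponentiating termwise yields $e^{t\discreteGenerators_n}\downOperator_{n,k}=\downOperator_{n,k}e^{t\discreteGenerators_k}$ for $t\ge 0$. (ii) The linear extension of \cref{prop properties of density functions}\ref{action of down operators on density functions} noted in the proof of \cref{lemma H+ properties} gives $\downOperator_{n,k}(f)_k=(f)_n$ for every $f\in\spaceOfDensityFunctions_k$. (iii) By \cref{prop properties of stationary measures}\ref{integrating against stationary measure}, $[\eigenfunction_\zeroVertex]f=\int_{\stateSpace_k}(f)_k\,d\upDownDist_k$ for $f\in\spaceOfDensityFunctions_k$; since $\upDownDist_k$ is a probability measure, this quantity is positive whenever $f\in\spaceOfDensityFunctions_k^+$ (this is the footnoted nonvanishing of the denominators).

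For the inequality ``$\le$'', I would fix $n\ge k$, $t\ge 0$, $r\in\stateSpace_n$, and $f\in\spaceOfDensityFunctions_k^+$. Combining (i) and (ii), $(e^{t\discreteGenerators_n}(f)_n)(r)=(\downOperator_{n,k}e^{t\discreteGenerators_k}(f)_k)(r)=\sum_{|u|=k}\downKernel_{n,k}(r,u)\,(e^{t\discreteGenerators_k}(f)_k)(u)$, a convex combination because $\downKernel_{n,k}$ is a transition kernel. Dividing by $[\eigenfunction_\zeroVertex]f=\int_{\stateSpace_k}(f)_k\,d\upDownDist_k$ and using $\sum_{|u|=k}\downKernel_{n,k}(r,u)=1$, the quantity $1-(e^{t\discreteGenerators_n}(f)_n)(r)/[\eigenfunction_\zeroVertex]f$ becomes the convex combination, with weights $\downKernel_{n,k}(r,u)$, of the numbers $1-(e^{t\discreteGenerators_k}(f)_k)(u)/\int_{\stateSpace_k}(f)_k\,d\upDownDist_k$ over $u\in\stateSpace_k$. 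Since $(f)_k\in C_+(\stateSpace_k)$ by \cref{lemma H+ properties}, each such number is at most $\sepDist_k^*(t)$ by the definition of the separation distance of the continuous-time variant, so the convex combination is as well; taking the supremum over $r$ and $f$ gives ``$\le$''.

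For ``$\ge$'', the key is \ref{assumption consistency rn}: iterating $\downKernel_{m+1}(r_{m+1},r_m)=1$ down to level $k$ shows $\downKernel_{n,k}(r_n,r_k)=1$, so by (i)--(ii), $(e^{t\discreteGenerators_n}(f)_n)(r_n)=(e^{t\discreteGenerators_k}(f)_k)(r_k)$ for every $f\in\spaceOfDensityFunctions_k$. I would then test with $f_\varepsilon=\density_{s_k}+\varepsilon\sum_{|s|=k}\density_s\in\spaceOfDensityFunctions_k$, whose restriction to $\stateSpace_k$ is $\indicator_{s_k}+\varepsilon\indicator_{\stateSpace_k}>0$ by \eqref{identity density as delta function}, so $f_\varepsilon\in\spaceOfDensityFunctions_k^+$ and $[\eigenfunction_\zeroVertex]f_\varepsilon=\upDownDist_k(s_k)+\varepsilon$. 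This gives, for the right-hand side $S$ of \eqref{eqn recursive structure in sep dist}, $S\ge 1-(e^{t\discreteGenerators_k}(\indicator_{s_k}+\varepsilon\indicator_{\stateSpace_k}))(r_k)/(\upDownDist_k(s_k)+\varepsilon)$. Letting $\varepsilon\to 0^+$, using $\upDownDist_k(s_k)>0$ from \ref{assumption distant elements} and $\upDownDist_k(s_k)=\int_{\stateSpace_k}(\density_{s_k})_k\,d\upDownDist_k$ from \cref{prop properties of stationary measures}\ref{action of stationary measures on density functions}, the bound converges to $1-(e^{t\discreteGenerators_k}(\density_{s_k})_k)(r_k)/\int_{\stateSpace_k}(\density_{s_k})_k\,d\upDownDist_k$, which equals $\sepDist_k^*(t)$ by the identity \eqref{eq:1mDeltakStar} obtained in the proof of \cref{thm:separation_distance}. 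Combining the two inequalities establishes \eqref{eqn recursive structure in sep dist}.

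For the ``Consequently'' part, given $k\le k'$ and $t\ge 0$, \cref{lemma H+ properties} yields $\spaceOfDensityFunctions_k^+\subseteq\spaceOfDensityFunctions_{k'}^+$; applying \eqref{eqn recursive structure in sep dist} with $n=k'$ writes $\sepDist_k^*(t)$ as a supremum over $\stateSpace_{k'}\times\spaceOfDensityFunctions_k^+$, which is bounded above by the same supremum over the larger index set $\stateSpace_{k'}\times\spaceOfDensityFunctions_{k'}^+$, i.e.\ by $\sepDist_{k'}^*(t)$ (the $n=k=k'$ case of \eqref{eqn recursive structure in sep dist}). I expect the main obstacle to be the ``$\ge$'' direction: one has to recognize that, via \eqref{eq:1mDeltakStar}, the continuous-time separation distance at level $k$ is attained (after approximating $\indicator_{s_k}$ by a positive function) at the distinguished pair $(r_k,s_k)$, and that \ref{assumption consistency rn} is precisely what transports this optimum up to level $n$ through $\downKernel_{n,k}$ without changing the relevant expected value; the convex-combination argument for ``$\le$'' and the monotonicity step are then routine.
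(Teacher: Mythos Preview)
Your proof is correct and follows essentially the same approach as the paper: both directions rely on the intertwining $e^{t\discreteGenerators_n}\downOperator_{n,k}=\downOperator_{n,k}e^{t\discreteGenerators_k}$ together with $(f)_n=\downOperator_{n,k}(f)_k$, then use that $\downOperator_{n,k}$ is a transition operator for ``$\le$'' and that $\downKernel_{n,k}(r_n,r_k)=1$ under \ref{assumption consistency rn} for ``$\ge$'', finishing via \eqref{eq:1mDeltakStar}. The only cosmetic differences are that the paper phrases the argument in terms of infima of $1-\sepDist_k^*(t)$ rather than suprema, and approximates $\indicator_{s_k}$ by $\density_{s_k}+\varepsilon\density_\zeroVertex$ instead of your $\density_{s_k}+\varepsilon\sum_{|s|=k}\density_s$; both choices work equally well.
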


\begin{remark}
	One implication of this result is that each of these separation distances can be described by a variety of processes (by changing $ n $).
	Another is that one can fix a single process (i.e.~an $n$) and compute various separation distances by simply moving along the filtration
	$
    	\spaceOfDensityFunctions_0^+
    		\subset
            	\spaceOfDensityFunctions_1^+
    				\subset 
    				\ldots
    				\subset
            	\spaceOfDensityFunctions_n^+
			.
	$
\end{remark}

\begin{remark}
	We do not have an equivalent statement for the discrete-time chains since they do not exhibit intertwining.
\end{remark}

\begin{proof}
    {Let $ n \ge k $ and $t \ge 0 $}.
    Iterating \cref{prop properties of density functions}\ref{action of down operators on density functions} gives us that
    $
    	\downOperator_{ n, k }
		( f )_k
			=
				( f )_n
	$
	for {$ f $ in $ \spaceOfDensityFunctions_k $}.
	Similarly, we can iterate Assumption \ref{assumption intertwining} to obtain
    $ 
    	\discreteGenerators_n \downOperator_{ n, k }
    	=
    		\downOperator_{ n, k } \discreteGenerators_k
		,
    $
    which can be converted into the semigroup relation
    $
    	e^{ t \discreteGenerators_n } \downOperator_{ n, k }
    	=
    		\downOperator_{ n, k } e^{ t \discreteGenerators_k }
    $
    (see~\cite[Corollary 7.1]{krdrLeftmost} if needed).
	In summary,
$$
    	e^{ t \discreteGenerators_n } 
		( f )_n
			=
            	e^{ t \discreteGenerators_n } 
            	\downOperator_{ n, k }
        		( f )_k
        	=
        		\downOperator_{ n, k } 
				e^{ t \discreteGenerators_k }
        		( f )_k,
				\qquad 
				f \in \spaceOfDensityFunctions_k
			.		
	$$
	Since $ \downOperator_{ n, k } $ is a transition operator, this yields the bound
	\[
   		\inf_{  r\in \stateSpace_n } 
        				( e^{ t \discreteGenerators_n } ( f )_n )( r ) 
      \ge
        		\inf_{  u \in \stateSpace_k} 
        				 ( e^{ t \discreteGenerators_k } ( f )_k )( u ),
				\qquad 
				f \in \spaceOfDensityFunctions_k
			.		
	\]
	\noindent
	Combining this with the identities in
    \cref{prop properties of stationary measures}\ref{integrating against stationary measure}
    and {\cref{lemma H+ properties}}, we find that
		\begin{equation*}%
    		1 - \sepDist_k^*( t )
    			= 
            		\inf_{ \substack{  g \in C_+( \stateSpace_k ) \\  u \in \stateSpace_k } } 
            				\frac{ ( e^{ t \discreteGenerators_k } g )( u ) }{ \int_{\stateSpace_k} g \, d\upDownDist_k }
    			= 
            		\inf_{ \substack{ f \in \spaceOfDensityFunctions^+_k  \\  u \in \stateSpace_k  } } 
            				\frac{ ( e^{ t \discreteGenerators_k } ( f )_k )( u ) }{ \int_{\stateSpace_k} ( f )_k \, d\upDownDist_k }
    			\le 
            		\inf_{ \substack{ f \in \spaceOfDensityFunctions^+_k \\ r \in \stateSpace_n } } 
            				\frac{ ( e^{ t \discreteGenerators_n } ( f )_n )( r ) 
            				}{ 
                        		[ \eigenfunction_\zeroVertex ]
                        		f
            				}.
		\end{equation*}
	{To establish the first claim, it only remains to prove the reverse inequality.}
	To this end, observe from Assumption \ref{assumption consistency rn} that 
	$\downKernel_{n,k}(r_n,r_k)=1$, and as a result, $(\downOperator_{n,k} \, g) (r_n)=g(r_k)$ for any $g$ in $C( \stateSpace_k)$.
	In particular,
	\[( 
    		e^{ t \discreteGenerators_n } 
    		( f )_n
		)( r_n )
			=
            	( 
            		\downOperator_{ n, k } 
    				e^{ t \discreteGenerators_k }
            		( f )_k
        		)( r_n )
			=
            	( 
    				e^{ t \discreteGenerators_k }
            		( f )_k
        		)( r_k )
			,
				\qquad 
				f \in \spaceOfDensityFunctions_k
			.		\]
	Together with \cref{prop properties of stationary measures}\ref{integrating against stationary measure} and \cref{lemma H+ properties}, this implies that
	\begin{equation*}%
	\inf_{ \substack{ f \in \spaceOfDensityFunctions^+_k \\  r \in \stateSpace_n } } 
        				\frac{ ( e^{ t \discreteGenerators_n } ( f )_n )( r ) 
        				}{ 
                    		[ \eigenfunction_\zeroVertex ]
                    		f}
		\le
		\inf_{ \substack{ f \in \spaceOfDensityFunctions^+_k  } } 
        				\frac{ ( e^{ t \discreteGenerators_n } ( f )_n )( r_n ) 
        				}{ 
                    		[ \eigenfunction_\zeroVertex ]
                    		f}
		=
			\inf_{ \substack{ f \in \spaceOfDensityFunctions^+_k  } } 
        				\frac{ ( e^{ t \discreteGenerators_{k} } ( f )_k )( r_k ) 
        				}{ 
            				\int_{ \stateSpace_k }
            					( f )_k \,
            					d \upDownDist_k
            			}
		=
			\inf_{ \substack{ g \in C_+(\stateSpace_k)  } } 
        				\frac{ ( e^{ t \discreteGenerators_{k} } g )( r_k ) 
        				}{ 
            				\int_{ \stateSpace_k }
            					g \,
            					d \upDownDist_k
            			}
		.
	\end{equation*}
	\noindent
	We can therefore obtain the reverse inequality by using the nonnegativity of $\density_{s_k}$ and \eqref{eq:1mDeltakStar} to write

	\[\inf_{ \substack{ g \in C_+(\stateSpace_k)  } } 
    		\frac{ ( e^{ t \discreteGenerators_k } g )( r_k ) 
    		}{ 
        			\int_{ \stateSpace_k }
        				g \,
        				d \upDownDist_k
				}
				\le \lim_{\eps \to 0^+} \frac{ ( e^{ t \discreteGenerators_k } ( \density_{s_k}+\eps\density_\zeroVertex )_k )( r_k ) 
    		}{ 
        			\int_{ \stateSpace_k }
        				( \density_{s_k} +\eps\density_\zeroVertex )_k
        				d \upDownDist_k
        	}      
				=
				\frac{ ( e^{ t \discreteGenerators_k } ( \density_{s_k} )_k )( r_k ) 
    		}{ 
        			\int_{ \stateSpace_k }
        				( \density_{s_k} )_k
        				d \upDownDist_k}
				=
					1 - \sepDist_k^*( t )
				.
        	\]
    This establishes the first claim, along with the following identity, which will be useful later:
    \begin{equation}
      1 - \sepDist_k^*( t )
        = 	\inf_{ \substack{ g \in C_+(\stateSpace_k)  } } 
                \frac{ ( e^{ t \discreteGenerators_{\orange{k}} } g )( r_k ) 
                }{     
                    \int_{ \stateSpace_k }
                        g \,
                        d \upDownDist_k
                }
       .
      \label{inequality final bound for reversal}
    \end{equation}
	The second claim of the proposition follows from the containment assertion in \cref{lemma H+ properties}.
\end{proof}

\section{The scaling limit}
\label{section convergence}

In this section, we establish the convergence of our chains to a Feller process.
Our approach continues to be based on the analysis of transition operators.

\subsection{The limiting space and the extension of functions}
\label{section limiting space}

We now assume, in addition to \allAssumptions\!, that we have
	a compact metric space $ \limitSpace $,
	a map  
	$ 
		\inclusion
			\colon
				\stateSpace \to \limitSpace
			,
	$
	and a family of functions
	$
		\{
			\density_s^o
		\}_{ s \in \stateSpace }
			\subset 
				C( \limitSpace ) 
	$
	that satisfy the following conditions:
\begin{enumerate}[ label = (L\arabic*), leftmargin = \indentForAssumptions ]
	\item
	\label{assumption state space approximation}
	for every 
	$ 
		x \in \limitSpace 
	$
	there is a sequence 
	$ 
		\{ s_n \}_{ n \ge 0 }
	$
	such that 
	$ 
		s_n \in \stateSpace_n
	$
	and
	$
		\inclusion( s_n )
			\to
				x
			,
	$

	\item
	\label{assumption continuous density functions are dense} 
	the span of
	$
		\{
			\density_s^o
		\}_{ s \in \stateSpace }
	$
	is dense in
	$
		C( \limitSpace )
		,
	$ 
	\item
	\label{assumption continuous density functions are limits}
	for every $ s \in \stateSpace $, we have the convergence
	$$
		\big\Vert
    			( \density_s )_n
			-
				\density_s^o 
				\circ 
				\inclusion 
					\vert_{ \stateSpace_n }
		\big\Vert_{ C( \stateSpace_n ) }
			\xrightarrow[ n \to \infty ]{}
				0
			.
	$$

\end{enumerate}

These conditions imply that the density functions on $ \stateSpace $ can be `extended' to $ \limitSpace $.
Indeed, \ref{assumption state space approximation} and \ref{assumption continuous density functions are limits} lead to the limit
\begin{align}
\label{defn continuous density functions}
	\density_s^o( x ) 
		& =
			\lim_{ \substack{ 
					\inclusion( r ) \to x \\
					| r | \to \infty
				}}
				\density_s( r )
		,
			\qquad
			x \in \limitSpace,
			\,
			s \in \stateSpace
		.
\end{align}
Consequently, the functions
$
	\{
		\density_s^o
	\}_{ s \in \stateSpace }
$
can be seen as continuous analogues of the density functions.
We will call these the \emph{density functions (on $ \limitSpace $)} and
note that they are uniquely determined by (\ref{defn continuous density functions}).
In addition, they inherit several properties from the density functions on $ \stateSpace $:
\begin{enumerate}[ label = (\roman*) ]
	\item
	\label{continuous density functions lie in unit interval}
	$
    	0
			\le
				\density_s^o
			\le
				1
	$
	for all $ s \in \stateSpace $,

	\item
	\label{continuous zero density is constant}
    $
    	\density_\zeroVertex^o
    		\equiv
				1
			,
	$

	\item
	\label{Pieri rule for continuous density functions}
    we have the expansions
    $$
    	\density_s^o
    		=
    			\sum_{ | u | = n }
    				\density_s( u )
    				\,
    				\density_u^o
			,
				\qquad
				n \ge | s |
			,
	$$
  \item \label{item:filtration_core}
    the subspaces
    $
    	\core_n
    		=
    			\linearSpan	
    				\{
            			\density_s^o
            		\}_{ 
        					| s | = n
    					}
    $
	form a finite-dimensional filtration of
    $
    	\core
    		=
    			\linearSpan	
    				\{
            			\density_s^o
            		\}_{ s \in \stateSpace } 
			,
    $
    $$
    	\core_0
    		\subset
    			\core_1
    				\subset 
    				\ldots
    				\subset
    					\core
    						=
    							\bigcup_n
    								\core_n
    				.
    $$
\end{enumerate}
Due to the latter property, we say that $ \density_r^o $ is of `lower order' than $ \density_s^o $ whenever $ | r | < | s | $.

For the upcoming analysis, it will be useful to extend this analogy to other functions.
We do this by defining a linear operator
$ 
	\discreteToContinuousMap 
		\colon
        	\spaceOfDensityFunctions
		\to
        	\core
$ 
by
$ 
	\density_s 
		\mapsto 
			\density_s^o 
$
so that now each 
$
	f
		\in
        	\spaceOfDensityFunctions 
$
has a continuous analogue $ f^o = \discreteToContinuousMap f $.
A number of identities carry over from the discrete setting, including
\begin{equation}
    \label{eq:ho_In_Do_Basis}
	\eigenfunction_\zeroVertex^o \equiv 1,
	\qquad\quad
	\eigenfunction_s^o
		=
    		\sum_{ | r | \le | s | }
    			\upKernel_{ | r |, | s | }( r, s )
				\,
				\coeffEigInDensityBasis_{ | r |, | s | }
				\,
    			\density_r^o
		,
			\qquad
			s \in \stateSpace
		,
\end{equation}
\begin{equation}
	\label{eq:Do_In_ho_basis}
	\density_s^o
		=
    		\sum_{ | r | \le | s | }
    			\upKernel_{ | r |, | s | }( r, s )
				\,
				\coeffDensityInEigBasis_{ | r |, | s | }
				\,
    			\eigenfunction_r^o
		,
			\qquad
			s \in \stateSpace
		,
      \end{equation}

\noindent
and
\begin{equation*}
	\linearSpan	
		\{
			\eigenfunction_s^o
		\}_{
				| s | \le n
			}
        		=
        			\linearSpan	
        				\{
                			\density_s^o
                		\}_{
        						| s | \le n
        					}
        		,
        			\qquad
        			n \ge 0 
        		.
\end{equation*}

We will also need the projections induced by $ \inclusion $. 
As in Section \ref{subsection convergence theorems}, each $ n \ge 0 $ has an associated projection
$
	\projection_n
		\colon
				C( \limitSpace )
			\to
				C( \stateSpace_n )
$
given by
$$
	\projection_n
	f
		=
			f
			\circ
				\inclusion\vert_{ \stateSpace_n }			
		.
$$
We recall that these projections give rise to the following notion of convergence.
\begin{defn} 
\label{convergenceInDifferentSpaces}
    A sequence $ \{ f_n \} $ with 
    $ 
    	f_n 
    		\in 
				C( \stateSpace_n )
	$ 
	converges to 
    $ 
    	f
    		\in 
				C( \limitSpace )
	$ 
	(and we write $ f_n \to f $) if
    $$
    	\Vert
    		f_n
    	-	\pi_n
    		f
    	\Vert_{ C( \stateSpace_n ) }
    		\xrightarrow[ n \to \infty ]{}
    			0
    		.
    $$

\end{defn}

This convergence leads to a simple reformulation of Assumption \ref{assumption continuous density functions are limits}:
$
	( \density_s )_n
		\longrightarrow
			\density_s^o
$
for $ s \in \stateSpace $,
or more generally,
\begin{equation}
	\label{general reformulation of convergence assumption}
	( f )_n
		\longrightarrow
			f^o
		,
			\qquad
			f \in \spaceOfDensityFunctions
		.
\end{equation}

\noindent
We also have the following useful property.
It is a well-known consequence of \ref{assumption state space approximation} (see e.g.~\cite{BOpartitions, OlshAddJackParameter}), but we choose to include a proof for the sake of completeness.

\begin{lemma}
	\label{lem:norm approximation}
	Let $ f \in C( \limitSpace ) $.
	Then we have the convergence
	$
		\Vert
    			\projection_n
				f
		\Vert_{ C( \stateSpace_n ) }
				\to
                		\Vert
                				f
                		\Vert_{ C( \limitSpace ) }
	$
	as $ n \to \infty $.
	Consequently, if \orange{$f_n \to f $} %
	in the sense of Definition~\ref{convergenceInDifferentSpaces}, then
	$
		\Vert
				f_n
		\Vert_{ C( \stateSpace_n ) }
				\to
                		\Vert
                				f
                		\Vert_{ C( \limitSpace ) }
	$
	as $ n \to \infty $.
\end{lemma}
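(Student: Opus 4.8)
The plan is to obtain the norm convergence by a $\limsup$/$\liminf$ sandwich: the upper bound is a one-line estimate, while the lower bound uses compactness of $\limitSpace$ together with Assumption~\ref{assumption state space approximation}.

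First I would record the trivial inequality $\Vert \projection_n f \Vert_{ C( \stateSpace_n ) } \le \Vert f \Vert_{ C( \limitSpace ) }$, valid for every $n$. Indeed $\projection_n f = f \circ \inclusion\vert_{\stateSpace_n}$, so $|(\projection_n f)(s)| = |f(\inclusion(s))| \le \Vert f \Vert_{C(\limitSpace)}$ for all $s \in \stateSpace_n$. This already gives $\limsup_n \Vert \projection_n f \Vert_{C(\stateSpace_n)} \le \Vert f \Vert_{C(\limitSpace)}$.

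Next, for the matching lower bound, I would use that $\limitSpace$ is compact and $f$ continuous to pick $x^* \in \limitSpace$ with $|f(x^*)| = \Vert f \Vert_{C(\limitSpace)}$. By Assumption~\ref{assumption state space approximation} there is a sequence $s_n \in \stateSpace_n$ with $\inclusion(s_n) \to x^*$, and continuity of $f$ then gives $f(\inclusion(s_n)) \to f(x^*)$. Since $\Vert \projection_n f \Vert_{C(\stateSpace_n)} \ge |(\projection_n f)(s_n)| = |f(\inclusion(s_n))|$, we obtain $\liminf_n \Vert \projection_n f \Vert_{C(\stateSpace_n)} \ge \Vert f \Vert_{C(\limitSpace)}$. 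Combining the two bounds yields $\Vert \projection_n f \Vert_{C(\stateSpace_n)} \to \Vert f \Vert_{C(\limitSpace)}$.

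For the second assertion, suppose $f_n \to f$ in the sense of Definition~\ref{convergenceInDifferentSpaces}, i.e.\ $\Vert f_n - \projection_n f \Vert_{C(\stateSpace_n)} \to 0$. The reverse triangle inequality gives $\bigl| \Vert f_n \Vert_{C(\stateSpace_n)} - \Vert \projection_n f \Vert_{C(\stateSpace_n)} \bigr| \le \Vert f_n - \projection_n f \Vert_{C(\stateSpace_n)} \to 0$, so $\Vert f_n \Vert_{C(\stateSpace_n)}$ converges to the same limit as $\Vert \projection_n f \Vert_{C(\stateSpace_n)}$, namely $\Vert f \Vert_{C(\limitSpace)}$. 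The only point requiring any care is that the approximating sequence from Assumption~\ref{assumption state space approximation} must be taken at a maximizer of $|f|$ — which is precisely where compactness of $\limitSpace$ enters — and there is no genuine obstacle beyond that.
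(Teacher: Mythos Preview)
Your proof is correct and follows essentially the same approach as the paper: both use compactness of $\limitSpace$ to locate a maximizer of $|f|$, invoke Assumption~\ref{assumption state space approximation} to approximate it by points of $\stateSpace_n$, and combine this with the trivial upper bound $\Vert \projection_n f \Vert \le \Vert f \Vert$. The paper packages the two inequalities into a single sandwich $|f(\inclusion(s_n))| \le \Vert \projection_n f \Vert_{C(\stateSpace_n)} \le \Vert f \Vert_{C(\limitSpace)}$ before passing to the limit, whereas you separate the $\limsup$ and $\liminf$ and spell out the reverse triangle inequality for the second claim, but the substance is identical.
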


\begin{proof}
	Let $ f \in C( \limitSpace ) $.	Since $ \limitSpace $ is compact, there exists some $ x \in \limitSpace $ such that
	$
		\Vert
				f
		\Vert_{ C( \limitSpace ) }
			=
				| f( x ) |
			.
	$	
	Let 
	$ 
		\{ s_n \}_{ n \ge 0 }
	$
	be an $ \stateSpace $-approximation of $ x $, as in \ref{assumption state space approximation}.
	Writing
	$$
		| f( \inclusion( s_n ) ) |
			\le
        		\sup_{ r \in \stateSpace_n }
					| f( \inclusion( r ) ) |
			=
        		\sup_{ r \in \stateSpace_n }
					| 
            			( \projection_n f )( r )
					|
			\le
        		\sup_{ z \in \limitSpace }
					| 
            			f( z )
					|
	$$
	
	\noindent
	and taking the limit as $ n \to \infty $ yields the first claim.
	The second follows immediately.
\end{proof}

\subsection{The limiting process}
\label{ssec:limiting_process}
We now construct the semigroup of our limiting process and establish the desired semigroup convergence.
The candidate for the limiting semigroup is supplied by Proposition \ref{prop initial convergence}: we seek operators that satisfy
$
	\limitSemigroupOnMetricSpace( t )
	f^o
		=
			( \limitSemigroupOnGraph_t f )^o
		,
$
where $Q_t$ is the operator on $\spaceOfDensityFunctions$ defined by
$
	\limitSemigroupOnGraph_t
	\eigenfunction_s
		=
			e^{- t \generatorRates_{ | s | - 1 } }
			\,
			\eigenfunction_s
$
for $ s \in \stateSpace $.
The following result shows that such a semigroup exists and that it is indeed the limiting semigroup.

\begin{prop}
	\label{prop limiting semigroup and generator}
	Suppose that $ \generatorRates_n \to \infty $ and $ \{ \eps_n \}_{ n \ge 0 } $ is a positive sequence converging to zero such that
	$
		\eps_n
		\generatorRates_n 
		\to
			1
		.
	$
	The following statements hold:
	\begin{enumerate}[ label = (\roman*) ]
		\item
		\label{claim existence of limit}
		for every $ f \in C( \limitSpace ) $ and $ t \ge 0 $, the limit
		\begin{equation*} 
			\limitSemigroupOnMetricSpace( t )
			f
				=
					\lim_{ n \to \infty }
                		\upDownOperator_n
    						^{ \floor{ t/\eps_n } }
                        \projection_n
                        f
		\end{equation*}
		
		\noindent
		exists in the sense of Definition \ref{convergenceInDifferentSpaces} \orange{and defines a family of linear operators},
		
		\item
		\label{claim description of limit}
		these operators satisfy
		$
			\limitSemigroupOnMetricSpace( t )
			\discreteToContinuousMap
				=
					\discreteToContinuousMap
					\limitSemigroupOnGraph_t
				,
		$
		or equivalently, 
		\begin{equation}\label{eq:Tt}
			\limitSemigroupOnMetricSpace( t )
			\eigenfunction_s^o
				=
    				e^{ - t \generatorRates_{ | s | - 1 } }
					\eigenfunction_s^o
				,
					\qquad
						s \in \stateSpace,
						\,
						t \ge 0,
		\end{equation}

		\item
		\label{claim Feller semigroup}
		these operators form a Feller semigroup on
		$
			C( \limitSpace )
			,
		$

		\item
		\label{claim generator and core}
		$ \core $ is a core for $ \pregenerator $, the generator of this semigroup, and the action of $ \pregenerator $ on $ \core $ is given by
        $$
        	\pregenerator
        	\eigenfunction_s^o
        		=
            		- \generatorRates_{ | s | - 1 }
            		\eigenfunction_s^o
        		,
        			\qquad
					s \in \stateSpace
        		,
        $$
		and
		
		\item
		\label{claim convergence of generators}
		we have the convergence of generators
		$$
			\discreteGenerators_n
			( f )_n
				\xrightarrow[ n \to \infty ]{}
					\pregenerator
					f^o
				,
					\qquad
					f \in \spaceOfDensityFunctions
				,
		$$
		
		\noindent
        in the sense of Definition \ref{convergenceInDifferentSpaces}.
		
	\end{enumerate}
\end{prop}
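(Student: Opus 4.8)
The plan is to establish all five claims by first carrying out the computation on the ``eigenbasis'' $\{\eigenfunction_s^o\}_{s\in\stateSpace}$ of $\core$, and then bootstrapping to all of $C(\limitSpace)$ by density and contractivity. The building block is essentially \cref{prop initial convergence}, but with the exponent $\floor{t/\eps_n}$ in place of $\floor{t\generatorRates_n}$. Fix $s\in\stateSpace$ and $t\ge0$, and write $m_n=\floor{t/\eps_n}$. By \cref{prop eigenbasis}, for $n\ge|s|$ we have $\upDownOperator_n^{m_n}(\eigenfunction_s)_n=(1-\commutationWeightProduct_{|s|,n})^{m_n}(\eigenfunction_s)_n=\big(1-\generatorRates_{|s|-1}/\generatorRates_n\big)^{m_n}(\eigenfunction_s)_n$. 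Applying \cref{lemma exponential estimate} with $x=t/\eps_n$ and $y=\generatorRates_{|s|-1}/\generatorRates_n$, and using $\generatorRates_n\to\infty$ (so $y\to0$) together with $\eps_n\generatorRates_n\to1$ (so $xy=t\generatorRates_{|s|-1}/(\eps_n\generatorRates_n)\to t\generatorRates_{|s|-1}$), the scalar $\big(1-\generatorRates_{|s|-1}/\generatorRates_n\big)^{m_n}$ converges to $e^{-t\generatorRates_{|s|-1}}$. Since $(\eigenfunction_s)_n\to\eigenfunction_s^o$ by \eqref{general reformulation of convergence assumption} and \cref{lem:norm approximation} keeps $\Vert(\eigenfunction_s)_n\Vert_{C(\stateSpace_n)}$ bounded, multiplying the scalar into the function gives $\upDownOperator_n^{m_n}(\eigenfunction_s)_n\to e^{-t\generatorRates_{|s|-1}}\eigenfunction_s^o$ in the sense of \cref{convergenceInDifferentSpaces}; and because $\upDownOperator_n^{m_n}$ and $\projection_n$ are contractions and $(\eigenfunction_s)_n$ is asymptotically close to $\projection_n\eigenfunction_s^o$ (this is precisely \eqref{general reformulation of convergence assumption}), the same limit holds with $(\eigenfunction_s)_n$ replaced by $\projection_n\eigenfunction_s^o$. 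By linearity in $s$ (finite sums), $\upDownOperator_n^{m_n}\projection_n f\to(\limitSemigroupOnGraph_t h)^o$ for every $f=h^o\in\core$.

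Next I would bootstrap to general $f\in C(\limitSpace)$. For $f\in\core$, say $f=h^o$ with $h\in\spaceOfDensityFunctions$, the convergence just established lets me define $\limitSemigroupOnMetricSpace(t)f$ to be the limit of $\upDownOperator_n^{m_n}\projection_n f$, a sequence depending only on $f$; this yields a well-defined linear operator on $\core$ satisfying $\limitSemigroupOnMetricSpace(t)\discreteToContinuousMap h=\discreteToContinuousMap\limitSemigroupOnGraph_t h$. From $\Vert\upDownOperator_n^{m_n}\projection_n f\Vert_{C(\stateSpace_n)}\le\Vert\projection_n f\Vert_{C(\stateSpace_n)}$ and \cref{lem:norm approximation} we get $\Vert\limitSemigroupOnMetricSpace(t)f\Vert_{C(\limitSpace)}\le\Vert f\Vert_{C(\limitSpace)}$, so $\limitSemigroupOnMetricSpace(t)$ is a contraction on $\core$, which is dense by \ref{assumption continuous density functions are dense}; it therefore extends uniquely to a contraction on $C(\limitSpace)$. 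A three-term estimate then proves convergence for all $f$: for $g\in\core$,
\[
	\Vert\upDownOperator_n^{m_n}\projection_n f-\projection_n\limitSemigroupOnMetricSpace(t)f\Vert_{C(\stateSpace_n)}
	\le\Vert\projection_n(f-g)\Vert_{C(\stateSpace_n)}+\Vert\upDownOperator_n^{m_n}\projection_n g-\projection_n\limitSemigroupOnMetricSpace(t)g\Vert_{C(\stateSpace_n)}+\Vert\projection_n\limitSemigroupOnMetricSpace(t)(g-f)\Vert_{C(\stateSpace_n)},
\]
and since $\upDownOperator_n^{m_n}$, $\projection_n$ and $\limitSemigroupOnMetricSpace(t)$ are contractions, the right side is at most $2\Vert f-g\Vert_{C(\limitSpace)}+o(1)$; taking $g$ arbitrarily close to $f$ yields claim \ref{claim existence of limit}. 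Claim \ref{claim description of limit} is then immediate: for $f=\eigenfunction_s^o=\discreteToContinuousMap\eigenfunction_s$ (using \eqref{identity expansion of eigenfunctions} and \eqref{eq:ho_In_Do_Basis}) we obtain $\limitSemigroupOnMetricSpace(t)\eigenfunction_s^o=(\limitSemigroupOnGraph_t\eigenfunction_s)^o=e^{-t\generatorRates_{|s|-1}}\eigenfunction_s^o$.

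For claim \ref{claim Feller semigroup} I would verify the Feller axioms in turn. Contractivity was just shown. Positivity: given $f\ge0$ and $x\in\limitSpace$, pick $s_n\in\stateSpace_n$ with $\inclusion(s_n)\to x$ as in \ref{assumption state space approximation}; since $\upDownOperator_n^{m_n}\projection_n f\to\limitSemigroupOnMetricSpace(t)f$ and $\inclusion(s_n)\to x$, we have $(\upDownOperator_n^{m_n}\projection_n f)(s_n)\to(\limitSemigroupOnMetricSpace(t)f)(x)$, and the left side is nonnegative because $\upDownOperator_n^{m_n}$ is positive and $\projection_n f\ge0$; hence $\limitSemigroupOnMetricSpace(t)f\ge0$. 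Conservativity: $\eigenfunction_\zeroVertex^o\equiv1$ and $\generatorRates_{-1}=0$, so $\limitSemigroupOnMetricSpace(t)1=e^{-t\generatorRates_{-1}}\eigenfunction_\zeroVertex^o=1$. The semigroup identity $\limitSemigroupOnMetricSpace(t+s)=\limitSemigroupOnMetricSpace(t)\limitSemigroupOnMetricSpace(s)$ and $\limitSemigroupOnMetricSpace(0)=I$ hold on $\core$ because $\limitSemigroupOnGraph_{t+s}=\limitSemigroupOnGraph_t\limitSemigroupOnGraph_s$, $\limitSemigroupOnGraph_0=I$ on $\spaceOfDensityFunctions$, and $\limitSemigroupOnMetricSpace(s)\core\subseteq\core$; they extend to $C(\limitSpace)$ by density and continuity. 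Finally $\limitSemigroupOnMetricSpace(t)C(\limitSpace)\subseteq C(\limitSpace)$ is built in, and $(\limitSemigroupOnMetricSpace(t)f)(x)\to f(x)$ as $t\to0$ holds on $\core$ (there it reduces to $e^{-t\generatorRates_{|s|-1}}\to1$) and extends to $C(\limitSpace)$ by a further three-term estimate. For claim \ref{claim generator and core}: differentiating $\limitSemigroupOnMetricSpace(t)\eigenfunction_s^o=e^{-t\generatorRates_{|s|-1}}\eigenfunction_s^o$ at $t=0$ in $C(\limitSpace)$-norm shows $\eigenfunction_s^o$ lies in the domain of $\pregenerator$ with $\pregenerator\eigenfunction_s^o=-\generatorRates_{|s|-1}\eigenfunction_s^o$, so $\core$ is contained in that domain; and $\core$ is dense and $\limitSemigroupOnMetricSpace(t)$-invariant, so \cref{prop:dense_invariant_core} shows it is a core.

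Finally, claim \ref{claim convergence of generators} follows directly from \cref{prop eigenbasis} and \eqref{general reformulation of convergence assumption}: for $f=\sum_{|s|\le k}a_s\eigenfunction_s\in\spaceOfDensityFunctions$,
\[
	\discreteGenerators_n(f)_n
	=\generatorRates_n(\upDownOperator_n-I)(f)_n
	=-\sum_{|s|\le k}a_s\,\generatorRates_n\commutationWeightProduct_{|s|,n}(\eigenfunction_s)_n
	=-\sum_{|s|\le k}a_s\,\generatorRates_{|s|-1}(\eigenfunction_s)_n
	\xrightarrow[n\to\infty]{}
	-\sum_{|s|\le k}a_s\,\generatorRates_{|s|-1}\eigenfunction_s^o
	=\pregenerator f^o
\]
in the sense of \cref{convergenceInDifferentSpaces}, where the last step uses claim \ref{claim generator and core} and the linearity of $\pregenerator$ and $\discreteToContinuousMap$. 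The main obstacle is the bootstrap in claim \ref{claim existence of limit}: one must produce the limiting operator and prove the limit exists for all continuous $f$, while carefully keeping apart the three notions of convergence in play --- \cref{convergenceInDifferentSpaces}-convergence of functions living on different spaces, norm convergence in $C(\limitSpace)$, and plain convergence of scalars. Everything else is either an explicit eigenvalue computation or a routine density argument.
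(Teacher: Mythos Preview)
Your proof is correct and follows essentially the same architecture as the paper's: compute on the eigenfunctions $\eigenfunction_s^o$ via \cref{prop eigenbasis} and \cref{lemma exponential estimate}, define $\limitSemigroupOnMetricSpace(t)$ on $\core$ as the limit, extend by contractivity and density, and then verify the Feller axioms and the generator claims one by one.

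The one genuine difference is how you handle the general sequence $\{\eps_n\}$. The paper first proves everything for the special choice $\eps_n=\generatorRates_n^{-1}$, establishes the generator convergence \ref{claim convergence of generators} in that case, and then invokes \cref{theorem ethier kurtz modes of convergence} to pass to arbitrary $\eps_n$ with $\eps_n\generatorRates_n\to1$ (going through generators rather than semigroups). You instead plug $x=t/\eps_n$, $y=\generatorRates_{|s|-1}/\generatorRates_n$ directly into \cref{lemma exponential estimate} and observe that $xy\to t\generatorRates_{|s|-1}$ and $y\to0$ already force the scalar limit, so no detour through the special case or Ethier--Kurtz is needed. This is a modest streamlining; the paper's route has the advantage of isolating the dependence on $\eps_n$ into a single clean step, while yours avoids an extra appeal to abstract machinery. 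Your positivity argument (pointwise via \ref{assumption state space approximation} after \ref{claim existence of limit} is established on all of $C(\limitSpace)$) is also slightly more direct than the paper's, which first proves positivity on $\core$ and then extends.
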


\begin{remark}

Although we have several characterizations of our semigroup and generator, the only one that can be easily used as a definition is the one in \ref{claim existence of limit}.
Indeed, the diagonal descriptions in \ref{claim description of limit} and \ref{claim generator and core} are inadequate since the functions $ \{ \eigenfunction_s^o \}_{ s \in \stateSpace } $ need not be independent. 
Similarly, we cannot use the limit in \ref{claim convergence of generators} without establishing the nontrivial fact that $ \discreteGenerators_n ( f )_n $ converges to zero whenever $ f^o = 0 $.

\end{remark}

\begin{proof}

	Suppose that
	$
		\generatorRates_n 
			\to
				\infty
			.
	$
	We will first prove the result for the sequence
	$ 
		\eps_n 
			= 
				\generatorRates_n^{ -1 }
	$
	and then later generalize to arbitrary sequences.

	Fix $ t \ge 0 $ and $ s \in \stateSpace $.
	Using the contractivity of
	$
    	\upDownOperator_n,
	$
	Proposition \ref{prop initial convergence}, and \ref{assumption continuous density functions are limits}, we have that
	\begin{align*}
		\left\Vert
            	\upDownOperator_n
					^{ 
						\floor{ t \generatorRates_n }  
					}
				\projection_n
				\eigenfunction_s^o
    		-
				\projection_n
				e^{ - t \generatorRates_{ | s | - 1 } }
				\eigenfunction_s^o
		\right\Vert_{ C( \stateSpace_n ) }
			& \le
                    \left\Vert
            				\upDownOperator_n
            					^{ 
            						\floor{ t \generatorRates_n }  
            					}
							(
                				\projection_n
                				\eigenfunction_s^o
    						-
                				( \eigenfunction_s )_n
							)
            		\right\Vert_{ C( \stateSpace_n ) }
			+
                    \left\Vert
            				\upDownOperator_n
            					^{ 
            						\floor{ t \generatorRates_n }  
            					}
            				( \eigenfunction_s )_n
                		-
                    		(
                				\limitSemigroupOnGraph_t
                    			\eigenfunction_s
                			)_n
            		\right\Vert_{ C( \stateSpace_n ) }
			\\
			& \qquad + 
                    \left\Vert
                    		(
                				\limitSemigroupOnGraph_t
                    			\eigenfunction_s
                			)_n
						-
            				e^{ - t \generatorRates_{ | s | - 1 } }
            				\projection_n
            				\eigenfunction_s^o
            		\right\Vert_{ C( \stateSpace_n ) }
			\\
			& \le
            		\left\Vert
                				\projection_n
                				\eigenfunction_s^o
    						-
                				( \eigenfunction_s )_n
            		\right\Vert_{ C( \stateSpace_n ) }
			+
                    O( \generatorRates_n^{ -1 } )
			+
    				e^{ - t \generatorRates_{ | s | - 1 } }
                    \left\Vert
                    		(
                    			\eigenfunction_s
                			)_n
						-
            				\projection_n
            				\eigenfunction_s^o
            		\right\Vert_{ C( \stateSpace_n ) }
			\\
			& \quad
				\longrightarrow
					0
			.
	\end{align*}

	\noindent
	This shows that the limit in \ref{claim existence of limit} exists and is given by \eqref{eq:Tt} for each $ \eigenfunction_s^o $.
	Extending linearly, this limit yields a linear operator $ \limitSemigroupOnMetricSpace( t ) \colon \core \to \core $ satisfying 	$
		\limitSemigroupOnMetricSpace( t )
		\discreteToContinuousMap
			=
				\discreteToContinuousMap
				\limitSemigroupOnGraph_t
			,
	$
	establishing \ref{claim description of limit}.
	We will extend this operator to all of $ C( \limitSpace ) $ by continuity.

	Let $ f \in \core $.
	Using \ref{claim existence of limit} and the contractivity of
	$
    	\upDownOperator_n,
	$
	we can write
	\begin{align*}
		\Vert
			\projection_n
			\limitSemigroupOnMetricSpace( t )
			f
		\Vert_{ C( \stateSpace_n ) }
			\le
        		\Vert
        			\projection_n
        			\limitSemigroupOnMetricSpace( t )
        			f
				-
    				\upDownOperator_n
    					^{ 
    						\floor{ t \generatorRates_n }
    					}
        				\projection_n
        				f
        		\Vert_{ C( \stateSpace_n ) }
			+
        		\Vert
    				\upDownOperator_n
    					^{ 
    						\floor{ t \generatorRates_n }
    					}
        				\projection_n
        				f
        		\Vert_{ C( \stateSpace_n ) }
			\le
				o( 1 )
			+
        		\Vert
        				\projection_n
        				f
        		\Vert_{ C( \stateSpace_n ) }	
			.			
	\end{align*}
	
	\noindent
	Taking the limit as $ n \to \infty $ using Lemma~\ref{lem:norm approximation}, we find that $ \limitSemigroupOnMetricSpace( t ) $ is a contraction on $ \core $.
	It follows that $ \limitSemigroupOnMetricSpace( t ) $ extends uniquely to a contraction on the topological closure of $ \core $, which according to \ref{assumption continuous density functions are dense}, is $ C( \limitSpace ) $.
	We will denote this extension with the same symbol.

	To show that this extension satisfies \ref{claim existence of limit}, we use a density argument.
	Given $ f \in C( \limitSpace ) $, we take an approximating sequence $ \{ f_k \}_{ k \ge 1 } \subset \core $ satisfying $ f_k \to f $.
	We then use the contractivity of $ \upDownOperator_n $, $ \projection_n $, and $ \limitSemigroupOnMetricSpace( t ) $ to obtain (for all $ n, k $) 
	\begin{align*}
		\left\Vert
            	\upDownOperator_n
					^{ 
						\floor{ t \generatorRates_n }  
					}
				\projection_n
				f
    		-
				\projection_n
				\limitSemigroupOnMetricSpace( t )
				f
		\right\Vert_{ C( \stateSpace_n ) }
			& \le
                    \left\Vert
                        	\upDownOperator_n
            					^{ 
            						\floor{ t \generatorRates_n }  
            					}
            				\projection_n
							(
                				f
    						-
                				f_k
							)
            		\right\Vert_{ C( \stateSpace_n ) }
			+
                    \left\Vert
            				\upDownOperator_n
            					^{ 
            						\floor{ t \generatorRates_n }  
            					}
                				\projection_n
                				f_k
    						-
                				\projection_n
                				\limitSemigroupOnMetricSpace( t )
								f_k
            		\right\Vert_{ C( \stateSpace_n ) }
			\\
			& \qquad +
                    \left\Vert
            				\projection_n
            				\limitSemigroupOnMetricSpace( t )
							(
                				f_k
    						-
                				f
							)
            		\right\Vert_{ C( \stateSpace_n ) }
			\\
			& \le
                    2
                    \left\Vert
                				f
    						-
                				f_k
            		\right\Vert_{ C( \limitSpace ) }
			+
                    \left\Vert
            				\upDownOperator_n
            					^{ 
            						\floor{ t \generatorRates_n }  
            					}
                				\projection_n
                				f_k
    						-
                				\projection_n
                				\limitSemigroupOnMetricSpace( t )
								f_k
            		\right\Vert_{ C( \stateSpace_n ) }
			.
	\end{align*}
	
	\noindent
	Observe now that the right-hand side becomes zero if we let $ n \to \infty $ and then $ k \to \infty $ since \ref{claim existence of limit} was already established on $ \core $.
	Therefore, the left-hand side converges to zero as $ n \to \infty $, and \ref{claim existence of limit} holds.

	To see that $ \limitSemigroupOnMetricSpace( t ) $ is conservative, we make use of (\ref{eq:Tt}) and (\ref{eq:ho_In_Do_Basis}):
	$ 
		\limitSemigroupOnMetricSpace( t ) 1
			=
				\limitSemigroupOnMetricSpace( t )
				\eigenfunction_\zeroVertex^o
			=
				\eigenfunction_\zeroVertex^o
			=
				1
			.
	$
	To establish positivity, we take $ f \in \core $ with $ f \ge 0 $ and fix $ x \in \limitSpace $.
	Let
	$ 
		\{ s_n \}_{ n \ge 0 }
	$
	be an $ \stateSpace $-approximation of $ x $, as in \ref{assumption state space approximation}.
	Using \ref{claim existence of limit} and the positivity of each
	$
		\upDownOperator_n
	$
	and
	$
		\projection_n,
	$
	we have that
	\begin{align*}
		(
    		\limitSemigroupOnMetricSpace( t )
    		f
		)
		( \inclusion( s_n ) )
			= 
            \left(
					(
            			\projection_n
                		\limitSemigroupOnMetricSpace( t )
                		f
            		)( s_n )
			-
    				(
        				\upDownOperator_n
        					^{ 
        						\floor{ t \generatorRates_n }
        					}
            			\projection_n
            			f
            		)( s_n )
			\right)
			+
    				(
        				\upDownOperator_n
        					^{ 
        						\floor{ t \generatorRates_n }
        					}
            			\projection_n
            			f
            		)( s_n )
			\ge
				o( 1 )
			.
	\end{align*}	

	\noindent
	Taking the limit as $ n \to \infty $ reveals that $ ( \limitSemigroupOnMetricSpace( t ) f ) ( x ) \ge 0 $, from which it follows that $ \limitSemigroupOnMetricSpace( t ) $ is a positive operator on $ \core $.
	Since the constant functions lie in $ \core $, this positivity will extend to all of $ C( \limitSpace ) $.
	Indeed, suppose $ f \in C( \limitSpace ) $ with $ f \ge 0 $.
	Using \ref{assumption continuous density functions are dense}, we can take an approximating sequence $ \{ f_n \}_{ n \ge 1 } \subset \core $ satisfying $ f_n \to f $.
	Setting
	$ 
		g_n
			=
        		f_n 
        		+ 
        		\Vert
        			 f - f_n
        		\Vert
        		\cdot 
        		1
			,
	$
	we obtain a nonnegative approximating sequence:
	$ 
		g_n \in \core,
	$
	$
		g_n \to f,
	$
	and
	$
		g_n \ge f.
	$
	The positivity on $ \core $ and the boundedness on $ C( \limitSpace ) $ then imply that
	$$
		0
			\le
            		\limitSemigroupOnMetricSpace( t ) 
					g_n
			\longrightarrow
            		\limitSemigroupOnMetricSpace( t ) 
					f
			.
	$$
	
	\noindent
	As a result, $ \limitSemigroupOnMetricSpace( t ) $ is positive on $ C( \limitSpace ) $.

	The relation 
	$
		\limitSemigroupOnMetricSpace( t )
		\discreteToContinuousMap
			=
				\discreteToContinuousMap
				\limitSemigroupOnGraph_t
	$
	shows that 
	$
		\{
			\limitSemigroupOnMetricSpace( t )\vert_\core
		\}_{ t \ge 0 }
	$
	inherits the semigroup identity from
	$
		\{
			\limitSemigroupOnGraph_t
		\}_{ t \ge 0 }.
	$
	The semigroup identity can then be extended from $ \core $ to all of $ C( \limitSpace ) $ by continuity.
	Writing
	$$
		\lim_{ t \to 0 }
			\limitSemigroupOnMetricSpace( t )
			\eigenfunction_s^o
				=
            		\lim_{ t \to 0 }
                        e^{ - t \generatorRates_{ | s | - 1 } }
            			\eigenfunction_s^o
				=
            			\eigenfunction_s^o
				,
					\qquad
					s \in \stateSpace
				,
	$$
	
	\noindent
	shows that the semigroup is strongly continuous on $ \core $.
	This property can be extended to $ C( \limitSpace ) $ by a density argument.
	It follows that the semigroup is Feller, establishing \ref{claim Feller semigroup}.

	Writing
	$$
		\lim_{ t \to 0 }
			\frac{ \limitSemigroupOnMetricSpace( t ) - I }{ t }
			\eigenfunction_s^o
				=
            		\lim_{ t \to 0 }
                        \frac{ e^{ - t \generatorRates_{ | s | - 1 } } - 1 }{ t }
            			\eigenfunction_s^o
				=
                        - \generatorRates_{ | s | - 1 }
            			\eigenfunction_s^o
				,
					\qquad
					s \in \stateSpace
				,
	$$
	
	\noindent
	shows that $ \core $ lies in the domain of $ \pregenerator $ and the formula in \ref{claim generator and core} holds.
	In particular, $ \core $ is invariant under $ \pregenerator $. 
    Together with the fact that $ \core $ is dense in $ C( \limitSpace ) $, it follows from \cref{prop:dense_invariant_core} that $ \core $ is a core for $ \pregenerator $, establishing \ref{claim generator and core}.

	Moving on to the generator convergence, we fix $ s \in \stateSpace $ and $ n \ge | s | $.
	Applying Proposition \ref{prop eigenbasis}, the identity in \ref{claim generator and core}, and (\ref{defn extended commutation weights}), we can compute (recall that $\discreteGenerators_n=\generatorRates_n(\upDownOperator_n-I)$)
	\begin{align*}
		\left\Vert
        	\discreteGenerators_n
			( \eigenfunction_s )_n
		-
			\projection_n
			\pregenerator
			\eigenfunction_s^o
		\right\Vert_{ C( \stateSpace_n ) }
			=
                    \left\Vert
                    	\generatorRates_n
						( - \commutationWeightProduct_{ | s |, n } )
        				( \eigenfunction_s )_n
            		+
        				\projection_n
        				\generatorRates_{ | s | - 1 }
        				\eigenfunction_s^o
            		\right\Vert_{ C( \stateSpace_n ) }
			=
                    \generatorRates_{ | s | - 1 }
                    \left\Vert
        				( \eigenfunction_s )_n
            		-
        				\projection_n
        				\eigenfunction_s^o
            		\right\Vert_{ C( \stateSpace_n ) }
			.
	\end{align*}

	\noindent
	Taking the limit as $ n \to \infty $ using (\ref{general reformulation of convergence assumption}) establishes \ref{claim convergence of generators} for each $ \eigenfunction_s $.
	The general case follows by linearity.

	We have proven the result for $ \eps_n = \generatorRates_n^{ -1 } $.
	Let us now handle the general case. 
	Suppose that
	$ 
		\eps_n
		\,
		\generatorRates_n
			\to 
				1
			.
	$
	Taking $ f \in \spaceOfDensityFunctions $, we can write
	\begin{align*}
    	\Vert
    		\eps_n^{ -1 }
    		(
    			\upDownOperator_n
    		-
    			I
    		)
    		( f )_n
    	-
    		\discreteGenerators_n
    		( f )_n
    	\Vert_{ C( \stateSpace_n ) }
			& = 
            	\Vert
            		( \eps_n \generatorRates_n )^{ -1 }
    				\generatorRates_n
            		(
            			\upDownOperator_n
            		-
            			I
            		)
            		( f )_n
            	-
            		\discreteGenerators_n
            		( f )_n
            	\Vert_{ C( \stateSpace_n ) }
			\\
			& = 
            	\Vert
            		( \eps_n \generatorRates_n )^{ -1 }
    				\discreteGenerators_n
            		( f )_n
            	-
            		\discreteGenerators_n
            		( f )_n
            	\Vert_{ C( \stateSpace_n ) }
			\\
			& = 
            	|
					( \eps_n \generatorRates_n )^{ -1 }
				-
					1
				|
				\Vert	
    				\discreteGenerators_n
            		( f )_n
            	\Vert_{ C( \stateSpace_n ) }
			.
	\end{align*}
	
	\noindent
	Using the previously established case of \ref{claim convergence of generators} together with Lemma~\ref{lem:norm approximation}, we find that the norms on the right-hand side are bounded.
	Consequently, the norm on the left hand side converges to zero as $ n \to \infty $.
	Combining this with the previously established case of \ref{claim convergence of generators}, we obtain the generator convergence
	$$
		\eps_n^{ -1 }
		(
			\upDownOperator_n
		-
			I
		)
		( f )_n
			\longrightarrow
				\pregenerator
				f^o
			,
				\qquad
				f \in \spaceOfDensityFunctions
			,
	$$

	\noindent
	where $ \pregenerator $ is the generator identified in the previous case.
	Applying now Theorem \ref{theorem ethier kurtz modes of convergence}, we obtain the associated semigroup convergence
	$$
		\upDownOperator_n^{ \floor{ t/\eps_n } }
        \projection_n
        f
        	\longrightarrow
    			\limitSemigroupOnMetricSpace( t )
    			f
			,
				\qquad
				f \in C( \limitSpace ), 
				\,
				t \ge 0
			,
	$$	
	
	\noindent
	where $ \limitSemigroupOnMetricSpace( t ) $ is the semigroup constructed in the previous case.
	This concludes the proof.
\end{proof}

Our main convergence result now follows from Proposition \ref{prop limiting semigroup and generator} and Theorem \ref{theorem ethier kurtz path convergence}.
\begin{corollary}
	\label{corol:ConvProcesses}
	
	In addition to the hypothesis of Proposition \ref{prop limiting semigroup and generator}, suppose that
	the distributions of
	$
		\inclusion( \upDownChain_n( 0 ) )
	$
	converge to $ \mu $.
	Then there exists a Feller process $ \limitProcess $ in $ \limitSpace $ with 
	initial distribution $ \mu $, 
	generator $ \pregenerator $ (from Proposition \ref{prop limiting semigroup and generator}), 
	and sample paths in $ D( [ 0, \infty ), \limitSpace )  $ satisfying the path convergence
	$$
		\big(
			\inclusion( \upDownChain_n ( \floor{ t/\eps_n } ) )
		\big)_{ t \ge 0 }
			\Longrightarrow
        		\big(
        				\limitProcess( t )
        		\big)_{ t \ge 0 }
			.
	$$
\end{corollary}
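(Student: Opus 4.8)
The plan is to derive \cref{corol:ConvProcesses} as an essentially immediate application of \cref{theorem ethier kurtz path convergence}, having first verified its hypotheses via \cref{prop limiting semigroup and generator}. Concretely, I would set $E = \limitSpace$ (a compact metric space by assumption), $E_n = \stateSpace_n$ with the discrete topology, $Y_n = \upDownChain_n$ (a Markov chain on the finite set $\stateSpace_n$ with transition operator $\upDownOperator_n$), and $\gamma_n = \inclusion\vert_{\stateSpace_n}$, which is continuous since $\stateSpace_n$ carries the discrete topology. The notion of convergence $f_n \to f$ used in \cref{theorem ethier kurtz path convergence} is exactly \cref{convergenceInDifferentSpaces}, and the projections $\pi_n f = f \circ \gamma_n$ are the ones introduced in \cref{section limiting space}.

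Next I would assemble the three ingredients \cref{theorem ethier kurtz path convergence} requires. First, the hypothesis of \cref{prop limiting semigroup and generator} is in force, so \cref{prop limiting semigroup and generator}\ref{claim Feller semigroup} gives a Feller semigroup $\{\limitSemigroupOnMetricSpace(t)\}_{t \ge 0}$ on $C(\limitSpace)$. Second, the additional assumption of the corollary states that the laws of $\inclusion(\upDownChain_n(0))$ converge weakly to $\mu$, which is precisely the initial-distribution hypothesis. Third, \cref{prop limiting semigroup and generator}\ref{claim existence of limit} (applied at the end of its proof, where the general sequence $\eps_n$ with $\eps_n \generatorRates_n \to 1$ is treated) yields
$$
	\upDownOperator_n^{\floor{t/\eps_n}}
	\pi_n f
		\longrightarrow
			\limitSemigroupOnMetricSpace(t) f
	,
		\qquad
		f \in C(\limitSpace),
		\,
		t \ge 0
	,
$$
which is exactly the semigroup-convergence hypothesis of \cref{theorem ethier kurtz path convergence} (with $\eps_n$ playing the role of the positive null sequence). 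Note $\eps_n \to 0$ follows from $\generatorRates_n \to \infty$ together with $\eps_n\generatorRates_n \to 1$.

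Applying \cref{theorem ethier kurtz path convergence} then directly produces a Markov process $\limitProcess$ with initial distribution $\mu$, transition semigroup $\{\limitSemigroupOnMetricSpace(t)\}_{t \ge 0}$, and sample paths in $D([0,\infty),\limitSpace)$, such that
$$
	\big(
		\inclusion(\upDownChain_n(\floor{t/\eps_n}))
	\big)_{t \ge 0}
		\Longrightarrow
			\big(
				\limitProcess(t)
			\big)_{t \ge 0}
$$
in distribution in $D([0,\infty),\limitSpace)$. Since $\{\limitSemigroupOnMetricSpace(t)\}_{t\ge 0}$ is a Feller semigroup, $\limitProcess$ is a Feller process, and by \cref{prop limiting semigroup and generator}\ref{claim generator and core} its generator is $\pregenerator$ with core $\core$. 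This is the full content of the claimed statement.

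There is essentially no obstacle here: the corollary is a packaging result, and all the genuine work — constructing the limiting semigroup, proving it is Feller, identifying the generator and establishing the operator-level convergence — was already carried out in \cref{prop limiting semigroup and generator}. The only points deserving a line of comment are that $\gamma_n$ is continuous (immediate from the discrete topology on $\stateSpace_n$, per the paper's notation conventions) and that the convergence notion of \cref{theorem ethier kurtz path convergence} coincides with \cref{convergenceInDifferentSpaces} (they are literally the same definition). So the proof is a two-sentence invocation of the two cited results.
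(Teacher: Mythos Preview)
Your proposal is correct and follows exactly the paper's approach: the paper's proof is literally the one-line statement that the corollary follows from \cref{prop limiting semigroup and generator} and \cref{theorem ethier kurtz path convergence}, and you have simply spelled out the (routine) verification of the hypotheses of the latter.
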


For the remainder of \cref{section convergence}, we will assume that $ \generatorRates_n \to \infty $, denote by $ \{\limitSemigroupOnMetricSpace( t ) \}_{ t \ge 0 } $ and $ \pregenerator $ the objects in Proposition \ref{prop limiting semigroup and generator}, and denote by $ \limitProcess $ the limiting process in \cref{corol:ConvProcesses}.
We now investigate the properties of these objects.
To begin, we provide a condition for ensuring that the paths of $ \limitProcess $ are continuous,
and \orange{consequently}, that $ \limitProcess $ is a diffusion.
\begin{prop}
  \label{prop:path_continuity}
	Let $ \rho $ denote the metric on $ \limitSpace $.
	Suppose that
	$$
		\sup_{ r, s \in \stateSpace_n, \, \upDownKernel_n( r, s ) > 0 } 
			\rho( \inclusion( r ), \inclusion( s ) )
				\xrightarrow[ n \to \infty ]{}
					0
				.
	$$
	Then $ \limitProcess $ is almost surely continuous.
\end{prop}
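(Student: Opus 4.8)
The plan is to upgrade the path convergence of \cref{corol:ConvProcesses} to continuity of the trajectories of $ \limitProcess $ by observing that the discrete chains, once embedded in continuous time, make only uniformly small jumps. Write $ X_n( t ) = \inclusion( \upDownChain_n( \floor{ t/\eps_n } ) ) $ for the càdlàg process appearing in \cref{corol:ConvProcesses}, so that $ X_n \Rightarrow \limitProcess $ in $ D( [ 0, \infty ), \limitSpace ) $. The process $ X_n $ is constant on each interval $ [ \eps_n k, \eps_n( k + 1 ) ) $, and at a time $ \eps_n k $ it moves from $ \inclusion( \upDownChain_n( k - 1 ) ) $ to $ \inclusion( \upDownChain_n( k ) ) $; since $ \upDownChain_n $ performs the transition $ \upDownChain_n( k - 1 ) \to \upDownChain_n( k ) $, this forces $ \upDownKernel_n( \upDownChain_n( k - 1 ), \upDownChain_n( k ) ) > 0 $. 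Hence every jump of $ X_n $ has $ \rho $-size at most
\[
	\delta_n
		\ldef
			\sup_{ r, s \in \stateSpace_n, \, \upDownKernel_n( r, s ) > 0 }
				\rho( \inclusion( r ), \inclusion( s ) )
		,
\]
which is finite because $ \stateSpace_n $ is finite (\ref{assumption finite state spaces}) and tends to $ 0 $ as $ n \to \infty $ by hypothesis.

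First I would note that, for every $ T > 0 $, $ X_n $ has at most $ \ceiling{ T/\eps_n } $ discontinuities in $ [ 0, T ] $, each of $ \rho $-size at most $ \delta_n $; consequently $ \sup_{ 0 \le t \le T } \rho( X_n( t ), X_n( t^- ) ) \le \delta_n $ for all $ n $, so any of the usual jump functionals on $ D( [ 0, \infty ), \limitSpace ) $ evaluated at $ X_n $ is dominated by the deterministic sequence $ \delta_n \to 0 $ and therefore converges to $ 0 $, in particular in distribution. I would then invoke the standard criterion describing when a weak limit in the Skorokhod space has continuous paths (e.g.~\cite[Chapter~3, Theorem~10.2]{ethierKurtzBook}, or an analogous statement): since $ X_n \Rightarrow \limitProcess $ and the jump functionals of $ X_n $ converge to $ 0 $ in distribution, $ \limitProcess $ has sample paths in $ C( [ 0, \infty ), \limitSpace ) $ almost surely. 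Combined with the Feller property established in \cref{prop limiting semigroup and generator}, this shows that $ \limitProcess $ is a Feller process with a.s.~continuous paths, i.e.~a Feller diffusion.

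There is no serious obstacle here; the only care required is bookkeeping. One must check that each jump of the embedded process $ X_n $ is a single step of $ \upDownChain_n $, which holds because $ t \mapsto \floor{ t/\eps_n } $ increases by exactly one at each of its discontinuities, and that the particular jump functional appearing in the cited criterion is dominated by $ \sup_t \rho( X_n( t ), X_n( t^- ) ) $, which is immediate for any of the customary choices. An alternative route would be to verify directly the analytic \enquote{no instantaneous jumps} condition for the semigroup $ \{ \limitSemigroupOnMetricSpace( t ) \}_{ t \ge 0 } $ — namely that the probability for $ \limitProcess $ to leave an $ \eps $-ball around its starting point is $ o( t ) $ as $ t \to 0 $, uniformly in the starting point — but this seems less convenient, since the transition kernels of $ \limitProcess $ are only accessed through the approximating chains.
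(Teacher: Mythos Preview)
Your proposal is correct and follows essentially the same approach as the paper: both bound the largest jump of the embedded process $t \mapsto \inclusion(\upDownChain_n(\floor{t/\eps_n}))$ by the quantity $\delta_n$ in the hypothesis, use $\delta_n \to 0$ to get that the jumps vanish, and then invoke \cite[Chapter~3, Theorem~10.2]{ethierKurtzBook} together with \cref{corol:ConvProcesses}. The paper's proof is more terse but the content is identical.
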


\begin{proof}
	For $ n \ge 0 $, let $ j_n $ be the largest jump of $ ( \inclusion( \upDownChain_n( \floor{ t/\eps_n } ) ) )_{ t \ge 0 } $.
	Then 
	\begin{align*}
		j_n
			=
				\sup_{ t \ge 0 }
					\rho(
							\inclusion( \upDownChain_n (\floor{ t/\eps_n }) )
						,
							\inclusion( \upDownChain_n (\floor{ t^-/\eps_n }) )
						)
			& \le
        		\sup_{ \substack{ r, s \in \stateSpace_n \\ \upDownKernel_n( r, s ) > 0 } } 
					\rho(
							\inclusion( r )
						,
							\inclusion( s )
						)
			.						
	\end{align*}

	\noindent
	Applying the hypothesis, we find that $ j_n \to 0 $ almost surely.
	The result now follows from Corollary \ref{corol:ConvProcesses} and \cite[Chapter 3, Theorem 10.2]{ethierKurtzBook}.
\end{proof}

\subsection{Intertwining and a formula for the generator}
In this section, we continue the analogy of \cref{section limiting space} by `extending' the down-kernels $ \{ \downKernel_n \} $ into our new setting.
This will lead to some natural kernels from $ \limitSpace $ to the $ \stateSpace_n $, 
intertwining relations, 
a triangular description of the generator, 
and
a constructive version of Assumption \ref{assumption continuous density functions are dense}.

Our starting point is equation (\ref{defn continuous density functions}), written in terms of the down-kernels:
\begin{equation}
	\label{equation density as limit down kernel}
	\density_s^o( x ) 
		=
			\lim_{ \substack{ 
					\inclusion( r ) \to x 
					\\
					| r | \to \infty
				}
				}
				\downKernel_{ | r |, n }( r, s )
		,
			\qquad
			x \in \limitSpace,
			\,
			| s | = n
		.
\end{equation}

\noindent
This equation suggests that $ \density_s^o( x ) $ can be viewed as $ \downKernel_{ \infty, n }( x, s ) $, motivating us to consider the function
\begin{equation}
\label{continuous density functions produce kernels}
	\intertwiningKernel_n( x, s )
		\ldef
			\density_s^o( x )
		,
			\qquad
			x \in \limitSpace,
			\,
			| s | = n
		.
\end{equation}

\noindent
The properties of the density functions on $ \limitSpace $ (items \ref{continuous density functions lie in unit interval}-\ref{item:filtration_core} below \eqref{defn continuous density functions})
can then be used to show that for each $ n \ge 0 $ the above formula indeed defines a kernel
$
	\intertwiningKernel_n
$
from
$
	\limitSpace
$
to
$
	\stateSpace_n
$
and that these kernels satisfy
\begin{equation}
	\label{consistency of samples}
	\intertwiningKernel_n
		=
			\intertwiningKernel_{ n + 1 }
			\downKernel_{ n + 1, n }
		,
			\qquad
			n \ge 0
		.
\end{equation}
 
\noindent
We conclude that these kernels are analogues of the down-kernels.
As such, we can interpret the quantity $ \intertwiningKernel_n( x, s ) = \density_s^o( x ) $ as the \emph{density of $ s $ in $ x $}, justifying our earlier language for the functions $ \{ \density_s^o \}_{ s \in \stateSpace } $.

This analogy carries over to the associated transition operators: the operator associated to $ \intertwiningKernel_n $, which we denote by
$
	\intertwiningOperator_n
		\colon
				C( \stateSpace_n )
			\to
				C( \limitSpace )
		,
$
can be viewed as $ \downOperator_{ \infty, n } $, a natural limit of down-operators.
This operator is given by
\begin{equation}
	\label{defn intertwining operator}
	\intertwiningOperator_n
	f
		=
			\sum_{ | u | = n }
				f( u )
				\,
				\density_u^o
		,
			\qquad
			f \in C( \stateSpace_n )
		,
\end{equation}

\noindent
or by
\begin{equation}
	\label{describing intertwining kernel with density functions}
	\intertwiningOperator_n
	( g )_n
		=
			g^o
		,
			\qquad
			g \in \spaceOfDensityFunctions_n
		.
\end{equation}
The latter description is an analogue of Proposition \ref{prop properties of density functions}\ref{action of down operators on density functions} and can be established on the basis $ \{ \density _s \}_{ | s | \le n } $ using property \ref{Pieri rule for continuous density functions} of the density functions on $ \limitSpace $.
\bigskip

In the following result, we show that the intertwining structure in \ref{assumption intertwining} carries over to the limit.
As a consequence, we obtain an analogue of the triangular descriptions in \cref{prop action of transition operators}.
\begin{prop}
	\label{prop algebraic identities in limit}
	Let $ n \ge 0 $.
	The following statements hold: 
	\begin{enumerate}[ label = (\roman*) ]
		\item
		\label{claim generator commutation relation}
		$
			\pregenerator
			\intertwiningOperator_n
				=
        			\intertwiningOperator_n
        			\discreteGenerators_n
				,
		$
		
		\item
		\label{claim semigroup intertwining}
		$
			\limitSemigroupOnMetricSpace( t )
			\intertwiningOperator_n
				=
        			\intertwiningOperator_n
        			e^{ t \discreteGenerators_n }
		$
		for $ t \ge 0 $, and

    	\item
    	\label{claim generator on density functions}
    	the generator $ \pregenerator $ can also be described by the following formula
    	$$
    		\pregenerator
    		\density_s^o
    			=
        			-\generatorRates_{ | s | - 1 }
    				\Big(
                		\density_s^o
        			-
        				\sum_{ | r | = | s | - 1 }
    						\density_r^o
    						\,
            				\upKernel_{ | r | }( r, s )
    				\Big)
    			,
    				\qquad
    				s \in \stateSpace
    			.
    	$$
	\end{enumerate}
\end{prop}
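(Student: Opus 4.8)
The plan is to reduce all three claims to the diagonal descriptions of $\upDownOperator_n$, $\discreteGenerators_n$, $\limitSemigroupOnMetricSpace(t)$ and $\pregenerator$ already established, together with the identity $\intertwiningOperator_n(g)_n = g^o$ for $g\in\spaceOfDensityFunctions_n$ from \eqref{describing intertwining kernel with density functions}. First note that $\intertwiningOperator_n$ maps $C(\stateSpace_n)$ into $\core_n\subseteq\core$, which lies in the domain of $\pregenerator$, so all the compositions below make sense.

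For \ref{claim generator commutation relation}, both $\pregenerator\intertwiningOperator_n$ and $\intertwiningOperator_n\discreteGenerators_n$ are linear maps $C(\stateSpace_n)\to C(\limitSpace)$, so by \cref{prop eigenbasis} it suffices to test them against the functions $(\eigenfunction_s)_n$ with $|s|\le n$. Since $\eigenfunction_s\in\spaceOfDensityFunctions_n$ one has $\intertwiningOperator_n(\eigenfunction_s)_n=\eigenfunction_s^o$, so the left-hand side equals $\pregenerator\eigenfunction_s^o=-\generatorRates_{|s|-1}\eigenfunction_s^o$ by \cref{prop limiting semigroup and generator}\ref{claim generator and core}. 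For the right-hand side, \cref{prop eigenbasis} together with $\commutationWeightProduct_{|s|,n}=\generatorRates_{|s|-1}/\generatorRates_n$ (see \eqref{defn extended commutation weights}) gives
\[
	\discreteGenerators_n(\eigenfunction_s)_n
		= \generatorRates_n(\upDownOperator_n-I)(\eigenfunction_s)_n
		= -\generatorRates_n\commutationWeightProduct_{|s|,n}(\eigenfunction_s)_n
		= -\generatorRates_{|s|-1}(\eigenfunction_s)_n,
\]
and applying $\intertwiningOperator_n$ yields $-\generatorRates_{|s|-1}\eigenfunction_s^o$ as well, so the two sides agree.

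For \ref{claim semigroup intertwining}, the same test against $(\eigenfunction_s)_n$ works: the left-hand side is $\limitSemigroupOnMetricSpace(t)\eigenfunction_s^o=e^{-t\generatorRates_{|s|-1}}\eigenfunction_s^o$ by \eqref{eq:Tt}, while the right-hand side is $\intertwiningOperator_n e^{t\discreteGenerators_n}(\eigenfunction_s)_n=e^{-t\generatorRates_{|s|-1}}\intertwiningOperator_n(\eigenfunction_s)_n=e^{-t\generatorRates_{|s|-1}}\eigenfunction_s^o$, since $(\eigenfunction_s)_n$ is an eigenvector of the bounded operator $\discreteGenerators_n$ with eigenvalue $-\generatorRates_{|s|-1}$. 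Alternatively, \ref{claim semigroup intertwining} can be deduced from \ref{claim generator commutation relation} by the standard transfer of an intertwining of generators to one of semigroups (cf.\ \cite[Corollary 7.1]{krdrLeftmost}), using that $\discreteGenerators_n$ is bounded.

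Finally, \ref{claim generator on density functions} is obtained by feeding the triangular description of $\upDownOperator_n$ through \ref{claim generator commutation relation}. Fix $s\in\stateSpace$ and any $n\ge|s|$. \cref{prop action of transition operators} gives $(\upDownOperator_n-I)(\density_s)_n=\commutationWeightProduct_{|s|,n}\big(-(\density_s)_n+\sum_{|r|=|s|-1}(\density_r)_n\,\upKernel_{|r|}(r,s)\big)$; multiplying by $\generatorRates_n$ and using $\generatorRates_n\commutationWeightProduct_{|s|,n}=\generatorRates_{|s|-1}$ gives
\[
	\discreteGenerators_n(\density_s)_n
		= -\generatorRates_{|s|-1}\Big((\density_s)_n-\sum_{|r|=|s|-1}(\density_r)_n\,\upKernel_{|r|}(r,s)\Big).
\]
All functions appearing here lie in $\spaceOfDensityFunctions_n$ (as $|r|=|s|-1<|s|\le n$), so applying $\intertwiningOperator_n$ and using \eqref{describing intertwining kernel with density functions} turns the right-hand side into $-\generatorRates_{|s|-1}\big(\density_s^o-\sum_{|r|=|s|-1}\density_r^o\,\upKernel_{|r|}(r,s)\big)$, while by \ref{claim generator commutation relation} the left-hand side becomes $\pregenerator\intertwiningOperator_n(\density_s)_n=\pregenerator\density_s^o$; this is exactly the asserted formula. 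The computations are routine; the only point needing care is to keep track of which functions belong to $\spaceOfDensityFunctions_n$ so that \eqref{describing intertwining kernel with density functions} applies, and I do not anticipate a genuine obstacle.
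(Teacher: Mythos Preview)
Your proof is correct and follows essentially the same approach as the paper: verify \ref{claim generator commutation relation} on the spanning family $\{(\eigenfunction_s)_n\}_{|s|\le n}$ via the diagonal descriptions, then derive \ref{claim generator on density functions} by pushing the triangular formula for $\discreteGenerators_n(\density_s)_n$ through $\intertwiningOperator_n$ using \ref{claim generator commutation relation}. The only minor difference is in \ref{claim semigroup intertwining}: the paper deduces it from \ref{claim generator commutation relation} via the generator-to-semigroup transfer (Corollary 7.1 in \cite{krdrLeftmost}), whereas you first give a direct diagonal verification and mention the transfer as an alternative; both are equally valid here.
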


\begin{proof}

	Let $ n \ge 0 $ and $ | s | \le n $.
    Using
    \eqref{describing intertwining kernel with density functions},
	Proposition \ref{prop limiting semigroup and generator}\ref{claim generator and core},
    (\ref{defn extended commutation weights}),
	and
	Proposition \ref{prop eigenbasis}, we have that
    \begin{multline}
	\label{evaluating generator at discrete eigenfunctions}
		\pregenerator
		\intertwiningOperator_n
		( \eigenfunction_s )_n
			 = 
        		\pregenerator
        		\eigenfunction_s^o
			 = 
        		- \generatorRates_{ | s | - 1 }
        		\eigenfunction_s^o
			 = 
				-\generatorRates_n
				\commutationWeightProduct_{ | s |, n }
        		\intertwiningOperator_n
				( \eigenfunction_s )_n
			 = 
        		\intertwiningOperator_n
				\generatorRates_n
				( \upDownOperator_n - I )
				( \eigenfunction_s )_n
			 = 
        		\intertwiningOperator_n
				\discreteGenerators_n
				( \eigenfunction_s )_n
			.
	\end{multline}
	Recalling that the functions
	$
		\{
			( \eigenfunction_s )_n
		\}_{ | s | \le n }
	$
	span
	$
		C( \stateSpace_n )
	$
	(see Proposition \ref{prop expansion of density in eigenbasis}) establishes the first claim.
	The intertwining relation in \ref{claim semigroup intertwining} then follows from the generator commutation relation by applying Corollary 7.1 in \cite{krdrLeftmost}.

 	The action of the generator on the density functions can be computed directly using 
  	\eqref{describing intertwining kernel with density functions}, 
	\ref{claim generator commutation relation},
  	Proposition \ref{prop action of transition operators},
	and 
	\eqref{defn extended commutation weights}:
	given $ s \in \stateSpace $, we take $ n \ge | s | $ and compute
	\begin{align*}
		\pregenerator
		\density_s^o
			= 
                	\pregenerator
					\intertwiningOperator_n
                	( \density_s )_n
			=
                	\intertwiningOperator_n
                    \discreteGenerators_n
                	( \density_s )_n
			& =
        			\intertwiningOperator_n
        			\generatorRates_n
    				(
                		\upDownOperator_n
    				-
    					I
    				)
                	( \density_s )_n
			\\
			& =
        			\intertwiningOperator_n
        			\generatorRates_n
        			\commutationWeightProduct_{ | s |, n }
    				\Big(
                		- ( \density_s )_n
        			+
        				\sum_{ | r | = | s | - 1 }
    						( \density_r )_n
    						\,
            				\upKernel_{ | r | }( r, s )
    				\Big)
			\\
			& =
        			\generatorRates_{ | s | - 1 }
    				\Big(
                		-  \density_s^o
        			+
        				\sum_{ | r | = | s | - 1 }
    						\density_r^o
    						\,
            				\upKernel_{ | r | }( r, s )
    				\Big)
			.
	\end{align*}

	\noindent
	This formula completely describes $ \pregenerator $ since 
	$
		\core
		=
			\linearSpan
			\{
            	\density_s^o
            \}_{ s \in \stateSpace }
	$
	is a core for $ \pregenerator $
	(see Proposition \ref{prop limiting semigroup and generator}\ref{claim generator and core}).%
\end{proof}

Our final result of this section further demonstrates that the intertwining kernels are natural objects.
\begin{prop}
	The following convergence holds for any $f$ in $ C( \limitSpace )$:
	$$
		\intertwiningOperator_n
		\projection_n f
			=
		\sum_{ | s | = n }
			f( \inclusion( s ) )
			\,
			\density_s^o
    			\xrightarrow[ n \to \infty ]{}
    				f.
	$$
	
	\noindent
	Consequently, for any probability measure $ \mu $ on $ \limitSpace $, we have the weak convergence
	$
		\mu \intertwiningKernel_n \circ \inclusion\vert_{ \stateSpace_n }^{-1}
			\to
				\mu
	$
\end{prop}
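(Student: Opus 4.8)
The plan is to prove the uniform convergence $\intertwiningOperator_n \projection_n f \to f$ in $C(\limitSpace)$ and then derive the weak convergence of measures as a routine consequence. First I would verify the stated identity $\intertwiningOperator_n \projection_n f = \sum_{|s|=n} f(\inclusion(s))\,\density_s^o$: this is immediate from the formula \eqref{defn intertwining operator} for $\intertwiningOperator_n$ together with $(\projection_n f)(s) = f(\inclusion(s))$. The heart of the matter is showing this sum converges uniformly to $f$. The key tool is property \ref{density functions form partition of unity}, or rather its continuous analogue \ref{density functions form partition of unity}: taking $s = \zeroVertex$ in item \ref{Pieri rule for continuous density functions} below \eqref{defn continuous density functions} (recall $\density_\zeroVertex^o \equiv 1$) gives $\sum_{|u|=n} \density_u^o \equiv 1$, so the functions $\{\density_u^o\}_{|u|=n}$ form a partition of unity on $\limitSpace$. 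Moreover each $\density_u^o$ is nonnegative by \ref{continuous density functions lie in unit interval}.

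The second ingredient is that this partition of unity concentrates near $\inclusion(u)$ in a suitable averaged sense, or more precisely that the measures $\intertwiningKernel_n(x,\cdot)$ are supported (in an approximate sense) near $x$. The natural way to make this precise without extra hypotheses: for a fixed $\eps>0$ and $f$ uniformly continuous on the compact space $\limitSpace$, one wants to show $\big|\sum_{|s|=n} f(\inclusion(s))\density_s^o(x) - f(x)\big| = \big|\sum_{|s|=n}(f(\inclusion(s))-f(x))\density_s^o(x)\big|$ is small uniformly in $x$. Splitting the sum over $s$ with $\rho(\inclusion(s),x)<\delta$ versus $\ge \delta$, the first part is bounded by the modulus of continuity of $f$ at scale $\delta$, and for the second part one needs $\sum_{|s|=n:\,\rho(\inclusion(s),x)\ge\delta}\density_s^o(x) \to 0$ uniformly in $x$. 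This last fact should follow from the convergence in \ref{assumption continuous density functions are limits}: indeed, since $\density_s^o$ is the limit of $(\density_s)_n = \downKernel_{n,|s|}(\cdot,s)$, the kernel $\intertwiningKernel_n$ inherits, via \eqref{consistency of samples} and the approximation, the property that its mass escaping a $\delta$-ball around $x$ vanishes — this is really the content of \ref{assumption continuous density functions are limits} combined with \ref{assumption state space approximation}. Alternatively, and perhaps more cleanly, I would first prove the convergence on the dense subspace $\core = \linearSpan\{\density_s^o\}$ using \eqref{describing intertwining kernel with density functions} (so $\intertwiningOperator_n\projection_n$ acts as identity on $\{(g)_n : g\in\spaceOfDensityFunctions_n\}$ once $n\ge\deg$), observe $\intertwiningOperator_n\projection_n$ is a contraction (composition of transition operators composed with projection, all contractive — here using nonnegativity of $\density_s^o$ and the partition of unity), and then extend by the standard $3\eps$-density argument exactly as in the proof of Proposition \ref{prop limiting semigroup and generator}\ref{claim existence of limit}. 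I expect this contraction-plus-density route to be the safest.

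The main obstacle is establishing the convergence on $\core$: one needs that for $g \in \spaceOfDensityFunctions$, say $g \in \spaceOfDensityFunctions_m$, we have $\intertwiningOperator_n \projection_n g^o \to g^o$. For $n \ge m$ one would like $\projection_n g^o$ to be close to $(g)_n$ — which is exactly \eqref{general reformulation of convergence assumption}, the reformulation of \ref{assumption continuous density functions are limits} — and then $\intertwiningOperator_n (g)_n = g^o$ by \eqref{describing intertwining kernel with density functions}. Combining $\Vert \intertwiningOperator_n\projection_n g^o - g^o\Vert \le \Vert \intertwiningOperator_n\Vert \cdot \Vert \projection_n g^o - (g)_n\Vert_{C(\stateSpace_n)} + \Vert \intertwiningOperator_n(g)_n - g^o\Vert = \Vert \projection_n g^o - (g)_n\Vert_{C(\stateSpace_n)} \to 0$ gives the claim on $\core$. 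Then the density argument extends it to all of $C(\limitSpace)$ using \ref{assumption continuous density functions are dense} and the contractivity of $\intertwiningOperator_n\projection_n$. Finally, for the measures: for any $f\in C(\limitSpace)$,
\[
\int_\limitSpace f\, d\big(\mu\intertwiningKernel_n\circ\inclusion\vert_{\stateSpace_n}^{-1}\big)
= \int_\limitSpace \Big(\sum_{|s|=n} f(\inclusion(s))\,\intertwiningKernel_n(x,s)\Big)\mu(dx)
= \int_\limitSpace (\intertwiningOperator_n\projection_n f)(x)\,\mu(dx)
\xrightarrow[n\to\infty]{} \int_\limitSpace f\,d\mu,
\]
where the last step uses uniform convergence $\intertwiningOperator_n\projection_n f\to f$ and dominated convergence (the integrands are uniformly bounded by $\Vert f\Vert_{C(\limitSpace)}$). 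Since this holds for all $f\in C(\limitSpace)$ and $\limitSpace$ is compact metric, this is precisely weak convergence $\mu\intertwiningKernel_n\circ\inclusion\vert_{\stateSpace_n}^{-1}\to\mu$.
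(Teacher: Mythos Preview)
Your proposal is correct and takes essentially the same approach as the paper: prove the convergence first on $\core$ via the identity $\intertwiningOperator_n(g)_n = g^o$ from \eqref{describing intertwining kernel with density functions} combined with $\|\projection_n g^o - (g)_n\|_{C(\stateSpace_n)}\to 0$ from \eqref{general reformulation of convergence assumption}, then extend to all of $C(\limitSpace)$ by contractivity of $\intertwiningOperator_n\projection_n$ and density, and finally deduce the weak convergence by the same integral computation you wrote. Your initial partition-of-unity discussion is unnecessary (and the concentration fact you flag as needed is not established in the paper), but you rightly identify and carry out the contraction-plus-density route, which is exactly what the paper does.
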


\begin{remark}
	The first convergence provides a constructive companion to Assumption \ref{assumption continuous density functions are dense} by explicitly approximating a function in $ C( \limitSpace ) $ by functions in $ \core $. 
	Note that this approximation resembles classical constructions, such as Bernstein's approximation of continuous functions by polynomials.
\end{remark}

\begin{remark}
	The second convergence implies that $ \intertwiningKernel_n $ can be viewed as an approximating kernel, taking $ x \in \limitSpace $ to a random point in $ \stateSpace_n $ that is close to $ x $ (under the $ \limitSpace $ metric) with high probability.
    This random approximation of $x$ specializes to well-known constructions, 
    namely Vershik--Kerov central measures on partitions (for the down-kernel of Section~\ref{ssec:BO-partitions}), Kingman's paintbox construction~\cite{kingman1978partition} (for the down-kernel of Section~\ref{section crp partitions}), its ordered variant due to Gnedin~\cite{gnedin1997} (for the down-kernel of Section~\ref{ssec:composition}), 
    and permuton/graphon samples when considering the down-kernels
    of \cref{section permutation example,sec:graph}.
\end{remark}

\begin{proof}
	Let $ g \in \spaceOfDensityFunctions $.
	Since $ g $ lies in $ \spaceOfDensityFunctions_n $ for large $ n $, we can apply \eqref{describing intertwining kernel with density functions} to write (recall that $ \intertwiningOperator_n $ is contractive)
	$$
		\norm{
		\intertwiningOperator_n
		\projection_n 
		g^o
		-
		g^o
		}_{ C( \limitSpace ) }
			=
        		\norm{
            	\intertwiningOperator_n
            	( \projection_n g^o - ( g )_n )
        		}_{ C( \limitSpace ) }
			\le
        		\norm{
            	\projection_n g^o - ( g )_n
        		}_{ C( \stateSpace_n ) }
				\qquad
				\text{for large $ n $}
			.
	$$
	Making use of the convergence in \eqref{general reformulation of convergence assumption} establishes the first claim for functions in $ \core $.
	The extension to $ C( \limitSpace ) $ can be handled by a density argument.
	The second claim follows from the first by writing 
	\begin{align*}
		\int_{ \limitSpace }
			f
			\,
			d ( \mu \intertwiningKernel_n \circ \inclusion \vert_{ \stateSpace_n }^{ -1 } )
			= 
				\int_{ \stateSpace_n }
					f \circ \inclusion\vert_{ \stateSpace_n } 
        			d ( \mu \intertwiningKernel_n )
			& = 
        		\sum_{ | s | = n }
        			f( \inclusion( s ) )
    				\int_\limitSpace
						\intertwiningKernel_n( x, s )
    					\,
    					d\mu( x )
			\\
			& = 
				\int_\limitSpace
					\intertwiningOperator_n
					\projection_n f
					d\mu
			\to
            	\int_{ \limitSpace }
            		f
            		d \mu
			,
				\qquad
				f \in C( \limitSpace )
			.\qedhere
	\end{align*}
\end{proof}

\subsection{Large time behavior}
\label{ssec:process_large_time}
We now investigate the behavior of $ \limitProcess $ for large time.
We begin with an analogue of Proposition \ref{prop density estimate}.

\begin{prop}
	\label{prop densities of limit process}
	Let
	$
		f
			= 
				a_\zeroVertex
				\eigenfunction_\zeroVertex^o
			+
				\sum_{ | s | = j }^k
    				a_s
    				\eigenfunction_s^o
	$
	for some $ k \ge j > 0 $.
	Then there exists some $ B_f > 0 $ such that
	$$
		\bigg|
    		\mbb E
    		\left[
    				f( \limitProcess( t ) )
    		\right]
		-
    		a_\zeroVertex
		\bigg|
			\le
        		B_f
                e^{ - t \generatorRates_{ j - 1 } }
			,
				\qquad
				t \ge 0
			,
	$$
	\noindent
	for any initial distribution.
	Consequently, we have the convergence (for any initial distribution)
	\[ 
		\mbb E[ \density_s^o( \limitProcess(t) ) ]
			\xrightarrow[ t \to \infty ]{}   
				\upDownDist_{ | s | }( s )
			,
				\qquad
				s \in \stateSpace
			.
	\]
\end{prop}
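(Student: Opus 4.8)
The plan is to mimic the proof of \cref{prop density estimate} almost verbatim, replacing the discrete semigroup $\upDownOperator_n^m$ by the limiting semigroup $\limitSemigroupOnMetricSpace(t)$ and the estimate $(1-\commutationWeightProduct_{|s|,n})^m \le (1-\commutationWeightProduct_{j,n})^m$ by the clean exponential decay $e^{-t\generatorRates_{|s|-1}} \le e^{-t\generatorRates_{j-1}}$. The ingredients are all available: the diagonal description $\limitSemigroupOnMetricSpace(t)\eigenfunction_s^o = e^{-t\generatorRates_{|s|-1}}\eigenfunction_s^o$ from \cref{prop limiting semigroup and generator}\ref{claim description of limit}, the boundedness $0 \le \density_s^o \le 1$ (hence $\eigenfunction_s^o$ bounded, being a finite linear combination of density functions via \eqref{eq:ho_In_Do_Basis}), and the fact that $\eigenfunction_\zeroVertex^o \equiv 1$.

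First I would fix $s \in \stateSpace$, $t \ge 0$, and an initial distribution, and write $\mbb E[\eigenfunction_s^o(\limitProcess(t))]$ in terms of the semigroup: conditioning on the initial point $x$ gives $\mbb E_x[\eigenfunction_s^o(\limitProcess(t))] = (\limitSemigroupOnMetricSpace(t)\eigenfunction_s^o)(x) = e^{-t\generatorRates_{|s|-1}}\eigenfunction_s^o(x)$, so
\[
	\big|\mbb E[\eigenfunction_s^o(\limitProcess(t))]\big|
		\le
			\sup_{x \in \limitSpace} \big|(\limitSemigroupOnMetricSpace(t)\eigenfunction_s^o)(x)\big|
		=
			e^{-t\generatorRates_{|s|-1}}\sup_{x \in \limitSpace}|\eigenfunction_s^o(x)|.
\]
Then, for $f = a_\zeroVertex\eigenfunction_\zeroVertex^o + \sum_{|s|=j}^k a_s\eigenfunction_s^o$, I would use $\eigenfunction_\zeroVertex^o \equiv 1$ and linearity of expectation to get
\[
	\big|\mbb E[f(\limitProcess(t))] - a_\zeroVertex\big|
		\le
			\sum_{|s|=j}^k |a_s|\,\big|\mbb E[\eigenfunction_s^o(\limitProcess(t))]\big|
		\le
			e^{-t\generatorRates_{j-1}}\sum_{|s|=j}^k |a_s|\sup_{x\in\limitSpace}|\eigenfunction_s^o(x)|,
\]
using $\generatorRates_{|s|-1} \ge \generatorRates_{j-1}$ for $|s| \ge j$ (from \ref{assumption increasing generator rates}). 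Taking $B_f$ to be this last sum gives the inequality; since $\generatorRates_{j-1} > 0$ (again \ref{assumption increasing generator rates}, as $j \ge 1$), the right side tends to $0$, so $\mbb E[f(\limitProcess(t))] \to a_\zeroVertex$.

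For the final convergence, I would specialize to $f = \density_s^o$. By the expansion \eqref{eq:Do_In_ho_basis} (the continuous analogue of \cref{prop expansion of density in eigenbasis}), $\density_s^o = \sum_{|r|\le|s|}\upKernel_{|r|,|s|}(r,s)\,\coeffDensityInEigBasis_{|r|,|s|}\,\eigenfunction_r^o$, and the coefficient of $\eigenfunction_\zeroVertex^o$ is $\upKernel_{0,|s|}(\zeroVertex,s)\,\coeffDensityInEigBasis_{0,|s|} = \upKernel_{0,|s|}(\zeroVertex,s)$, since $\coeffDensityInEigBasis_{0,j} \equiv 1$. Hence $a_\zeroVertex = \upKernel_{0,|s|}(\zeroVertex,s) = \upDownDist_{|s|}(s)$ by \eqref{defn stationary measures}, and the first part of the proposition yields $\mbb E[\density_s^o(\limitProcess(t))] \to \upDownDist_{|s|}(s)$ as $t \to \infty$. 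There is no real obstacle here — the only point to be careful about is that $\density_s^o$ need not lie in the span $\sum_{|s|=j}^k a_s\eigenfunction_s^o$ with a single homogeneous degree $j$, but that is fine: the expansion \eqref{eq:Do_In_ho_basis} ranges over $|r| \le |s|$, which is exactly the form $a_\zeroVertex\eigenfunction_\zeroVertex^o + \sum_{|s|=1}^{k}a_s\eigenfunction_s^o$ covered by the statement with $j = 1$, $k = |s|$.
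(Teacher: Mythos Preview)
Your proof is correct and follows the paper's argument essentially verbatim: both use the diagonal description $\limitSemigroupOnMetricSpace(t)\eigenfunction_s^o = e^{-t\generatorRates_{|s|-1}}\eigenfunction_s^o$ to bound each eigenfunction term by its sup-norm times the exponential factor, combine via the triangle inequality, and then identify $a_\zeroVertex$ for $f=\density_s^o$ from the expansion \eqref{eq:Do_In_ho_basis} using $\coeffDensityInEigBasis_{0,j}\equiv 1$ and \eqref{defn stationary measures}. Your closing remark about $\density_s^o$ fitting the hypothesis with $j=1$ is a nice point of care that the paper leaves implicit.
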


\begin{proof}

	We follow the proof of Proposition \ref{prop density estimate}.
	Let $ \nu $ be the initial distribution of $ \limitProcess $, $ s \in \stateSpace $, and $ t \ge 0 $.
	Using (\ref{eq:Tt}), we obtain the estimate 
    \begin{align*}
    	\big|
        	\mbb E
        	\left[
        			\eigenfunction_s^o
        			( 
        				\limitProcess( t )
        			)
        	\right]
    	\big|
    		= 
            	\left|
    				\int_\limitSpace
        				\limitSemigroupOnMetricSpace( t )
        				\eigenfunction_s^o
        				\,
        				d \nu
            	\right|
    		\le 
        		\Vert
                        \limitSemigroupOnMetricSpace( t )
        				\eigenfunction_s^o
        		\Vert_{ C( \limitSpace ) }
    		= 
				e^{ - t \generatorRates_{ | s | - 1 } }
        		\Vert
    				\eigenfunction_s^o
        		\Vert_{ C( \limitSpace ) }
			.
    \end{align*}

    \noindent
    Given now
	$
		f 
			= 
				a_\zeroVertex
				\eigenfunction_\zeroVertex^o
			+
				\sum_{ | s | = j }^k
    				a_s
    				\eigenfunction_s^o
			,
	$
	we can write (recall that $ \eigenfunction_\zeroVertex^o \equiv 1 $)
    \begin{align*}
    	\Big|
    		\mbb E
    		\left[
    				f( \limitProcess( t ) )
    		\right]
    	-
    		a_\zeroVertex
    	\Big|
			& =
            	\bigg|
    				\sum_{ | s | = j }^k
        				a_s
                    	\mbb E
                    	\left[
            				\eigenfunction_s^o
                			( 
                				\limitProcess( t )
                			)
                    	\right]
            	\bigg|
			\le
    				\sum_{ | s | = j }^k
        				| a_s |
                    	\left|
                        	\mbb E
                        	\left[
                				\eigenfunction_s^o
                    			( 
                    				\limitProcess( t )
                    			)
                        	\right]
                    	\right|
			\\
			& \le
				\sum_{ | s | = j }^k
    				| a_s |
    				e^{ - t \generatorRates_{ | s | - 1 } }
            		\Vert
        				\eigenfunction_s^o
            		\Vert_{ C( \limitSpace ) }
			\le
				e^{ - t \generatorRates_{ j - 1 } }
				\sum_{ | s | = j }^k
    				| a_s |
            		\Vert
        				\eigenfunction_s^o
            		\Vert_{ C( \limitSpace ) }
			.
    \end{align*}

	\noindent
    Taking $ B $ to be the above sum establishes the inequality.
    Recalling that $ \generatorRates_{ j - 1 } $ is positive shows that the expectations should converge to $ a_\zeroVertex $ as $ t \to \infty $.
	In the case when $ f = \density_s^o $, this coefficient can be identified from the expansion in (\ref{eq:Do_In_ho_basis}) (recall that $ \coeffDensityInEigBasis_{ 0, j } \equiv 1 $).
\end{proof}

As before, we are interested in identifying these limiting densities as coming from a distribution.
To this end, we first observe that they come from a functional.

\begin{prop}
	
	\label{prop coefficient functional}
    There exists a unique bounded linear functional $ [ \eigenfunction_\zeroVertex^o ] \colon C( \limitSpace ) \to \mbb R $ satisfying
    \begin{equation}
        \label{claim coefficient functional on density functions}
        [ \eigenfunction_\zeroVertex^o ] \density_s^o
        		=
        			\upDownDist_{ | s | }( s )
				,
					\qquad
					s \in \stateSpace
				.
	\end{equation}
	\noindent	
	Moreover, this functional is contractive, positive, and satisfies the following conditions:
    \begin{enumerate}[ label = (\roman*) ]

		\item
		\label{claim coefficient functional extends discrete one}
		$
			[ \eigenfunction_\zeroVertex^o ] f^o
    			=
			[ \eigenfunction_\zeroVertex ]
			f
		$
		for $ f \in \spaceOfDensityFunctions $
        (in particular, $
        	[ \eigenfunction_\zeroVertex^o ] \eigenfunction_s^o
        		=
        			\indicator( s = \zeroVertex )
        $
        for $ s \in \stateSpace $),

        \item
		\label{claim coefficient functional as a limit}
        $
        	[ \eigenfunction_\zeroVertex^o ] f
        		=
        			\lim_{ t \to \infty }
        				{\textstyle \int_\limitSpace }
           					\limitSemigroupOnMetricSpace( t )
           					f
        					\,
        					d \nu
        $
        for $ f \in C( \limitSpace ) $ and any probability measure $\nu$ on $\limitSpace$,
		and
		
		\item
		\label{claim weak convergence of stationary measures}
    	$
			[ \eigenfunction_\zeroVertex^o ] f
				=
        			\lim_{ n \to \infty }
        				{\textstyle \int_{ \stateSpace_n } }
            				\projection_n f \,
            				d \upDownDist_n
    	$
    	for $ f \in C( \limitSpace ) $.
	\end{enumerate}
\end{prop}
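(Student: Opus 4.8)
The plan is to build the functional directly on the dense subspace $\core = \linearSpan\{\density_s^o\}_{s \in \stateSpace}$ via the prescribed values $[\eigenfunction_\zeroVertex^o]\density_s^o = \upDownDist_{|s|}(s)$, verify it is bounded there so that it extends uniquely to $C(\limitSpace)$ by \ref{assumption continuous density functions are dense}, and then read off positivity and the limit characterizations. First I would check that the prescription is \emph{well-defined} on $\core$: since the $\{\density_s^o\}$ are not independent, I must confirm that the linear functional on $\spaceOfDensityFunctions$ sending $\density_s \mapsto \upDownDist_{|s|}(s)$ — which is exactly $[\eigenfunction_\zeroVertex]$ by \cref{prop properties of stationary measures}\ref{integrating against stationary measure} and the expansion of \cref{prop expansion of density in eigenbasis} — factors through $\discreteToContinuousMap$. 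Equivalently, I need: if $f \in \spaceOfDensityFunctions$ satisfies $f^o = 0$, then $[\eigenfunction_\zeroVertex]f = 0$. This is the one genuinely nontrivial algebraic point, and I expect it to be \textbf{the main obstacle}. The cleanest route is to use \eqref{describing intertwining kernel with density functions}, which gives $\intertwiningOperator_n (f)_n = f^o$ for $f \in \spaceOfDensityFunctions_n$; combined with \cref{prop properties of stationary measures}\ref{integrating against stationary measure} and the relation $\upDownDist_n = \mu \intertwiningKernel_n$-type consistency, I would show $[\eigenfunction_\zeroVertex]f = \int_{\stateSpace_n} (f)_n\, d\upDownDist_n$ and then that this equals $\lim_n \int_{\stateSpace_n} \projection_n f^o\, d\upDownDist_n$, which visibly depends only on $f^o$; if $f^o = 0$ the limit is $0$. (This argument simultaneously delivers claim \ref{claim weak convergence of stationary measures} on the subspace $\core$.)

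Next I would establish \textbf{boundedness}. Here I use claim \ref{claim coefficient functional as a limit} as the \emph{definition} on $\core$: for $f \in \core$, Proposition \ref{prop densities of limit process} (applied after expanding $f$ in the $\eigenfunction_s^o$ basis, using $\eigenfunction_\zeroVertex^o \equiv 1$) shows that $\int_\limitSpace \limitSemigroupOnMetricSpace(t) f\, d\nu \to a_\zeroVertex = [\eigenfunction_\zeroVertex]f$ as $t \to \infty$, for any probability measure $\nu$. Since each $\limitSemigroupOnMetricSpace(t)$ is a positive, contractive, conservative operator (Proposition \ref{prop limiting semigroup and generator}\ref{claim Feller semigroup}), we have $\big|\int_\limitSpace \limitSemigroupOnMetricSpace(t) f\, d\nu\big| \le \Vert f\Vert_{C(\limitSpace)}$, and letting $t \to \infty$ gives $|[\eigenfunction_\zeroVertex^o]f| \le \Vert f\Vert_{C(\limitSpace)}$. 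Thus $[\eigenfunction_\zeroVertex^o]$ is contractive on $\core$ and extends uniquely to a contractive linear functional on $C(\limitSpace) = \overline{\core}$ (by \ref{assumption continuous density functions are dense}); uniqueness is immediate from density. Positivity follows from the same limit description: if $f \ge 0$ then $\limitSemigroupOnMetricSpace(t)f \ge 0$, so $\int_\limitSpace \limitSemigroupOnMetricSpace(t)f\, d\nu \ge 0$ for all $t$, hence $[\eigenfunction_\zeroVertex^o]f \ge 0$; this extends to all nonnegative $f \in C(\limitSpace)$ by approximating from above with elements of $\core$ (adding constant multiples of $\eigenfunction_\zeroVertex^o \equiv 1$, exactly as in the proof of positivity in Proposition \ref{prop limiting semigroup and generator}).

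Finally I would tie up the three enumerated claims. Claim \ref{claim coefficient functional extends discrete one}: for $f \in \spaceOfDensityFunctions$, the identity $[\eigenfunction_\zeroVertex^o]f^o = [\eigenfunction_\zeroVertex]f$ holds on the spanning set $\{\density_s\}$ by construction and extends by linearity; the parenthetical $[\eigenfunction_\zeroVertex^o]\eigenfunction_s^o = \indicator(s=\zeroVertex)$ is then the $f = \eigenfunction_s$ case using $[\eigenfunction_\zeroVertex]\eigenfunction_s = \indicator(s = \zeroVertex)$. Claim \ref{claim coefficient functional as a limit}: established above on $\core$; for general $f \in C(\limitSpace)$ pick $f_k \to f$ in $\core$ and use the uniform (in $t$) bound $\big|\int \limitSemigroupOnMetricSpace(t)(f-f_k)\,d\nu\big| \le \Vert f - f_k\Vert$ together with contractivity of $[\eigenfunction_\zeroVertex^o]$ to interchange the limits. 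Claim \ref{claim weak convergence of stationary measures}: on $\core$ this was part of the well-definedness step ($[\eigenfunction_\zeroVertex^o]f^o = [\eigenfunction_\zeroVertex]f = \int_{\stateSpace_n}(f)_n\,d\upDownDist_n$ and $(f)_n \to f^o$ forces $\int_{\stateSpace_n}(f)_n\,d\upDownDist_n$ and $\int_{\stateSpace_n}\projection_n f^o\,d\upDownDist_n$ to have the same limit, using $|\int_{\stateSpace_n}((f)_n - \projection_n f^o)\,d\upDownDist_n| \le \Vert (f)_n - \projection_n f^o\Vert_{C(\stateSpace_n)} \to 0$ by \eqref{general reformulation of convergence assumption}); the extension to $C(\limitSpace)$ is again a density argument, using that $\big|\int_{\stateSpace_n}\projection_n(f - f_k)\,d\upDownDist_n\big| \le \Vert f - f_k\Vert_{C(\limitSpace)}$ uniformly in $n$ (each $\upDownDist_n$ is a probability measure and $\Vert\projection_n\Vert \le 1$), together with the already-established contractivity of $[\eigenfunction_\zeroVertex^o]$.
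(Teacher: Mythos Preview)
Your proposal is correct and uses the same core ingredients as the paper (Proposition~\ref{prop densities of limit process} for the limit~\ref{claim coefficient functional as a limit}, contractivity of the Feller semigroup for boundedness, and \eqref{general reformulation of convergence assumption} with Proposition~\ref{prop properties of stationary measures}\ref{integrating against stationary measure} for~\ref{claim weak convergence of stationary measures}). The one structural difference is your choice of starting point: you begin from the prescription $[\eigenfunction_\zeroVertex^o]\density_s^o = \upDownDist_{|s|}(s)$ and then labor to prove it is well-defined on $\core$ (your ``main obstacle''), whereas the paper sidesteps this entirely by \emph{defining} the functional on $\core$ via the limit $\lim_{t\to\infty}\int_\limitSpace \limitSemigroupOnMetricSpace(t)f\,d\nu$, which is automatically well-defined and $\nu$-independent by Proposition~\ref{prop densities of limit process}. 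The prescription~\eqref{claim coefficient functional on density functions} then falls out as the special case $f = \density_s^o$, and uniqueness follows from density. Your detour through~\ref{claim weak convergence of stationary measures} to establish well-definedness is valid but unnecessary; the limit definition is the cleaner entry point and makes what you flagged as the hardest step disappear.
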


\begin{remark}
	We cannot define a functional by specifying its values on $ \{ \density_s^o \}_{ s \in \stateSpace } $ and then extending by continuity since these functions need not be independent.
	The same holds for the family $ \{ \eigenfunction_s^o \}_{ s \in \stateSpace } $.
\end{remark}

\begin{proof}
	Let $ \nu $ be a probability measure on $\limitSpace$.
	\cref{prop densities of limit process} implies that the following formula defines a linear functional from $ \core $ to $ \mbb R $ that is independent of $ \nu $:
    $$
    	[ \eigenfunction_\zeroVertex^o ] f
    		=
    			\lim_{ t \to \infty }
    				{\textstyle \int_\limitSpace }
       					\limitSemigroupOnMetricSpace( t )
       					f
    					\,
    					d \nu
				.
    $$
 	The special case in \cref{prop densities of limit process} then reveals that this functional satisfies \eqref{claim coefficient functional on density functions}.
	A direct computation shows that $ [ \eigenfunction_\zeroVertex^o ] $ inherits the contractivity of the semigroup:
	$$
		\Vert
			f
		\Vert_{ C( \limitSpace ) }
			\ge
					\Vert
        					\limitSemigroupOnMetricSpace( t )
        					f
					\Vert_{ C( \limitSpace ) }
			\ge
        		\left|
        				{\textstyle \int_\limitSpace }
           					\limitSemigroupOnMetricSpace( t )
           					f
    					\,
    					d \nu
        		\right|
			\xrightarrow[ t \to \infty ]{}
        		\left|
            		[ \eigenfunction_\zeroVertex^o ]
            		f
        		\right|
			.
	$$
    Hence, it has a unique contractive extension to $ C( \limitSpace ) $, which we continue to denote by $ [ \eigenfunction_\zeroVertex^o ] $.
    This functional is uniquely defined by \eqref{claim coefficient functional on density functions} since $ \core $ is dense in $ C( \limitSpace ) $. This establishes the first claim.

    We have already seen that \ref{claim coefficient functional extends discrete one} holds whenever $ f $ is given by some $ \density_s $ (see \cref{prop properties of stationary measures}).
    This identity extends to $ \spaceOfDensityFunctions $ by linearity.
    Similarly, we have already established \ref{claim coefficient functional as a limit} for functions in $ \core $, and the extension to $ C( \limitSpace ) $ follows from a density argument.
    Using \eqref{general reformulation of convergence assumption} and \cref{prop properties of stationary measures}\ref{integrating against stationary measure}, we can write for $ f \in \spaceOfDensityFunctions $,
	\begin{align*}
		\int_{ \stateSpace_n }
			\projection_n f^o \,
			d \upDownDist_n
				=
            		\int_{ \stateSpace_n }
            			\projection_n f^o - ( f )_n \,
            			d \upDownDist_n
                    +
            		\int_{ \stateSpace_n }
            			( f )_n \,
            			d \upDownDist_n
				=
            		o( 1 )
                    +
            		[ \eigenfunction_\zeroVertex ] f
				,
					\qquad
					n \to \infty
				.	
	\end{align*}
	This establishes \ref{claim weak convergence of stationary measures} on $ \core $ (recall \ref{claim coefficient functional extends discrete one}) and a density argument (using the contractivity of the $ \projection_n $) extends it to all of $ C( \limitSpace ) $.
    The positivity of $ [ \eigenfunction_\zeroVertex^o ] $ is also inherited from the semigroup: if $ f \in C( \limitSpace ) $ is nonnegative, then so is each $ \limitSemigroupOnMetricSpace( t ) f $ and
	\begin{equation*}
		0
			\le
				{\textstyle \int_\limitSpace }
   					\limitSemigroupOnMetricSpace( t )
   					f
					\,
					d \nu
			\xrightarrow[ t \to \infty ]{}
					[ \eigenfunction_\zeroVertex^o ]
    				f
			.
			\qedhere
	\end{equation*}
\end{proof}

Having identified a suitable functional, we proceed by applying the Riesz--Markov--Kakutani representation theorem \cite[Theorem 2.22]{kallenbergTheBible}.
Since
$
	[ \eigenfunction_\zeroVertex^o ]
$
is a positive, continuous linear functional on $ C( \limitSpace ) $, there is a unique finite measure $ \limitDist $ on $ \limitSpace $ satisfying
\begin{equation}
	\label{claim coefficient description}
	\int_{ \limitSpace }
			f
			\,
		d \limitDist
			=
				[ \eigenfunction_\zeroVertex^o ]
					f
				,
					\qquad
					f \in C( \limitSpace )
				.
\end{equation}
\noindent
The following result verifies that $ \limitDist $ satisfies the desired property and provides some alternative characterizations of it.
\begin{prop}
    \label{prop:Stationary Measure of Limit Process}
    Each of the following statements holds and characterizes the finite measure $ \limitDist $.
    \begin{enumerate}[ label = (\roman*) ]

    	\item
    	\label{claim finite dimensional description}
    	$
        	\int_{ \limitSpace }
        		\density_s^o
    			\,
        		d \limitDist
        			=
                    	\upDownDist_{ | s | }( s )
    	$
    	for $ s \in \stateSpace $,

    	\item
    	\label{claim limit distribution}
    	$ \limitProcess ( t ) $ converges to $ \limitDist $ in distribution as $ t \to \infty $ for any initial condition,
    	\item
    	\label{claim stationary for limit}
    	$ \limitDist $ is a stationary distribution of $ \limitProcess $,

    	\item
    	\label{claim limit of stationary distributions}
    	$ \upDownDist_n \circ \inclusion\vert_{ \stateSpace_n }^{ -1 } \to \limitDist $ weakly,
    	and

    	\item
    	\label{claim projections are stationary}
    	$
        	\limitDist
    		\intertwiningKernel_n
        		=
        			\upDownDist_n
    	$
    	for $ n \ge 0 $.
    \end{enumerate}
\end{prop}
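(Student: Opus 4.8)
The plan is to read off all five statements from \cref{prop coefficient functional} together with the Riesz--Markov representation \eqref{claim coefficient description}. First I would record that $\limitDist$ is a probability measure: taking $f=\density_\zeroVertex^o\equiv 1$ in \eqref{claim coefficient description} and using the special case $s=\zeroVertex$ of \eqref{claim coefficient functional on density functions}, namely $[\eigenfunction_\zeroVertex^o]\density_\zeroVertex^o=\upDownDist_0(\zeroVertex)=1$, gives $\limitDist(\limitSpace)=1$. Statement \ref{claim finite dimensional description} is then immediate, since $\int_\limitSpace\density_s^o\,d\limitDist=[\eigenfunction_\zeroVertex^o]\density_s^o=\upDownDist_{|s|}(s)$ by \eqref{claim coefficient description} and \eqref{claim coefficient functional on density functions}. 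Statement \ref{claim projections are stationary} follows at once: since $\intertwiningKernel_n(x,s)=\density_s^o(x)$ by \eqref{continuous density functions produce kernels}, we get $(\limitDist\intertwiningKernel_n)(s)=\int_\limitSpace\density_s^o\,d\limitDist=\upDownDist_{|s|}(s)=\upDownDist_n(s)$ for every $s\in\stateSpace_n$.

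Statements \ref{claim limit distribution} and \ref{claim limit of stationary distributions} are the limit descriptions of $[\eigenfunction_\zeroVertex^o]$ rephrased through \eqref{claim coefficient description}. For \ref{claim limit distribution}, if $\nu$ is the initial distribution of $\limitProcess$, then for every $f\in C(\limitSpace)$ one has $\mbb E[f(\limitProcess(t))]=\int_\limitSpace\limitSemigroupOnMetricSpace(t)f\,d\nu\to[\eigenfunction_\zeroVertex^o]f=\int_\limitSpace f\,d\limitDist$ by \cref{prop coefficient functional}\ref{claim coefficient functional as a limit}, which is precisely weak convergence of $\limitProcess(t)$ to $\limitDist$. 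For \ref{claim limit of stationary distributions}, the change of variables $\int_\limitSpace f\,d(\upDownDist_n\circ\inclusion\vert_{\stateSpace_n}^{-1})=\int_{\stateSpace_n}\projection_n f\,d\upDownDist_n$ together with \cref{prop coefficient functional}\ref{claim weak convergence of stationary measures} gives the claim.

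For the stationarity statement \ref{claim stationary for limit}, the one line that requires work is the semigroup manipulation: fix $f\in C(\limitSpace)$, $t\ge 0$, and a probability measure $\nu$ on $\limitSpace$; since $\limitSemigroupOnMetricSpace(t)f\in C(\limitSpace)$ by the Feller property, \eqref{claim coefficient description}, \cref{prop coefficient functional}\ref{claim coefficient functional as a limit}, and the semigroup identity give
\begin{align*}
\int_\limitSpace\limitSemigroupOnMetricSpace(t)f\,d\limitDist
&=[\eigenfunction_\zeroVertex^o]\big(\limitSemigroupOnMetricSpace(t)f\big)
=\lim_{s\to\infty}\int_\limitSpace\limitSemigroupOnMetricSpace(s)\limitSemigroupOnMetricSpace(t)f\,d\nu\\
&=\lim_{s\to\infty}\int_\limitSpace\limitSemigroupOnMetricSpace(s+t)f\,d\nu
=[\eigenfunction_\zeroVertex^o]f=\int_\limitSpace f\,d\limitDist.
\end{align*}
Hence $\limitDist\limitSemigroupOnMetricSpace(t)=\limitDist$ as finite Borel measures on $\limitSpace$, i.e.\ $\limitDist$ is stationary.

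It then remains to argue that each statement characterizes $\limitDist$ among finite measures on $\limitSpace$. Statement \ref{claim finite dimensional description} does so because, by linearity, any finite measure $\mu$ satisfying it agrees with $\limitDist$ on $\core=\linearSpan\{\density_s^o\}_{s\in\stateSpace}$, which is dense in $C(\limitSpace)$ by \ref{assumption continuous density functions are dense}, and two finite Borel measures agreeing on a dense subspace of $C(\limitSpace)$ coincide; statement \ref{claim projections are stationary} implies \ref{claim finite dimensional description} (evaluate at $s\in\stateSpace_n$) and so also characterizes $\limitDist$. Statements \ref{claim limit distribution} and \ref{claim limit of stationary distributions} characterize $\limitDist$ by uniqueness of weak limits, and for \ref{claim stationary for limit} one invokes \ref{claim limit distribution}: a stationary $\mu$ keeps the law of $\limitProcess$ equal to $\mu$ for all time, while that law converges weakly to $\limitDist$, forcing $\mu=\limitDist$. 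In short, the proof is essentially assembly of \cref{prop coefficient functional}; the only genuine computation is the semigroup manipulation for \ref{claim stationary for limit}, and the one point to be careful about is that the characterization clauses for \ref{claim limit distribution}--\ref{claim stationary for limit} are not self-contained but lean on the already-established parts (and, for \ref{claim finite dimensional description}, on the density assumption \ref{assumption continuous density functions are dense}).
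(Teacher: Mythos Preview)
Your proof is correct and follows essentially the same route as the paper: each item is read off from \cref{prop coefficient functional} via the Riesz representation \eqref{claim coefficient description}, with \ref{claim finite dimensional description} and \ref{claim projections are stationary} coming from \eqref{claim coefficient functional on density functions} and \eqref{continuous density functions produce kernels}, and \ref{claim limit distribution}, \ref{claim limit of stationary distributions} from the limit descriptions of $[\eigenfunction_\zeroVertex^o]$. The paper treats \ref{claim stationary for limit} as a ``well-known consequence'' of \ref{claim limit distribution}, whereas you give the explicit semigroup manipulation; your argument is correct and amounts to the standard proof of that well-known fact. You are also more explicit than the paper about the characterization clauses, but the underlying ideas (density of $\core$ in $C(\limitSpace)$ for \ref{claim finite dimensional description} and \ref{claim projections are stationary}, uniqueness of weak limits for \ref{claim limit distribution} and \ref{claim limit of stationary distributions}, and leveraging \ref{claim limit distribution} for \ref{claim stationary for limit}) are the same.
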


\begin{proof}
	To begin, note that a finite measure on $ \limitSpace $ is uniquely determined by its integration of the density functions on $ \limitSpace $, since their span is dense in $ C( \limitSpace ) $.
    Therefore, the claim regarding \ref{claim finite dimensional description} follows from \eqref{claim coefficient functional on density functions} and \eqref{claim coefficient description}.
	The claim regarding \ref{claim limit distribution} follows from \cref{prop coefficient functional}\ref{claim coefficient functional as a limit} and \eqref{claim coefficient description}.
	The claim regarding \ref{claim stationary for limit} is a well-known consequence of \ref{claim limit distribution}.
	To establish the claim regarding \ref{claim limit of stationary distributions}, we use \cref{prop coefficient functional}\ref{claim weak convergence of stationary measures} and \eqref{claim coefficient description} to write
	\begin{align*}
		\int_{ \limitSpace }
			f
			\,
			d ( \upDownDist_n \circ \inclusion \vert_{ \stateSpace_n }^{ -1 } )
			= 
				\int_{ \stateSpace_n }
					f \circ \inclusion\vert_{ \stateSpace_n } 
        			\,
        			d \upDownDist_n
			= 
				\int_{ \stateSpace_n }
					\projection_n f
        			\,
        			d \upDownDist_n
			\to
                [ \eigenfunction_\zeroVertex^o ] f
			=
            	\int_{ \limitSpace }
            			f
            			\,
            		d \limitDist
			,
				\qquad
				f \in C( \limitSpace )
			.
	\end{align*}

	\noindent
	To establish the claim regarding \ref{claim projections are stationary}, let $ \mu $ be a finite measure on $ \limitSpace $ and use  \ref{claim finite dimensional description} and \eqref{continuous density functions produce kernels} to write
	\begin{equation*}
		\upDownDist_n( s ) - ( \mu \intertwiningKernel_n )( s )
			= 
				\int_{ \limitSpace }
					\density_s^o
        			\,
        			d \limitDist
				-
					\int_\limitSpace
						\intertwiningKernel_n( x, s )
						\,
						d\mu( x )
			= 
				\int_{ \limitSpace }
					\density_s^o
        			\,
        			d \limitDist
				-
					\int_\limitSpace
						\density_s^o
						\,
						d\mu
			,
				\qquad
				s \in \stateSpace_n
			.
			\qedhere
	\end{equation*}
\end{proof}

\subsection{Separation distance}
\label{ssec:separation_distance_continuous}
We conclude our study of $ \limitProcess $ with a result concerning its separation distance, which we denote by
\begin{equation*}
  \sepDist_\limitProcess(t) 
  	= 
		\sup_{ x \in \limitSpace, \, f \in C_+( \limitSpace ) } 
			\bigg( 
				1 - \frac{ ( \limitSemigroupOnMetricSpace( t ) f )( x ) }{ \int_\limitSpace f d\limitDist}
			\bigg)
	,
		\qquad
		t \ge 0
	.
\end{equation*}
{This} result extends \cref{prop recursive structure in sep dist} to the current setting, including an inequality that compliments the one in \cref{thm general sep dist of a limit}.
This allows us to identify $\sepDist_\limitProcess $ as the limit of the separation distances of the continuous-time discrete chains.

\begin{theorem}
	\label{theorem discrete and continuous sep dist}

	Suppose that the $ \{ \upDownChain_n \}_{ n \ge 0 } $ are up-down chains satisfying \ref{assumption finite state spaces} and \ref{assumption:commutation} and that Assumptions \ref{assumption distant elements}, \ref{assumption consistency rn}, and \ref{assumption state space approximation}--\ref{assumption continuous density functions are limits} hold.
	Then we have the identities
    \begin{equation}
    	\label{eqn discrete and continuous sep dist}
        \sepDist_n^*(t) 
        	= 
        		\sup_{ x \in E, f \in H_n^+ } 
        			\bigg( 
        				1 - \frac{ ( \limitSemigroupOnMetricSpace( t ) f^o )( x )
							 }{ [ \eigenfunction_\zeroVertex^o ] f^o }
        			\bigg)
    		,
				\qquad
				n \ge 0,
				\,
				t \ge 0
			.
    \end{equation}
    Consequently, as $ n \to \infty $ we have the monotonic convergence
	$$
        \sepDist_n^*( t )
        \orange{\,\nearrow\,}
				\sepDist_\limitProcess(t) 
			,
				\qquad
				t \ge 0
			.
	$$
\end{theorem}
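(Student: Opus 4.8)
## Proof Plan

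The plan is to establish \eqref{eqn discrete and continuous sep dist} first, since the monotonic convergence follows almost immediately from it. The identity \eqref{eqn discrete and continuous sep dist} is the natural continuous analogue of \eqref{eqn recursive structure in sep dist} from \cref{prop recursive structure in sep dist}, so the strategy is to pass to the limit $ m \to \infty $ (i.e., along the filtration level $ n $) in that discrete identity, using the machinery already built in \cref{section convergence}. Concretely, I would start from the characterization
$$
	1 - \sepDist_k^*(t)
		=
			\inf_{ f \in \spaceOfDensityFunctions_k^+, \, r \in \stateSpace_n }
				\frac{ ( e^{ t \discreteGenerators_n } ( f )_n )( r ) }{ [ \eigenfunction_\zeroVertex ] f }
	,
		\qquad
		n \ge k
	,
$$
valid for every $ n \ge k $, and take $ n \to \infty $. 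The key inputs are: $ e^{ t \discreteGenerators_n } = e^{ (\upDownOperator_n - I) t/\eps_n } $ converges (via \cref{theorem ethier kurtz modes of convergence}\ref{item:cveTn} and \cref{prop limiting semigroup and generator}) so that $ ( e^{ t \discreteGenerators_n } ( f )_n )( r_n ) \to ( \limitSemigroupOnMetricSpace(t) f^o )( x ) $ whenever $ \inclusion(r_n) \to x $ (using the remark after \cref{convergenceInDifferentSpaces} that $ f_n \to f $ and $ \gamma_n(x_n) \to x $ imply $ f_n(x_n) \to f(x) $); the denominator $ [ \eigenfunction_\zeroVertex ] f $ equals $ [ \eigenfunction_\zeroVertex^o ] f^o $ by \cref{prop coefficient functional}\ref{claim coefficient functional extends discrete one}; and \ref{assumption state space approximation} guarantees that every $ x \in \limitSpace $ is approximated by some $ \inclusion(r_n) $, so the supremum over $ x \in E $ is exactly the limit of the suprema over $ r \in \stateSpace_n $.

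For the actual argument I would fix $ k $ and $ t $ and prove two inequalities. For $ 1 - \sepDist_k^*(t) \le \inf_{x, f} (\ldots) $: given $ f \in \spaceOfDensityFunctions_k^+ $ and $ x \in E $, pick $ r_n \in \stateSpace_n $ with $ \inclusion(r_n) \to x $; since $ ( e^{ t \discreteGenerators_n } ( f )_n )( r_n ) \ge (1 - \sepDist_k^*(t)) [ \eigenfunction_\zeroVertex ] f $ for each $ n $ (by \cref{prop recursive structure in sep dist} applied with that $ r_n $, noting $ \spaceOfDensityFunctions_k^+ \subseteq \spaceOfDensityFunctions_n^+ $ by \cref{lemma H+ properties}), passing to the limit gives $ ( \limitSemigroupOnMetricSpace(t) f^o )( x ) \ge (1 - \sepDist_k^*(t)) [ \eigenfunction_\zeroVertex^o ] f^o $. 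Taking the infimum over $ f $ and $ x $ yields one direction. For the reverse, I would use the identity \eqref{inequality final bound for reversal} together with \cref{lemma H+ properties}: the infimum over $ g \in C_+(\stateSpace_k) $ in that formula can be realized by functions of the form $ (\density_{s_k} + \eps \density_\zeroVertex)_k $, whose continuous analogues $ \density_{s_k}^o + \eps $ lie in $ \core $ and have the right limiting behavior under $ \limitSemigroupOnMetricSpace(t) $; evaluating at a limit point $ x $ of $ \inclusion(r_k) $... — more carefully, I would argue that the right-hand side of \eqref{eqn discrete and continuous sep dist} is at most $ 1 - \sepDist_k^*(t) $ by exhibiting, for each $ \delta > 0 $, a function $ f \in \spaceOfDensityFunctions_k^+ $ and a point $ x \in E $ (namely a limit point of $ \inclusion(r_k) $ where $ r_k $ is from \ref{assumption consistency rn}, using \ref{assumption state space approximation} to get such a limit point, or rather using that $ \density_{s_k}^o(x) $ can be made close to the discrete ratio) realizing the bound within $ \delta $. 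This step is where \ref{assumption consistency rn} and \ref{assumption distant elements} get used, exactly as in \cref{prop recursive structure in sep dist}.

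Once \eqref{eqn discrete and continuous sep dist} is in hand, the monotonic convergence is a two-line corollary: \cref{prop recursive structure in sep dist} already gives that $ \sepDist_k^*(t) $ is nondecreasing in $ k $ (via $ \spaceOfDensityFunctions_k^+ \subseteq \spaceOfDensityFunctions_{k+1}^+ $ from \cref{lemma H+ properties}), and the supremum on the right of \eqref{eqn discrete and continuous sep dist} is taken over $ f \in H_n^+ $, an increasing family; since $ \bigcup_n H_n^+ $ generates (the density functions on $ \limitSpace $ span a dense subspace of $ C(\limitSpace) $, by \ref{assumption continuous density functions are dense}, and $ C_+(\limitSpace) $ is approximated within $ \bigcup_n \{ f^o : f \in H_n^+ \} $ after adding small positive constants), the increasing suprema converge to $ \sup_{x \in E, f \in C_+(E)} (1 - (\limitSemigroupOnMetricSpace(t) f)(x) / \int f \, d\limitDist) = \sepDist_\limitProcess(t) $. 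One should double-check that every $ g \in C_+(E) $ can be approximated from above, or in the relevant one-sided sense, by elements of $ \bigcup_n \{ f^o : f \in H_n^+ \} $, so that no mass of the supremum is lost in the limit; this density/monotone-exhaustion bookkeeping is the main obstacle — not deep, but the place where care is needed, since $ H_n^+ $ consists of functions whose \emph{restriction to $ \stateSpace_n $} is positive, and one must confirm this transfers to positivity of $ f^o $ on $ E $ (which follows from \eqref{defn continuous density functions} and a limiting argument, or directly from the fact that $ f^o $ is a limit of the positive $ (f)_n $ composed with $ \inclusion $). The convergence being \emph{monotonic} (rather than merely convergent) is then immediate from the nested structure of the $ H_n^+ $.
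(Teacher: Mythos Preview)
Your proposal is correct and close in spirit to the paper's proof, but there are two noteworthy differences worth spelling out.

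For the identity \eqref{eqn discrete and continuous sep dist}, both you and the paper use the same key ingredients: the special point $y$ obtained as a subsequential limit of $\inclusion(r_m)$ (via compactness of $E$), the consistency assumption \ref{assumption consistency rn}, and the formula \eqref{inequality final bound for reversal}. The difference is packaging. You pass to the limit $m \to \infty$ in the discrete identity \eqref{eqn recursive structure in sep dist} using the semigroup convergence $e^{t\discreteGenerators_m}(f)_m \to \limitSemigroupOnMetricSpace(t)f^o$; for the reverse inequality you would need the observation (implicit in the proof of \cref{prop recursive structure in sep dist}) that $(e^{t\discreteGenerators_m}(f)_m)(r_m)$ is actually \emph{constant} in $m \ge k$ for $f \in \spaceOfDensityFunctions_k$, by \ref{assumption consistency rn} and discrete intertwining, so the limit equals $(e^{t\discreteGenerators_k}(f)_k)(r_k)$. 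The paper instead uses the already-established continuous intertwining $\limitSemigroupOnMetricSpace(t)\intertwiningOperator_n = \intertwiningOperator_n e^{t\discreteGenerators_n}$ from \cref{prop algebraic identities in limit}\ref{claim semigroup intertwining} together with the direct computation $\intertwiningKernel_n(y,r_n)=1$, yielding $(\limitSemigroupOnMetricSpace(t)f^o)(y) = (e^{t\discreteGenerators_n}(f)_n)(r_n)$ without any limiting argument. This is cleaner and explains the role of $\intertwiningOperator_n$ more transparently, but your route works.

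For the monotonic convergence your approach is genuinely different. You argue by density: the sets $\{f^o : f \in \spaceOfDensityFunctions_n^+\}$ increase, their union approximates $C_+(\limitSpace)$ (after shifting by small constants, using \ref{assumption continuous density functions are dense} and \eqref{general reformulation of convergence assumption}), and contractivity of $\limitSemigroupOnMetricSpace(t)$ and $[\eigenfunction_\zeroVertex^o]$ makes the ratios stable under approximation. The paper instead simply combines the containment $\{f^o : f \in \spaceOfDensityFunctions_n^+\} \subseteq C_+(\limitSpace)$ (giving $\sepDist_n^* \le \sepDist_\limitProcess$) with the general liminf bound $\sepDist_\limitProcess(t) \le \liminf_n \sepDist_n^*(t)$ from \cref{thm general sep dist of a limit}, which is already in hand. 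Your density argument is more self-contained but requires the approximation bookkeeping you flagged; the paper's is a one-liner given the earlier machinery.
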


\begin{remark}
Whenever this theorem and \cref{prop convergence of sep dist} both apply, $ \sepDist_\limitProcess $ is also a scaling limit of the separation distances of the discrete-time chains, and an explicit expression for this limit can be found in \cref{prop convergence of sep dist}.
\end{remark}

\begin{proof}
	\orange{We follow the proof of \cref{prop recursive structure in sep dist}}.
	Let $ t \ge 0 $ and $ n \ge 0 $.
	Equation (\ref{describing intertwining kernel with density functions}) and \cref{prop algebraic identities in limit}\ref{claim semigroup intertwining} give us the identity
	\begin{equation}\label{eq:Tt_fo}
    	\limitSemigroupOnMetricSpace( t ) f^o
			=
            	\limitSemigroupOnMetricSpace( t )
				\intertwiningOperator_n
				( f )_n
        	=
    			\intertwiningOperator_n
    			e^{ t \discreteGenerators_n }
				( f )_n, \qquad f \in \spaceOfDensityFunctions^+_n.
	\end{equation}
	Since $ \intertwiningOperator_n $ is a transition operator, this leads to the bound
		\begin{equation}
			\label{inequality discrete and continuous minima}
    		\inf_{  x\in E } 
        		( \limitSemigroupOnMetricSpace( t ) f^o )( x ) 
              \ge
				\inf_{  u  \in \stateSpace_n }
    				( e^{ t \discreteGenerators_n } ( f )_n )( u )
				, 
					\qquad f \in \spaceOfDensityFunctions^+_n.
		\end{equation}
	Combining this with \cref{prop recursive structure in sep dist} and \cref{prop coefficient functional}\ref{claim coefficient functional extends discrete one}, we obtain the inequality
	\begin{equation*}%
		1 - \sepDist_n^*( t )
			 = 
        		\inf_{ \substack{  f \in \spaceOfDensityFunctions^+_n \\ | u | = n} }
        				\frac{ ( e^{ t \discreteGenerators_n } ( f )_n )( u ) }{ [ \eigenfunction_\zeroVertex ] f }
			\le 
        		\inf_{ \substack{ f \in \spaceOfDensityFunctions^+_n \\ x \in \limitSpace } }
        				\frac{ ( \limitSemigroupOnMetricSpace( t ) f^o ) ( x )
        				}{ 
                    		[ \eigenfunction_\zeroVertex^o ]
                    		f^o
        				}
			.
	\end{equation*}

	We proceed by establishing the reverse inequality.
    Since $ \limitSpace $ is compact, the points $ \{ \inclusion( r_m ) \}_{ m \ge 1 } $ have a subsequential limit point, say $ \inclusion( r_{ m_k } ) \to y $.
    Using the identities in \eqref{continuous density functions produce kernels}, \eqref{equation density as limit down kernel}, and \ref{assumption consistency rn}, we find that
    \begin{align*}
    	\intertwiningKernel_n( y, r_n )
    		=
    			\lim_{ k \to \infty }
    				\downKernel_{ m_k, n }( r_{ m_k }, r_n )
    		=
    			\lim_{ k \to \infty }
    				\indicator( m_k \ge n )
    		=
    			1
			,
    \end{align*}
    and as a result, 
    $
    	(
		\intertwiningOperator_n g 
		)( y )
    		=
    			g( r_n )
	$
	for any $ g $ in $ C( \stateSpace_n )$.
  In particular (recall \eqref{eq:Tt_fo}), 
	$$
    	( \limitSemigroupOnMetricSpace( t ) f^o )( y )
        	=
    			( \intertwiningOperator_n
    			e^{ t \discreteGenerators_n }
				( f )_n)( y )
        	=
    			( e^{ t \discreteGenerators_n }
				( f )_n)( r_n )
			,
				\qquad
				f \in \spaceOfDensityFunctions_n^+
			.
    $$  
  
	\noindent
	Together with \cref{prop coefficient functional}\ref{claim coefficient functional extends discrete one}, \cref{prop properties of stationary measures}\ref{integrating against stationary measure} and \cref{lemma H+ properties}, this implies that
	\begin{equation*}%
					\inf_{ \substack{  f \in \spaceOfDensityFunctions^+_n \\ x \in \limitSpace } }
        				\frac{ ( \limitSemigroupOnMetricSpace( t ) f^o ) ( x )
        				}{ 
                    		[ \eigenfunction_\zeroVertex^o ]
                    		f^o
        				}
			\le 
        		\inf_{ f \in \spaceOfDensityFunctions^+_n }
        				\frac{ ( \limitSemigroupOnMetricSpace( t ) f^o ) ( y )
        				}{ 
                    		[ \eigenfunction_\zeroVertex]
                    		f
        				}  
			= 
        		\inf_{ f \in \spaceOfDensityFunctions^+_n }
        				\frac{ 
							(  
                			e^{ t \discreteGenerators_n }
            				( f )_n
							) ( r_n )
        				}{ 
                				\int_{ \stateSpace_n }
                					( f )_n
                					d \upDownDist_n
        				}
			= 
    			\inf_{ \substack{ g \in C_+(\stateSpace_n)  } } 
            				\frac{ ( e^{ t \discreteGenerators_n } g )( r_n ) 
            				}{ 
                				\int_{ \stateSpace_n }
                					g \,
                					d \upDownDist_n
                			}
			.
	\end{equation*}
	The desired inequality, and the first claim, now follows from \eqref{inequality final bound for reversal}.

	Moving on to the second claim, first observe that, since $\intertwiningOperator_n$ is a positive operator,
    \eqref{describing intertwining kernel with density functions} gives us the containment
    $
        \{ f^o \colon f \in \spaceOfDensityFunctions^+_n \}
        	\subseteq
               C_+( \limitSpace )
			.
    $
	Recalling the first claim and \cref{prop:Stationary Measure of Limit Process}\ref{claim coefficient description}, we can then write
    \begin{equation*}
        \sepDist_n^*(t) 
        =\sup_{ \substack{ x \in \limitSpace, \\ f \in \spaceOfDensityFunctions^+_n }}
        			\bigg( 
        				1 - \frac{ ( \limitSemigroupOnMetricSpace( t ) f^o )( x )
							 }{\int_{ \limitSpace }  f^o 
					d \upDownDist}
        			\bigg)
        	\le
        		\sup_{ \substack{ x \in \limitSpace, \\ f \in C_+( \limitSpace ) }}
        			\bigg( 
        				1 - \frac{ ( \limitSemigroupOnMetricSpace( t ) f )( x )
							 }{ \int_{ \limitSpace } f  d \upDownDist}
        			\bigg)
			=
				\sepDist_\limitProcess( t )
			,
				\qquad
				t \ge 0,
				\,
				n \ge 0
			.
    \end{equation*}
	On the other hand, \cref{prop limiting semigroup and generator} and \cref{thm general sep dist of a limit} supply us with the inequality
	$$
        \sepDist_\limitProcess(t) 
            \le 
            	\liminf_{ n \to \infty }
					\sepDist_n^*( t )
			,
				\qquad
				t \ge 0
			.
	$$
	We must therefore have the equality $ \sepDist_\limitProcess(t) = \lim_{ n \to \infty } \sepDist_n^*( t )$ for any $t \ge 0$.
	The monotonicity of this convergence follows from \cref{prop recursive structure in sep dist}.
\end{proof}

\section{Examples in the literature}
\label{sec:previous_examples}

In this section, we discuss how several up-down chains in the literature fit into our framework.
This will recover many of the main results in \cite{BOpartitions, petrovTwoParameter, petrov2010strict, OlshAddJackParameter, petrovSL2, lohr2020Aldous_chain, krdrDiffusions} while providing new results about these chains and their limiting processes.

   Before the main discussion, let us state a lemma that will simplify checking the commutation relation \ref{assumption:commutation}.
   In words, this result says that it is enough to check \ref{assumption:commutation} on the off-diagonal entries -- the relation for the diagonal entries will follow automatically.
   \begin{lemma}
     \label{lem:avoid_diag}
     Let $ \{ \upKernel_n \}_{ n \ge 0 } $ and $ \{ \downKernel_n \}_{ n \ge  1} $ be transition matrices, respectively from
$\stateSpace_n $ to $ \stateSpace_{n+1} $ and from $\stateSpace_n $ to $ \stateSpace_{n-1} $. 
Assume that for each $n$, there exist  $\beta_n$ in $ ( 0, 1 ) $ such that
\begin{equation}\label{eq:comm_off_diag}
    ( \upKernel_n
	\downKernel_{ n + 1 })(r,s)
		= 
        		\beta_n
    				(\downKernel_n
    				\upKernel_{ n - 1 }) (r,s)
		,
			\qquad
			r \neq s
		.
\end{equation}
    Then Assumption~\ref{assumption:commutation} holds.
   \end{lemma}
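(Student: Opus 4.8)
The plan is to exploit the fact that both $\upKernel_n\downKernel_{n+1}$ and $\beta_n\downKernel_n\upKernel_{n-1}+(1-\beta_n)i_n$ are "stochastic-like" objects whose rows sum to $1$, so that once the off-diagonal entries agree, the diagonal entries are forced to agree as well. Concretely, fix $n\ge 1$ and fix a row index $r\in\stateSpace_n$. Since $\upKernel_n$ and $\downKernel_{n+1}$ are transition matrices, their product $\upKernel_n\downKernel_{n+1}$ is a transition matrix from $\stateSpace_n$ to $\stateSpace_n$; in particular $\sum_{s\in\stateSpace_n}(\upKernel_n\downKernel_{n+1})(r,s)=1$. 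Likewise $\downKernel_n\upKernel_{n-1}$ is a transition matrix from $\stateSpace_n$ to $\stateSpace_n$, so $\sum_{s\in\stateSpace_n}(\downKernel_n\upKernel_{n-1})(r,s)=1$, and of course $\sum_{s}i_n(r,s)=1$. Hence the right-hand side matrix $R_n:=\beta_n\downKernel_n\upKernel_{n-1}+(1-\beta_n)i_n$ also has all row sums equal to $1$.

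Now write $L_n:=\upKernel_n\downKernel_{n+1}$. By hypothesis \eqref{eq:comm_off_diag}, $L_n(r,s)=R_n(r,s)$ for every $s\neq r$. Subtracting the two row-sum identities gives
\[
L_n(r,r)-R_n(r,r)
=\Big(1-\sum_{s\neq r}L_n(r,s)\Big)-\Big(1-\sum_{s\neq r}R_n(r,s)\Big)
=\sum_{s\neq r}\big(R_n(r,s)-L_n(r,s)\big)=0,
\]
so $L_n(r,r)=R_n(r,r)$ as well. Since $r$ and $n$ were arbitrary, $L_n=R_n$ for all $n\ge 1$, which is exactly the matrix identity
\[
\upKernel_n\downKernel_{n+1}=\beta_n\downKernel_n\upKernel_{n-1}+(1-\beta_n)i_n,\qquad n\ge 1,
\]
i.e.\ Assumption~\ref{assumption:commutation}.

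There is essentially no obstacle here: the only thing to be careful about is making sure the two products $\upKernel_n\downKernel_{n+1}$ and $\downKernel_n\upKernel_{n-1}$ really are square transition matrices on $\stateSpace_n$ (so that the row-sum bookkeeping is legitimate), which is immediate from the stated directions of the kernels and the fact that a product of transition matrices is a transition matrix. One could also phrase the argument operator-theoretically — both sides applied to the constant function $\One$ on $\stateSpace_n$ return $\One$, so the difference $L_n-R_n$ annihilates $\One$, and a matrix all of whose rows sum to zero and whose off-diagonal entries vanish must be the zero matrix — but the direct row-sum computation above is the cleanest route.
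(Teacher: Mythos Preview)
Your proof is correct and takes essentially the same approach as the paper: both exploit that $\upKernel_n\downKernel_{n+1}$ and $\downKernel_n\upKernel_{n-1}$ are transition matrices with row sums equal to $1$, so the diagonal entries are forced once the off-diagonal entries agree.
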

   \begin{proof}
     We need to check that, for all $s$ in $\stateSpace_n $, 
     \[( \upKernel_n \downKernel_{ n + 1 })(s, s)= \beta_n              
        (\downKernel_n \upKernel_{ n - 1 }) (s, s) + (1-\beta_n).\]
        \orange{This} follows immediately from \eqref{eq:comm_off_diag} and the identities
        \[( \upKernel_n\downKernel_{ n + 1 })(s, s)=1-\sum_{u \ne s}  (\upKernel_n\downKernel_{ n + 1 })(s,u), \qquad
        (\downKernel_n \upKernel_{ n - 1 }) (s, s)=1-\sum_{u \ne s}(\downKernel_n \upKernel_{ n - 1 }) (s,u).\qedhere\]
   \end{proof}

\subsection{Borodin--Olshanski chains on partitions}
\label{ssec:BO-partitions}

For $n \ge 1$, let $\stateSpace_n$ be the set of integer partitions of $n$ (\orange{equivalently,} Young diagrams with $n$ boxes).
The number of standard Young tableaux of shape $\la$ will be denoted by $\dim(\la)$. 
We write $\la \nearrow \rho$ if a partition $\rho$ can be obtained from a partition $\la$ by adding a single box.
If this new box $\rho \backslash \lambda$ lies in the $i$-th row and $j$-th column of the diagram of $ \rho $, we set $c(\rho \backslash \lambda) = j - i $ (this is known as the \emph{content} of the box).

Let $\la \, \in \stateSpace_n$ and $\rho \, \in \stateSpace_{n+1}$.
The down-steps considered in \cite{BOpartitions} are given by the formula 
\[\downKernel_{n+1}(\rho,\la)=\begin{cases}
  \frac{\dim(\la)}{\dim(\rho)}, & \lambda \nearrow \rho,\\
  0, & \text{ otherwise.}\end{cases}\]

\noindent
The up-steps are given by  
\[\upKernel_{n}(\la,\rho)=\begin{cases} \frac{(z+c(\rho \backslash \lambda))(z'+c(\rho \backslash \lambda))}{zz'+n}\ \frac{\dim(\rho)}{(n+1)\, \dim(\la)},  & \la \nearrow \rho, \\
     0, & \text{ otherwise,}
\end{cases}\]
where $z$ and $z'$ are two complex parameters chosen\footnote{Concretely, they should either be complex conjugates or lie in an interval contained in $ \mbb R \setminus \mbb Z $.} so that the above quantities are nonnegative and $zz'>0 $.
In the following result, we show that these transition kernels satisfy a commutation relation (a variant in which the operators are normalized differently can be found in \cite[Lemma 5.1]{fulmanCommutation}). 
\begin{prop}
\label{prop:BO_Commutation}
  The above matrices $(\upKernel_n)_{n \ge 1}$ and $(\downKernel_{n})_{n \ge 2}$ satisfy Assumption~\ref{assumption:commutation}
  with parameter $\beta_n=\frac{zz'+n-1}{zz'+n} \frac{n}{n+1}$ (for $n \ge 1$).
\end{prop}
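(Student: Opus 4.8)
The plan is to apply \cref{lem:avoid_diag}, so that it suffices to verify the commutation relation \eqref{eq:comm_off_diag} on off-diagonal entries $(\lambda, \mu)$ with $\lambda, \mu \in \stateSpace_n$ and $\lambda \neq \mu$. For such a pair, both $(\upKernel_n \downKernel_{n+1})(\lambda,\mu)$ and $(\downKernel_n \upKernel_{n-1})(\lambda,\mu)$ are sums over intermediate partitions, and the key combinatorial fact is that both are supported only on pairs $\lambda, \mu$ that differ by moving one box; more precisely, $(\upKernel_n\downKernel_{n+1})(\lambda,\mu)$ is nonzero only when there is a partition $\rho \in \stateSpace_{n+1}$ with $\lambda \nearrow \rho$ and $\mu \nearrow \rho$, and $(\downKernel_n\upKernel_{n-1})(\lambda,\mu)$ is nonzero only when there is $\nu \in \stateSpace_{n-1}$ with $\nu \nearrow \lambda$ and $\nu \nearrow \mu$. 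When $\lambda \neq \mu$, each of these intermediate partitions, if it exists, is unique.

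First I would write out the two products explicitly. For $\lambda \neq \mu$ with a common cover $\rho$,
\[
(\upKernel_n\downKernel_{n+1})(\lambda,\mu)
= \upKernel_n(\lambda,\rho)\,\downKernel_{n+1}(\rho,\mu)
= \frac{(z+c(\rho\backslash\lambda))(z'+c(\rho\backslash\lambda))}{zz'+n}\cdot\frac{\dim(\rho)}{(n+1)\dim(\lambda)}\cdot\frac{\dim(\mu)}{\dim(\rho)},
\]
which simplifies to $\frac{(z+c(\rho\backslash\lambda))(z'+c(\rho\backslash\lambda))}{zz'+n}\cdot\frac{\dim(\mu)}{(n+1)\dim(\lambda)}$. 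Similarly, for $\lambda \neq \mu$ with a common lower cover $\nu$,
\[
(\downKernel_n\upKernel_{n-1})(\lambda,\mu)
= \downKernel_n(\lambda,\nu)\,\upKernel_{n-1}(\nu,\mu)
= \frac{\dim(\nu)}{\dim(\lambda)}\cdot\frac{(z+c(\mu\backslash\nu))(z'+c(\mu\backslash\nu))}{zz'+n-1}\cdot\frac{\dim(\mu)}{n\,\dim(\nu)},
\]
which simplifies to $\frac{(z+c(\mu\backslash\nu))(z'+c(\mu\backslash\nu))}{zz'+n-1}\cdot\frac{\dim(\mu)}{n\,\dim(\lambda)}$. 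The crucial geometric observation is that when $\lambda$ and $\mu$ both differ from a common $\rho$ by one box and also both differ from a common $\nu$ by one box, then $\{\lambda,\mu\}$, $\rho$, $\nu$ form the four corners of a $2\times 2$ configuration, and the two boxes $\rho\backslash\lambda$ and $\mu\backslash\nu$ occupy the same cell of the diagram — hence $c(\rho\backslash\lambda) = c(\mu\backslash\nu)$. Conversely one must check that $\rho$ exists iff $\nu$ exists for a given off-diagonal pair $\lambda \neq \mu$ with nonzero entries; this is the standard fact that the Young lattice is such that two distinct elements of rank $n$ have a common upper cover iff they have a common lower cover.

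Granting $c := c(\rho\backslash\lambda) = c(\mu\backslash\nu)$, the ratio of the two products is
\[
\frac{(\upKernel_n\downKernel_{n+1})(\lambda,\mu)}{(\downKernel_n\upKernel_{n-1})(\lambda,\mu)}
= \frac{(z+c)(z'+c)}{zz'+n}\cdot\frac{1}{n+1}\cdot\frac{zz'+n-1}{(z+c)(z'+c)}\cdot n
= \frac{zz'+n-1}{zz'+n}\cdot\frac{n}{n+1} = \beta_n,
\]
so \eqref{eq:comm_off_diag} holds with exactly the claimed $\beta_n$. Finally I would note that $\beta_n \in (0,1)$: positivity follows from $zz' > 0$, and $\beta_n < 1$ is clear since both factors $\frac{zz'+n-1}{zz'+n}$ and $\frac{n}{n+1}$ lie in $(0,1)$. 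Then \cref{lem:avoid_diag} yields Assumption~\ref{assumption:commutation}. The main obstacle is the bookkeeping in the geometric step — carefully arguing the $2\times 2$ corner picture, the equality of contents, and the equivalence "common upper cover $\Leftrightarrow$ common lower cover" for distinct partitions of the same size — but this is routine for anyone used to the Young lattice, and everything else is a short algebraic simplification.
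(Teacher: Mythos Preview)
Your argument is correct and follows essentially the same route as the paper's own proof: both reduce via \cref{lem:avoid_diag} to the off-diagonal case, identify the unique intermediate partitions $\rho=\lambda\cup\mu$ and $\nu=\lambda\cap\mu$, compute each product explicitly, and conclude using $c(\rho\backslash\lambda)=c(\mu\backslash\nu)$. Your additional check that $\beta_n\in(0,1)$ is a small bonus the paper leaves implicit.
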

\begin{proof}
  From \cref{lem:avoid_diag}, it suffices to check that for $\la,\mu \in \stateSpace_n$, $\la \ne \mu$, we have
  \begin{equation}\label{eq:comm_off_diag_BO}
    ( \upKernel_n
	\downKernel_{ n + 1 })(\la,\mu)
		= \frac{zz'+n-1}{zz'+n} \frac{n}{n+1}
    				(\downKernel_n
    				\upKernel_{ n - 1 }) (\la,\mu).
                  \end{equation}
    If there are two boxes or more that are in $\la$ but not in $\mu$, then both sides are equal to $0$ and the equality holds trivially.
    We can therefore assume that there is exactly one box that is in $\la$ but not in $\mu$.    
    Let $\rho=\la \cup \mu$ and $\tau=\la \cap \mu$, where $\la$ and $\mu$ should be viewed as sets of boxes.
    Then $\rho$ and $\tau$ are Young diagrams of size $n+1$ and $n-1$, respectively, \orange{and}
\[      ( \upKernel_n                                                                  
    \downKernel_{ n + 1 })(\la,\mu)= \upKernel_n(\la,\rho) \downKernel_{ n + 1 }(\rho,\mu) 
    = \frac{(z+c(\rho \backslash \la))(z'+c(\rho \backslash \la))}{zz'+n} \frac{\dim(\rho)}{(n+1)\, \dim(\la)} \frac{\dim(\mu)}{\dim(\rho)}.
    \]
    On the other hand,
    \[
             (\downKernel_n                                                                                              
                    \upKernel_{ n - 1 }) (\la,\mu)= \downKernel_n(\la,\tau)  \upKernel_{ n - 1 }(\tau,\mu)
                    =\frac{\dim(\tau)}{\dim(\la)} \frac{(z+c(\mu \backslash \tau))(z'+c(\mu \backslash \tau))}{zz'+n-1} \frac{\dim(\mu)}{n \, \dim(\tau)}.
      \]
     Observing that $\rho \backslash \la=\mu \backslash \tau$ establishes \eqref{eq:comm_off_diag_BO} and concludes the proof.
\end{proof}

To address Assumption~\ref{assumption finite state spaces}, we simply observe that $\stateSpace_1$ is a singleton.
Notice that this involves a shift of index since the state spaces in this example are indexed by $n \ge 1$, not $ n \ge 0 $.
It then follows from \cref{prop up-down chains give intertwining} that our chains satisfy the hypotheses \allAssumptions with $c_n=(n+1)(zz'+n)$.

We now discuss the hypotheses \ref{assumption distant elements} and \ref{assumption consistency rn}, which are needed for the results on separation distance.
Under the shift of index, \ref{assumption distant elements} requires two elements $r_n$ and $s_n$ in $\stateSpace_n$ that are at distance $n-1$ from each other. 
This condition is satisfied by taking $r_n=(n)$, a one-row diagram, and $s_n=(1^n)$, a one-column diagram: 
{$ s_n $ can be obtained from $ r_n $ by $ n - 1 $ up-down steps that create a new part and remove from the largest part each time}, and this is the fastest way to obtain $ s_n $ from $ r_n $ since each up-down step can increase the number of parts by at most $ 1 $.
The condition $M_n(s_n)>0$ is vacuous here because $M_n$ has full support (this is a consequence of \eqref{defn stationary measures} or of its explicit expression given in~\cite{BOpartitions}).
The assumption \ref{assumption consistency rn} is clearly satisfied.
The additional hypotheses of \cref{prop convergence of sep dist} are also satisfied -- namely, the rates satisfy $ \sum_{ n \ge 1 } \tfrac{1}{\generatorRates_n} < \infty $ and $ \{ \generatorRates_{n+1} - \generatorRates_{n} \}_{ n \ge 1 } $ is an unbounded, nondecreasing sequence.
\bigskip

Finally, we discuss \orange{our analytic hypotheses}. 
\orange{Again, we follow the work in \cite{BOpartitions}.}
For the limit space, we take the so-called Thoma simplex, the subspace $\Omega \subset [0, 1]^\infty \times [0, 1]^\infty $ consisting of pairs $(\bm a, \bm b)$, where $\bm a$ and $\bm b$ are infinite nonincreasing sequences of 
real numbers
such that $\sum ( a_i + b_i ) \le 1$.
The topology on $\Omega$ is that of pointwise convergence, under which it is a compact metric space.
A partition $\la \in \stateSpace_n$ can be seen as an element of $\Omega$ via its {\em modified Frobenius coordinates} -- namely $\iota(\la)=\frac1n (\bm a,\bm b)$, where
\begin{equation}\label{eq:Frobenius_coord}
 a_i= \max\big(\la_i -i +\tfrac12 ,0\big),
 \qquad b_i= \max\big(\la'_i -i +\tfrac12,0 \big) .
 \end{equation}

\noindent
We can check that this \orange{injection} satisfies \ref{assumption state space approximation}, i.e.~any point  $(\bm a, \bm b)$ in $\Omega$ can be approximate by a sequence
 of the form $\iota(\la_n)$: indeed, \orange{one can construct} $\la_n$ so that, for each fixed $i$ and $n$ sufficiently large,
 its $i$-th row (resp.~column) has length $\lfloor a_i n \rfloor$ (resp.~$\lfloor b_i n \rfloor$) if $a_i \ne 0$ (resp. $b_i \ne 0$), \orange{and length $o(n)$ if $a_i = 0$ (resp. $b_i = 0$)}.

We proceed by considering the density functions. 
It can be verified that they are given by
\begin{equation}\label{eq:dmu}
d_\mu(\la) = 
	\begin{cases}
		\dim(\la /\mu) \tfrac{\dim(\mu)}{\dim(\la)},
			& 	\mu \subseteq \la,
			\\
		0,
			& 	\text{else},
	\end{cases}
\end{equation}
where $\dim(\la / \mu)$ is the number of standard Young tableaux of skew shape $\la / \mu$.
We will also need to consider the action of a power sum symmetric function on a partition -- this is given by
\begin{equation}\label{eq:pk_la}
p_k(\la)= \sum_{i \ge 1} a_i^k +(-1)^{k-1} \sum_{i \ge 1} b_i^k,
	\qquad
		k \ge 1,
		\,
		\la \in \stateSpace,
\end{equation}
where $a_i$ and $b_i$ are defined by \eqref{eq:Frobenius_coord}.
This definition can be extended to $F(\la)$ for any symmetric function $F$, by requiring
that $F \mapsto F(\la)$ is an algebra morphism.
With this definition, one can prove (see \cite[eq. (4.1)]{BOpartitions}) that, for any $\mu \vdash k$, there exists
a symmetric function $FS_\mu$, called Frobenius--Schur function, such that
\begin{equation}\label{eq:FSmu}
FS_\mu(\la)= 
|\la| (|\la|-1) \cdots (|\la|-k+1) \frac{\dim(\la /  \mu)}{\dim(\la)} 
\end{equation}
Moreover $FS_\mu$ is an inhomogeneous function of degree $\mu$,
and its top degree component is the standard Schur function $s_\mu$.

We note that formula \eqref{eq:pk_la} can be used to define $p_k(\omega)$ 
for any $k \ge 2$ and any point $\omega$ of the Thoma simplex. By convention $p_1(\omega)=1$ for all $\omega$. Once again, we extend this definition to any $F(\omega)$, where $F$ is a symmetric function, by requiring that $F \mapsto F(\omega)$ is an algebra morphism.
Then we set $d_\mu^o(\omega) \ldef \dim(\mu)\, s_\mu(\omega)$.
We claim that this family of functions satisfies
\ref{assumption continuous density functions are dense}
and \ref{assumption continuous density functions are limits}.

The assumption \ref{assumption continuous density functions are dense} follows from Stone-Weierstrass theorem (recall that $\Omega$ is compact).
 The span of $\{d_\mu^o, \mu \in \stateSpace\}$
is the set of functions $\{\omega \mapsto F(\omega)\}$, where $F$ runs over the algebra of symmetric functions. It is thus a subalgebra of $C(\Omega)$.
Moreover, it contains the constant function $\omega \mapsto p_1(\omega)=1$
and separates point.
Indeed, $p_k(\omega)$ can be interpreted as the $k-1$-st moment of the discrete probability measure on $[0,1]$, giving weight $a_i$ to $a_i$, weight $b_i$ to $-b_i$, and the remaining weight $1-\sum_i a_i - \sum_i b_i$ to $0$.
Since bounded probability measures are determined by their moments,
$\omega$ is determined by the values of $(p_k(\omega))_{k \ge 1}$, as claimed.
We conclude that the span of $\{d_\mu^o, \mu \in \stateSpace\}$ is dense in $C(\Omega)$,
showing \ref{assumption continuous density functions are dense}.

For the remaining assumption \ref{assumption continuous density functions are limits},
we observe that 
\[ |p_k(\la)| \le \Big(\sum\nolimits_i a_i + \sum\nolimits_i b_i\Big)^k  \le n^k\]
 for any diagram $\la$ of size $n$.
This implies that, for any symmetric function $F$,
one has the bound $F(\la)=O(|\la|^{\deg(F)})$ uniformly in $\la$ (the constant in the $O$ symbol however depends on $F$). Since the top-homogeneous part of $FS_\mu$ is $s_\mu$, we get
\[ \big| FS_\mu(\la)-s_\mu(\la) \big| = O(|\la|^{|\mu|-1}) \]
Dividing by $|\la|(|\la|-1)\cdots (|\la|-|\mu|+1)=|\la|^{|\mu|}(1+O(|\la|^{-1}))$, and using \eqref{eq:FSmu}, we have
\[ \left| \frac{\dim(\la /  \mu)}{\dim(\la)} - \frac{s_\mu(\la)}{|\la|^{|\mu|}} \right| =O(|\la|^{-1}).\]
But 
\[\frac{s_\mu(\la)}{|\la|^{|\mu|}}=s_\mu(\iota(\la))=\frac1{\dim(\mu)} d_\mu^o(\iota(\la)),\]
while  $\frac{\dim(\la /  \mu)}{\dim(\la)}=\frac1{\dim(\mu)} d_\mu(\la)$ (see \eqref{eq:dmu}).
Since all error terms are uniform in $\la$, this proves \ref{assumption continuous density functions are limits}.

Therefore, all results of \cref{section discrete framework} apply and we recover the results from \cite{fulmanCommutation} on the separation distance of this up-down chain.

To sum up, all hypotheses \ref{assumption finite state spaces}, \ref{assumption:commutation},  \ref{assumption distant elements}-\ref{assumption consistency rn} and \ref{assumption state space approximation}-\ref{assumption continuous density functions are limits} are satisfied, hence all our results apply.
We recover the scaling limit results from \cite{BOpartitions} (see Theorem 5.1 there) and the triangular form of the limiting generator \cite[Lemma 5.4]{BOpartitions}.
Other results, like the large time asymptotics of the limiting process (\cref{prop densities of limit process}), the intertwining between the discrete and the continuous processes
(Proposition~\ref{prop algebraic identities in limit})
or the computation of its separation distance to equilibrium (\cref{theorem discrete and continuous sep dist}) seem to be new.

\begin{remark}
A similar chain on strict partitions has been defined and analyzed by Petrov in \cite{petrov2010strict}.
Our general theory also applies to this example, but since most of the discussion
would be similar
to that of Borodin--Olshanski's chain, we do not give any detail here.
\end{remark}

\subsection{Olshanski's generalization through Jack polynomials}
The chain in the previous section can be generalized, introducing a parameter $\theta$
linked to Jack polynomials. We follow here the paper \cite{OlshAddJackParameter}
and use its notation.

In particular, if $\la$ is a partition and $\theta$ a positive real number,
 $P^{\theta}_\la$ denotes the Jack symmetric function indexed by $\la$ of parameter $\theta$. It is a homogeneous symmetric function of degree $|\la|$ and they form a basis of the symmetric function algebra. 
 The Pieri rule states that, for any partition $\mu$,
 \[P^{\theta}_\mu \, p_1=\sum_\la \kappa^\theta(\mu,\la) P^{\theta}_\la,\]
 where the sum runs over partitions $\la$ obtained by adding one box to $\mu$ (i.e.~$\mu \nearrow \la$) and $\kappa^\theta(\mu,\la)$ are some combinatorial quantities whose precise definition is irrelevant here.
Using the coefficients  $\kappa^\theta(\mu,\la)$, we can introduce a $\theta$-deformation of dimensions as follows: fixing $\mu$, we define $\dim^\theta(\la / \mu)$ recursively on $\la$ by
\[\dim^\theta(\la / \mu) = \begin{cases} 0 & \text{ if } \la\not\supseteq \mu;\\
1 & \text{ if } \la=\mu;\\
 \sum_{\tau \nearrow \la} \dim^\theta(\tau / \mu)  \kappa^\theta(\tau,\la) & \text{ if } \la \supsetneq \mu. \end{cases}\]
 We simply write $\dim^\theta(\la)$ for $\dim^\theta(\la / \varnothing)$, 
 where $\varnothing$ is the empty partition. For $\theta=1$, this coincides with the usual
 dimension of (skew) Young diagrams.
 Still following Olshanski~\cite{OlshAddJackParameter}, we can now define a down transition matrix:
 for $\rho \vdash n+1$ and $\la \vdash n$,
\begin{equation}\label{eq:pdowntheta}
p^{\downarrow,\theta}_{n+1}(\rho,\la)=\begin{cases}
  \frac{\dim^\theta(\la) \kappa^\theta(\tau,\la) }{\dim^\theta(\rho)} 
  & \text{ if }\lambda \nearrow \rho,\\
  0 & \text{ otherwise.}\end{cases}
\end{equation}
It turns out that there is an equivalent description of $p^{\downarrow,\theta}_{n+1}$ which will
be useful for establishing the commutation relation.
Given a Young diagram $\rho$, we consider its Russian representation with boxes of length $1$, but height $\theta$ (this is sometimes called anisotropic Young diagrams and is standard in the context of Jack polynomials); see \cref{fig:anisotropic}, left.
We then call $x^\rho_0 <y^\rho_1 < \dots < y^\rho_m < x^\rho_m$ the local extrema of the outer border of the Young diagram (the $x^\rho_i$'s are local minima, while the $y^\rho_i$'s are local maxima).
These are called \orange{the} ($\theta$-dependent) interlacing coordinates of the diagram.
 
\begin{figure}
\[\includegraphics[scale=.6]{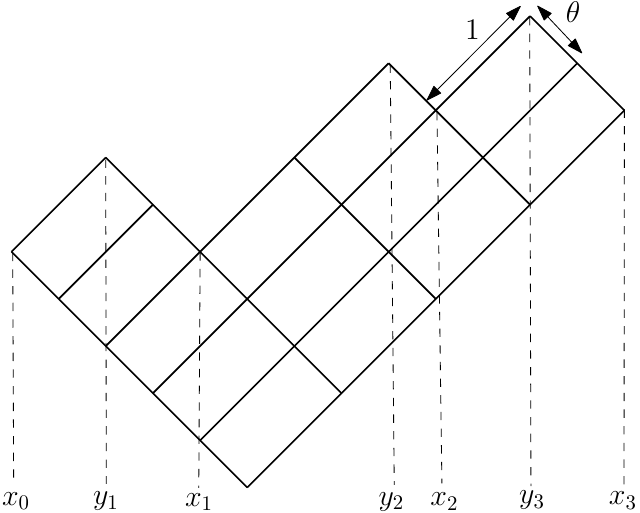} \qquad
\includegraphics[scale=.6]{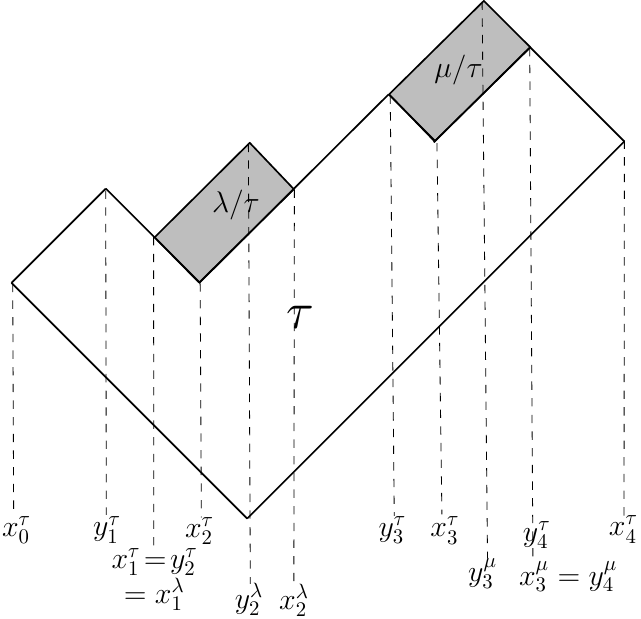}\]
\caption{Left: the anisotropic Young diagram $\tau=(4,4,3,1,1)$ (for $\theta=.5$)
and its interlacing coordinates. Right: the same Young diagram $\tau$, together with two extra boxes, defining diagrams $\la$ and $\mu$, each with one more box than $\tau$.
We indicated the interlacing coordinates of $\tau$ in which we have inserted an artificial pair $x_1^\tau=y_2^\tau$ for convenience. We also indicated the interlacing coordinates of $\la$ and $\mu$ which are different from that of $\tau$. Note that the interlacing coordinates
of $\mu$ also contain an artificial pair $x_3^\mu=y_4^\mu$.
These coordinates satisfy  \eqref{eq:xj1} and \eqref{eq:xj2}.  }
\label{fig:anisotropic}
\end{figure}

 Now if $\lambda \nearrow \rho$, the unique box of $\rho / \la$ corresponds
 to $y^\rho_j$ for some $j \le m$. Then it can be proven \cite[Proposition 4.2]{OlshAddJackParameter} that
 \begin{equation}\label{eq:pdowntheta_interlacing}
 p^{\downarrow,\theta}_{n+1}(\rho,\la)=\frac{-1}{\theta (n+1)} \, \frac{\prod_{i=0}^m (y^\rho_j-x^\rho_i)}{\prod_{ 1 \le i \le m, \, i \ne j} (y^\rho_j-y^\rho_i)}.
 \end{equation}
 
 To define the up transition matrix, we fix parameters $z,z'$ as before
 and use the interlacing coordinates $x^\la_0 <y^\la_1 < \dots < y^\la_m < x^\la_m$ of $\la$.
 The box $\rho / \la$ corresponds to $x^\rho_j$ for some $j \in  \{0,\dots,m\}$.
 Then we set
  \begin{equation}\label{eq:puptheta}
p^{\uparrow,\theta}_{n}(\la,\rho) = \frac{(z+x^\la_j)(z'+x^\la_j)}{zz'+\theta\, n} \, 
 \frac{\prod_{i=1}^m (x^\la_j-y^\la_i)}{\prod_{ 0 \le i \le m, \, i \ne j} (x^\la_j-x^\la_i)},
  \end{equation}
  where, as before, $z$ and $z'$ are complex parameters, either conjugate or in the same interval $(N,N+1)$ for some integer $N$.

We now establish the following commutation relation, 
which does not appear in the paper
 \cite{OlshAddJackParameter}.
 This is the only example where proving Assumption~\ref{assumption:commutation}
 requires a delicate computation.
\begin{prop}
  The above matrices $(p^{\uparrow,\theta}_n)_{n \ge 1}$ and $(p^{\downarrow,\theta}_{n})_{n \ge 2}$ satisfy Assumption \ref{assumption:commutation}
  with parameter $\beta_n=\frac{n\, (zz'+\theta\, (n-1))}{(n+1)\, (zz'+\theta\, n)}$ (for $n \ge 1$).
\end{prop}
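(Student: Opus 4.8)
The plan is to follow the pattern of the proof of \cref{prop:BO_Commutation}, replacing the elementary product formulas available there with the interlacing-coordinate descriptions \eqref{eq:pdowntheta_interlacing}--\eqref{eq:puptheta}. By \cref{lem:avoid_diag}, it suffices to verify \eqref{eq:comm_off_diag}, i.e.\ that for $\lambda\neq\mu$ in $\stateSpace_n$,
$$(p^{\uparrow,\theta}_n p^{\downarrow,\theta}_{n+1})(\lambda,\mu)=\frac{n\,(zz'+\theta(n-1))}{(n+1)(zz'+\theta n)}\,(p^{\downarrow,\theta}_n p^{\uparrow,\theta}_{n-1})(\lambda,\mu).$$
Exactly as in \cref{prop:BO_Commutation}, both sides vanish unless $\rho:=\lambda\cup\mu$ and $\tau:=\lambda\cap\mu$ (diagrams viewed as sets of boxes) are Young diagrams, necessarily of sizes $n+1$ and $n-1$; since the up- and down-matrices are supported on single-box moves, in that case each side collapses to a single product,
$$(p^{\uparrow,\theta}_n p^{\downarrow,\theta}_{n+1})(\lambda,\mu)=p^{\uparrow,\theta}_n(\lambda,\rho)\,p^{\downarrow,\theta}_{n+1}(\rho,\mu),\qquad (p^{\downarrow,\theta}_n p^{\uparrow,\theta}_{n-1})(\lambda,\mu)=p^{\downarrow,\theta}_n(\lambda,\tau)\,p^{\uparrow,\theta}_{n-1}(\tau,\mu).$$

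Next I would expand the four factors via \eqref{eq:pdowntheta_interlacing}--\eqref{eq:puptheta}. Writing $x_a$ and $x_b$ for the interlacing coordinates of $\tau$ carrying the boxes $\lambda\setminus\tau$ and $\mu\setminus\tau$ (two distinct inner corners of $\tau$), one checks that $p^{\uparrow,\theta}_n(\lambda,\rho)$ and $p^{\uparrow,\theta}_{n-1}(\tau,\mu)$ both carry the prefactor $(z+x_b)(z'+x_b)$, over $zz'+\theta n$ and $zz'+\theta(n-1)$ respectively, while $p^{\downarrow,\theta}_{n+1}(\rho,\mu)$ and $p^{\downarrow,\theta}_n(\lambda,\tau)$ carry $-1/(\theta(n+1))$ and $-1/(\theta n)$; the remaining factors are the ``residue-type'' products $K_\nu(x)=\prod_i(x-y_i^\nu)/\prod_{i:\,x_i^\nu\neq x}(x-x_i^\nu)$ and $K'_\nu(y)=\prod_i(y-x_i^\nu)/\prod_{i:\,y_i^\nu\neq y}(y-y_i^\nu)$ appearing in \eqref{eq:puptheta} and \eqref{eq:pdowntheta_interlacing}. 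Cancelling the common factor $(z+x_b)(z'+x_b)/\theta$ from the identity to be proved, one sees that \emph{the stated value of $\beta_n$ is precisely the one for which the $z$, $z'$, $n$, $\theta$-dependence telescopes}: the identity becomes the purely combinatorial statement
$$K_\lambda(x_b)\,K'_\rho(x_a)=K'_\lambda(x_a)\,K_\tau(x_b).$$

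Finally, I would prove this combinatorial identity by encoding each of $\tau,\lambda,\rho$ by the rational function $\Phi_\nu(u)=\prod_i(u-y_i^\nu)/\prod_i(u-x_i^\nu)$, for which $K_\nu(x)=\mathrm{Res}_{u=x}\Phi_\nu$ at an inner corner $x$ and $K'_\nu(y)=\mathrm{Res}_{u=y}\Phi_\nu^{-1}$ at an outer corner $y$. Since $\lambda$ is $\tau$ with the box at $x_a$ added and $\rho$ is $\lambda$ with the box at $x_b$ added, the identity rewrites, in the generic case where $x_a$ and $x_b$ are non-adjacent inner corners of $\tau$, as $M_{x_a}(x_b)=M_{x_b}(x_a)$, where $M_c$ denotes the local multiplicative change of $\Phi$ under adding a box at inner corner $c$; one then checks that $M_c$ has the required symmetry about $c$ — and it is exactly here that the precise ($\theta$-dependent) form of $M_c$, read off from the interlacing coordinates or from the product formula for the Pieri coefficients $\kappa^\theta$, genuinely enters. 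The degenerate case, when $\lambda\setminus\tau$ and $\mu\setminus\tau$ sit at adjacent inner corners, is handled uniformly by the artificial-pair convention of \cref{fig:anisotropic}, which keeps all the interlacing-coordinate products above well defined without case distinctions. The main obstacle is precisely this last step: tracking how the anisotropic interlacing coordinates transform under a single box addition and verifying the resulting residue identity in every configuration; this is the delicate computation alluded to before the statement, and it is of the same flavour — comparison of residues, or Lagrange interpolation in the coordinates — as \cref{lem identity for coefficient of density in eigenbasis}.
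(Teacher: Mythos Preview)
Your overall plan matches the paper's: reduce via \cref{lem:avoid_diag}, set $\rho=\lambda\cup\mu$ and $\tau=\lambda\cap\mu$, expand the four factors via \eqref{eq:pdowntheta_interlacing}--\eqref{eq:puptheta}, strip off the $(z,z',n,\theta)$-dependent scalars (which is exactly where the stated $\beta_n$ appears), and reduce to a combinatorial identity among the residue-type products. The paper then verifies that identity by a direct computation---writing two ratios in terms of $y^\tau_{j_1}-y^\tau_{j_2}$ and $\theta$ and checking that their product equals~$1$---using the artificial-pair convention just as you suggest.

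There is, however, a genuine slip in your reformulation of the combinatorial part. The down-factor $K'_\nu$ from \eqref{eq:pdowntheta_interlacing} is evaluated at an \emph{outer} corner, not an inner corner: in $p^{\downarrow,\theta}_{n+1}(\rho,\mu)$ the argument is $y^\rho_{j_1}=y^\lambda_{j_1}$, which differs from your $x_a=x^\tau_{j_1}$ by $1-\theta$. Once this is corrected, the identity you need becomes $M_{j_1}(x_b)=M_{j_2}(y_a)$ with $x_b=x^\tau_{j_2}$ and $y_a=y^\lambda_{j_1}$; a short calculation shows it holds. But it is \emph{not} the swap symmetry $M_{x_a}(x_b)=M_{x_b}(x_a)$ you wrote, which actually fails for $\theta\neq 1$ (try $\theta=2$). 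So the conceptual ``symmetry about $c$'' heuristic does not survive the Jack deformation; one really needs the explicit cancellation the paper carries out, which is precisely what you flagged as the main obstacle.
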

\begin{proof}
As for \cref{prop:BO_Commutation}, it suffices to consider
the case where $\la \ne \mu$, and they differ from exactly one box,
so that $\tau\ldef \la \cap \mu$ and $\rho\ldef \la \cup \mu$ have respectively $n-1$
and $n+1$ boxes.
We note that the formulae \eqref{eq:pdowntheta_interlacing} and \eqref{eq:puptheta}
are not changed if we insert a pair of artificial equal coordinates $x_i=y_i$ or $y_i=x_{i+1}$ .
Using this, we can assume that the numbers $2m+1$ of interlacing coordinates
for $\la$, $\mu$, $\tau$ and $\rho$ are all the same, and that they have the same
interlacing coordinates, except that there exists $j_1 \ne j_2$ in $\{1,\dots,m\}$ such that
$x_{j_1}^\tau =y_{j_1}^\tau +\theta$,  $x_{j_2}^\tau =y_{j_2}^\tau +\theta$,
\begin{align}
  x_{j_1}^\rho&=x_{j_1}^\la= x_{j_1}^\tau +1=x_{j_1}^\mu +1,\  y_{j_1}^\rho=y_{j_1}^\la= y_{j_1}^\tau +1=y_{j_1}^\mu +1  \label{eq:xj1}\\
  \text{and }
x_{j_2}^\rho&=x_{j_2}^\mu= x_{j_2}^\tau +1=x_{j_2}^\la +1,\  y_{j_2}^\rho=y_{j_2}^\mu= y_{j_2}^\tau +1=y_{j_2}^\la +1.\label{eq:xj2}
\end{align}
See \cref{fig:anisotropic}, right. Then
\begin{multline}
 ( p^{\uparrow,\theta}_n                                                                  
    p^{\downarrow,\theta}_{ n + 1 })(\la,\mu)= p^{\uparrow,\theta}_n(\la,\rho) p^{\downarrow,\theta}_{ n + 1 }(\rho,\mu) \\
    =\frac{-(z+x^\la_{j_2})(z'+x^\la_{j_2})}{\theta (n+1)\, (zz'+\theta\, n)} \, 
 \frac{\prod_{i=1}^m (x^\la_{j_2}-y^\la_i)}{\prod_{ 0 \le i \le n, \, i \ne {j_2}} (x^\la_{j_2}-x^\la_i)} 
 \, \frac{\prod_{i=0}^m (y^\rho_{j_1}-x^\rho_i)}{\prod_{ 1 \le i \le n, \, i \ne {j_1}} (y^\rho_{j_1}-y^\rho_i)}.
 \label{eq:UDtheta}
\end{multline}
On the other hand,
\begin{multline}
(p^{\downarrow,\theta}_n                                                                                              
                    p^{\uparrow,\theta}_{ n - 1 }) (\la,\mu)= p^{\downarrow,\theta}_n(\la,\tau)  p^{\uparrow,\theta}_{ n - 1 }(\tau,\mu) \\
                    =\frac{-(z+x^\tau_{j_2})(z'+x^\tau_{j_2})}{\theta \, n\, (zz'+\theta\, (n-1))}
                      \frac{\prod_{i=0}^m (y^\la_{j_1}-x^\la_i)}{\prod_{ 1 \le i \le m, \, i \ne {j_1}} (y^\la_{j_1}-y^\la_i)}
                     \frac{\prod_{i=1}^m (x^\tau_{j_2}-y^\tau_i)}{\prod_{ 0 \le i \le m, \, i \ne {j_2}} (x^\tau_{j_2}-x^\tau_i)} 
 \,
  \label{eq:DUtheta}
\end{multline}
Using \cref{eq:xj1,eq:xj2}, we have
\[\frac{\prod_{i=0}^m (y^\rho_{j_1}-x^\rho_i) \Big/ \prod_{ 1 \le i \le n, \, i \ne {j_1}} (y^\rho_{j_1}-y^\rho_i)}
{\prod_{i=0}^m (y^\la_{j_1}-x^\la_i) \Big/ \prod_{ 1 \le i \le n, \, i \ne {j_1}} (y^\la_{j_1}-y^\la_i)} = \frac{ (y^\rho_{j_1}-x^\rho_{j_2})(y^\la_{j_1}-y^\la_{j_2})}{ (y^\rho_{j_1}-y^\rho_{j_2})(y^\la_{j_1}-x^\la_{j_2})} 
= \frac{ (y^\tau_{j_1}-y^\tau_{j_2}-\theta)(y^\tau_{j_1}-y^\tau_{j_2}+1)}{ (y^\tau_{j_1}-y^\tau_{j_2})(y^\tau_{j_1}-y^\tau_{j_2}+1-\theta)}\]
and
\[ \!\! \frac{\prod_{i=1}^m (x^\la_{j_2}-y^\la_i) \Big/ \prod_{ 0 \le i \le m, \, i \ne {j_2}} (x^\la_{j_2}-x^\la_i)} {\prod_{i=1}^m (x^\tau_{j_2}-y^\tau_i) \Big/ \prod_{ 0 \le i \le m, \, i \ne {j_2}} (x^\tau_{j_2}-x^\tau_i)} = \frac{ (x^\la_{j_2}-y^\la_{j_1})(x^\tau_{j_2}-x^\tau_{j_1})}{(x^\la_{j_2}-x^\la_{j_1})(x^\tau_{j_2}-y^\tau_{j_1})}=\frac{ (y^\tau_{j_2}-y^\tau_{j_1}-1+\theta)(y^\tau_{j_2}-y^\tau_{j_1})}{(y^\tau_{j_2}-y^\tau_{j_1}-1)(y^\tau_{j_2}-y^\tau_{j_1}+\theta)}. \]
We note that the product of the expressions in the last two displays is equal to $1$
so that the product of the last two fractions in \eqref{eq:UDtheta} and \eqref{eq:DUtheta} are the same. Since $x^\tau_{j_2}=x^\la_{j_2}$, we conclude that
\[ ( p^{\uparrow,\theta}_n                                                                  
    p^{\downarrow,\theta}_{ n + 1 })(\la,\mu)= \frac{n\, (zz'+\theta\, (n-1))}{(n+1)\, (zz'+\theta\, n)} (p^{\downarrow,\theta}_n                                                                                              
 p^{\uparrow,\theta}_{ n - 1 }) (\la,\mu),\]
 as wanted.
  \end{proof}
The hypotheses  \ref{assumption distant elements} and  \ref{assumption consistency rn}
are satisfied (with a shift of index) for the exact same reason than in the case $\theta=1$.
The additional assumptions of \cref{prop convergence of sep dist} are also satisfied.
Our results on separation distance of the discrete chain therefore apply:
in particular, \cref{thm:separation_distance} gives an exact expression for the separation
distance that generalizes some results of \cite[Section 5]{fulmanCommutation} (\orange{the latter corresponding
to the case $\theta=1$}).

Regarding the scaling limit, we still use the Thoma simplex $\Omega$ as limiting space
but the embedding $\iota$ needs to be twisted. Namely for a Young diagram $\la$,
we set $\iota_\theta(\la)=\frac1n (\bm a,\bm b)$, where $a_i$ (resp. $b_i$) is the area of the intersection of the $i$-th row (resp.~$i$-th column) of the diagram in French representation and the half-plane $\{(r,s): \theta r \ge s\}$ (resp. $\{(r,s): \theta r \le s\}$):
see \cref{fig:theta_embedding}. It is clear that $\Omega$ with this embedding 
still satisfies \ref{assumption state space approximation}.
\begin{figure}
\[\includegraphics[scale=.7]{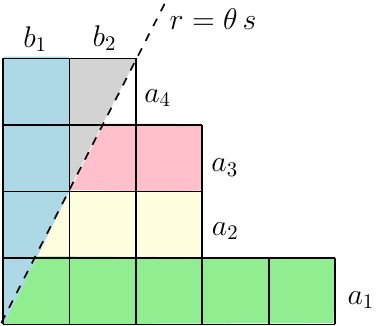} \]
\caption{The $\theta$-embedding of a Young diagram $\lambda$ in $\Omega$.
The $a_i$'s and $b_i$'s are the areas of the colored regions of the diagram.}
\label{fig:theta_embedding}
\end{figure}

Let us now look at the density functions.
Iterating \cref{eq:pdowntheta}, we get
\[d_\mu(\la)=p^{\downarrow}_{|\la|,|\mu|} (\la,\mu)=\dim^\theta(\mu) \frac{\dim^\theta(\la / \mu)}{\dim^\theta(\la)}.\]
It turns out \cite[Eq. (5.2)]{okounkov1997shifted_Jack} that there exists some functions, analogue to the Frobenius--Schur functions above, called shifted Jack polynomials, and denoted $P^{\#,\theta}_\mu$ such that for any partition $\la$,
\[P^{\#,\theta}_\mu(\la)=|\la| (|\la|-1) \dots  (|\la|-|\mu|+1)  \frac{\dim^\theta(\la / \mu)}{\dim^\theta(\la)}.\]
The top homogeneous component of shifted Jack polynomials
are the Jack symmetric functions $P^{\theta}_\mu$, which invites us to set 
$d^o_\mu(\omega)=\dim^\theta(\mu)\, P^{\theta}_\mu( \omega)$,
where we define $F(\omega)$ as before (for $F$ a symmetric function and $\omega$ a point in $\Omega$). 
Then Assumption~\ref{assumption continuous density functions are dense} is proved in the same way as before, and Assumption~\ref{assumption continuous density functions are limits} is a consequence of \cite[Theorem 9.5]{OlshAddJackParameter}.

The scaling limit result of \cite{OlshAddJackParameter} follows from our general theory,
together with some additional results: the triangular description of the generator via density functions seem to be new in this case (\cref{prop algebraic identities in limit} item \ref{claim generator on density functions}), as well as the intertwining property between the discrete and continuous processes (\cref{prop algebraic identities in limit}), and the result on separation distances
(\cref{prop convergence of sep dist,theorem discrete and continuous sep dist}).
However, as previously, we cannot recover the differential expression of the generator.

\subsection{Partition chains arising from the Chinese restaurant process}
\label{section crp partitions}

Moving on to the chains in \cite{petrovTwoParameter}, we will once again deal with partitions, but with different dynamics.
Interestingly, we will need to use a different limiting space
(since the down-operator is different, the density functions are different, and
we need to consider a different space for \ref{assumption continuous density functions are dense}--\ref{assumption continuous density functions are limits} to be satisfied).
We use the French convention to draw Young diagrams,
and let $ \ell( \lambda ) $ denote the number of rows in a partition $ \lambda $.

In Petrov's chains, a down-step from $ \lambda \in \stateSpace_n $ deletes \orange{the last box} from row $ j $ with probability $ \frac{\lambda_j}{n} $.
\orange{Boxes above the deleted box then need to be shifted down} so that the resulting diagram is once again a partition.
The up-steps depend on two parameters $(\alpha,\theta)$ satisfying $0\leq \alpha<1$ and $\alpha+\theta>0$, and will add a box to a partition.
In particular, an up-step from $ \lambda \in \stateSpace_n $ will
\begin{itemize}

	\item
	add a box to row $ j $ with probability $ \frac{ \lambda_j - \alpha }{ n + \theta } $,
	and
	
	\item
	create a one-box row with probability $ \frac{ \theta + \alpha \ell( \lambda ) }{ n + \theta } $.
\end{itemize}

\noindent
If necessary, the rows in the resulting diagram should be reordered to obtain a partition.
\orange{Examples of up and down transition probabilities are shown on \cref{fig:transition_CRP}.}
\orange{When $\alpha=0$, these up-steps correspond to the so-called}
{\em Chinese restaurant process} \cite{pitmanBook}. 
\orange{We denote $(\upKernel_n)_{n \ge 1}$ and $(\downKernel_{n})_{n \ge 2}$ the corresponding transition matrices.
We now verify the commutation assumption.}

\begin{figure}
\[\includegraphics[height=5cm]{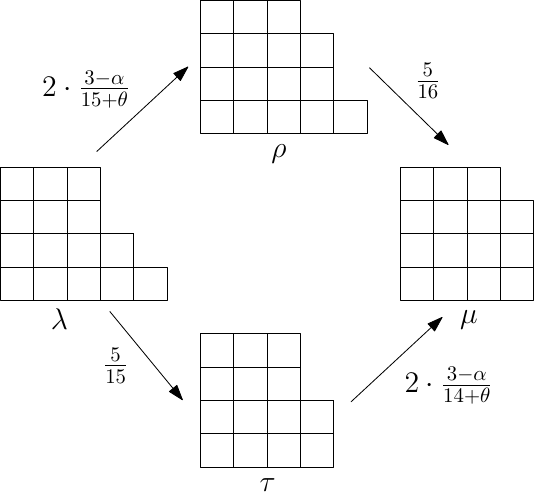}\]
\caption{Here, we show four partitions $\la$, $\mu$, $\rho$ and $\tau$ represented by their Young diagrams in French convention. We also printed the up transition probabilities
$\upKernel_n(\la,\rho)$ and $\upKernel_{n-1}(\tau,\mu)$, as well as
the down transition probabilities $\downKernel_n(\la,\tau)$ and $\downKernel_{n+1}(\rho,\mu)$.
Note the multiplicity factor $2$ in $\upKernel_n(\la,\rho)$ which comes from the fact that $\rho$ can be obtained from $\la$ by adding a box in row 3 or in row 4 (the same factor appears for the same reason in  $\upKernel_{n-1}(\tau,\mu)$.
}
\label{fig:transition_CRP}
\end{figure}

\begin{prop}
\label{prop:CRP_Commutation}
  \orange{The above matrices $(\upKernel_n)_{n \ge 1}$ and $(\downKernel_{n})_{n \ge 2}$ satisfy Assumption \ref{assumption:commutation}
  with parameter $\beta_n=\frac{ n ( n - 1 + \theta )}{ ( n + 1 ) ( n + \theta ) }$ (for $n \ge 2$).}
\end{prop}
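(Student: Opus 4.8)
The plan is to follow the template of \cref{prop:BO_Commutation}: by \cref{lem:avoid_diag} it is enough to verify \ref{assumption:commutation} on the off-diagonal entries. So fix $n \ge 2$ and partitions $\la \ne \mu$ of $n$; the goal is
\[
(\upKernel_n\downKernel_{n+1})(\la,\mu) = \beta_n\,(\downKernel_n\upKernel_{n-1})(\la,\mu), \qquad \beta_n=\frac{n(n-1+\theta)}{(n+1)(n+\theta)}.
\]
Viewing Young diagrams as sets of boxes, I set $\rho=\la\cup\mu$ and $\tau=\la\cap\mu$, so $|\rho|+|\tau|=2n$. A nonzero term $\upKernel_n(\la,\nu)\downKernel_{n+1}(\nu,\mu)$ requires a diagram $\nu$ of size $n+1$ obtained from both $\la$ and $\mu$ by adding one box, hence containing $\rho$; since $\la\ne\mu$ forces $|\rho|\ge n+1$, either $|\rho|\ge n+2$ and both sides vanish, or $|\rho|=n+1$, in which case $\nu=\rho$ is the \emph{unique} possibility. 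The symmetric argument (with $\tau$ and $n-1$) handles the other side. So I may assume $|\tau|=n-1$ and $|\rho|=n+1$, in which case the two products reduce to single terms, $(\upKernel_n\downKernel_{n+1})(\la,\mu)=\upKernel_n(\la,\rho)\downKernel_{n+1}(\rho,\mu)$ and $(\downKernel_n\upKernel_{n-1})(\la,\mu)=\downKernel_n(\la,\tau)\upKernel_{n-1}(\tau,\mu)$; this is exactly the configuration of \cref{fig:transition_CRP}.

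Next I would make the four probabilities explicit. Write $m_\ell(\nu)$ for the number of length-$\ell$ rows of a partition $\nu$. Grouping rows of equal length, the down-step probability of passing from a partition $\nu$ of size $N$ to the partition obtained by shrinking a length-$\ell$ row is $m_\ell(\nu)\,\ell/N$, while the up-step probability of passing from $\nu$ to the partition obtained by growing a length-$(\ell-1)$ row (a length-$0$ row meaning a new row) is $m_{\ell-1}(\nu)(\ell-1-\alpha)/(N+\theta)$ if $\ell\ge2$ and $(\theta+\alpha\,\ell(\nu))/(N+\theta)$ if $\ell=1$. I parametrize by the two values $a,b$ for which $\la$ is obtained from $\tau$ by growing a length-$(a-1)$ row and $\mu$ by growing a length-$(b-1)$ row; then $\rho$ is obtained from $\tau$ by performing both growths, and $a\ne b$, since otherwise $\la=\mu$. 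Substituting, the factor $a$ cancels between $\downKernel_{n+1}(\rho,\mu)$ and $\downKernel_n(\la,\tau)$, the factor $b-1-\alpha$ (or $\theta+\alpha\,\ell(\cdot)$ when $b=1$) cancels between $\upKernel_n(\la,\rho)$ and $\upKernel_{n-1}(\tau,\mu)$, and the surviving normalizations combine into $\frac{n}{n+1}\cdot\frac{n-1+\theta}{n+\theta}=\beta_n$, leaving
\[
\frac{(\upKernel_n\downKernel_{n+1})(\la,\mu)}{(\downKernel_n\upKernel_{n-1})(\la,\mu)}=\beta_n\cdot\frac{m_a(\rho)}{m_a(\la)}\cdot\frac{m_{b-1}(\la)}{m_{b-1}(\tau)}
\]
when $b\ge2$, the last fraction being replaced by $(\theta+\alpha\,\ell(\la))/(\theta+\alpha\,\ell(\tau))$ in the new-row case $b=1$ (which forces $a\ge2$). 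So the proposition reduces to showing that this combinatorial correction factor equals $1$.

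Finally I would verify the correction factor. This is elementary bookkeeping with the identities $m_a(\la)=m_a(\tau)+1$, $m_{a-1}(\la)=m_{a-1}(\tau)-1$ and the analogous equations for $\rho$ (obtained from $\tau$ by applying both growths), together with $\ell(\la)=\ell(\tau)$ whenever a new row occurs — which holds because $a\ne b$ prevents both growths from being new-row growths. Distinguishing the cases $|a-b|\ge2$, $b=a+1$ and $b=a-1$ (the case $a=b$ being impossible), one finds the factor collapses to $1$ in each; for instance, when $b=a+1$ the relations $m_a(\rho)=m_a(\tau)$, $m_a(\la)=m_a(\tau)+1$, $m_{b-1}(\la)=m_a(\la)$, $m_{b-1}(\tau)=m_a(\tau)$ give $\frac{m_a(\rho)}{m_a(\la)}\cdot\frac{m_{b-1}(\la)}{m_{b-1}(\tau)}=1$. \textbf{The one genuinely delicate point} is exactly this cancellation of combinatorial multiplicities: one must track how the multiplicity factors in the up- and down-probabilities (such as the factor $2$ visible in \cref{fig:transition_CRP}) line up between the $\upKernel\downKernel$ side and the $\downKernel\upKernel$ side so that the correction factor is $1$ in every sub-case. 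It is the analogue of the box-counting step in \cref{prop:BO_Commutation}, though considerably lighter than the computation needed in the Jack-polynomial case; everything else is routine.
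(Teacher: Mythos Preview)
Your proposal is correct and follows essentially the same route as the paper: reduce to the off-diagonal via \cref{lem:avoid_diag}, identify the unique intermediates $\rho=\la\cup\mu$ and $\tau=\la\cap\mu$, write out the four transition probabilities, and check that the multiplicity factors cancel. Your parametrization by the row lengths $(a,b)$ is a minor variant of the paper's parametrization by row indices $(j_1,j_2)$, and your special sub-case $b=a+1$ is exactly the paper's case $\la_{j_1}=\la_{j_2}$ (adjacent row lengths), while your $b=1$ case is the paper's case $\tau_{j_2}=0$.
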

\begin{proof}
\orange{Again, it suffices to consider
the case where $\la \ne \mu$, and they differ from exactly one box,
so that $\tau\ldef \la \cap \mu$ and $\rho\ldef \la \cup \mu$ have respectively $n-1$
and $n+1$ boxes. Call $j_1$ (resp.~$j_2$) the row index of the unique box in $\la / \tau$
(resp.~in $\mu / \tau$). Note that $j_1\ne j_2$ since $\la \ne \mu$. 
We first assume that $\tau_{j_2}$ is positive.
Let $m_i(\pi)$ denote the multiplicity of $i$ in a partition $\pi$.
Then we have
 \[( p^{\uparrow,\theta}_n                                                                  
    p^{\downarrow,\theta}_{ n + 1 })(\la,\mu)= p^{\uparrow,\theta}_n(\la,\rho) \, p^{\downarrow,\theta}_{ n + 1 }(\rho,\mu)  =\frac{m_{\la_{j_2}}(\la) (\la_{j_2}-\alpha)}{n+\theta} \frac{m_{\rho_{j_1}}(\rho) \cdot \rho_{j_1} }{n+1}. \]
    On the other hand,
\[
(p^{\downarrow,\theta}_n                                                                                              
                    p^{\uparrow,\theta}_{ n - 1 }) (\la,\mu)= p^{\downarrow,\theta}_n(\la,\tau)  \, p^{\uparrow,\theta}_{ n - 1 }(\tau,\mu)  = \frac{m_{\la_{j_1}}(\la) \cdot \la_{j_1} }{n}\frac{m_{\tau_{j_2}}(\tau) (\tau_{j_2}-\alpha)}{n-1+\theta}.
                    \]
 To explain these formulas, let us note that to go from $\la$ to $\rho$ in an up-step,
 we may choose to add a box in any row which has the same length than $\la_{j_2}$,
 whence the factor $m_{\la_{j_2}}(\la)$.
 By construction, we have $\la_{j_2}=\tau_{j_2}$ and $\rho_{j_1}=\la_{j_1}$.
 Also, except when $\la_{j_1}=\la_{j_2}$ (the added and removed boxes are in neighbouring columns), we have $m_{\la_{j_2}}(\la)=m_{\tau_{j_2}}(\tau)$ and $m_{\rho_{j_1}}(\rho)=m_{\la_{j_1}}(\la)$. In the case $\la_{j_1}=\la_{j_2}$, we have $m_{\la_{j_1}}(\la)=m_{\la_{j_2}}(\la)=m_{\tau_{j_2}}(\tau)+1=m_{\rho_{j_1}}(\rho)+1$.
 In both cases, it holds that
 \[(p^{\uparrow,\theta}_n                                                                  
    p^{\downarrow,\theta}_{ n + 1 })(\la,\mu)=
    \frac{n(n-1+\theta)}{(n+1)(n+\theta)} (p^{\downarrow,\theta}_n                                                                                              
                    p^{\uparrow,\theta}_{ n - 1 }) (\la,\mu).\]
   The case where $\tau_{j_2}=0$ is similar.}
\end{proof}
\orange{Since the underlying graph is the same as in \cref{ssec:BO-partitions} (only the transition
probabilities differ)}, the conditions \ref{assumption distant elements} and \ref{assumption consistency rn} (keeping the change of index in mind) are also satisfied here.
It can be verified that the hypotheses of Proposition \ref{prop convergence of sep dist} are also satisfied.
Therefore, all of the results in \cref{section discrete framework} apply to these chains.
Here, we recover 
	the complete triangular description in \cite[Proposition 3.1]{petrovTwoParameter}, 
	the description of the spectrum in \cite[Corollary 7.2]{fulmanCommutation},
	and \cite[Theorem 7.5]{fulmanCommutation}, which provides an explicit formula for the separation distance and its scaling limit when $ \theta = 1 $.
Our description of the eigenfunctions in Proposition \ref{prop eigenbasis}, 
the asymptotic descriptions in Propositions \ref{prop density estimate} and \ref{prop initial convergence},
and the analysis of the separation distance in \cref{ssec:separation_distance_discrete} (for all values of $ \theta $), 
appear to be new.
\medskip

To discuss the scaling limit of these chains, we follow \cite{petrovTwoParameter}.
The limiting space will be taken to be the Kingman simplex, defined as 
$$ 
	\overline{\nabla}_\infty 
		= 
			\left\{
				\mathbf{x} = (x_1,x_2,\dots) \ 
				: 
				\ x_1\ge x_2\ge \cdots\ge 0, 
				\sum_{i \ge 1} 
					x_i
						\leq 
								1
			\right\}
		.
$$
This space becomes a compact metrizable space when equipped with the topology of coordinatewise convergence.
A partition $ \lambda $ can be identified as the following element of $ \overline{\nabla}_\infty $:
$$
	\iota( \lambda ) 	
		= 	
				\bigg( 
						\frac{ \lambda_1 }{ | \lambda | }, 
						\frac{ \lambda_2 }{ | \lambda | }, 
						\ldots 
						\frac{ \lambda_{ \ell(\lambda) } }{ | \lambda | }, 
						0, 0, \ldots
				\bigg)
	.
$$
It then follows from \cite[Remark 2.1]{petrovTwoParameter} that \ref{assumption state space approximation} holds.

To extend the density functions to $ \overline{\nabla}_\infty $, we first observe that,
using \cite[equation (6)]{petrovTwoParameter}, it can be shown that the density functions in this context are given by
\begin{equation*}
	\density_\mu ( \lambda )
		=
               \frac{1}{ \binom{|\lambda|}{|\mu|}}
    			\sum_{ \substack{1 \le i_1, i_2, \ldots, i_{ \ell( \mu ) } \le \ell( \lambda ) \\ \text{distinct}} }
    				\prod_{ r = 1 }^{ \ell( \mu) }
    					\binom{ \lambda_{ i_r } }{ \mu_r }
		,
			\qquad
				| \lambda | \ge | \mu |
		.
\end{equation*}

Then we make use of \cite[Remark 5.2]{petrovTwoParameter}.
This remark implies that there is a unique continuous function $ d_\mu^o \in C( \overline{\nabla}_\infty  ) $ that satisfies
$$
	d_\mu^o( \mb x ) 
		=
			\frac{|\mu|!}{ \prod_{ r = 1 }^{ \ell( \mu ) }  \mu_r !}
			\sum_{ \substack{1 \le i_1, i_2, \ldots, i_{ \ell( \mu ) } \\ \text{distinct}} }
				\prod_{ r = 1 }^{\ell(\mu)} 
					x_{ i_r }^{ \mu_r }
		,
			\qquad \text{whenever}
			\sum_{ k \ge 1 }
				x_k
					=
						1
		.
$$

\noindent
This function would be called $ g(\mu) m_\mu^o $ in \cite{petrovTwoParameter}
(see Proposition 5.1 there).
Assumption \ref{assumption continuous density functions are dense} now follows from the discussion in \cite[Section 2.2]{petrovTwoParameter} and Assumption \ref{assumption continuous density functions are limits} can be established using \cite[equation (6)]{petrovTwoParameter} and the proof of \cite[Lemma 4.1]{petrovTwoParameter}.
The hypothesis of Proposition \ref{prop:path_continuity} also holds: an up-down step from $ \lambda $ will cause at most two coordinates of $ \iota( \lambda ) $ to change and these changes will be at most $ \frac{1}{|\lambda|} $.

We have shown that all of the results in \cref{section convergence} apply.
This recovers some of the main results in \cite{petrovTwoParameter} (his Main theorem,: his item (3) follows from \eqref{defn stationary measures} and Proposition \ref{prop:Stationary Measure of Limit Process} here), \orange{except for the differential form of the generator.}
Some of our results appear to be new -- these include
	the diagonal descriptions in Proposition \ref{prop limiting semigroup and generator},
	the intertwining result in Proposition \ref{prop algebraic identities in limit},
	the asymptotic description in Proposition \ref{prop densities of limit process},
	and
	the description of the separation distance of the limiting process
	in Theorem \ref{theorem discrete and continuous sep dist}.

\subsection{Composition chains arising from the Chinese restaurant process}
\label{ssec:composition}

In the up-down chains considered in \cite{krdrDiffusions}, the state space $ \stateSpace_n $ consists of integer compositions of $ n $.
A composition of $ n \ge 1 $ is a tuple $ \sigma = ( \sigma_1, ..., \sigma_k ) $ of positive integers that sum to $ n $. 
It will be helpful to think of these objects in terms of their associated box diagrams. 
The diagram for a composition $ \sigma \in \stateSpace_n $ contains $ n $ boxes arranged into columns so that there are $ \sigma_j $ boxes in the $ j $-th column. %
We denote the number of boxes and columns in a composition $ \sigma $ by $ | \sigma | $ and $ \ell( \sigma) $, respectively.
The down-steps considered in these chains delete a box uniformly at random from a composition. 
If this box lied below other boxes, those boxes are shifted down so that the resulting diagram still describes a composition. 
Similarly, if this box was the only box in its column, the remaining columns should be shifted so that there is no empty column.

The up-steps in these chains add a box to a composition and depend on two parameters $(\alpha,\theta)$ satisfying $\theta\geq 0$, $0\leq \alpha<1$, and $\alpha+\theta>0$.
Given an initial composition $ \sigma \in \stateSpace_n $, an up-step from $ \sigma $ will 
\begin{itemize}

	\item
	add a box on top of column $ j $ with probability $ \frac{ \sigma_j - \alpha }{ n + \theta } $,
	
	\item
	create a one-box column to the left of the first column with probability $ \frac{ \theta }{ n + \theta } $,
	and
	
	\item
	create a one-box column immediately to the right of a given column with probability $ \frac{ \alpha }{ n + \theta } $.
\end{itemize}

\noindent
These up-steps are inspired by an ordered variant of the Chinese restaurant process introduced in~\cite{pitmanOCRP}.
The resulting up-down chains can thus be viewed as ordered variants of the chains in \cref{section crp partitions}.
\orange{Again, we denote $(\upKernel_n)_{n \ge 1}$ and $(\downKernel_{n})_{n \ge 2}$ the corresponding transition matrices.
Examples of composition diagrams and up- and down-transition probabilities can be seen
in \cref{fig:Ordered_CRP}.}

\begin{figure}
\[\includegraphics[height=5cm]{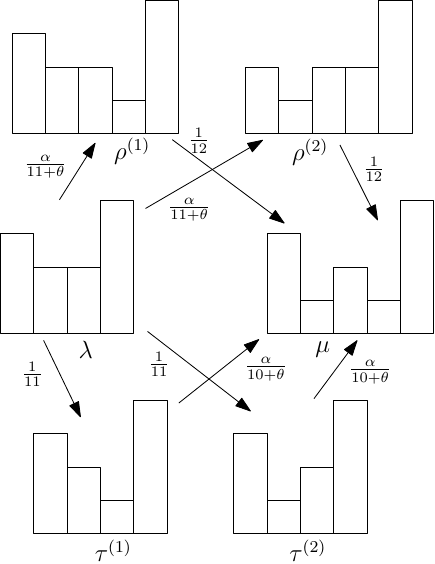}\]
\caption{Here, we show six compositions partitions $\la$, $\mu$, $\rho^{(1)}$, $\rho^{(2)}$, $\tau^{(1)}$ and $\tau^{(2)}$ represented by their box diagrams. We also printed the up transition probabilities
$\upKernel_n(\la,\rho^{(i)})$ and $\upKernel_{n-1}(\tau^{(i)},\mu)$, as well as
the down transition probabilities $\downKernel_n(\la,\tau^{(i)})$ and $\downKernel_{n+1}(\rho^{(i)},\mu)$ (each time for $i$ in $\{1,2\}$).
}
\label{fig:Ordered_CRP}
\end{figure}

\begin{prop}
\label{prop:Ordered_CRP_Commutation}
  \orange{The above matrices $(\upKernel_n)_{n \ge 1}$ and $(\downKernel_{n})_{n \ge 2}$ satisfy Assumption \ref{assumption:commutation}
  with parameter $\beta_n=\frac{ n ( n - 1 + \theta )}{ ( n + 1 ) ( n + \theta ) }$ (for $n \ge 2$).}
\end{prop}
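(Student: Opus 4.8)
The plan is to mimic exactly the proof of \cref{prop:CRP_Commutation}, since the composition chains are ``ordered variants'' of the partition chains and the local combinatorics of up- and down-steps are essentially the same, with one new feature: columns of equal height may appear in different positions, but there is no merging of rows as there was for partitions. First I would invoke \cref{lem:avoid_diag} to reduce to checking the off-diagonal commutation relation \eqref{eq:comm_off_diag}. So fix $\la \ne \mu$ in $\stateSpace_n$ and consider $(\upKernel_n \downKernel_{n+1})(\la,\mu)$ and $(\downKernel_n \upKernel_{n-1})(\la,\mu)$. If $\la$ and $\mu$ differ by more than ``one box added and one box removed'' (more precisely, if there is no composition $\rho$ of size $n+1$ with $\la,\mu \nearrow \rho$, or equivalently no composition $\tau$ of size $n-1$ obtained by deleting a box from each), then both products vanish and the identity is trivial.

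Next I would treat the generic case. The subtlety compared to partitions is that, for compositions, obtaining $\mu$ from $\la$ via an up-step then a down-step can proceed through \emph{several} intermediate compositions $\rho$ (adding a box in various places then deleting a box), and similarly going down-then-up passes through several $\tau$'s. This is already visible in \cref{fig:Ordered_CRP}, where there are two relevant $\rho^{(i)}$ and two relevant $\tau^{(i)}$. So the computation is
\[
(\upKernel_n \downKernel_{n+1})(\la,\mu)
= \sum_{\rho} \upKernel_n(\la,\rho)\,\downKernel_{n+1}(\rho,\mu),
\qquad
(\downKernel_n \upKernel_{n-1})(\la,\mu)
= \sum_{\tau} \downKernel_n(\la,\tau)\,\upKernel_{n-1}(\tau,\mu),
\]
and I would set up a bijection between the intermediate compositions appearing in the two sums (each $\rho$ corresponds to inserting a box ``between'' the positions of the added box of $\mu/\tau$ and the removed box of $\la/\tau$, and symmetrically for $\tau$), then check that the corresponding summands match up to the universal factor $\beta_n = \frac{n(n-1+\theta)}{(n+1)(n+\theta)}$. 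Concretely: the down-step probabilities contribute factors $\frac{1}{n+1}$ and $\frac{1}{n}$ (uniform deletion, times the number of boxes of a given height in a given column block, which is typically $1$), the up-step probabilities contribute $\frac{\sigma_j - \alpha}{\,\cdot\,+\theta}$ or $\frac{\theta}{\,\cdot\,+\theta}$ or $\frac{\alpha}{\,\cdot\,+\theta}$ depending on whether the new box sits on top of an existing column or creates a new column, and the heights/positions involved are the same on both sides because adding and removing a box far from each other commute. The denominators $(n+1)(n+\theta)$ versus $n(n-1+\theta)$ produce precisely $\beta_n$.

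I would organize the case analysis by the type of box involved: (a) the removed box $\la/\tau$ sits strictly below the top of its column (so its column has the same height before and after up/down, and the multiplicity/position bookkeeping is immediate); (b) the removed box is the top of a multi-box column; (c) the removed box is the unique box of a one-box column (so deleting it removes a column, and creating it came from a $\frac{\theta}{n+\theta}$ or $\frac{\alpha}{n+\theta}$ move) — this last case mirrors the ``$\tau_{j_2}=0$'' case at the end of the proof of \cref{prop:CRP_Commutation}, and similarly for the added box $\mu/\tau$. In each of these $3\times 3$ combinations the matching is routine; I expect the main obstacle to be purely organizational, namely keeping track of which intermediate compositions contribute and ensuring the position-shifting (``create a one-box column immediately to the right of a given column'') is correctly paired between the up-then-down and down-then-up routes, especially when the added and removed boxes are in adjacent column positions. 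Once the matching is established the identity $(\upKernel_n \downKernel_{n+1})(\la,\mu) = \beta_n (\downKernel_n \upKernel_{n-1})(\la,\mu)$ follows, and \cref{lem:avoid_diag} completes the proof.
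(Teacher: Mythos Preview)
Your approach is essentially the same as the paper's: invoke \cref{lem:avoid_diag}, mimic the partition case, and handle the new feature that several intermediate compositions $\rho$ (or $\tau$) may appear. The paper's proof differs only in how the case analysis is organized: rather than your $3\times 3$ split by ``type of box'', it directly identifies the specific composition patterns where two intermediates arise (namely $\la=(u,2^{k+1},v)$, $\mu=(u,1,2^k,1,v)$ and $\la=(u,2^k,1,v)$, $\mu=(u,1,2^k,v)$, together with their reversals for the $\tau$ side), and notes that away from these patterns there are no multiplicity factors at all since columns of equal height in a composition are distinguishable by position. Your case (a) is vacuous for compositions (deleting any box from a column has the same effect), so your split really collapses to ``top of a multi-box column'' versus ``unique box of a column'', which is consistent with the paper's description.
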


\begin{proof}
\orange{The proof is similar to that of \cref{prop:CRP_Commutation}, except that we do not have multiplicity factors (\orange{except when adding a part of size $1$ in a series of parts of size $1$}), and with the following modification. (We use here exponents in compositions to indicate repeated parts.)}
\begin{itemize}
\item \orange{If $\la$ and $\mu$ are of the form $(u,2^{k+1},v)$ and $(u,1,2^k,1,v)$ for some $k \ge 0$ and some compositions $u$ and $v$, then there are two compositions $\rho^{(1)}$ and $\rho^{(2)}$ of size $n+1$
which can both be obtained  from $\la$ and $\mu$ by adding a single box,
namely $\rho^{(1)}=(u,2^{k+1},1,v)$ and $\rho^{(2)}=(u,1,2^{k+1},v)$.
See again \cref{fig:Ordered_CRP} for an example.}

\orange{Similarly, if $\la$ and $\mu$ are of the form $(u,2^{k},1,v)$ and $(u,1,2^{k},v)$,
then $\rho^{(1)}=(u,1,2^{k},1,v)$ and $\rho^{(2)}=(u,2^{k+1},v)$
can both be obtained  from $\la$ and $\mu$ by adding a single box.}
\item \orange{Conversely, for some $\la$ and $\mu$, there might be two compositions $\tau^{(1)}$ and $\tau^{(2)}$ of size $n-1$
which can be obtained both from $\la$ and $\mu$ by removing a single box
(just "revert" the previous example).}
\end{itemize}
\orange{In the first (resp.~second) case, the probability $( p^{\uparrow,\theta}_n                                                                  
    p^{\downarrow,\theta}_{ n + 1 })(\la,\mu)$ (resp.~$(p^{\downarrow,\theta}_n                                                                                              
p^{\uparrow,\theta}_{ n - 1 }) (\la,\mu)$) is a sum of two terms.
Nevertheless, the identity,
 \[(p^{\uparrow,\theta}_n                                                                  
    p^{\downarrow,\theta}_{ n + 1 })(\la,\mu)=
    \frac{n(n-1+\theta)}{(n+1)(n+\theta)} (p^{\downarrow,\theta}_n                                                                                              
                    p^{\uparrow,\theta}_{ n - 1 }) (\la,\mu)\]
                    is straightforward to check in all cases.}
\end{proof}

Let us verify the conditions \ref{assumption distant elements} and \ref{assumption consistency rn} (keeping the \orange{shift} of index in mind).
Given $ n \ge 2 $, we will take $ r_n = ( n ) $ and $ s_n = ( 1, \ldots, 1 ) $.
These compositions are indeed at distance $ n - 1 $ from each other: $ s_n $ can be obtained from $ r_n $ by $ n - 1 $ up-down steps that create a new column while removing a box from the largest column each time, and this is the fastest way to obtain $ s_n $ from $ r_n $ since each up-down step can increase the length of a composition by at most $ 1 $.
To see that $ s_n $ is in the support of the stationary measure, it suffices to show that it can be obtained from the composition $ \zeroVertex = (1) $ through up-steps (see \eqref{defn stationary measures}).
It should be clear that this is the case.
To verify \ref{assumption consistency rn}, we simply observe that a down-step from $ r_n = ( n ) $ will always lead to $ r_{n-1} = ( n-1 ) $.

The extra hypotheses of Proposition \ref{prop convergence of sep dist} are also satisfied.
Therefore, all of the results in \cref{section discrete framework} apply to these chains.
We highlight in particular
	the spectral decomposition in Proposition \ref{prop eigenbasis},
	the asymptotic descriptions in Propositions \ref{prop density estimate} and \ref{prop initial convergence}, 
	and 
	the analysis of the separation distance in \cref{ssec:separation_distance_discrete},
which we believe are new.
\medskip

Moving beyond the discrete setting, we will take as the limiting space $ \mc U $, the collection of open subsets of $ ( 0, 1 ) $.
This space becomes a compact metric space when equipped with the metric obtained from applying the Hausdorff metric on the complements of sets (see \cite{gnedin1997}). 
We regard a composition as an element of $ \mc U $ by identifying $ \sigma $ with the open set
$$
	\iota( \sigma ) 	
		= 	
				\bigg( 
						0, 
						\frac{ \sigma_1 }{ | \sigma | } 
				\bigg ) 	
			\cup 	
				\bigg( 
						\frac{ \sigma_1 }{ | \sigma | }, 
						\frac{ \sigma_1 + \sigma_2 }{ | \sigma | } 
				\bigg) 	
			\cup 
				\ldots 
			\cup	
				\bigg( 
						\frac{ 
								| \sigma | - \sigma_{ \ell( \sigma ) }
							}{ 
								| \sigma | 
							}, 
						1 
				\bigg)
	.
$$
It then follows from \cite[Proposition 6.1]{krdrDiffusions} that \ref{assumption state space approximation} holds.

\orange{To go further, we need to consider the density functions associated with the down-operators}.
Using \cite[Proposition 4.1]{krdrDiffusions}, it can be shown that they are given by
\begin{equation*}
	\density_\sigma ( \tau )
		=
               \frac{1}{ \binom{|\tau|}{|\sigma|}}
    			\sum_{ 1 \le i_1 < i_2 < \ldots < i_{ \ell( \sigma ) } \le \ell( \tau ) }
    				\prod_{ r = 1 }^{ \ell( \sigma) }
    					\binom{ \tau_{ i_r } }{ \sigma_r}
		,
			\qquad
				| \tau | \ge | \sigma |
		.
\end{equation*}
To extend the density functions on $ \mc U $, we make use of \cite[Proposition 10]{gnedin1997}.
This result implies that there is a unique continuous function $ \density_\sigma^o \in C( \mc U ) $ that satisfies
$$
	\density_\sigma^o( U ) 
		=
			\frac{|\sigma|!}{ \prod_{ r = 1 }^{ \ell( \sigma ) }  \sigma_r !}
			\sum_{ 1 \le i_1 < i_2 < \ldots < i_{ \ell( \sigma ) } } 
				\prod_{ r = 1 }^{ \ell( \sigma ) } 
					x_{ i_r }^{ \sigma_r }
$$
\noindent
for open sets of the form
$$
	U 
		= 
				( 0, x_1 ) 
			\cup 
				( x_1, x_1 + x_2 ) 
			\cup 
				( x_1 + x_2, x_1 + x_2 + x_3 ) 
			\cup \ldots,
$$

\noindent
where $ \{ x_i \} $ is a sequence in $ [ 0, 1 ] $ summing to 1.
Assumption \ref{assumption continuous density functions are dense} now follows from \cite[Proposition 6.4]{krdrDiffusions} and Assumption \ref{assumption continuous density functions are limits} can be established using \cite[Propositions 4.1, 6.3, and 6.5]{krdrDiffusions}.
The additional hypothesis needed for Proposition \ref{prop:path_continuity} can also be seen to hold (an up-down step will move a composition of $ n $ by at most $\frac{1}{n} $ in the $\mc U $ metric).
Therefore, all of the results in \cref{section convergence} apply.
Here, we recover the main results of \cite{krdrDiffusions} and obtain some new results.
These include
	the diagonal descriptions in Proposition \ref{prop limiting semigroup and generator},
	the intertwining result in Proposition \ref{prop algebraic identities in limit},
	the uniqueness of the stationary distribution (Proposition \ref{prop:Stationary Measure of Limit Process}),
	the asymptotic description in Proposition \ref{prop densities of limit process},
	and
	the description of the separation distance in Theorem \ref{theorem discrete and continuous sep dist} and Proposition \ref{prop convergence of sep dist}.

\subsection{Up-down chains arising from Kerov's operators}

The \orange{paper} \cite{petrovSL2} does not focus on a specific example of up-down chains but is concerned with a certain class of up-down chains that follow a general construction.
The starting point for this construction is a branching graph, a graph $ \mbb G = \bigsqcup_{ n = 0 }^\infty \mbb G_n $ in which
\begin{itemize}
	
	\item
	each $ \mbb G_n $ is finite,
	
	\item 
	$ \mbb G_0 $ is a single vertex, denoted by $ \zeroVertex $,
	
	\item
	each edge is directed and moves from some $ \mbb G_n $ to $ \mbb G_{ n + 1 } $,
	
	\item
	each vertex has an outgoing edge, 
	and 
	
	\item
	each vertex has an incoming edge (except for $ \zeroVertex $).
\end{itemize}

\noindent
The notation $ \mu \nearrow \lambda $ or $ \lambda \searrow \mu $ is used to denote that there is an edge from $ \mu $ to $ \lambda $.

The graph $ \mbb G $ is to be equipped with an edge multiplicity function $ \kappa $ that assigns a positive number $ \kappa( \mu, \lambda ) $ to the edge from $ \mu $ to $ \lambda $.
This function induces a weight on each path in $ \mbb G $, given by the product of the multiplicities of its edges.
These weights then lead to the relative combinatorial dimension $ \dim( \mu, \lambda ) $, defined as the sum of the weights of all of the paths from $ \mu $ to $ \lambda $. %
The special case $ \mu = \zeroVertex $ results in the combinatorial dimension $ \dim \lambda \ldef \dim( \zeroVertex, \lambda ) $.
These quantities are used to define down-kernels as follows:
$$
	\downKernel_n( \lambda, \mu )
		=
			\frac{ \dim \mu \cdot \kappa( \mu, \lambda ) }{ \dim \lambda }
		,
			\qquad
			\lambda \in \mbb G_n,
			\,
			\mu \in \mbb G_{ n - 1 }
		.
$$

To continue with the construction, the following assumptions \orange{(among others)} are required:
\begin{itemize}
	\item
      the vertices of $ \mbb G $ can be identified with the lattice of finite order ideals of a poset $ P $, under which $ \lambda \in \mbb G_n $ corresponds to an ideal with $ n $ elements and $ \mu \nearrow \lambda $ \orange{means} that $ \mu \subset \lambda $ 
and $|mu|=|\lambda|+1$
      (see \cite[Section 2.3]{petrovSL2} for details),
	
	\item For $\la$, $\mu$ in $\mbb G_n$ such that $\tau\ldef  \la \cap \mu$ has size $n-1$ (intersections,
      unions and set differences should be understood after identification of the elements with ideals of $P$), we have
      \begin{equation}
		\orange{\kappa( \la, \rho )
		\kappa( \mu, \rho )
			=
				\kappa( \tau, \lambda)
                \kappa( \tau, \mu ),}
        \label{eq:identity_kappas}
      \end{equation}
    where $\rho=\la \cup \mu$ (see \cite[Eq.~(12)]{petrovSL2}).
\end{itemize}

\noindent
We note here that the second %
condition is related to a certain algebraic structure called Kerov's operators (see \cite[Proposition 4]{petrovSL2}).
This structure is fundamental in \cite{petrovSL2} but is not important for our purposes.
Given this, the up-kernels considered by Petrov are of the form\footnote{\orange{For $\upKernel_{n}$
to be a transition kernel (i.e.~for its row sums to be equal to $1$), we need additional assumption on
$\gamma$ and $Q$; see \cite[Proposition 3.9]{petrovSL2}.}}
\begin{equation}
	\label{Kerov up-kernels}
	\upKernel_{n}( \lambda, \nu )
		=
			\frac{ \dim \nu \cdot \kappa( \lambda, \nu ) }{ \dim \lambda }	
			\frac{ Q( \nu \setminus \lambda ) }{ ( n + 1 ) ( n + \gamma ) }
		,
			\qquad
			\lambda \in \mbb G_n,
			\,
			\nu \in \mbb G_{ n + 1 }
		,
\end{equation}
for some nonnegative parameter $\gamma$ and some function $Q$ on the poset $P$;
see \cite[Definition 9 and Eq.~(18)]{petrovSL2}.

\begin{prop}
\label{prop:KerovOper_Commutation}
  \orange{The above matrices $(\upKernel_n)_{n \ge 1}$ and $(\downKernel_{n})_{n \ge 2}$ satisfy Assumption \ref{assumption:commutation}
  with parameter $\beta_n=\frac{ n ( n - 1 + \gamma )}{ ( n + 1 ) ( n + \gamma ) }$ (for $n \ge 2$).}
\end{prop}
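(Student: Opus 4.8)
The plan is to follow the same strategy used for \cref{prop:BO_Commutation,prop:CRP_Commutation,prop:Ordered_CRP_Commutation}: invoke \cref{lem:avoid_diag} to reduce the verification of Assumption~\ref{assumption:commutation} to the off-diagonal entries. So first I would fix $n$ and take $\la,\mu \in \mbb G_n$ with $\la \neq \mu$, and aim to prove that
\[
( \upKernel_n \downKernel_{ n + 1 })(\la,\mu)
= \frac{ n ( n - 1 + \gamma )}{ ( n + 1 ) ( n + \gamma ) }
(\downKernel_n \upKernel_{ n - 1 }) (\la,\mu).
\]
As in the other partition examples, if $|\la \cap \mu| < n-1$ then both compositions $\upKernel_n \downKernel_{n+1}$ and $\downKernel_n \upKernel_{n-1}$ send $\la$ to $\mu$ with probability $0$ (there is no single intermediate vertex $\rho$ of size $n+1$ containing both, nor a single $\tau$ of size $n-1$ contained in both), so the identity is trivially $0=0$. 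Hence I may assume $\tau \ldef \la \cap \mu$ has size $n-1$ and set $\rho \ldef \la \cup \mu$, which then has size $n+1$; by the poset-ideal structure, $\tau \nearrow \la$, $\tau \nearrow \mu$, $\la \nearrow \rho$, $\mu \nearrow \rho$ are the only relevant edges, so each side of the identity is a single product of two transition probabilities.

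Next I would expand both sides using the explicit formulas for $\downKernel$ and $\upKernel$ from \eqref{Kerov up-kernels}. On the left,
\[
( \upKernel_n \downKernel_{ n + 1 })(\la,\mu)
= \upKernel_n(\la,\rho)\,\downKernel_{n+1}(\rho,\mu)
= \frac{\dim\rho\cdot\kappa(\la,\rho)}{\dim\la}\,\frac{Q(\rho\setminus\la)}{(n+1)(n+\gamma)}\cdot\frac{\dim\mu\cdot\kappa(\mu,\rho)}{\dim\rho},
\]
and on the right,
\[
(\downKernel_n \upKernel_{ n - 1 })(\la,\mu)
= \downKernel_n(\la,\tau)\,\upKernel_{n-1}(\tau,\mu)
= \frac{\dim\tau\cdot\kappa(\tau,\la)}{\dim\la}\cdot\frac{\dim\mu\cdot\kappa(\tau,\mu)}{\dim\tau}\,\frac{Q(\mu\setminus\tau)}{n(n-1+\gamma)}.
\]
After cancelling $\dim\rho$, $\dim\tau$, and the common factor $\dim\mu/\dim\la$, the left side becomes $\frac{\kappa(\la,\rho)\kappa(\mu,\rho)\,Q(\rho\setminus\la)}{(n+1)(n+\gamma)}$ and the right side becomes $\frac{\kappa(\tau,\la)\kappa(\tau,\mu)\,Q(\mu\setminus\tau)}{n(n-1+\gamma)}$. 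The key algebraic input is then the identity \eqref{eq:identity_kappas}, namely $\kappa(\la,\rho)\kappa(\mu,\rho)=\kappa(\tau,\la)\kappa(\tau,\mu)$, together with the observation that $\rho\setminus\la = \mu\setminus\tau$ as elements of the poset $P$ (both equal the unique element in which $\mu$ and $\rho$ exceed $\la$ and $\tau$ respectively), so $Q(\rho\setminus\la)=Q(\mu\setminus\tau)$. With these two facts the numerators agree, and dividing by the denominators yields exactly the factor $\frac{n(n-1+\gamma)}{(n+1)(n+\gamma)}=\beta_n$, which is the claimed value.

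The main obstacle I anticipate is bookkeeping around the poset-ideal identification rather than any real difficulty: one must be careful that $\tau=\la\cap\mu$ and $\rho=\la\cup\mu$ are genuinely finite order ideals (this uses that $\la,\mu$ differ by exactly one element when $|\la\cap\mu|=n-1$, which is where the lattice structure of finite order ideals is used) and that the set differences $\rho\setminus\la$ and $\mu\setminus\tau$ coincide as elements of $P$, so that $Q$ takes the same value on them. Once \eqref{eq:identity_kappas} and this $Q$-consistency are in hand, the computation is a routine cancellation, and \cref{lem:avoid_diag} promotes the off-diagonal identity to the full Assumption~\ref{assumption:commutation}. It is worth remarking that, unlike the Jack case, no delicate interlacing-coordinate manipulation is needed here: the hypothesis \eqref{eq:identity_kappas} is precisely the abstraction of the cancellation that made those computations work.
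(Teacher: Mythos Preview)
Your proposal is correct and follows essentially the same approach as the paper, which simply states that the proof is similar to that of \cref{prop:BO_Commutation}, making use of the relation \eqref{eq:identity_kappas}. Your expanded version fills in exactly the details the paper leaves implicit, including the identification $\rho\setminus\la=\mu\setminus\tau$ needed for the $Q$-factor to match.
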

\begin{proof}
  The proof is similar to that of \cref{prop:BO_Commutation}, making use of the relation \eqref{eq:identity_kappas}.
\end{proof}

Noting that \ref{assumption finite state spaces} is  satisfied \orange{by assumption},
it follows that the above chains fit into our framework.
\orange{Moreover, our framework is more general: 
the above example on integer compositions, as well as the next example on trees, and our novel examples
on permutations and graphs do not fit in the framework proposed by Petrov, since the objects are not in bijection
with the order ideals of some posets (in particular, we have seen in the previous section that integer compositions 
do not form a lattice since two elements can have several joins and/or meets).
Also, the chains considered by Petrov are all reversible (as a consequence of \cite[Definition 9]{petrovSL2}),
while the composition chain above, and the permutation and graph chains of the next sections are not.}

An application of our theory recovers the complete triangular description in \cite[Proposition 7]{petrovSL2} and the description of the spectrum in \cite[Proposition 8]{petrovSL2}.
Our description of the eigenfunctions in Proposition \ref{prop eigenbasis} and the asymptotic descriptions in Propositions \ref{prop density estimate} and \ref{prop initial convergence} appear to be new.

Regarding the separation distance or the scaling limit, this is neither considered in \cite{petrovSL2} nor immediately addressed by our results.
Indeed, the verification of \ref{assumption distant elements}, \ref{assumption consistency rn}, and \ref{assumption state space approximation}--\ref{assumption continuous density functions are limits} cannot be handled in this general context.
For specific examples, however, we expect some of these conditions to be quite approachable.
For example, \ref{assumption consistency rn} is equivalent to the existence of a sequence of vertices $ \zeroVertex \nearrow \lambda_1 \nearrow \lambda_2 \nearrow \ldots $ in which each $ \lambda_i $ has exactly one incoming edge.
Moreover, a second sequence of vertices satisfying the same property, \orange{disjoint from the first one,} would then suffice to establish \ref{assumption distant elements}.

\subsection{Aldous' chain on cladograms}
\label{ssec:cladograms}

We now move from partitions and related objects to trees. More precisely,
we will considered unrooted non-plane unlabelled trees, whose internal nodes
all have degree 3. These are sometimes called (unlabelled) {\em cladograms} in the literature,
and we will use this terminology here.
The following chain is an unlabelled version of a chain introduced by Aldous~\cite{aldous2000mixing_cladograms}. 

The size is the number of leaves, i.e.~we let $(\stateSpace_n)_{n \ge 3}$ be the set of cladograms with $n$ leaves.
Our up step will consist in selecting uniformly at random an edge of a cladogram,
and attaching a new leaf to it. Equivalently, it $C$ and $C'$ are cladograms of size $n$ and $n+1$ respectively, then
\[\upKernel_n(C,C')=\frac{1}{2n-3} e(C,C'),\]
where $e(C,C')$ is the number of edges in $C$ such that attaching a new leaf to one of those edges yields $C'$.
Similarly the downstep consists in selecting uniformly at random a leaf in a cladogram, and erasing it.
This creates an internal node of arity $2$, which is erased as well, its two incident edges being merged in a single one.
In formula, this writes as
\[\downKernel_{n+1}(C',C)=\frac{1}{n+1} \ell(C',C),\]
where $\ell(C',C)$ is the number of leaves of $C'$ whose removal yields $C$.

\orange{Examples of up- and down-transition probabilities are given on \cref{fig:Transitions_Cladograms}.}
We now check the condition \ref{assumption:commutation}.
\begin{figure}
\[\includegraphics[height=5cm]{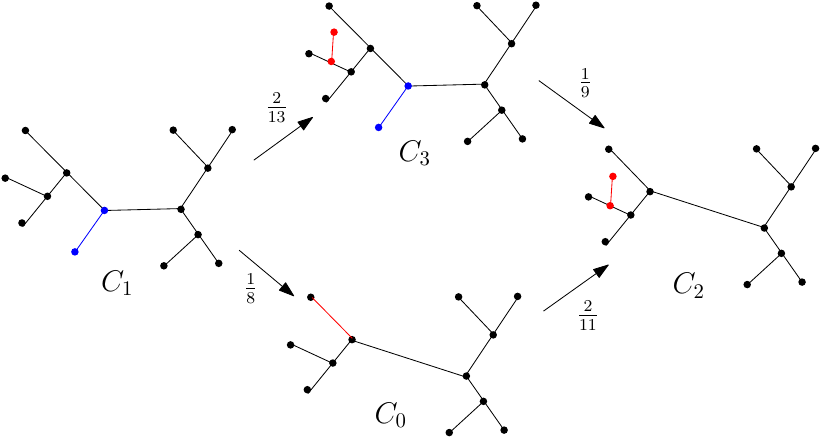}\]
\caption{Here, we show four cladograms partitions $C_0$, $C_1$, $C_2$ and $C_3$,
with the up-transition probabilities
$\upKernel_n(C_1,C_3)$ and $\upKernel_{n-1}(C_0,C_2)$, and
the-down transition probabilities $\downKernel_n(C_1,C_0)$ and $\downKernel_{n+1}(C_3,C_2)$. To help the reader, the added leaf is painted in red in $C_2$ and $C_3$,
while the leaf to be removed in $C_0$ and $C_1$ is in blue, as well as the contracted edge in $C_2$ and $C_3$. Note the multiplicity factor $2$ in the up-transition matrix coming from the fact that the edge on which we graft a new leaf has an orbit of size $2$ under the action 
of the automorphism group of the tree.
}
\label{fig:Transitions_Cladograms}
\end{figure}
\begin{prop}
 The above matrices $(\upKernel_n)_{n \ge 3}$ and $(\downKernel_{n})_{n \ge 4}$ satisfy Assumption \ref{assumption:commutation}
  with parameter $\beta_n=\frac{n(2n-5)}{(n+1)(2n-3)}$ (for $n \ge 4$).
  \end{prop}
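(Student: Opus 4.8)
The plan is to imitate the structure of \cref{prop:BO_Commutation,prop:CRP_Commutation,prop:Ordered_CRP_Commutation}: invoke \cref{lem:avoid_diag} to reduce to the off-diagonal entries, and then do a case analysis on the combinatorial relationship between the two cladograms involved. So first I would fix $n \ge 4$ and two cladograms $\la, \mu \in \stateSpace_n$ with $\la \ne \mu$, and aim to verify
\[
(\upKernel_n \downKernel_{n+1})(\la,\mu)
	=
		\frac{n(2n-5)}{(n+1)(2n-3)}
		(\downKernel_n \upKernel_{n-1})(\la,\mu).
\]
As in the previous proofs, if one cannot pass from $\la$ to $\mu$ by removing one leaf and then grafting one leaf (equivalently, if one cannot reach a common ``ancestor'' of size $n-1$ by a single leaf-removal from each side), then both sides vanish and there is nothing to prove. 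Hence I would reduce to the generic situation: there is a cladogram $\tau$ of size $n-1$ such that $\la$ and $\mu$ are each obtained from $\tau$ by grafting one leaf (on possibly different edges), and $\rho$ of size $n+1$ obtained from each of $\la, \mu$ by grafting one leaf. The analogue of $\tau = \la \cap \mu$ and $\rho = \la \cup \mu$ here needs a careful description: one should show $\tau$ is unique (it is the cladogram obtained by removing the ``new'' leaf on either side), and identify all $\rho$ realizing the overlap.

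The key computation is to expand both sides explicitly. On the left, $(\upKernel_n \downKernel_{n+1})(\la,\mu) = \sum_{\rho} \upKernel_n(\la,\rho)\downKernel_{n+1}(\rho,\mu)$, with $\upKernel_n(\la,\rho) = \frac{1}{2n-3}e(\la,\rho)$ and $\downKernel_{n+1}(\rho,\mu) = \frac{1}{n+1}\ell(\rho,\mu)$; on the right, $(\downKernel_n\upKernel_{n-1})(\la,\mu) = \sum_{\tau}\downKernel_n(\la,\tau)\upKernel_{n-1}(\tau,\mu)$ with $\downKernel_n(\la,\tau) = \frac{1}{n}\ell(\la,\tau)$ and $\upKernel_{n-1}(\tau,\mu) = \frac{1}{2n-5}e(\tau,\mu)$. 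After extracting the scalar prefactors $\frac{1}{(2n-3)(n+1)}$ and $\frac{1}{n(2n-5)}$ — whose ratio is exactly $\beta_n$ — what remains is the purely combinatorial identity
\[
\sum_{\rho}e(\la,\rho)\,\ell(\rho,\mu)
	=
		\sum_{\tau}\ell(\la,\tau)\,e(\tau,\mu),
\]
i.e.\ a bijection between ``graft-then-remove'' paths $\la \nearrow \rho \searrow \mu$ and ``remove-then-graft'' paths $\la \searrow \tau \nearrow \mu$. I would establish this by building an explicit involution/bijection on such length-two paths: given a path $\la \nearrow \rho \searrow \mu$, the leaf removed to go from $\la$ to $\rho$... wait, from $\rho$ to $\mu$ we remove a leaf; the leaf grafted to go from $\la$ to $\rho$ is some leaf $v$, and the leaf removed to reach $\mu$ is some leaf $w \ne v$ (if $w = v$ then $\la = \mu$, excluded) — then $\tau$ should be $\rho$ with both $v$ and $w$ removed, and the reverse path is $\la \searrow \tau$ (remove $w$ from $\la$) $\nearrow \mu$ (graft $v$). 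The subtlety is bookkeeping the automorphism multiplicities $e(\cdot,\cdot)$ and $\ell(\cdot,\cdot)$: because these count edges/leaves up to the tree's symmetry group, the bijection must be checked to be multiplicity-preserving, exactly as the multiplicity factor $2$ appears in \cref{fig:Transitions_Cladograms}. This is the analogue of the ``multiplicity'' discussion in the proof of \cref{prop:CRP_Commutation} and of the two-term sums in \cref{prop:Ordered_CRP_Commutation}.

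The main obstacle I anticipate is precisely this careful treatment of symmetries: one must argue that summing over unlabelled cladograms $\rho$ (resp.\ $\tau$) weighted by $e$ (resp.\ $\ell$) is the same as summing over labelled leaf-insertions modulo automorphisms, and that the insert/delete bijection descends correctly to the unlabelled level. A clean way to do this is to lift everything to \emph{leaf-labelled} cladograms (where automorphism groups are trivial), prove the bijection there — where it is an essentially trivial ``the two operations commute'' statement on edge sets — and then push forward, using that $e(\la,\rho)$ and $\ell(\rho,\mu)$ are exactly the orbit sizes under the relevant stabilizers. Once the labelled identity and the orbit-counting are in place, \cref{lem:avoid_diag} finishes the diagonal entries automatically, and the proposition follows. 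I would also double-check the edge count $2n-3$ for a cladogram with $n$ leaves (it has $n-2$ internal nodes and $2n-3$ edges), so that the up-kernel normalization and hence the value of $\beta_n$ are consistent with the statement.
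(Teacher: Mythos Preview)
Your proposal is correct and follows essentially the same route as the paper: reduce to the off-diagonal via \cref{lem:avoid_diag}, then handle the symmetry bookkeeping by lifting to leaf-labelled cladograms, where the graft/remove commutation becomes a trivial one-term identity with prefactors $\frac{1}{(2n-3)(n+1)}$ and $\frac{1}{n(2n-5)}$, and finally sum over labellings to descend to the unlabelled level. The paper goes straight to the labelled lift rather than first attempting the unlabelled bijection with multiplicities, but you identify exactly this as the clean way through, so there is no substantive difference.
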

  \begin{proof}
  As in \cref{prop:BO_Commutation},
  it suffices to check that for cladograms $C_1 \ne C_2$, both of size $n$, we have
  \begin{equation}\label{eq:comm_off_cladograms}
    ( \upKernel_n
	\downKernel_{ n + 1 })(C_1,C_2)
		= \frac{n(2n-5)}{(n+1)(2n-3)}  (\downKernel_n                                                                                              
                    \upKernel_{ n - 1 })
    				 (\la,\mu).
                  \end{equation}
\orange{To avoid dealing with symmetries, we consider here cladograms with labels on the leaves.
Let $C_1^\ell$ and $C_2^\ell$ be labeled versions of $C_1$ and $C_2$ with labels from $1$ to $n+1$, such that each label is used at most once, and $n+1$ is not used in 
$C_1^\ell$. We will also denote $\upKernelell_{n-1}$, $\upKernelell_n$, $\downKernelell_n$ and $\downKernelell_{n+1}$ to be labeled liftings of the up and down transition matrices, where
for $\upKernelell_{n-1}$ and $\upKernelell_n$, the new added leaf has label $n+1$.
We assume that $C_2^\ell$ is different from $C_1^\ell$ can be obtained from $C_1^\ell$ by erasing a leaf $\ell_0$ and grafting a leaf labelled $n+1$ on some edge $e_0$.
We then let $C^\ell_0$ be the cladogram of size $n-1$ obtained 
from $C^\ell_1$ by erasing $\ell_0$,
and $C^\ell_3$ the cladogram of size $n+1$ obtained from $C^\ell_1$ by grafting a new leaf on $e_0$.
Then we have 
\begin{align*}
( \upKernelell_n                                                                  
    \downKernelell_{ n + 1 })(C^\ell_1,C^\ell_2)&= \upKernelell_n(C^\ell_1,C^\ell_3) \downKernelell_{ n + 1 }(C^\ell_3,C^\ell_2) 
    = \frac{1 }{(2n-3)(n+1)};\\
             (\downKernelell_n                                                                                              
                    \upKernelell_{ n - 1 }) (C^\ell_1,C^\ell_2)&= \downKernelell_n(C^\ell_1,C^\ell_0)  \upKernelell_{ n - 1 }(C^\ell_0,C^\ell_2)
                    =\frac{1 }{n(2n-5)}.
      \end{align*}
Fixing a labeled version $C_1^\ell$ of $C_1$ and summing over all possible labeled versions $C^\ell_2$ of $C_2$ yields \eqref{eq:comm_off_cladograms} and conclude the proof.}
  \end{proof}
  Assumption~\ref{assumption finite state spaces} is clearly satisfied, \orange{this time with a shift of index by 3.
  Hence,} the results
of Sections \ref{ssec:discrete_triangular}--\ref{ssec:large_time_large_size} apply:
in particular, we have an explicit description of the eigenvectors of the transition operator,
which seems to be new.
More interestingly, let us discuss the existence of a scaling limit,
a question originally asked by Aldous~\cite{aldous1999diffusion}, and solved in different ways
by  Löhr--Mytnik--Winter~\cite{lohr2020Aldous_chain} and 
Forman--Pal--Rizzolo--Winkel~\cite{forman2023aldousdiffusion}.

In \cite{lohr2020Aldous_chain}, Löhr, Mytnik and Winter introduced a compact metric space $\mathbb T_2^{\cont}$ of binary algebraic measure trees, \orange{which plays the role of our limiting space $E$.
The space $\mathbb T_2^{\cont}$ does not satisfy our assumption \ref{assumption state space approximation}
since there is no canonical embedding of finite cladograms in $\mathbb T_2^{\cont}$,
but both finite cladograms and $\mathbb T_2^{\cont}$ can be embedded in a larger space (denoted $\mathbb T$ in \cite{lohr2020Aldous_chain}),
so that each element of $\mathbb T_2^{\cont}$ is a limit of finite cladograms~\cite[Proposition 2.9]{lohr2020Aldous_chain}.
This property can replace Assumption \ref{assumption state space approximation} in our proofs.}
For Assumption~\ref{assumption continuous density functions are dense}
and \ref{assumption continuous density functions are limits},
we note that for a cladogram $C$ of size $k$ and a cladogram $C'$ of size $n$,
$d_C(C')$ is the probability that $k$ random leaves of $C$ induce the cladogram $C'$
in the sense of \cite[Definition 2.5]{lohr2020Aldous_chain}.
We therefore set, for $(T,c,\mu) \in \mathbb T_2^{\cont}$
\begin{equation}\label{eq:dco}
d_C^o\big((T,c,\mu) \big)= \mathbb P\big(\mathfrak s_{(T,c)}(x_1,\dots,x_k) = C\big),
\end{equation}
where $x_1,\dots,x_k$ are i.i.d.~with measure $\mu$.
Then Assumption \ref{assumption continuous density functions are dense}
 follows from \cite[Lemma 2.12]{lohr2020Aldous_chain}.
 Let us check \ref{assumption continuous density functions are limits}.
  For $(T,c,\mu)=\iota(C')$, we have $\mu=\frac1n \sum \delta_u$, where the sum is taken over leaves of $C'$ (see \cite[eq. (2.18)]{lohr2020Aldous_chain}).
  Thus $d_C^o(\iota(C'))$ is given by \eqref{eq:dco}, where
$x_1, \dots, x_k$ are i.i.d.~random leaves of $C'$.
 In comparison, $d_C(C')$ is defined the same way, but we take $\{x_1, \dots, x_k\}$
 to be a uniform random subset of $k$ leaves of $C'$. The difference is bounded by the probability
 to have a repetition in an i.i.d. sample, i.e.
 \[\Big|d_C(C')-d_C^o\big(\iota(C')\big) \Big| \le \frac1n \binom{k}2.\]
Since this bound is uniform on all cladograms $C'$ of size $n$,
Assumption \ref{assumption continuous density functions are limits} is verified.

Therefore our results of Sections \ref{ssec:limiting_process}--\ref{ssec:process_large_time} apply. In particular, this proves the convergence of the chain to a limiting diffusion
in $ \mathbb T_2^{\cont}$, which is one of the main result of \cite{lohr2020Aldous_chain}\footnote{As said above, our chain is an unlabelled version of that defined by Aldous and
considered in \cite{lohr2020Aldous_chain}. But since the embedding of cladograms
in $ \mathbb T_2^{\cont}$ is invariant under relabelling, proving the convergence of the labelled chain or its unlabelled version are equivalent.}.
We also obtain diagonal and triangular descriptions of the limiting generator and 
estimates for the convergence of the limiting process to its equilibrium measure (the algebraic Brownian Continuum Random Tree),
\orange{which can be found under slightly different forms in Gambelin's thesis
\cite{gambelin:tel-05202989} (see in particular Theorem 4.4.3.2 and Corollary 4.4.4.1 there)}.
On the other hand, the differential description of the generator acting on the subtree mass vector given
in  \cite{lohr2020Aldous_chain} does not follow from our general framework.

To conclude this section, we discuss the separation distance of the discrete chain.
It follows from \cref{thm general sep dist of a limit} and the scaling limit result that,
if $\Delta_n(m)$ denotes the separation distance of the chain on cladograms of size $n$
after $m$ steps, then
\begin{equation}\label{eq:lower_bound_Deltan_clado}
\liminf \Delta_n(\lfloor c_n t \rfloor) \ge \Delta(t),
\end{equation}
where $c_n=\beta_3^{-1} \dots \beta_n^{-1}=\Theta(n^2)$,
and $\Delta$ is the separation distance of the limiting process at time $t$.
This gives a lower bound $\Theta(n^2)$ on the separation mixing time.
Unfortunately, Assumption \ref{assumption distant elements} is not satisfied in this context.
Indeed, any cladogram of size $n$ contains a path of logarithmic length and, consequently, one can go from any cladogram to another by moving at most $n-\Theta(\log(n))$ leaves.
Hence the results of \cref{ssec:separation_distance_discrete,ssec:separation_distance_continuous} do not apply and we do not have a formula for $\Delta(t)$, nor the reverse inequality in \eqref{eq:lower_bound_Deltan_clado}.
As a comparison, the analogue chain on labeled cladograms
is known to have a mixing time in total variation distance \orange{bounded below by $\Theta(n^2)$ 
and above by $\Theta(n^3)$~\cite{aldous2000mixing_cladograms},
and a relaxation time\footnote{The relaxation time is the inverse of the spectral gap of the transition matrix, the spectral gap being the difference between the two largest eigenvalues (in absolute value). The relaxation time is a lower bound for the mixing time.} of order $\Theta(n^2)$ \cite{schweinsberg2002cladograms}.}

\begin{remark}
\orange{A generalization of Aldous' chain where the role of the Brownian Continuum Random Tree
is played by stable trees has been introduced and studied by Gambelin in his Phd thesis~\cite{gambelin:tel-05202989}.}
\end{remark}

\section{{The} permutation example}
\label{section permutation example}

In this section, we apply our theory to the permutation-valued chains of the introduction.
We begin with some background on permutations and permutons.

\subsection{{Background}} %
\label{ssec:prel_permutations}

\subsubsection{Patterns, densities, and the down-steps}
\label{ssec:basics_patterns}

For $ n \ge 1 $, let $ \stateSpace_n $ be the set of permutations of $[n]= \{1,2,\ldots, n\}$.
Each $ \sigma $ in $ \stateSpace_n $ can be written in one-line notation as $ \sigma(1) \sigma(2) \dots \sigma(n)$.
It can also be represented in the Cartesian plane by its \emph{diagram}: the set $ \{ ( i, \sigma(i) ) : i \in [n] \} $ or any variant of it obtained by applying a map that preserves the order of $ x $-coordinates and $ y $-coordinates. 
An example is given in Figure~\ref{fig diagram and pattern}.

In such a diagram, every subset of indices $ I \subset [n] $ yields a subdiagram $ \{ ( i, \sigma(i) ) : i \in I \} $.
This set is the diagram of a unique permutation that is denoted by $\pat_I(\sigma) $.
Such a permutation is called a \emph{pattern} since it is obtained from a larger permutation.
Restricting the one-line notation of $ \sigma $ to the index set $ I $, we obtain an \emph{occurrence} of the pattern $\pat_I(\sigma)$ in $\sigma$. 
For example, if $\sigma=65831247$ and $I=\{2,5,7\}$, the one-line notation restricts to $\sigma(2) \sigma(5) \sigma(7)=514$, which is an occurrence of the pattern induced by $ I $,
$$
	\pat_{\{2,5,7\}}\left(6\mb{5}83\mb{1}2\mb{4}7\right)=312.
$$
The relevant diagrams are depicted in Figure~\ref{fig diagram and pattern}.
The case when $ I $ has size $n-1$ is of particular interest for us.
Here we will say that the pattern $\pat_{ [n] \setminus \{i\} }(\sigma)$ is obtained from $\sigma$
by removing the point $(i,\sigma(i))$.
\smallskip
\begin{figure}[ t ]
    \centering
	\includegraphics[height=35mm]{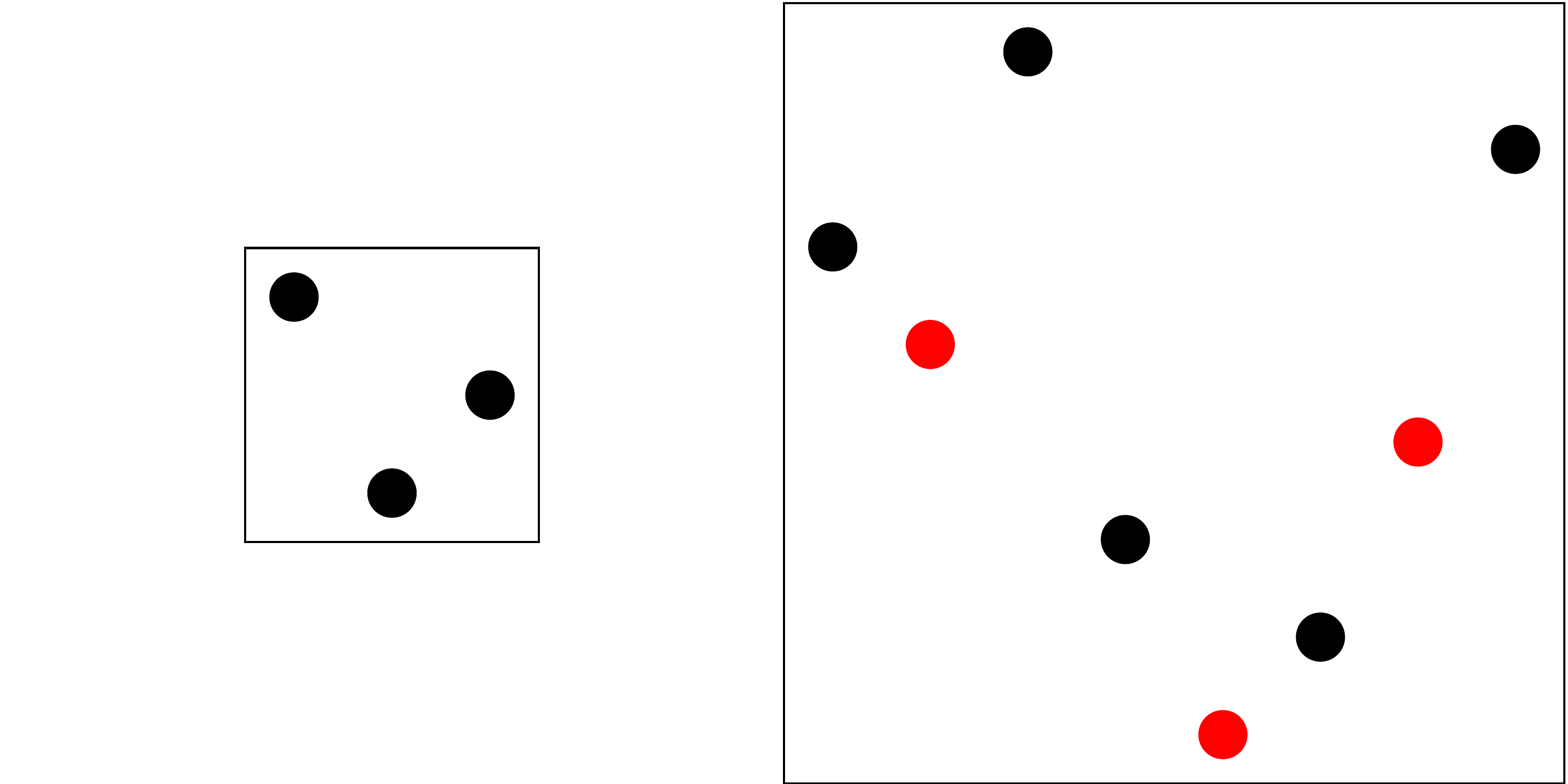}
    \captionsetup{width = 0.81 \textwidth}
    \caption{
    	\emph{Left}. 
		The diagram of the permutation $ \pi = 312 $.
		\emph{Right}. 
		The diagram of the permutation $\sigma=65831247$.
		The red points indicate the \orange{subdiagram induced} by the index set $I=\{2,5,7\}$.
		These points form the diagram of $ \pi $.
    }
    \label{fig diagram and pattern}
\end{figure}

Let $ n \ge 2 $.
Consider the pattern obtained by removing a uniformly random point \orange{from $ \sigma \in \stateSpace_n $}.
The probability that this pattern is equal to $ \pi \in \stateSpace_{ n - 1 } $ is proportional to the
number of occurrences of $\pi$ in $\sigma$, denoted by $\occ(\pi,\sigma)$.
We can express this through a kernel from $ \stateSpace_n $ to $ \stateSpace_{ n - 1 } $ given by
\[ \downKernel_{ n }(\sigma,\pi) =\frac{\occ(\pi,\sigma)}{n}.\]

\noindent
These will play the role of our down-kernels.
\orange{Using the notation $ | \cdot | $ from the general setting}, their products are given by
$$
	\downKernel_{ n, k }( \sigma, \pi )
		=
			\frac{\occ( \pi, \sigma )}{\binom{n}{k}}
		,
			\qquad
			| \sigma | = n \ge k = | \pi |
		,
$$

\noindent
from which we obtain the density functions
\begin{equation}\label{eq:def_pattern_densities}
	\density_\pi (\sigma)
		=
			\begin{cases}
                \frac{ \occ(\pi,\sigma) }{ \binom{|\sigma|}{|\pi|} },
						&
							| \sigma | \ge | \pi |,
						\\
					0,
						&
							\text{else}.
				\end{cases}
\end{equation}
\noindent
\orange{These functions are known in the literature as} \emph{pattern densities},
\orange{and notably play a central role in the theory of permutons (see next section).}

\subsubsection{Permutons}
\label{ssec:permutons}

Let $\Leb$ denote the Lebesgue measure on $[0,1]$ and $\pi_1$ and $\pi_2$ the maps from $[0,1]^2$ to $[0,1]$ that project to the first and second coordinates, respectively.
Recall that a measure $\nu$ on $A$ and a measurable map $g$ from $A$ to $B$ give rise to a push-forward measure $g_\#\nu$ on $B$ defined by $g_\#\nu(C)=\nu(g^{-1}(C))$.

By definition, a \emph{permuton} is a probability measure $\mu$ on the unit square $[0,1]^2$
whose projections to the horizontal and vertical axes are both uniform:~$(\pi_1)_\# \mu=(\pi_2)_\# \mu=\Leb$.
These objects have recently been considered to provide a limit theory for large permutations (see, e.g.,~\cite{MR2995721,bassino2020universal,grubel2022ranks}).
The heart of this theory is that the set of permutons $ \mc P $ can be viewed as a natural completion of the set of permutations $ \stateSpace $. 
Here $ \stateSpace $ is identified as a subset of $ \mc P $ by associating to $ \pi \in \stateSpace_n $ the permuton 
$\mu_{\pi}=\frac1n \sum_{i=1}^n \lambda_{ i,\pi(i) }$,
where $\lambda_{ j, k }$ is the uniform probability measure on the square $[\frac{j-1}{n}, \frac jn ] \times [\frac{k-1}{n}, \frac kn ] $.
Equivalently, $ \mu_\pi $ is the permuton with piecewise constant density 
$
	g( x, y ) = n \, \indicator( \pi(\ceiling{ nx })= \ceiling{ ny } ).
$
One then realizes $ \mc P $ as the completion of the permutons $ \{ \mu_\pi \}_{ \pi \in \stateSpace } $ by equipping it with the Wasserstein metric.
This metric induces the weak topology of measures on $ \mc P $, makes $ \mc P $ a compact space, and is also combinatorially natural.
Indeed, the convergence of permutons under this metric is equivalent to the convergence of their images under a certain family of combinatorial \orange{observables}.
These functions, which we denote by $ \{ \density^o_\pi \}_{ \pi \in \stateSpace } $, are permuton analogues of the pattern densities, and are defined as follows.
The function $\density^o_\pi $ maps a permuton $ \mu $ to the probability that $ | \pi | $ i.i.d.~points with distribution $\mu$ form the diagram of $\pi$ (see \cite[Section 2]{bassino2020universal} for details).
\medskip

In the following short proofs, we demonstrate how our analytic hypotheses \ref{assumption state space approximation}--\ref{assumption continuous density functions are limits} follow immediately from the theory of permutons.
For convenience, a sequence of permutations is said to converge to a permuton $\mu$ if the associated permutons converge to $\mu$.
\begin{proof}[Proof of \ref{assumption state space approximation}]
    Let $\mu$ be a permuton.
    Applying \cite[Lemma 2.2]{bassino2020universal}, there exists a sequence of random permutations $ \{ \sigma_n \}_{ n \ge 1 } $
    that converges a.s.~to $\mu$ and satisfies $ | \sigma_n | = n $.
    In particular, there exists at least one sequence of permutations $ \{ \tau_n \}_{n \ge 1}$ that converges to $ \mu $
    and satisfies $|\tau_n|=n$.
\end{proof}
\begin{proof}[Proof of \ref{assumption continuous density functions are dense}]
    This follows directly from \cite[Proposition 17]{ModGaussian3}, which implies that the \orange{functions $ \{ \density^o_\pi \}_{ \pi \in \stateSpace } $}
    span a dense subalgebra of $C(\mc P)$, the space of \orange{real-valued} continuous functions on $\mc P$.
\end{proof}
\begin{proof}[Proof of \ref{assumption continuous density functions are limits}]
    This follows directly from the estimate in \cite[Lemma 3.5]{MR2995721}.
\end{proof}

\subsection{The up-steps and the commutation relation}
In this section, we recall the up-steps in our permutation model and show that the associated up-down chains fit into our general framework.

It will be convenient to introduce an operation that modifies the diagram of a permutation called \emph{inflation}.
Inflating a point involves replacing it by two {new} points that are consecutive both in position and in value.
Examples are depicted in \cref{fig:ExamplesUpOperatorPermutation}.
\begin{figure}[t]
	\centering
	\includegraphics[height=35mm]{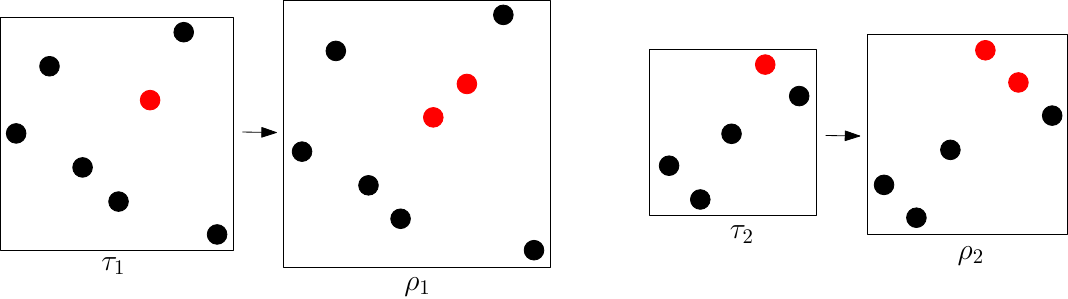}
    \captionsetup{width = 0.90 \textwidth}
    \caption{
	For $ i = 1, 2 $, inflating the red point in $ \tau_i $ into two new red points results in $ \rho_i $.
	On the left/right, the new points are in increasing/decreasing position.
	Since inflating the rightmost point in $ \tau_2 $ also results in $ \rho_2 $, a factor of 2 appears in the second equality of \eqref{eq:example_upKernel_Perm}.
    }
\label{fig:ExamplesUpOperatorPermutation}
\end{figure}

As in the introduction (Section~\ref{ssec:intro-permuton-graphon}), the up-steps that we will consider depend on a fixed parameter $p$ in $(0,1)$ and involve random inflations.
In particular, our up-step chooses a uniformly random point in the diagram of a permutation and inflates it, placing the new points in increasing position with probability $p$ and in decreasing position with probability $1-p$.
For the examples in \cref{fig:ExamplesUpOperatorPermutation}, the associated transition probabilities are
\begin{equation}\label{eq:example_upKernel_Perm}
 \upKernel_7(\tau_1,\rho_1) = \tfrac{p}{7},
\qquad
\upKernel_5(\tau_2,\rho_2)= \tfrac{2(1-p)}{5}.
\end{equation}
\medskip

It should be clear that the sets $ \{ \stateSpace_n \}_{ n \ge 1 } $ satisfy Assumption~\ref{assumption finite state spaces} with a shift of index (as in \cref{sec:previous_examples}).
\orange{In the following result, we show that our up-steps and down-steps satisfy condition \ref{assumption:commutation}}.
\begin{prop}
\label{prop:commutation_permutations}
The \orange{up- and down-steps on permutations} satisfy the following commutation relations:
	\begin{equation}
		\upKernel_n
		\downKernel_{ n + 1 }
			=
				\frac{n-1}{n+1}\,
        		\downKernel_n
        		\upKernel_{ n - 1 }
			+
				\frac{2}{n+1}\,
        		\delta_n,
			\qquad
				n \ge 2
			,
		\label{eq:commutation_permutations}
   	\end{equation}
where $ \delta_n$ denotes the identity kernel on $ \stateSpace_n $.
Consequently, \orange{the associated up-down chains satisfy} Assumptions~\AssumptionsOneFive (with a shift of index) together with the rates
$
	\generatorRates_n
		= 
			n ( n + 1 )
$	
for $n \ge 1$.
\end{prop}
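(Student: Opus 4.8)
The plan is to reduce everything to the commutation relation~\eqref{eq:commutation_permutations}, since once that is established, the ``Consequently'' part follows immediately from \cref{lem:avoid_diag} (to get \ref{assumption:commutation}) and then from \cref{prop up-down chains give intertwining} (to get \AssumptionsOneFive with $c_n = \beta_1^{-1}\cdots\beta_n^{-1}$). Indeed, with $\beta_n = \frac{n-1}{n+1}$, a telescoping computation gives $c_n = \prod_{k=2}^{n} \frac{k+1}{k-1} = \frac{n(n+1)}{2}$ up to the constant $c_1$; choosing the normalization so that $c_n = n(n+1)$ (equivalently $c_1 = 2$, consistent with $c_1/c_2 = 2/6 = 1/3 = \beta_2$) matches the stated rates. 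The index shift is exactly as in the partition examples of \cref{sec:previous_examples}: here state spaces are indexed from $n \ge 1$, and $\stateSpace_1$ is a singleton, so \ref{assumption finite state spaces} holds.

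For the commutation relation itself, by \cref{lem:avoid_diag} it suffices to check, for distinct $\sigma, \tau \in \stateSpace_n$, that
\[
(\upKernel_n \downKernel_{n+1})(\sigma,\tau) = \frac{n-1}{n+1}\,(\downKernel_n \upKernel_{n-1})(\sigma,\tau).
\]
First I would note that both sides vanish unless $\sigma$ and $\tau$ are ``close'': the left side requires that $\tau$ be obtainable from $\sigma$ by one inflation followed by one point-deletion, and the right side requires that $\tau$ be obtainable by one point-deletion followed by one inflation. In either case $\sigma$ and $\tau$ must agree after deleting one point from each, so I would first dispose of the cases where they differ by ``more than one point'' (both sides zero). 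This leaves the generic case: there are indices $i$ (in $\sigma$) and $j$ (in $\tau$) such that $\pat_{[n]\setminus\{i\}}(\sigma) = \pat_{[n]\setminus\{j\}}(\tau) =: \pi \in \stateSpace_{n-1}$, and I would want to enumerate the intermediate objects. For the left side, the intermediate permutation $\rho \in \stateSpace_{n+1}$ is obtained from $\sigma$ by inflating some point and then deleting a point to reach $\tau$; for the right side, the intermediate is $\pi \in \stateSpace_{n-1}$ obtained from $\sigma$ by deleting a point, then inflating to reach $\tau$.

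The key step is a careful bookkeeping of the transition probabilities. On the right, $(\downKernel_n\upKernel_{n-1})(\sigma,\tau) = \downKernel_n(\sigma,\pi)\,\upKernel_{n-1}(\pi,\tau)$, where $\downKernel_n(\sigma,\pi) = \frac{\occ(\pi,\sigma)}{n}$ and $\upKernel_{n-1}(\pi,\tau)$ is $\frac{p}{n-1}$ or $\frac{1-p}{n-1}$ times the number of points of $\pi$ whose inflation (in the appropriate orientation) yields $\tau$. On the left, $(\upKernel_n\downKernel_{n+1})(\sigma,\tau) = \sum_\rho \upKernel_n(\sigma,\rho)\,\downKernel_{n+1}(\rho,\tau)$, a sum over the possible intermediates $\rho$; each $\upKernel_n(\sigma,\rho)$ is $\frac{p}{n}$ or $\frac{1-p}{n}$ times a multiplicity, and $\downKernel_{n+1}(\rho,\tau) = \frac{\occ(\tau,\rho)}{n+1}$. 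The main obstacle is organizing this sum: one must see that the ``two new points'' created on the left, after a deletion, can be matched bijectively with the ``point created by inflation'' on the right, and that the orientation ($p$ versus $1-p$) and the various multiplicity/occurrence counts line up to produce exactly the factor $\frac{n-1}{n+1}$ (with the denominators $n(n+1)$ versus $n(n-1)$ contributing the $\frac{n-1}{n+1}$ and the combinatorial counts cancelling). I expect this is where the real work lies --- a clean way to do it is to introduce the common ``grandparent'' permutation and classify the pairs (inflation site, deletion site) on each side, showing the classifications correspond; a picture analogous to \cref{fig:transition_CRP} or \cref{fig:Ordered_CRP} would make the bijection transparent. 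The off-diagonal reduction via \cref{lem:avoid_diag} is what makes this tractable, since it avoids the combinatorially messy diagonal term (the $\frac{2}{n+1}\delta_n$), which would otherwise require summing over all ways an inflation-then-deletion returns to $\sigma$.
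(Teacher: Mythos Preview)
Your high-level structure is right: reduce to the off-diagonal via \cref{lem:avoid_diag}, then invoke \cref{prop up-down chains give intertwining} for the consequence (and your rate check $c_{n-1}/c_n=(n-1)/(n+1)$ is fine). But your plan for the off-diagonal identity is more tangled than needed, and there is a slip: you write $(\downKernel_n\upKernel_{n-1})(\sigma,\tau)=\downKernel_n(\sigma,\pi)\,\upKernel_{n-1}(\pi,\tau)$ with a single intermediate $\pi$, whereas for permutations (unlike partitions, where $\tau=\lambda\cap\mu$ is unique) there can be several such $\pi$, and likewise several $\rho$ on the other side. Your proposed bijection via a ``grandparent'' would eventually sort this out, but you correctly flag it as ``where the real work lies'' and do not carry it out.

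The paper sidesteps this bookkeeping entirely. Rather than enumerating intermediate permutations, it parametrizes \emph{both} sides by the same set: ordered pairs $(i,j)$ of distinct points of $\sigma$, where $i$ is the point to inflate and $j$ the point to remove. The key observation is that since $\tau\neq\sigma$, in an up-down step the removed point cannot be one of the two freshly created ones (else we would return to $\sigma$), so it is naturally identified with a point $j\neq i$ of $\sigma$; moreover the result $\sigma_{\nearrow i}^{\setminus j}$ (or $\sigma_{\searrow i}^{\setminus j}$) is the same regardless of the order of the two operations. Hence
\[
(\upKernel_n\downKernel_{n+1})(\sigma,\tau)=\sum_{i\neq j}\tfrac{1}{n(n+1)}\big(p\,\indicator(\tau=\sigma_{\nearrow i}^{\setminus j})+(1-p)\,\indicator(\tau=\sigma_{\searrow i}^{\setminus j})\big),
\]
while $(\downKernel_n\upKernel_{n-1})(\sigma,\tau)$ is the identical sum with weight $\tfrac{1}{n(n-1)}$. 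The factor $\tfrac{n-1}{n+1}$ drops out immediately, with no multiplicity matching required.
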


\begin{proof}
    Recall from \cref{lem:avoid_diag} that it suffices to establish the relation for each off-diagonal pair.
    Let then $ \sigma $ and $ \tau $ be distinct permutations of $ [ n ] $.
    
	\orange{We first compute $( \upKernel_n \downKernel_{n + 1} )( \sigma, \tau )$, the probability that an up-down step from $ \sigma $ results in $ \tau $.
	Recall that an up-down step from $ \sigma $}
	chooses a uniformly random point in $\sigma$,
	\orange{inflates it to obtain a new permutation $\rho$}, and
	removes a uniformly random point from $\rho$.
	\orange{For this to result in $ \tau $, we cannot remove a new point in $ \rho $.}
	Therefore, we can view the point that we remove in $ \rho $ as a point in $ \sigma $ that is different from the point that we inflate.

	Let us consider the case when $ ( i, \sigma(i)) $ and $ ( j, \sigma(j)) $ are the distinct points in $ \sigma $ that the up-down step will inflate and remove, respectively.
	Here there are only two possible outcomes for the final permutation, which correspond to placing the new points in $ \rho $ in increasing and decreasing position.
	Denote these outcomes by $\sigma_{\nearrow i}^{\setminus j}$ and $\sigma_{\searrow i}^{\setminus j}$, respectively. 
	Summing over all of the equiprobable cases, we obtain
    $$
    	( \upKernel_n \downKernel_{n + 1} )( \sigma, \tau )
    		=
    			\sum_{ 1 \le i \neq j \le n }
    				\frac{1}{ n ( n + 1 ) } 
    				\left( p \indicator\big( \tau = \sigma_{\nearrow i}^{\setminus j} \big)
    				+
    				(1-p) 
					\indicator\big( \tau = \sigma_{\searrow i}^{\setminus j} \big)
					\right)
    		.
    $$
	We will compute $( \downKernel_n \upKernel_{n - 1} )( \sigma, \tau )$ similarly.
	A down-up step from $ \sigma $ involves
		removing a uniformly random point in $\sigma$
		and inflating a uniformly random point in the resulting permutation.
	The point that we inflate can immediately be viewed as a point in $ \sigma $ that is different from the point that we remove.
	Let us consider the case when $ ( i, \sigma(i)) $ and $ ( j, \sigma(j)) $ are the distinct points in $ \sigma $ that the down-up step will inflate and remove, respectively.
Observe that the final permutation is not affected by the order in which these operations occur.
In particular, its distribution is the same as it was in the above up-down case.
	We can therefore proceed as before, modifying only the probability of choosing each pair $ ( i, j ) $, to obtain
    $$
    	\orange{( \downKernel_n \upKernel_{n - 1} )( \sigma, \tau )}
    		=
    			\sum_{ 1 \le i \neq j \le n }
    				\frac{1}{ n ( n - 1 ) } 
    				\left( p \indicator\big( \tau = \sigma_{\nearrow i}^{\setminus j} \big)
    				+
    				(1-p) 
					\indicator\big( \tau = \sigma_{\searrow i}^{\setminus j} \big)
					\right)
    		.
    $$

	Comparing the two identities above, we find that the commutation relation holds on the off-diagonal.
    Applying \cref{lem:avoid_diag} then establishes \eqref{eq:commutation_permutations}, and the second claim follows immediately from \cref{prop up-down chains give intertwining}. 
	Notice that the rates
	$
	\generatorRates_n
		= 
			n ( n + 1 )
	$
	indeed satisfy $\frac{\generatorRates_{n-1}}{\generatorRates_{n}}=\beta_n=\frac{n-1}{n+1}$.
\end{proof}

\subsection{Analysis of the discrete chain}
We have shown that our permutation-valued chains fit into our general framework.
For the remainder of the section, we discuss how that framework specializes into the current setting (using the same notation as before).
This subsection focuses on the discrete chains, while the next one \orange{concerns the} scaling limit.

\subsubsection{Stationary distributions}
The first result we discuss is \cref{prop properties of stationary measures}, which identified the stationary distributions of the up-down chains.
In this setting, these are the distributions of the so-called {\em random recursive separable permutations (of parameter $ p $)} introduced and studied in our previous paper \cite{vfkrPermuton}.
Indeed, the random recursive separable permutation of size $ n $ is defined to be the permutation obtained by performing $ n -1 $ up-steps starting from the unique permutation of size 1, which coincides with the description in (\ref{defn stationary measures}) \orange{(with a shift of index)}.
We denote these stationary distributions by $ \{ \upDownDist_n \}_{ n \ge 1 } $, as in the general setting.
An explicit combinatorial formula for these distributions (involving separation trees) can be found in \cite[Proposition 1.7]{vfkrPermuton}.
We record the above observation in the following result.
\begin{prop}
	\label{prop stationary dist of perm chains}
	The unique stationary measure of $ \upDownChain_n $, denoted by $ \upDownDist_n $, %
	 is the law of the random recursive separable permutation of size $ n $ and parameter $ p $.
\end{prop}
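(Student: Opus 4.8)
The plan is to observe that this proposition is essentially a bookkeeping statement: it identifies the abstract stationary measure $\upDownDist_n$ from the general framework (formula \eqref{defn stationary measures}) with a previously studied combinatorial object. First I would invoke \cref{prop properties of stationary measures}\ref{up down stationary measures}, which already guarantees that $\upDownChain_n$ has a unique stationary distribution, namely $\upDownDist_n = \upKernel_0 \cdots \upKernel_{n-1}(\zeroVertex, \,\cdot\,)$, where here $\zeroVertex$ denotes the unique permutation of size $1$ (keeping in mind the shift of index that was noted when verifying \ref{assumption finite state spaces} for this example). So the only thing left to prove is that this particular distribution on $\stateSpace_n$ coincides with the law of the random recursive separable permutation of size $n$ and parameter $p$.

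The key step is then to recall the definition of the random recursive separable permutation from \cite{vfkrPermuton}: it is the random permutation obtained by starting from the unique permutation of size $1$ and applying $n-1$ successive up-steps, where each up-step chooses a uniformly random point of the current diagram and inflates it, placing the two new points in increasing position with probability $p$ and decreasing position with probability $1-p$. But this is exactly the description of the composition of kernels $\upKernel_1 \upKernel_2 \cdots \upKernel_{n-1}$ (with the index shift), applied to the starting state $\zeroVertex$: each $\upKernel_k$ is precisely the transition kernel of one such random inflation step, as spelled out just before \cref{prop:commutation_permutations}. Hence $\upDownDist_n$, which is the law of the state reached after these $n-1$ up-steps, is by construction the law of the random recursive separable permutation of size $n$ and parameter $p$.

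I would conclude by noting that uniqueness of the stationary measure is already part of \cref{prop properties of stationary measures}, so nothing further is needed; if desired, one may also cite \cite[Proposition 1.7]{vfkrPermuton} for the explicit combinatorial (separation-tree) formula for $\upDownDist_n$, but this is not required for the statement. I do not expect any genuine obstacle here: the proof is a one-line identification once the definitions are lined up, and the only subtlety worth flagging explicitly is the shift of index between the general framework (state spaces indexed from $n\ge 0$) and the permutation setting (permutations of size $n\ge 1$, with the size-$1$ permutation playing the role of $\zeroVertex$).
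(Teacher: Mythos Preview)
Your proposal is correct and matches the paper's approach essentially verbatim: the paper also treats this as a bookkeeping observation, invoking \cref{prop properties of stationary measures} and \eqref{defn stationary measures} (with the shift of index) and noting that the random recursive separable permutation is by definition the result of $n-1$ successive up-steps from the size-$1$ permutation.
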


\subsubsection{Asymptotics of pattern densities}
\label{section asymptotics of up-down pattern densities}

We now consider \cref{prop density estimate}, which describes the asymptotic behavior of various statistics of the chains.
Here, we are primarily interested in pattern densities, which play an important role.
The estimate provided by our theory is as follows:
for every pattern $\pi$, there exists a constant $B_\pi$
such that
\begin{equation*}
		\bigg|
    		\mbb E
    		\left[
    				\density_\pi
    				\big( 
    					\upDownChain_n ( m )
    				\big)
    		\right]
		-
        \upDownDist_{ |\pi| }(\pi)
		\bigg|
			\le
        		B_\pi
                \Big( 1 - \tfrac{j(j-1)}{n(n+1)} \Big)^m
             ,
             	\qquad
				n \ge | \pi |, 
				\, 
				m \ge 0
			,
              \end{equation*}
where $j \ge 2$ is the smallest size of a permutation $\rho \ne 1$ such that $\eigenfunction_\rho$ appears
in the $\eigenfunction$-expansion of $\density_\pi$.

We remark that the parameter $j$ above has a combinatorial description.
For this, we introduce some terminology.
Two elements which are consecutive both in position
and in value in a permutation form an {\em adjacency}.
Replacing these two elements by a single one leads to a smaller permutation -- this operation will be referred to as {\em shrinking the adjacency}.
This is the reverse of the \orange{inflation} operation used \orange{in the up-step}.
Finally, we introduce the {\em nonseparable core} of a permutation $\pi$, denoted by $\mathrm{ns}(\pi)$, as the permutation obtained by repeatedly shrinking all adjacencies in $\pi$ (the resulting permutation is independent of the order in which we shrink).
An example is given in Figure~\ref{fig:core}.

\orange{A} permutation is called {\em separable} if it can be obtained from the permutation $1$ by repeated inflations, or equivalently, if its nonseparable core is $1$.
\orange{Separable} permutations are standard objects in the literature\footnote{
For background on separable permutations, see, e.g., \cite{bassino2018BrownianSeparable}}, \orange{but to our knowledge, the notion of a {\em nonseparable core} is new.}
\begin{figure}
    \centering
    \includegraphics[scale=.9]{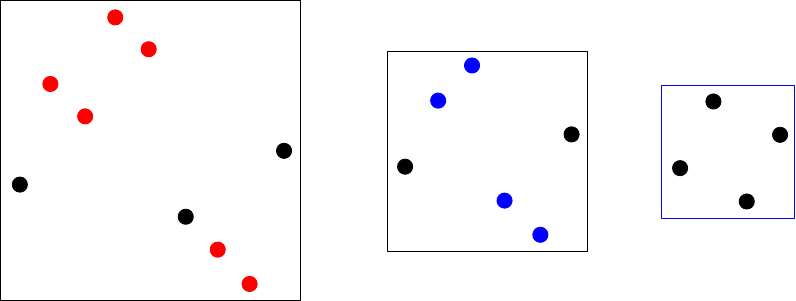}
    \captionsetup{width = 0.9 \textwidth}
    \caption{
        \emph{Left}. 
        The diagram of a permutation $\pi$. 
        Some set of disjoint adjacencies have been marked in red. 
        \emph{Middle}.
        The permutation obtained by shrinking those adjacencies. 
        Some new set of disjoint adjacencies have been marked in blue. 
        \emph{Right}.
        The permutation obtained by shrinking those adjacencies.
        This permutation does not contain any adjacency -- it is the nonseparable core of $\pi$.
    }
    \label{fig:core}
\end{figure}
\begin{lemma}
  If $\pi$ is separable, then $ \upDownDist_{ | \pi | }( \pi ) > 0 $ and $j=2$. 
  If $\pi$ is not separable, then $ \upDownDist_{ | \pi | }( \pi ) = 0 $ and $j$ is the size of the nonseparable core of $\pi$.
\end{lemma}
\begin{proof}
  The claims regarding the stationary distributions follow immediately from the description in (\ref{defn stationary measures}) and the definition of a separable permutation.
  To address the other claims, let us recall that the $\eigenfunction$-expansion of $\density_\pi$ can be found in \cref{prop expansion of density in eigenbasis}:
    $$
    	\density_\pi
    		=
        		\sum_{ | \sigma | \le | \pi | }
        			\upKernel_{ | \sigma |, | \pi | }( \sigma, \pi )
    				\,
    				\coeffDensityInEigBasis_{ | \sigma |, | \pi | }
    				\,
        			\eigenfunction_\sigma
			,
    $$
    where 
    $
    	\coeffDensityInEigBasis_{ i, k }
    		=
    			\prod_{ m = i }^{ k - 1 }
    				\frac{
    						\generatorRates_m
    					}{
    						\generatorRates_m - \generatorRates_{ i - 1 }
    					}$.
   Since $\generatorRates_m$ is positive and increasing in $m$, the coefficients $\coeffDensityInEigBasis_{ | \sigma |, | \pi | }$
   are all positive. Hence, the quantity $j$ we are looking for is the smallest size
   of a permutation $\rho \ne 1$ such that
   $\upKernel_{ | \rho |, | \pi | }( \rho, \pi )$ is nonzero.
   When $\pi$ is separable, it is possible to obtain it from either $\rho=12$ or $\rho=21$ by repeated inflations. 
   Therefore, in this case, $j=2$.

   Let us suppose that $\pi$ is nonseparable.
   Since shrinking and inflating are opposite operations, $ \pi $ can be obtained from its nonseparable core by repeated inflations.
Since $ \mathrm{ns}( \pi ) \neq 1 $, this implies that $j \le |\mathrm{ns}(\pi)|$. 
   Suppose now that $ \rho \neq 1 $ and $\upKernel_{ | \rho |, | \pi | }( \rho, \pi )$ is nonzero.
   Using again the fact that shrinking and inflating are opposite operations, we find that $\rho$ can be obtained from $\pi$ by shrinking adjacencies. 
   Shrinking any remaining adjacencies in $\rho$ must then yield $\mathrm{ns}(\pi)$, giving us that $|\rho| \ge |\mathrm{ns}(\pi)|$. 
   Since this holds for all $ \rho $, it follows that $j \ge |\mathrm{ns}(\pi)|$.
\end{proof}

\subsubsection{Separation distance}
\label{ssec:separation_permutations}

Here we specialize \cref{thm:separation_distance,prop convergence of sep dist} and prove part of \cref{prop:separation-intro-permutations}.
The remainder of the proof will be given in \cref{section sep dist of permuton diff}.

To begin, note that the identity permutation $ \sigma = 1 \cdots n $ and the reverse permutation $\tau = n \cdots 1$ are at distance $n-1$ for the up-down kernel $\upDownKernel_n$.
Consequently, Assumption \ref{assumption distant elements} holds \orange{(with a shift of index)} and \cref{thm:separation_distance} applies.
Recalling that $\generatorRates_i=i(i+1)$ \orange{and that there is a shift of index}, we obtain
  \[\sepDist_n(m)
  	= 
		\sum_{i=1}^{n-1} 
		\bigg(1-\frac{i(i+1)}{n(n+1)}\bigg)^{m} 
		\prod_{1 \le j \le n-1 \atop j \ne i} 
		\frac{j(j+1)}{j(j+1)-i(i+1)}
	,
		\qquad
		n \ge 2,
		\,
		m \ge 0
	.\] 
Using the identity $j(j+1)-i(i+1)=(j-i)(i+j+1)$, we have for $i \le n-1$
  \begin{multline*}
    \prod_{1 \le j \le n-1 \atop j \ne i} \frac{j(j+1)}{j(j+1)-i(i+1)}
    = \frac{\prod_{1 \le j \le n-1} j(j+1)}{i(i+1)} \cdot \prod_{1 \le j \le n-1 \atop j \ne i} \frac{1}{j-i} \cdot \prod_{1 \le j \le n-1 \atop j \ne i} \frac{1}{j+i+1} \\
  = \frac{(n-1)!\, n!}{i(i+1)} \cdot \frac{(-1)^{i-1}}{(i-1)!(n-1-i)!} \cdot \frac{(i+1)! (2i+1)}{(n+i)!}
  =\frac{(-1)^{i-1}\, (2i+1)\, (n-1)!\, n!}{(n-1-i)!\, (n+i)!}.\qedhere
\end{multline*}
This establishes \eqref{eq:separation-intro-permutations}.
Computing the limit
\begin{align*}
    \prod_{j = 1 \atop j \ne i}^\infty 
    	\frac{\generatorRates_{j}}{\generatorRates_{j}-\generatorRates_{i}}
			=
                \prod_{j = 1 \atop j \ne i}^\infty 
                	\frac{j ( j + 1 )}{ j ( j + 1 ) - i ( i + 1 ) }
			& =
                \lim_{ n \to \infty }
				(-1)^{i-1}\, ( 2 i + 1 )
				\frac{ (n-1)!\, n!}{(n-1-i)!\, (n+i)!}
			\\
			& =
				(-1)^{i-1}\, ( 2 i + 1 )
\end{align*}

\noindent
and applying \cref{prop convergence of sep dist} then yields the convergence to the series in \eqref{eq:separation-intro-asymptotics}.

\subsection{The limiting diffusion}

Let us turn our attention to our convergence result \cref{corol:ConvProcesses}.
This result specializes to the following theorem, which is the permutation half of \cref{thm:intro-scaling-limit-examples}.

\begin{theorem}
  \label{thm:CvChainPermuton}
  Let $\{ \si_{n,0} \}_{ n \ge 1 } $ be a sequence of (random) permutations converging to a (random) permuton $\mu_0$.
  For all $n$, let $\upDownChain_n$ be the up-down chain with transition kernel $ \upKernel_n \downKernel_{ n + 1 } $ and initial distribution $\Law( \si_{n,0} )$. 
  Then there exists a Feller process $\limitProcess$ in $ \permutonSpace $ with initial distribution $\Law(\mu_0)$ and sample paths in $D([0,\infty), \permutonSpace)$ satisfying the path convergence
	$$
		\big(
			\inclusion( \upDownChain_n ( \floor{ n^2 t } ) )
		\big)_{ t \ge 0 }
			\Longrightarrow
        		\big(
        				\limitProcess( t )
        		\big)_{ t \ge 0 }
			.
	$$
	
	\noindent
	Here $ \inclusion $ denotes the inclusion from the set of permutations to the space of permutons.
\end{theorem}

\begin{remark}
Using the Wasserstein metric on the space of permutons, it can be shown that removing or inflating a point in a permutation $\pi$ moves the associated permuton $\mu_\pi$ by at most $\frac{2}{n}$. 
Hence, the assumption of \cref{prop:path_continuity} is satisfied and $\limitProcess$ is almost surely continuous.
\end{remark}

For the remainder of \cref{section permutation example}, we \orange{study the limiting diffusion $F$}.
\subsubsection{Stationary distribution} 

We proceed by considering \cref{prop:Stationary Measure of Limit Process}, which showed that the limiting process of the up-down chains is ergodic and describes its unique stationary distribution in various forms.
In particular, the description in \cref{prop:Stationary Measure of Limit Process}\ref{claim limit of stationary distributions} tells us that this distribution is the weak limit of the stationary distributions of the up-down chains (or rather of their push-forwards on the space of permutons).
Using the description in \cref{prop stationary dist of perm chains}, it follows that this limit was identified in \cite{vfkrPermuton} as the {\em recursive separable permuton}, denoted by $\murec_p$.
Therefore, we have the following result.
\begin{prop}
	The unique stationary measure of $ F $ is the law of $\murec_p$.
\end{prop}

\subsubsection{Asymptotics of pattern densities}
\label{ssec:asymp_pattern_densities_diffusion}
We now specialize \cref{prop densities of limit process}, which describes the asymptotic behavior of various statistics of the limiting process.
Again, we are primarily interested in pattern densities.
Following the arguments in \cref{section asymptotics of up-down pattern densities}, we find that our estimate takes two forms, depending on whether or not the pattern is separable.
\begin{prop}
    \label{prop:asymptotic_densities_separable}

	Let $\pi$ be a pattern.
	Then as $ t \to \infty $, we have the estimate
	$$
		\mbb E[\density^o_\pi( \limitProcess(t))]
			=
				\begin{cases}
					\mbb E[\density^o_\pi(  \murec_p ) ] + O(e^{-2t}),
						&	
							\qquad
							\pi \text{ is separable},
						\\
        			O\left( 
        					e^{ - t \, j(j-1) } 
        			\right),
						&	
							\qquad
							\pi \text{ is nonseparable}
						,
				\end{cases}
	$$

	\noindent
	where $j$ is the size of the nonseparable core of $\pi$.
\end{prop}

\subsubsection{Separation distance}
\label{section sep dist of permuton diff}
In this section, we specialize \cref{prop convergence of sep dist,theorem discrete and continuous sep dist} and prove the remainder of \cref{prop:separation-intro-permutations}.

To begin, recall from \cref{ssec:separation_permutations} that \cref{prop convergence of sep dist} applies in this setting.
The hypotheses of \cref{theorem discrete and continuous sep dist} are verified by taking the permutations $ r_n = 1 \cdots n $, which are at distance $ n - 1 $ from the permutations $ s_n = 1 \cdots n $, and clearly satisfy $p^{\downarrow}_{n,n-1}(r_n,r_{n-1})=1$ for $n \ge \orange{2}$. 
\orange{We can therefore identify the limit
\[
\sepDist_\limitProcess(t) 
= \lim_{n \to +\infty}\sepDist_n^*( t )
= \lim_{n \to +\infty}  \sepDist_n(\floor{\generatorRates_n t})
,
	\qquad t > 0
.
\]}

Most of \orange{the properties of $ \sepDist_\limitProcess $ follow from} this expression and some \orange{properties of the Dedekind eta function.}
\orange{Indeed}, the product representation \orange{in item~\ref{item product form of sep dist}} is an immediate consequence of an identity due to Jacobi (see e.g.~(1.1) in \cite{leininger99}).
\orange{Similarly}, the \orange{symmetry in item~\ref{item:symmetry_DeltaF} can be obtained from the following well-known identity} (see, e.g.~\cite[Chapter 10 Proposition 1.9]{etaReference})\orange{:}
$$
	\eta( z )
	\sqrt{ z / i }
		=
			\eta( -1/z)
		,
			\qquad
			\text{Im } z > 0
		.
$$
\orange{The} asymptotic description \orange{in item~\ref{item:asymp_DeltaF_Infty}} is simply a particular case of the second claim in \cref{prop convergence of sep dist}.
\orange{Applying the symmetry identity then establishes the} asymptotic description \orange{in item~\ref{item:asymp_DeltaF_Zero}}.

\orange{Moving on to item~\ref{item:DeltaF_CInfty}, the regularity on $(0,\infty)$} is immediate
since the series converges absolutely uniformly on $(t_0,\infty)$ \orange{for} any $t_0>0$.
It \orange{only remains then} to verify \orange{the behavior} at $0$, that $ 
    	\frac{d^k}{dt^k} 
		\sepDist_\limitProcess( t ) \to 0 
	$ as $ t \to 0 $ for $ k \ge 1 $.
For this, we set 
$
	v( t )
		=
			\prod_{ j = 1 }^\infty
				( 1 - e^{ - 2 j t } )
$
and first analyze the behavior of
$$
	V( t )
		=
			\ln v( t )
		=
			\sum_{ j = 1 }^\infty
				\ln( 1 - e^{ - 2 j t } )
		,
			\qquad
			t > 0
		.
$$

\noindent
We will make frequent use of the family of polylogarithm functions
\begin{equation}
	\label{series form of polylog}
	\polylog_k( z )
		=
			\sum_{ j = 1 }^\infty
				\frac{ z^j }{ j^k }
		,
			\qquad
			| z | < 1,
			\,
			k = 1, 0, -1, \ldots
		,
\end{equation}

\noindent
which satisfy the recursion
\begin{equation}
	\label{recursion for polylog}
	\polylog_1( z )
		=
			-\ln ( 1 - z )
		,
			\qquad
	\polylog_{ k }( z )
		=
			z \polylog_{ k + 1 }'( z )
		,
			\qquad
			k \le 0
		,
\end{equation}

\noindent
and admit the representations
\begin{equation}
	\label{rational form of polylog}
	\polylog_k( z )
		=
			\frac{
				z P_k( z )
			}{
				( 1 - z )^{ 1 - k }
			}
		,
			\qquad
			k \le 0
		,
\end{equation}

\noindent
for some polynomials $ P_k $.

\begin{prop}
\label{prop results for V}
The following statements hold:
\begin{enumerate}[ label = (\roman*) ]
	\item
	\label{claim series for V derivatives}
    $
    	V^{(m)}( t )
    		=
    			(-1)^{ m + 1 }
    			2^m
    			\sum_{ j = 1 }^\infty
    				j^m
    				\polylog_{(1-m)} ( e^{ - 2 j t } )
    $
    for $ t > 0 $ and $ m \ge 0 $,

	\item
	\label{claim uniform convergence of series}
	the above series converge uniformly on closed subsets of $ ( 0, \infty ) $,
	and
	
	\item
	\label{claim asymptotics of derivatives}
	$ V^{(m)}( t ) = O( t^{ - 2 m - 1 } ) $ as $ t \to 0^+ $ for $ m \ge 1 $.
\end{enumerate}
\end{prop}

\begin{proof}

We first address \ref{claim uniform convergence of series} for $ m \ge 1 $.
Let $ m \ge 1 $ and $ C $ be a closed subset of $ ( 0, \infty ) $.
Since $ \inf C > 0 $, the points $ \{ e^{ - 2 j t } \}_{ j \ge 1, t \in C } $ are bounded away from $ z = 1 $.
The form in (\ref{rational form of polylog}) then implies that
$
	| \polylog_{(1-m)}( e^{ - 2 j t } ) |
		\le
			B e^{ - 2 j t }
$
for all $ j \ge 1 $, $ t \in C $, and some constant $ B $ that does not depend on $ j $ or $ t $.
Therefore, we have the following tail bound:
$$
	\left|
	\sum_{ j = n }^\infty
		j^m
		\polylog_{(1-m)} ( e^{ - 2 j t } )
	\right|
		\le
        	B
			\sum_{ j = n }^\infty
        		j^m
        		e^{ - 2 j t }
		,
			\qquad
			n \ge 1,
			\,
			t \in C
		.
$$

\noindent
Observe now that the second sum is exactly the tail of the series for $ \polylog_{ -m}( e^{ - 2 t } ) $ (see (\ref{series form of polylog})), and since the series in (\ref{series form of polylog}) converges uniformly on compact subsets of the unit disk, the above sums converge uniformly to zero on $ C $.

Applying the recursion in (\ref{recursion for polylog}), we see that the series appearing in \ref{claim series for V derivatives} can be obtained from each other by differentiating term-by-term.
Together with the fact that the $ m = 0 $ series is already known to converge to $ V^{ (0) } = V $, this implies that these series do represent the derivatives of $ V $ and that the series for $ V $ is uniformly convergent on closed subsets of $ ( 0, \infty ) $.
This establishes \ref{claim series for V derivatives} and the $ m = 0 $ case in \ref{claim uniform convergence of series}.

Turning our attention now to \ref{claim asymptotics of derivatives}, we fix $ t > 0 $.
Since the points $ \{ e^{ - 2 j t } \}_{ j \ge 1} $ lie in $ [ 0, 1 ] $, the form in (\ref{rational form of polylog}) implies that
$
	| \polylog_{(1-m)}( e^{ - 2 j t } ) |
		\le
			B_m 
			e^{ - 2 j t } 
			( 1 - e^{ - 2 t } )^{ -m }
$
for all $ j, m \ge 1 $ and some constant $ B_m $ depending only on $ m $.
Combining this with the representations in \ref{claim series for V derivatives} and (\ref{series form of polylog}), we obtain the following bounds:
\begin{align*}
	\left|
    	V^{(m)}( t )
	\right|
		& \le
			\frac{ 
    			2^m
            	B_m
			}{
				( 1 - e^{ - 2 t } )^{ m }
			}
			\sum_{ j = 1 }^\infty
        		j^m
				e^{ - 2 j t } 
		\\
		& =
			\frac{ 
    			2^m
            	B_m
			}{
				( 1 - e^{ - 2 t } )^{ m }
			}
			\polylog_{ -m }( e^{ - 2 t } )
		\\
		& \le
			\frac{ 
    			2^m
            	B_m
			}{
				( 1 - e^{ - 2 t } )^{ m }
			}
			\frac{ 
    			B_{ m + 1 } 
    			e^{ - 2 t } 
			}{
    			( 1 - e^{ - 2 t } )^{ m + 1 }
			}
		\\
		& \le
			\frac{ 
    			2^m
            	B_m
    			B_{ m + 1 } 
			}{
				( 1 - e^{ - 2 t } )^{ 2 m + 1 }
			}
\end{align*}

\noindent
for $ t > 0 $ and $ m \ge 1 $.
Observing that $ e^{ - 2 t } \le 1 - t $ as $ t \to 0^+ $ establishes \ref{claim asymptotics of derivatives}.
\end{proof}

We can now describe the behavior of $ v $ and its derivatives.
This result, together with the relation 
$ 
	\sepDist_\limitProcess( t )
		=
			1 - v( t )^3
		,
$
establishes the remaining limits in \cref{prop:separation-intro-permutations}.

\begin{prop}
For $ m \ge 0 $, we have that
$
	\lim_{ t \to 0^+ } 
		v^{(m)}( t )
			=
				0
			.
$
\end{prop}

\begin{proof}

	Starting from the identity $ v' = v V' $, we can establish inductively that the $ v^{(m)} $ have the form 
	$$
		v^{(m)} 
			= 
				v R_m( V', V'', \ldots, V^{(m)} )
			,
				\qquad
				m \ge 1
	$$
	
	\noindent
	for some polynomials $ R_m $.
	Estimating these polynomial terms using Proposition \ref{prop results for V}\ref{claim asymptotics of derivatives}, we have that for each $ m \ge 0 $, there is some $ n_m \ge 1 $ such that
	$$
		v^{(m)}( t )
			= 
				v( t ) O( t^{-n_m } )
			\quad
				\text{ as } t \to 0^+
			.
	$$

	\noindent
	Observing that for any $ n \ge 1 $, we have the convergence
	\begin{align*}
		\left| 
			\frac{ v( t ) }{ t^n }
		\right|
			=
    			\prod_{ j = n + 1 }^\infty
    				( 1 - e^{ - 2 j t } )
    			\prod_{ j = 1 }^n
        			\frac{ 
        				1 - e^{ - 2 j t }
    				}{ t }
			\le
				( 1 - e^{ - 2 t ( n + 1 ) } )
    			\prod_{ j = 1 }^n
        			\frac{ 
        				1 - e^{ - 2 j t }
    				}{ t }
				\xrightarrow[ t \to 0^+ ]{}
    			0
	\end{align*}
	
	\noindent
	concludes the proof.
\end{proof}

\subsubsection{A semi-discrete approximation of $F$}
\label{ssec:semi-discrete-permutations}
In this section, we construct another family of Markov chains that converges to $ \limitProcess $.
Unlike the up-down chains though, these chains will be {\em semi-discrete} -- their state space will be the continuous space of permutons.
\orange{Given} $\eps>0$ and $s$ in $[0,1]$, we define the following function \orange{on $[0,1]$:}
\[ \varphi^{s,\eps} (x)= \begin{cases} (1-\eps)x, & x \le s,\\
(1-\eps) x +\eps, & x > s.\end{cases}\]
In particular, if $(x_0,y_0)$ is a point in $[0,1]^2$,
the pair $(\varphi^{x_0,\eps}, \varphi^{y_0,\eps})$ defines a function from $[0,1]^2$ to itself
(acting componentwise).
We let
$\delta^{(\eps),\nearrow}_{(x_0,y_0)}$ (resp.~$\delta^{(\eps),\searrow}_{(x_0,y_0)}$)
be the uniform measure of mass $1$ on the increasing (resp. decreasing) diagonal
of the square $$[(1-\eps)x_0,(1-\eps) x_0 +\eps] \times [(1-\eps)y_0,(1-\eps) y_0 +\eps].$$
The increasing (resp.~decreasing) inflation of a measure $\mu$ at $(x_0,y_0)$ is then
\[\textstyle \inf^{\eps,\bullet}_{(x_0,y_0)} (\mu) \ldef  
(1-\eps) (\varphi^{x_0,\eps}, \varphi^{y_0,\eps})_{\#} (\mu)
+ \eps\, \delta^{(\eps),\bullet}_{(x_0,y_0)},\]
where $\bullet$ is $\nearrow$ (resp.~$\searrow$) for an increasing (resp.~decreasing) inflation.
This definition is illustrated \orange{in} Figure~\ref{fig:EpsilonInflation}.
Finally, we define
\[\textrm{Inf}^{\, \eps}(\mu) = \textrm{inf}^{\, \eps,B}_{(X_0,Y_0)}(\mu),\]
where $(X_0,Y_0)$ has distribution $\mu$, and $B$ is independent from $(X_0,Y_0)$,
and takes value $\nearrow$ with probability $p$ and $\searrow$ with probability $1-p$.
We note that $\textrm{Inf}^{\, \eps}(\mu)$ is a random measure, whose distribution
is determined by $\mu$ and $p$.
\medskip
\begin{figure}
\[\includegraphics{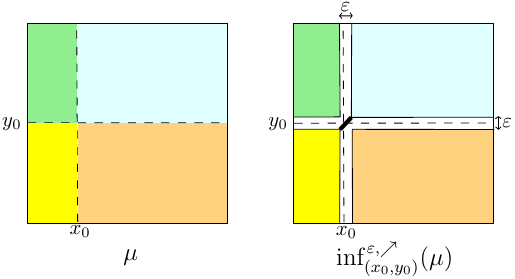}\]
\caption{Schematic representation of the map $\textrm{inf}_{(x_0,y_0)}^{\, \eps,\nearrow}$ on permutons. 
The term $ (\varphi^{x_0,\eps}, \varphi^{y_0,\eps})_{\#} (\mu)$ is obtained by splitting $\mu$ into four parts, and contracting slightly each part in a corner of the square $[0,1]^2$  (see the colored rectangles on the picture).  
The second term, here $\delta^{(\eps),\nearrow}_{(x_0,y_0)}$ is represented
by the bold line segment around $(x_0,y_0)$.}
\label{fig:EpsilonInflation}
\end{figure}

\orange{We can now} define our approximation.
Fix some (possibly random) permuton $\mu_0$,
 \orange{and let} $F_\eps$ be the pure-jump Feller process with initial condition $\mu_0$,
 which jumps at rate $2\eps^{-2}$
from $\mu$ to $\textrm{Inf}^{\, \eps}(\mu)$. In other terms it is the Feller process on $\mathcal P$
with domain $C(\mathcal P)$ and generator
\begin{equation}\label{eq:def_Aeps}
  \pregenerator_\eps f (\mu)= 2 \eps^{-2} \Big( \mathbb E \big[ f(\textrm{Inf}^{\, \eps}(\mu) ) \big] - f(\mu) \Big).
\end{equation}

\orange{We then have the following result, justifying that $F_\eps$ is indeed an approximation of $F$.}
\begin{prop}
  \label{prop:semi_discrete_approx}
$F_\eps$ tends to $F$ in the Skorokhod space $D([0,\infty), \mathcal P)$.
\end{prop}

We start with some notation and a lemma. For a permutation $\pi$ and $m \ge 1$, we let $\mathcal I_m(\pi)$ be the
set of pairs $(\tau,i)$ such that replacing the point $(i,\tau(i))$ by an increasing run of length $m$ (i.e.~$m$ points
which are consecutive both in values and in positions, put in increasing order) yield $\pi$.
We define $\mathcal D_m(\pi)$ similarly using decreasing runs.
\begin{lemma}
  For any permuton $\mu$, pattern $\pi$ of size $k \ge 1$, and $\eps$ in $[0,1]$, we have
  \begin{multline}\label{eq:dpi_InfEps}
  \mathbb E\Big[\density_\pi^o \big( \textrm{Inf}^{\, \eps}(\mu) \big) \Big]
  = (1-\eps)^{k} \density_\pi^o(\mu)
  + p \sum_{m \ge 1} \binom{k}{m} \frac{(1-\eps)^{k-m} \eps^m}{k-m+1} \left( \sum_{(\tau,i) \in \mathcal I_m(\pi)} \density_\tau^o(\mu)  \right)\\
  + (1-p) \sum_{m \ge 1} \binom{k}{m} \frac{(1-\eps)^{k-m} \eps^m}{k-m+1} \left( \sum_{(\tau,i) \in \orange{\mathcal D}_m(\pi)} \density_\tau^o(\mu)  \right).
\end{multline}
  \label{lem:dpi_InfEps}
\end{lemma}
\begin{proof}
  Let $(X_0,Y_0)$ and $B$ be the random variable involved in the construction of $\textrm{Inf}^{\, \eps}(\mu)$.
  Conditionally on these random variables, to evaluate $\density_\pi^o \big( \textrm{Inf}^{\, \eps}(\mu) \big) $, we further take 
  $(X_1,Y_1)$, \dots, $(X_k,Y_k)$ to be i.i.d.~random variables with distribution $\textrm{Inf}^{\, \eps}(\mu)$.
  By definition each $(X_i,Y_i)$ is either distributed with law $(\varphi^{X_0,\eps}, \varphi^{Y_0,\eps})_{\#} (\mu)$ 
  with probability $1-\eps$ or with law $\delta^{(\eps),B}_{(X_0,Y_0)}$ with probability $\eps$.
  We let $M$ be the number of variables $(X_1,Y_1)$, \dots, $(X_k,Y_k)$ having law $\delta^{(\eps),B}_{(X_0,Y_0)}$.
  Clearly, $M$ is a random variable with law $\mathrm{Binomial}(k,\eps)$. We will work conditionally on $M$ and $B$.

  The case $M=0$ occurs with probability $(1-\eps)^k$. In this case $(X_1,Y_1)$, \dots, $(X_k,Y_k)$ are i.i.d. variables
  with law $(\varphi^{X_0,\eps}, \varphi^{Y_0,\eps})_{\#} (\mu)$. Since $\varphi^{X_0,\eps}$ and $\varphi^{Y_0,\eps}$ are increasing functions, the map $(\varphi^{X_0,\eps}, \varphi^{Y_0,\eps})$ does not change the relative order of the coordinates,
  and the probability that $(X_1,Y_1)$, \dots, $(X_k,Y_k)$ form the pattern $\pi$ is the same as for
  i.i.d.~random variable of law $\mu$.
  Hence the probability that $M=0$ and $(X_1,Y_1)$, \dots, $(X_k,Y_k)$ form the pattern $\pi$
  is given by $(1-\eps)^k \density_\pi^o(\mu)$.

  We now consider the case $M=m$ ($1 \le m \le k$) and $B= \nearrow$. 
  This happens with probability $p  \binom{k}{m} (1-\eps)^{k-m} \eps^m$.
  Without loss of generality, we assume that $(X_1,Y_1)$, \dots, $(X_{k-m},Y_{k-m})$ have
  law $(\varphi^{X_0,\eps}, \varphi^{Y_0,\eps})_{\#} (\mu)$ and $(X_{k-m+1},Y_{k-m+1})$, \dots, $(X_{k},Y_{k})$
  have law $\delta^{(\eps),B}_{(X_0,Y_0)}$. By construction,
  these last points are consecutive in values and in positions in the sample $(X_1,Y_1)$,\dots,$(X_k,Y_k)$,
  and are in increasing order (since $B= \nearrow$).
  Moreover they have the same place as $(X_0,Y_0)$ in the sample $(X_0,Y_0)$, $(X_1,Y_1)$, \dots, $(X_{m-k},Y_{m-k})$.
  Hence $(X_1,Y_1)$, \dots, $(X_{k},Y_{k})$ form the pattern $\pi$ if and only if,
   for some $(\tau,i)$ in $\mathcal I_m(\pi)$, the points
  $(X_0,Y_0)$, $(X_1,Y_1)$, \dots, $(X_{m-k},Y_{m-k})$ form the pattern $\tau$ and $(X_0,Y_0)$ is the
  $i$-th point from the left in this sample.
  For a given $(\tau,i)$, the latter happens with probability $\frac{\density_\tau^o(\mu)}{k-m+1}$.
  Summing over all $(\tau,i)$ in $\mathcal I_m(\pi)$, we get that the probability that $M=m$, $B=\nearrow$ and
  $(X_1,Y_1)$, \dots, $(X_{k},Y_{k})$ form the pattern $\pi$ is 
  \[ p\binom{k}{m} \frac{(1-\eps)^{k-m} \eps^m}{k-m+1} \left( \sum_{(\tau,i) \in \mathcal I_m(\pi)} 
  \density_\tau^o(\mu)\right).\]

  The case $B=\searrow$ is similar, replacing $p$ by $1-p$ and $\mathcal I_m(\pi)$ by $\mathcal D_m(\pi)$.
  Summing the various contributions, we get the formula in the lemma.
\end{proof}

\begin{proof}
  [Proof of \cref{prop:semi_discrete_approx}]
  \cref{lem:dpi_InfEps} implies that the space $\mathcal H=\linearSpan\{\density_\pi\}_{\pi \in \stateSpace}$ is invariant
  by the generator $\mathcal A_\eps$. Since it is also dense in $C(\mathcal P)$
  (Assumption \ref{assumption continuous density functions are dense}),
  \cref{prop:dense_invariant_core} tells us that $\mathcal H$ is a core for $\mathcal A_\eps$.
  On the other hand, we know from \cref{prop limiting semigroup and generator}, item \ref{claim generator and core}
  that $\mathcal H$ is a core for the generator $\mathcal A$ of $F$.

  Let $\pi$ be a pattern of size $k$. Our next goal is to prove that 
  \begin{equation}\label{eq:cv_Aeps}
    \lim_{\eps \to 0} \mathcal A_\eps \density^o_\pi = \mathcal A \density^o_\pi.
  \end{equation}
  The quantity $\mathbb E\Big[\density_\pi^o \big( \textrm{Inf}^{\, \eps}(\mu) \big) \Big]$
  appearing in $\mathcal A_\eps \density^o_\pi$ (see \eqref{eq:def_Aeps}) is computed in Lemma~\ref{lem:dpi_InfEps}.
  We expand it into powers of $\eps$.
  \begin{itemize}
    \item The constant term in \eqref{lem:dpi_InfEps} is $\density^o_\pi(\mu)$.
      This cancels with the term $-\density^o_\pi(\mu)$ in the definition of the generator (\cref{eq:def_Aeps}).
    \item Let us look at linear terms in $\eps$. The first summand in \eqref{eq:dpi_InfEps} gives $-k \eps \density^o_\pi(\mu)$.
      In the sums, only the summands corresponding to $m=1$ yield a linear term in $\eps$. 
      For $m=1$, we simply have $\mathcal I_1(\pi)=\{(\pi,i), 1\le i \le k\}$, so that
      \[ \left( \sum_{(\tau,i) \in \mathcal I_1(\pi)} 
  \density_\tau^o(\mu)\right) = k \density^o_\pi(\mu),\]
  \orange{and the same holds with $\mathcal D_1(\pi)$ instead of $\mathcal I_1(\pi)$}.
      Summing everything up, the linear terms in $\eps$ in the right-hand side of \cref{eq:dpi_InfEps} are 
      \[-k \, \eps \,\density^o_\pi(\mu) + p\, k\, \frac{\eps}{k}\, k\, \density^o_\pi(\mu) 
      + (1-p)\, k\, \frac{\eps}{k}\, k\, \density^o_\pi(\mu) = 0.\]
    \item We now consider the quadratic terms in $\eps$ in the right-hand side of \eqref{eq:dpi_InfEps}.
      The first summand and the terms $m=1$ in the sums give the following quadratic terms
      \[\binom{k}2 \eps^2 \density^o_\pi(\mu) - p k \frac{(k-1) \eps^2}{k} k\, \density^o_\pi(\mu)
      - (1-p) k \frac{(k-1) \eps^2}{k} k\, \density^o_\pi(\mu) = - \binom{k}2 \eps^2 \density^o_\pi(\mu).\]
      Finally, the terms $m=2$ in the sums also yield some quadratic term, namely:
      \[ \binom{k}2 \frac{\eps^2}{k-1} \bigg( p \sum_{(\tau,i) \in \mathcal I_2(\pi)} \density_\tau^o(\mu)
      +(1-p) \sum_{(\tau,i) \in \mathcal D_2(\pi)} \density_\tau^o(\mu)\bigg) .\]
      That $(\tau,i)$ belongs to $\mathcal I_2(\pi)$ \orange{(resp.~$\mathcal D_2(\pi)$)}exactly means that $\pi$ can be obtained from $\tau$
      by an increasing \orange{(resp.~decreasing)} inflation of $(i,\tau(i))$, hence the quantity above rewrites
      as 
      \[\binom{k}2 \eps^2 \bigg(\sum_{\tau \in \stateSpace_{k-1}} \upKernel(\tau,\pi) \, \density_\tau^o(\mu) \bigg).\]
  \end{itemize}
    We finally get that
    \[\lim_{\eps \to 0} A_\eps \density^o_\pi(\mu) = \lim_{\eps \to 0} 2 \eps^{-2} \Big( \mathbb E \big[ \density^o_\pi(\textrm{Inf}^{\, \eps}(\mu) ) \big] - \density^o_\pi(\mu) \Big) = k(k-1) \bigg(-\density^o_\pi(\mu) + \sum_{\tau \in \stateSpace_{k-1}} \upKernel(\tau,\pi) \, \density_\tau^o(\mu) \bigg).\]
    Comparing with \cref{prop algebraic identities in limit}, item \ref{claim generator on density functions},
    this proves \eqref{eq:cv_Aeps}. By linearity, $\lim_{\eps \to 0} \mathcal A_\eps f = \mathcal A f$
    for all $f$ in the common core $\mathcal H$ of $\mathcal A_\eps$ and $\mathcal A$.
    Applying \cite[Chapter 1, Theorem 6.1]{ethierKurtzBook} (with $L_n=L=C(\mathcal P)$, $\pi_n=\mathrm{id}$ and $f_n=f$),
    it holds that, for any $t>0$ and any $f$ in $C(\mathcal P)$, 
    the function $\mathcal T_\eps(t) f$ tends to $\mathcal T(t) f$, 
    where $\mathcal T_\eps$ and $\mathcal T$ are the transition
    operator semigroups associated with $F_\eps$ and $F$.
    Applying further \cite[Chapter 4, Theorem 2.5]{ethierKurtzBook}, we get that $F_\eps$ converges to $F$ in the Skorokhod
    space $D([0,\infty), \mathcal P)$, as claimed.
\end{proof}

\begin{remark}
In the construction of $\mathrm{Inf}^\eps(\mu)$, instead of inserting a scaled copy of the increasing diagonal with probability $p$ and a copy of the decreasing one with probability $1-p$,
we could have inserted a scaled copy of any (potentially random) permuton $\mu_0$
with $d(12,\mu_0)=p$ (or $\mathbb E(d(12,\mu_0))=p$ if $\mu_0$ is random). The resulting processes $F_{\eps,\mu_0}$ would still be approximations of $F$ in the sense of Proposition~\ref{prop:semi_discrete_approx}.
\end{remark}

\section{A graph example}
\label{sec:graph}

\subsection{Subgraph densities and graphons}
\label{ssec:graphons}
In this section, for $n \ge 1$, 
we let $\mathcal G_n$ be the set of unlabelled simple undirected graphs on $n$ vertices.
If $G$ is a graph with vertex set $V$ and $I$ a subset of $V$,
we let $G[I]$ be the induced subgraph of $G$ on $I$,
that is the graph with vertex set $I$ containing the edges of $G$
with both extremities in $I$.
A special case of interest is when $I=V\setminus \{v\}$ for some vertex $v$:
then $G[I]$ is simply obtained by erasing the vertex $v$ and all incident edges.

Erasing a uniform random vertex $v$ in a graph $G$ defines a kernel $\downKernel$
from $\mathcal G_n$ to $\mathcal G_{n-1}$.
With this definition, if $H$ and $G$ are graphs of size $k$ and $n$ respectively, we have
$$
	d_H(G)=\downKernel_{ n, k }(G,H)
		=
			\frac{\occ(H,G)}{\binom{n}{k}}
		,
			\qquad
			| V_G | = n \ge k = | V_H |
		,
$$
where $\occ(H,G)$ is the number of subsets $I$ of $V_G$ such that $G[I]$ is isomorphic
to $H$. The quantity \orange{$d_H(G)$} is usually referred to as the {\em induced
subgraph density} of $H$ in $G$.

These (induced) subgraph densities play a central role in the theory of graph limits,
also known as graphons.
A {\em graphon} is a symmetric function from $[0,1]^2$ to $[0,1]$.
A graph $G$ can be seen as a graphon $W_G$ by considering its rescaled adjacency matrices, i.e.~
\[W_G(x,y)=\begin{cases}1 &\text{ if $\{\lceil nx \rceil,\lceil ny \rceil\}$ is an edge of $G$;}\\
0 & \text{ otherwise.} \end{cases}\]
The function $W_G$ depends on the labeling of the vertices of $G$ by numbers from $1$
to $n$, but we usually consider graphons up to some equivalence relation.
Convergence of graphons is defined via a pseudo-metric, call box distance,
whose definition is irrelevant here.
It turns out that induced subgraph densities can be extended to the space of graphons
and that the convergence for the box metric is equivalent to convergence of all subgraph densities.
A detailed introduction to graphon theory can be found in the book of Lovász \cite{BookGraphons}.
In particular Assumptions \ref{assumption state space approximation}--\ref{assumption continuous density functions are limits} 
are well-known to hold for the space of graphons.

\subsection{Up-kernel, commutation and consequences}
As for permutations, we also define an up-kernel $\upKernel_n$ from $\mathcal G_n$ to $\mathcal G_{n+1}$,
where $\upKernel_n(G,G')$ is the probability of obtaining $G'$ starting from $G$ by 
\orange{choosing} a uniform random vertex of $G$,
\orange{creating a copy of it with the same neighbourhood}
and connecting the selected vertex and its copy with probability $1-p$.
Then we have the following commutation relation, whose proof is identical to that of \cref{prop:commutation_permutations}.
\begin{prop}
\label{prop:commutation_graphs}
The above kernels satisfy the following commutation relations: for $n \ge 2$
	\begin{equation}
		\upKernel_n
		\downKernel_{ n + 1 }
			=
				\frac{n-1}{n+1}\,
        		\downKernel_n
        		\upKernel_{ n - 1 }
			+
				\frac{2}{n+1}\,
        		\delta_n,
		\label{eq:commutation_graphs}
   	\end{equation}
where $\delta_n$ denotes the identity kernel on $\mathcal G_n$.
Consequently, the up-down transition operator $T_n$
satisfies Assumptions \ref{assumption increasing generator rates}--\ref{assumption intertwining}, with
$
	\generatorRates_n
		= 
			n ( n + 1 )
	$	(for $n \ge 1$).
\end{prop}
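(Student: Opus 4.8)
The plan is to follow verbatim the strategy of \cref{prop:commutation_permutations}. First, by \cref{lem:avoid_diag}, it suffices to verify \eqref{eq:commutation_graphs} entry by entry on off-diagonal pairs: fix distinct unlabelled graphs $G, G'$, both on $n$ vertices, and choose a representative of $G$ with vertex set $[n]$. The strategy is to compute $(\upKernel_n \downKernel_{n+1})(G,G')$ and $(\downKernel_n \upKernel_{n-1})(G,G')$ separately and compare them.

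Next I would unwind the up-down step. An up-step from $G$ selects a uniform vertex $v \in [n]$, adds a copy $v'$ of $v$ with the same neighbourhood, and joins $v$ to $v'$ with probability $1-p$, producing a graph $\rho$ on $n+1$ vertices; a down-step then erases a uniform vertex of $\rho$. The key observation is that $v$ and $v'$ are exchanged by an automorphism of $\rho$, so erasing either one yields $G$; hence, since $G' \neq G$, the erased vertex must be one of the $n-1$ vertices of $G$ distinct from $v$. For such a pair $(v,w)$ with $w \neq v$, let $G^{\setminus w}_{v,\bullet}$ (resp.\ $G^{\setminus w}_{v,\circ}$) denote the graph obtained by duplicating $v$, joining the two copies (resp.\ not joining them), and then erasing $w$. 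Summing over equiprobable choices gives
\[
(\upKernel_n \downKernel_{n+1})(G,G')
= \sum_{1 \le v \neq w \le n} \frac{1}{n(n+1)}\Bigl( (1-p)\,\indicator\bigl(G' = G^{\setminus w}_{v,\bullet}\bigr) + p\,\indicator\bigl(G' = G^{\setminus w}_{v,\circ}\bigr)\Bigr).
\]
The down-up step is handled identically: a down-step erases a uniform vertex $w$ of $G$, and an up-step then duplicates a uniform vertex of the resulting $(n-1)$-vertex graph; since vertex duplication and vertex erasure at distinct vertices commute, the same outcomes $G^{\setminus w}_{v,\bullet}$, $G^{\setminus w}_{v,\circ}$ arise, now with weight $\tfrac{1}{n(n-1)}$ in place of $\tfrac{1}{n(n+1)}$:
\[
(\downKernel_n \upKernel_{n-1})(G,G')
= \sum_{1 \le v \neq w \le n} \frac{1}{n(n-1)}\Bigl( (1-p)\,\indicator\bigl(G' = G^{\setminus w}_{v,\bullet}\bigr) + p\,\indicator\bigl(G' = G^{\setminus w}_{v,\circ}\bigr)\Bigr).
\]

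Comparing the two displays yields $(\upKernel_n \downKernel_{n+1})(G,G') = \tfrac{n-1}{n+1}(\downKernel_n \upKernel_{n-1})(G,G')$ for every off-diagonal pair, and \cref{lem:avoid_diag} upgrades this to \eqref{eq:commutation_graphs} with $\beta_n = \tfrac{n-1}{n+1}$. Finally, \ref{assumption finite state spaces} holds up to a shift of index since $\mathcal G_1$ is a single graph, so \cref{prop up-down chains give intertwining} applies and produces Assumptions \ref{assumption increasing generator rates}--\ref{assumption intertwining}; one checks, exactly as in the permutation case, that $\generatorRates_n = n(n+1)$ satisfies $\tfrac{\generatorRates_{n-1}}{\generatorRates_n} = \beta_n$.

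The computations here are entirely routine; the only point that requires care is the bookkeeping around the random edge between the duplicated vertex and its copy, together with the observation that erasing either copy reproduces $G$ (so that, on the off-diagonal, only erasures of genuine vertices of $G$ contribute). This is precisely what makes the up-down and down-up outcomes coincide and is the crux of the argument. Working with unlabelled graphs introduces no real difficulty: summing over ordered pairs of vertices of a fixed representative automatically accounts for any multiplicities arising from graph automorphisms, just as in the permutation example.
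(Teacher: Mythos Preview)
Your proof is correct and follows exactly the approach the paper intends: the paper's proof simply states that it is identical to that of \cref{prop:commutation_permutations}, and you have carried out precisely that adaptation, including the key observation that erasing either of the duplicated vertices returns the original graph so that only erasures of the remaining $n-1$ vertices contribute on the off-diagonal.
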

The results announced in the introduction (the scaling limit of the up-down chains, the continuity of the limit in \cref{thm:intro-scaling-limit-examples},
the exact formula and asymptotics estimates for the separation distance in \cref{prop:separation-intro-permutations}) follow immediately.
\begin{remark}
	\label{remark commutation with inversion graph}
  Consider the map $I_n: \Sn_n \to \mathcal G_n$ associating with a permutation $\sigma$
  its inversion graph $G(\sigma)$ (recall that $G(\sigma)$ has vertex-set $\{1,\dots,n\}$)
  and contains the edge $\{i,j\}$ if $\{i,j\}$ is an inversion of $\sigma$, i.e.~if $(i-j)(\sigma(i)-\sigma(j))<0$.
  As mentioned in the introduction, we can easily check that the up and down transition operators on graphs and permutations
  are compatible with the inversion graph map in the following sense: for $n \ge 1$,
  \[  \upKernel_{\mathcal G,n} \circ I_n = I_{n+1} \circ \upKernel_{\Sn,n}, \quad \downKernel_{\mathcal G,n+1} \circ I_{n+1} = I_n \circ \downKernel_{\Sn,n+1},\]
  where we added subscripts $\mathcal G$ and $\Sn$ to distinguish the kernels on graphs and permutations.
  Hence it is not surprising that both $(\upKernel_{\Sn},\downKernel_{\Sn})$ and $(\upKernel_{\mathcal G},\downKernel_{\mathcal G})$ satisfy
  the same commutation relation (with the same coefficients).
  However, none of \cref{prop:commutation_permutations} or \cref{prop:commutation_graphs} implies the other,
  since $I_n$ is neither surjective, nor injective. 
  \orange{Also, the graph part
  of \cref{thm:intro-scaling-limit-examples} does not imply the permutation part,
and vice-versa.}
\end{remark}

\section*{Acknowledgements}
The authors are grateful to Koléhè Coulibaly-Pasquier for insightful discussions on separation distances of Markov chains,
and Roman Gambelin for interesting discussions, for pointing out some references,
and for sharing an early version of his work~\cite{gambelin2025scaling}.

\bibliographystyle{bibli_perso}
\bibliography{bibli}

\end{document}